\documentclass{amsart}


\usepackage{type1cm}        

\makeindex		     

\usepackage{graphicx}        
\usepackage{multicol}        
\usepackage[bottom]{footmisc}

\usepackage{newtxtext}       %
\usepackage{newtxmath}       


\usepackage{enumerate}
\usepackage{enumitem}

\usepackage[pagebackref,bookmarksopen]{hyperref}
\hypersetup{
	colorlinks=true,
	linkcolor=blue,
	citecolor=magenta,
	urlcolor=cyan,
}
\usepackage{bookmark} 


\newcommand{\ZZ}{{\mathbb Z}}
\newcommand{\RR}{{\mathbb R}}
\newcommand{\CC}{{\mathbb C}}
\newcommand{\QQ}{{\mathbb Q}}
\newcommand{\KK}{{\mathbb K}}
\newcommand{\PP}{{\mathbb P}}
\newcommand{\NN}{{\mathbb N}}
\newcommand{\LL}{{\mathbb L}}
\newcommand{\cA}{\mathcal{A}}
\newcommand{\cB}{\mathcal{B}}
\newcommand{\cE}{\mathcal{E}}
\newcommand{\cJ}{\mathcal{J}}
\newcommand{\cF}{\mathcal{F}}
\newcommand{\cG}{\mathcal{G}}
\newcommand{\CH}{\mathcal{H}}
\newcommand{\cX}{\mathcal{X}}
\newcommand{\CD}{\mathcal{D}}
\newcommand{\CaL}{\mathcal{L}}
\newcommand{\cM}{\mathcal{M}}
\newcommand{\cP}{\mathcal{P}}
\newcommand{\cQ}{\mathcal{Q}}
\newcommand{\cS}{\mathcal{S}}
\newcommand{\cT}{\mathcal{T}}
\newcommand{\cU}{\mathcal{U}}
\newcommand{\cV}{\mathcal{V}}
\newcommand{\Char}{{\rm Char}}
\newcommand{\ord}{{\rm ord}}


%
%

\newtheorem{theorem}{Theorem}[section]

\newtheorem{environment name}[theorem]{Caption}{\bf}{}
\newtheorem{proposition}[theorem]{Proposition}{\bf}{}
\newtheorem{corollary}[theorem]{Corollary}
\newtheorem{problem}[theorem]{Problem}
\theoremstyle{definition}
\newtheorem{definition}[theorem]{Definition}
\newtheorem{example}[theorem]{Example}
\theoremstyle{remark}
\newtheorem{remark}[theorem]{Remark}

\begin{document}

\title{Complements to ample divisors and Singularities.}

\author{Anatoly Libgober}
\address{Department of Mathematics, University of Illinois, Chicago, IL 60607, }
\email{libgober@uic.edu}

\begin{abstract}
The paper reviews recent developments in the 
study of Alexander invariants of quasi-projective manifolds using 
methods of singularity theory. Several 
results in topology of the complements to singular plane curves and
hypersurfaces in projective space extended to the case of curves on simply
connected smooth projective surfaces.
\end{abstract}

\maketitle 


\section{Introduction} 

These notes review interactions between singularity theory
and the study of fundamental groups and more generally the
homotopy type of the complements to divisors on smooth projective
varieties. The main question considered here is how the local topology of singularities as well
as their global geometry affect the topology of the complement.
Several surveys updating the state of the subject at respective
points in time were written 
over the years (cf. \cite{degtbook}) but most often focusing on
specific situations: complements to plane curves, arrangements of
lines or hyperplanes etc. reflecting that earlier studies of
the complements were mainly focused on the case 
of plane curves.
Below we consider the complements to divisors $D$ on smooth
projective surfaces $X$ and their fundamental groups, sometimes
 indicating how a generalization to the case of homotopy types of the
complements in manifolds of dimension greater than two looks like, but mostly referring to other publicaions for
additional details on homotopy invariants beyond fundamental
groups. An earlier appearances of the studies of the complements in the
context of general pairs $(X,D)$ and their fundamental groups can be
traced to the 80s. 
Some results on the topology of the complements in such set up  did appear in
\cite{SGA1}, \cite{fujita} \cite{popp}. A much earlier, beautiful
results, especially those showing the role of the abelian varieties in
the subject were obtained by Italian school (cf. \cite{ciliberto} for
a modern exposition).

The invariants of the fundamental groups with known strong relation
to singularity theory  are the Alexander type
invariants, introduced in \cite{homologyabcov} and called their {\it
  characteristic varieties}. The connections besides singularity
theory run through the knot theory, the Hodge theory of quasi-projective varieties,
study of elliptic fibrations, symplectic geometry to mention a few.

There are three major approaches to the study of characteristic varieties
of fundamental groups. One is topological, allowing their calculation in
terms of a presentation of the fundamental group via generators and
relations, obtained typically using braid monodromy. The other one is
geometric, going through a study of homology of the abelian covers and 
eventually leading to determination of characteristic varieties in terms of local type of
singularities and dimensions of the linear systems determined by the 
divisor and the local type of singularities. Finally, one can
calculate the characteristic varieties using Deligne extensions of 
bundles endowed with a flat connection.
Whole theory is a combination of methods and ideas from all
these areas.

Many results related to the discussion of this paper are presented in
volume \cite{jaca} where for the most part the case of plane curves was considered. The exposition which follows, describes 
  a generalization to the context of the complements to divisors on smooth simply
connected surfaces.  
 A very fruitful approach to a study of the
complements to divisors is via resolutions of singularities,
reducing the case when a divisor has arbitrary singularities to the case of divisors with normal crossings. In this
way one replaces the complexity of the divisor by complexity of
compactification and the complexity of individual components.
The goal here is rather to study how complexity of singularities affects
fundamental groups of the complements. Trying to make this paper 
more independent, we included some basic material which is scattered
through existing literature and for which we could not find good
references (e.g. theory of branched covers, the
relation between quasi-adjunction and multiplier ideals etc.). We also 
survey several results on the fundamental groups which appear in the last 
10-20 years providing an overview of the new results in this area. 
Several results here are new or did not appear in
the literature: they include the divisibility of Alexander
polynomials of complements on simply-connected surfaces, extending the
case of plane curves (cf. Theorem \ref{theoremdivisibility}), calculation of characteristic
varieties in terms of classes of irreducible components in Picard
group and invariants of quasi-adjunction of singularities
(cf. Theorem \ref{charvarposition}) and others.

The content of the paper is as follows. In section \ref{basics}
we discuss an analog of classical method of Van Kampen
(cf. \cite{vankampen})  to obtain presentations of the fundamental 
groups of the complements to divisors on smooth surfaces in terms
of mapping class group valued monodromy associated to a divisor.
We also review conditions on a divisor which allow to deduce that 
the fundamental group of the complement is abelian. 
In section \ref{alexinvariants} we firstly extend the theory of
Alexander invariants of plane algebraic curves (cf. \cite{mealex}, \cite{me2009})
to the complements of curves on smooth projective surfaces
 (for an earlier work cf. \cite{degt2001}). 
In particular we obtain a result unifying the divisibility
theorems in the case of plane curves, showing the divisibility of global
Alexander polynomials respectively in terms of local Alexander polynomials
and the Alexander polynomials at infinity. 
Many results depend on some sort of positivity assumptions of 
the components which suggest an interesting problem understanding 
the fundamental groups and its invariants when positivity is lacking.
Theory of Alexander invariants is closely related to the study of
homology of abelian covers. In section \ref{branchedcoverssect} we 
present basic definitions and then describe approaches enumerating 
covers either in terms of subgroups of fundamental groups or in terms
of eigensheaves of direct images of 
the structure sheaf. The most interesting results about Alexander
invariants are obtained through interaction of topological and
algebro-geometric view points. The last part of this section deals with 
multivariable Alexander invariants from topological view point. We
included a brief discussion of multivariable Alexander invariants for
quasi-projective invariants in higher dimensions including recent
results on propagation (cf. \cite{maximbook} for another recent
overview of this and related aspects). Section \ref{section4}
discusses a calculation of characteristic varieties in terms of superabundances of 
the linear systems associated with a divisor on a smooth projective surface 
using ideals of quasi-adjunction of singularities of the divisor. 
The ideals of quasi-adjunction,  defined in terms of branched covers of the germs of divisors, can be viewed as the  
multiplier ideals which received much attention over last 20-30
years. The role of these ideals in the study of the fundamental groups is to specify
the linear system which dimensions determine the characteristic
varieties and hence allow to give their geometric description. 
The section contains also another description of characteristic varieties
using the Deligne's extension and ends with a brief review of the relations between the
characteristic varieties and  
other invariants studied in Singularity theory, including
Bernstein-Sato  polynomials and Hodge decomposition of characteristic
varieties. Section \ref{asymptsection} mostly is based on recent preprint
\cite{asymptotics} which describes the results on distribution of 
Alexander type invariants when complexity (in appropriate sense) of the divisor 
increases. We describe the finiteness results when one
searches for fundamental groups of the complements  with large free
quotients. The last section discusses several recent calculations of
the fundamental groups of the complements. In the 80s scarcity of 
examples of quasi-projective groups and fundamental groups of the
complements was viewed as impediment to development of general theory. In recent 
years this problem was amply addressed and we present some of the most
consequencial results. 

In these notes, we tried at least to direct a reader to the most
importnat recent developments but nevertheless several important 
topics were not covered here. Those missing include the relation between the
Alexander invariants and the Mordell-Weil groups of isotrivial 
fibrations (cf. \cite{mathannalen}), Chern numbers of algebraic
surfaces and arrangements of curves (cf. \cite{urzua}), free subgroups of the fundamental
groups (cf. \cite{zaidenberg}), virtual nilpotence of virtually
solvable quasi-projective groups (cf. \cite{aranori}), singularities
of varieties of representations of the fundamental groups (cf. \cite{millson}), the complements to symplectic curves
(cf. \cite{ludmil1},\cite{ludmil2}, \cite{golla}) among
others. 
 
The theory described below appears to be far from completion.
Many interesting problems remain very much open (some are mentioned
throughout the text) and
a thorough understanding of the fundamental groups or homotopy 
type of quasi-projective varieties is still out of reach. 

Finally, I want to thank Alex Degtyarev as well as the referee of
this paper for reading the final version of the
text and very helpful comments.

\section{ Braid monodromy, presentations of fundamental groups and
  sufficient conditions for commutativity}\label{basics}

\subsection{Braid monodromy presentation of fundamental groups.}
In the case of plane curves, Zariski-van Kampen method\index{Zariski-van Kampen method}
(cf. \cite{zariski29}, \cite{vankampen}) is the oldest
tool for finding presentations of the fundamental groups\index{presentation of the fundamental group} of the
complements.
A convenient way to state the theorem is in terms of braid monodromy.
Its systematic use was initiated in \cite{moishezon} and in such form admits a natural generalization
to the complements to divisors on 
arbitrary algebraic surface which we describe in this section. Braid monodromy became an
important tool in symplectic geometry (cf. \cite{ludmil1}). A good
exposition of braid
monodromy of curves on ruled surfaces can be found in
\cite{degt11c}, Section 5.1. 

Let $X$ be a smooth projective surface and let $D$
be a reduced divisor on $X$. To describe a presentation of
$\pi_1(X\setminus D,p), p\in X\setminus D$ we make several choices,
on which the presentation will depend.
\begin{itemize} 
\item Select a pencil \footnote{i.e. a family of divisors parametrized by $\PP^1$}
 of hyperplane sections of
$X \subset \PP^N$ 
, generic for the pair
$(X,D)$. Its base locus is a generic
codimension 2 subspace $P \subset \PP^N$ and we can consider the projection
with the center at $P$,
i.e. the map $\PP^N\setminus P\rightarrow \PP^1$ sending to $p\in
\PP^N\setminus P$ to the 
 hyperplane containing $P$ and $p$. Its restriction to $X$ produces a regular map 
$\pi: X \setminus X\cap P \rightarrow \PP^1$. Denoting by $\widetilde X$ the
blow up of the surface $X$ at the base locus $P\cap X$ of the pencil,
we obtain a regular map ${\widetilde X} \rightarrow \PP^1$. Assuming that $P$ was
selected so that $D \cap P=\emptyset$ and still denoting by $D$ its
preimage in $\widetilde X$ we obtain the map $\tilde \pi: {\widetilde X}\setminus D \rightarrow \PP^1$.
Seifert-van Kampen theorem implies that $\pi_1( X\setminus
D)=\pi_1({\widetilde X}\setminus D)$ and so we can do calculations
 on
$\widetilde X$.
\bigskip

\item  Let $B=\{b_1, \dots ,b_k\} \subset \PP^1$ be the set consisting of
the critical values of $\tilde \pi$ \footnote{those are absent in the
  classical case  on pencils of lines $X=\PP^2$ of Zariski-van Kampen theorem.} and the images of the fibers
of $\pi$, either containing a singular point of $D$ or containing a
point of $D$ which is critical point of restriction $\pi\vert_ D$.
\bigskip
\item Let  $\Omega \subset \PP^1$ be a subset, containing $B$ and isotopic to a
disk in $\PP^1$, and let $b_0 \in \partial \Omega$ be a point on the
boundary of $\Omega$.  
\bigskip
\item 
Let 
$\partial B_{\epsilon}(p)$ be the boundary of a
small ball $B_{\epsilon}(p)$ in $X$ \footnote{we assume that there are no
  vanishing cycles corresponding to critical points of $\pi$ and no
  points of $D$ inside this ball} 
centered at a point $p \in X \cap
P$ or, equivalently, the boundary of a small regular neighborhood of the
exceptional curve $E_p$ in $\tilde X$ contracted to $p\in X$. 
The map $\tilde \pi$ restricted to $\partial B_{\epsilon}(p)$ is the Hopf fibration $\partial B_{\epsilon}=S^3\rightarrow
\PP^1=S^2$.  Using its trivialization over $\Omega$, we define a section over $\Omega\setminus B$:
 $s_p: \Omega\setminus B \rightarrow \tilde \pi^{-1}(\Omega\setminus
 B)$.  
\bigskip
\item 
Let $F_{b_i}, i=0,1,...,k$ be the fiber of $\tilde \pi$
over $b_i$. The curves $F_{b_i}, i=1,....,k$ either have singularities at critical
points of $\pi$ or contain singular points of $D$ or have non-transversal intersections with $D$, while
$F_{b_0}$ is  smooth closed Riemann surface having genus
$g=\frac{F_{b_0}(F_{b_0}+K)}{2}+1$ where $K$ is the canonical divisor
of $X$. 
\bigskip
\item
For any $p \in P\cap X$, let $\bar F_{b_0}^{\circ}$ be
the surface with one connected boundary component obtained by removing from $F_{b_0}$ 
its intersection with the above regular 
 neighborhood of $E_p$.
Denote by 
$\cM(\bar F_g^{\circ},[d])=Diff^+( \overline{F_{b_0}\setminus (F_{b_0} \cap B_{\epsilon}(p))}, [F_{b_0} \cap D])$ 
 the mapping class group of the Riemann surface with boundary with
$d$ marked points (cf. \cite{farb}) i.e. the group of isotopy classes of
orientation preserving diffeomorphisms taking the subset $[d]$
 of cardinality $d$ into itself and constant on the boundary of the
 Riemann surface.
\end{itemize}
\begin{definition} \it  The braid monodromy\index{braid monodromy} of the pair $(X,D)$ (for selected
  pencil on $X$) is the monodromy map 
\begin{equation}\label{monodromymap}
\mu: \pi_1(\Omega\setminus B,b_0)
\rightarrow \cM(\bar F_g^{\circ},[d])
\end{equation}
obtained by 

a) selecting a loop (denoted in b) and c) below as $\gamma$) for each homotopy class in $\pi_1(\Omega\setminus
B,b_0)$,  

b) a trivialization of the locally trivial fibration $\pi^{-1}(\gamma)
\rightarrow \gamma$ i.e. a differentiable map $\pi^{-1}(b_0) \times [0,1] \rightarrow
\pi^{-1}(\gamma)$ inducing a diffeomorphism of the fiber over $t \in
[0,1]$ onto the fiber over the image of $t$ in parametrization $[0,1]
\rightarrow \gamma$ of the loop. 

c) assigning to $\gamma$ the diffeomorphism of $\bar F_g^{\circ}=\pi^{-1}(b_0)$
sending a point $q \in \pi^{-1}(b_0)$ to the point $q' \in \pi^{-1}(b_0)$
to which the trivialization mentioned in b) takes the end point $q \times 1$ of the segment $q\times
[0,1] \subset \pi^{-1}(b_0) \times [0,1]$ in $\pi^{-1}(\gamma)$.
\end{definition}

One verifies that, though the diffeomorphism in c) depends on both, the
loop $\gamma$ in a) and the trivialization in b), its class in the
mapping class group does not depend on these choices.

Recall that the mapping class group $\cM(\bar F_g^{\circ},[d])$ acts on
$\pi_1(\bar F_g^{\circ}\setminus [d],q)$ (here $q$ is the base point which
we assume is on the boundary of $\bar F_g^{\circ}$). For example in the case $g=0$ the group $\cM(\bar F_0^{\circ},[d])$ is
the Artin's braid group on $d$-strings i.e. the group of orientation
preserving diffeomorphisms of a 2-disk $\Delta$, constant on the boundary and taking
into itself a given subset of $\Delta$ of cardinality $d$. It has a well known
presentation:
\begin{equation}\label{artin} 
 < \sigma_1,...,\sigma_{d-1}, \ \ \ \vert 
   \sigma_i\sigma_{i+1}\sigma_i=\sigma_{i+1}\sigma_i\sigma_{i+1} \ \ \
\sigma_i\sigma_j=\sigma_j\sigma_i, 1<\vert i-j\vert \ \ \  >
 \end{equation}
Note that the center of (\ref{artin}) is generated by
$[\sigma_1(\sigma_2\sigma_1)(\sigma_3\sigma_2\sigma_1)....(\sigma_{d-1}...\sigma_1)]^2$
(cf. \cite{gonzalez} Sect. 4.3).
The action on the free group $\pi_1(\Delta \setminus [d],p)$ is
given by 
\begin{equation}
   \sigma_i(t_i)=t_{i+1}, \sigma_i(t_{i+1})=t_{i+1}^{-1}t_it_{i+1},
   \sigma_i(t_j)=t_j, \ \ j \ne i,i+1 \footnote{this implies 
that $\sigma_i^{-1}(t_i)=t_it_{i+1}t_i^{-1}, \sigma_i^{-1}(t_{i+1})=t_i$.}
 \end{equation}
(which is the canonical action of the mapping class group on the
fundamental group for appropriate choice of generators $t_i$ of the latter).
This way in the case of $X=\PP^2$ one obtains the monodromy with
the values in the Artin's braid group, the case described in \cite{moishezon}.
The homomorphism (\ref{monodromymap}) in \cite{moishezon} is described
in a more combinatorial form, as a product of collection of braids. The
ordered collection of factors in this product is the collection of braids corresponding to so
called ``good ordered system of generators'' of the free group $\pi_1(\Omega
\setminus B,b_0)$ (cf. \cite{moishezon} for details).  

To define the final ingredient for our presentation of
$\pi_1(X\setminus D)$,  we consider 
the gluing map of the boundaries of $\pi^{-1}(\Omega)$ and
$\pi^{-1}(\PP^1 \setminus
\Omega)$ which can be viewed as a map $\Phi: \pi^{-1}(\partial
\Omega) \rightarrow \pi^{-1}(\partial (\PP^1\setminus \Omega))$, both
spaces being locally trivial fibrations over $\partial \Omega=S^1$, preserving
the set  $D \cap \pi^{-1}(\partial \Omega)$ and commuting with projection onto $S^1$.
Such map takes the loop $s_p(\partial \Omega)$  (as above, $s_p$ is
a section of restriction of the Hopf bundle over $\PP^1$) to the loop $S^1 \rightarrow
S^1 \times (F_{b_0} \setminus [d]) \rightarrow F_{b_0} \setminus [d]$ and 
hence determines a conjugacy class in the fundamental group of its
target. We shall denote this class $\rho_{X,D}$.  In the case of plane
curve of degree $d$ transversal to the line at infinity and pencil of lines,
complement to the base point is the total space of line bundle ${\mathcal{O}}_{\PP^1}(1)$, the gluing map $\Phi$ induced by positive generator
of $\pi_1(GL_2(\CC))$ which shows that  
 $\rho_{X,D}=\gamma_1
\cdot...\cdot \gamma_d$ is the product of standard ordered system of
generators of fundamental group of the complement in generic fiber to
the intersection of this fiber with the curve.

The mapping class group valued monodromy  determines the fundamental group as follows:

\begin{theorem}\label{vankampen} One has the isomorphism:
\begin{equation}\label{presentation}\pi_1(X\setminus D)=\pi_1(C_0\setminus C_0\cap
  D)/\{(\mu(\gamma_j)\alpha_i)\alpha_i^{-1},\rho_{X,D}\}
\end{equation}
\end{theorem}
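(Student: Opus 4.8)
The plan is to realize $\pi_1(X\setminus D)=\pi_1(\widetilde X\setminus D)$ as the fundamental group of the total space of the fibration $\tilde\pi$ and to compute it by decomposing the base $\PP^1$ into the disk $\Omega$ (containing all of $B$) and the complementary disk $\Omega'=\PP^1\setminus\mathrm{int}\,\Omega$ (which contains no point of $B$). Working throughout on $\widetilde X$, I would set $U=\tilde\pi^{-1}(\Omega)\setminus D$, $V=\tilde\pi^{-1}(\Omega')\setminus D$ and $U\cap V=\tilde\pi^{-1}(A)\setminus D$, where $A=\Omega\cap\Omega'$ is an annular neighborhood of $\partial\Omega$, and assemble $\pi_1(\widetilde X\setminus D)$ from these pieces by Seifert--van Kampen. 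Since $\Omega'$ carries no critical value, $V$ is (up to homotopy) a product, so $\pi_1(V)=\pi_1(C_0\setminus C_0\cap D)$ is the fiber group; the whole content then lies in computing $\pi_1(U)$ and the two gluing maps.

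First I would restrict to $\Omega\setminus B$, over which $\tilde\pi$ is a locally trivial fibration with fiber $F_{b_0}\setminus[d]=C_0\setminus C_0\cap D$ and section $s_p$. As $\Omega\setminus B$ is homotopy equivalent to a wedge of $k$ circles (so $\pi_2=0$ and $\pi_1(\Omega\setminus B)$ is free on $\gamma_1,\dots,\gamma_k$), the homotopy exact sequence of the fibration collapses to a split short exact sequence $1\to\pi_1(C_0\setminus C_0\cap D)\to\pi_1(\tilde\pi^{-1}(\Omega\setminus B)\setminus D)\to\pi_1(\Omega\setminus B)\to 1$, the splitting furnished by $s_p$. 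This exhibits the middle group as a semidirect product $\pi_1(C_0\setminus C_0\cap D)\rtimes\langle\gamma_1,\dots,\gamma_k\rangle$ in which conjugation by $\gamma_j$ acts on the fiber generators $\alpha_i$ exactly by the braid monodromy $\mu(\gamma_j)$; the identification of the geometric monodromy action with $\mu$ is immediate from the definition of $\mu$ through the trivializations described in b) and c).

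The key step, and the main obstacle, is to pass from $\Omega\setminus B$ to $\Omega$, i.e. to fill in the singular fibers over $b_1,\dots,b_k$. I would show that the inclusion induces on $\pi_1$ precisely the quotient by the normal closure of $\gamma_1,\dots,\gamma_k$. The section $s_p$ extends over all of $\Omega$ and avoids $D$ (since $D\cap P=\emptyset$), so each $s_p(\gamma_j)$ bounds the disk $s_p(\Delta_j)$ over a small $\Delta_j\ni b_j$ and becomes nullhomotopic in $U$. The delicate point is that the singular fibers themselves create no further generators or relations: this is a local assertion near each $b_j$, namely that $\tilde\pi^{-1}(\Delta_j)\setminus D$ deformation retracts onto its part over $\Delta_j\setminus\{b_j\}$ after accounting for the vanishing cycles, which one verifies from the local normal form of the pencil at a singular point of $D$ or a critical point of $\pi$ (a Lefschetz-type argument, where the genericity of the pencil and the isolatedness of these points enter). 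Setting each $\gamma_j=1$ in the semidirect product converts $\gamma_j\alpha_i\gamma_j^{-1}=\mu(\gamma_j)(\alpha_i)$ into $\mu(\gamma_j)(\alpha_i)=\alpha_i$, so that $\pi_1(U)=\pi_1(C_0\setminus C_0\cap D)/\langle\langle(\mu(\gamma_j)\alpha_i)\alpha_i^{-1}\rangle\rangle$.

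Finally I would run Seifert--van Kampen on $U\cup V=\widetilde X\setminus D$. The overlap $U\cap V$ fibers over $A\simeq S^1=\partial\Omega$ with fiber $C_0\setminus C_0\cap D$, so $\pi_1(U\cap V)$ is generated by the fiber group together with the class of the boundary loop, which I represent by $s_p(\partial\Omega)$. Into $\pi_1(U)$ this boundary class maps to $\gamma_1\cdots\gamma_k=1$ (the $\gamma_j$ having been killed), while the fiber maps identically; into $\pi_1(V)=\pi_1(C_0\setminus C_0\cap D)$ the fiber again maps identically and, by the description of the gluing map $\Phi$, the boundary class $s_p(\partial\Omega)$ maps to the prescribed conjugacy class $\rho_{X,D}$ (this is also where closing up the boundary of $\bar F_g^{\circ}$ around the base locus to recover the closed fiber $C_0$ is absorbed). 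Amalgamating over $\pi_1(U\cap V)$ therefore identifies $\rho_{X,D}$ with $1$ and introduces no relations among the $\alpha_i$ beyond those already present, yielding $\pi_1(\widetilde X\setminus D)=\pi_1(C_0\setminus C_0\cap D)/\{(\mu(\gamma_j)\alpha_i)\alpha_i^{-1},\rho_{X,D}\}$, which is the asserted presentation.
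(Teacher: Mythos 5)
The paper itself states Theorem \ref{vankampen} without proof, referring instead to the classical sources (\cite{vankampen}, \cite{moishezon}; see also \cite{degt11c}, Section 5.1), so your proposal must be measured against the standard Zariski--van Kampen argument --- and in its overall architecture it \emph{is} that argument, correctly assembled: passing to $\widetilde X$, the split short exact sequence of the locally trivial fibration over $\Omega\setminus B$ (with splitting $s_p$ and the identification of the conjugation action with $\mu$, and with $\pi_2(\Omega\setminus B)=0$ correctly invoked for injectivity of the fiber group), and the final Seifert--van Kampen amalgamation over the annulus, where the boundary class maps to $\gamma_1\cdots\gamma_k=1$ on one side and to $\rho_{X,D}$ (via $\Phi$) on the other. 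These outer steps are sound.

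The genuine weak point is the local filling lemma, which is the entire content of such proofs, and your formulation of it would fail as written. You assert that $\tilde\pi^{-1}(\Delta_j)\setminus D$ ``deformation retracts onto its part over $\Delta_j\setminus\{b_j\}$ after accounting for the vanishing cycles''; no such retraction exists, since it would make the inclusion a homotopy equivalence, whereas the class $\gamma_j$ must die (and $H_2$ genuinely changes by vanishing cycles). The correct statement is that the inclusion $\tilde\pi^{-1}(\Delta_j\setminus\{b_j\})\setminus D\hookrightarrow \tilde\pi^{-1}(\Delta_j)\setminus D$ is $\pi_1$-surjective with kernel the \emph{normal closure of the section lift} $s_p(\partial\Delta_j)$, and this requires proof, not a retraction. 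Concretely: refilling the complex hypersurface $F_{b_j}\setminus D$ kills exactly the normal closure of the meridians of \emph{each} of its irreducible components, while $s_p(\partial\Delta_j)$ is a meridian only of the component passing through $B_{\epsilon}(p)$. When the singular member of the pencil is reducible or non-reduced --- and reducible nodal members occur even for generic pencils of hyperplane sections, e.g.\ tangent hyperplane sections of a quadric splitting into two lines --- one must show that the meridians of the remaining components already lie in the normal closure of $s_p(\partial\Delta_j)$ together with the monodromy relations; a meridian of a component of multiplicity $m$ maps to $\gamma_j^m$ in the base, so this is not formal. This is precisely where the genericity of the pencil and the hypothesis that $B_{\epsilon}(p)$ contains no vanishing cycles and no points of $D$ enter, and a ``Lefschetz-type argument from the local normal form at the critical point'' does not reach it, because the issue is global along the singular fiber rather than local at the critical point. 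To close the gap, prove or quote the filling lemma in the form above, as in \cite{ajcarmona} or \cite{degt11c}.
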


Theorem \ref{vankampen} reduces a calculation of the fundamental group to  the
calculation of the braid monodromy and the element $\rho_{X,D}$. The literature
on calculations of braid monodromies of curves is very large and is
very hard to review. We refer to \cite{moishezon} and
\cite{moishteicher} where explicite expressions were obtained for the 
braid monodromy of smooth plane curves, branching curves of generic
projections of smooth surfaces in $\PP^3$, generic arrangements of
lines and branching curves of generic projections of various embeddings of 
quadric. The survey \cite{teicher} and the book \cite{degt11c} also are good
references for more recent developments.
We refer to the former for examples of calculations of fundamental
groups using van Kampen method and references to other works on
calculation of braid monodromy and the latter for  computer use
in calculations of braid monodromy and the fundamental groups.

Besides the fundamental group, the braid monodromy defines the
homotopy type of the complement (cf. \cite{me2complex} for precise statement). It is
however and open problem, if the homotopy type of the complement $X\setminus
D$ is determined by the fundamental group and the topological Euler
characteristic of the complement (cf. \cite{me2complex} for
a discussion of this problem). Considering dependence of the braid monodromy on
the curve and numerous choices made in its construction, in 
\cite{ajcarmona} the authors found conditions implying that the
homeomorphism type of the triples
$(\PP^2,L,C)$, where $C$ is a plane curve and $L$ is one of the lines
of the pencil used to construct the braid monodromy (the line at
infinity), determines the braid monodromy. 
Braid monodromy is an essential tool in showing
the existence of symplectic singular curves not isotopic to algebraic
ones (cf. \cite{ludmil1} and \cite{moishezonchisini}).

\subsection{Abelian fundamental groups} The question ``whether the
fundamental group of the complement to a nodal curve is abelian'' 
was known as ``Zariski problem'' since it was realized that Severi's
proof of irreducibility of the family of plane curves with fixed
degree and the number of nodes is incomplete
(cf. \cite{severi}). Zariski derived commutativity of the fundamental
groups of the complements to nodal curves using that irreducibility implies
existence of degeneration of a nodal curve to a union of lines without
points of multiplicity greater than two. Once one has degeneration, the relation between
 fundamental groups of the complements to a curve and to its degenerations
(i.e. that given a degeneration $C_0=limC_t$ one has surjection
$\pi_1(\PP^2\setminus C_0)\rightarrow \pi_1(\PP^2\setminus C_t)$ which
is a consequence of definition of the braid monodromy and presentation
(\ref{presentation})) implies the commutativity. 
Severi statements (and with it the Zariski
proof \cite{zariskialgsurf}) was eventually validated 
(cf. \cite{harris-86}). A proof of commutativity, based on connectedness theorem, was found prior to this 
by W.Fulton (\cite{fulton-80}) for algebraic fundamental group and by P.Deligne \cite{deligne}
in topological case. 

The central result on commutativity of fundamental groups of
complements to divisors is due to Nori
with the key step being a generalization of Lefschetz hyperplane section
theorem (cf. \cite{nori}). See \cite{fulton87} Chapter 5 and \cite{lazarsfeld}
Chapter 3.
\begin{theorem}\label{nori} (Nori's weak Lefschetz theorem). Let $U$ be a
  connected complex manifold of dimension greater than one 
 and let $i: H \rightarrow U$ be the embedding of a connected compact
  complex-analytic subspace defined by a locally principal sheaf of
  ideals.  Let $q: U\rightarrow X$ be a locally invertible map to a
  smooth projective variety, $h=q \circ i$, and $R \subset X$ be
  a Zariski closed subset. Assume
  that $\mathcal{O}_U(H)\vert H$ is ample. Then

A:  $G={\rm Im} \pi_1(U\setminus q^{-1}(R)) \rightarrow \pi_1(X\setminus
R)$ is a subgroup of a finite index.

B:
If $q(H)\cap R=\emptyset$ then $\pi_1(H) \rightarrow \pi_1(X\setminus
R)$ is a subgroup of a finite index.

C: If $\dim X=\dim U=2$ then index of subgroup $G$ of $\pi_1(X\setminus
R)$ is at most  $\frac{(Div(h))^2}{H^2}$ where the
divisor in numerator is the first Chern class of $h_*\mathcal{O}_H$,
the Cartier divisor on $X$ corresponding to
the divisor $H$ on $U$ (cf. \cite{nori}, 3.16 for details). 
\end{theorem}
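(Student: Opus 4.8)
The plan is to localize the problem near $H$, translate the ampleness of $\mathcal{O}_U(H)\vert_H$ into control of the monodromy of a meridian linking $H$, and then combine this with the classical Lefschetz hyperplane theorem for fundamental groups.

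First I would replace $U$ by a regular neighborhood $T$ of $H$. Since $H$ is compact and cut out by a locally principal ideal, $T$ deformation retracts onto $H$, so $\pi_1(T)=\pi_1(H)$, while the boundary $\partial T$ realizes the unit circle bundle $S$ of the normal bundle $\mathcal{O}_U(H)\vert_H$. Because $q$ is locally invertible, on $T$ it is a local homeomorphism, which lets me transport loops between $U\setminus q^{-1}(R)$ and $X\setminus R$. A small loop $\lambda$ linking $H$ in a normal slice is the image of the $S^1$-fibre, and on the punctured neighborhood one has the exact picture $\pi_1(S^1)\to\pi_1(T\setminus H)\to\pi_1(H)\to 1$, so everything is controlled by $\pi_1(H)$ together with $\lambda$.

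The key step is that ampleness forces $\lambda$ to introduce only a finite ambiguity. The Euler class of the circle bundle $S\to H$ is $c_1(\mathcal{O}_U(H)\vert_H)$, which is ample and hence nonzero on every curve; the Gysin sequence then makes the fibre class torsion in $H_1(S)$. Nori's sharpening upgrades this homological statement to the assertion that the subgroup generated by the image of $\pi_1(H)$ and the meridian $\lambda$ has finite index in $\pi_1(T\setminus H)$. This is the step I expect to be the main obstacle: passing from the positivity of the Euler class to a genuine finite-index statement at the level of $\pi_1$, rather than merely of $H_1$, is exactly where the hard Lefschetz-type input for ample normal bundles is needed, and it is what distinguishes Nori's theorem from the elementary homological version.

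Granting this, part A follows, since the meridian together with loops pushed off $H$ into a nearby fibre (via a trivialization as in the construction of $s_p$) generate $\pi_1(U\setminus q^{-1}(R))$ up to finite index, and $q_*$ of this group is of finite index in $\pi_1(X\setminus R)$. For part B the extra hypothesis $q(H)\cap R=\emptyset$ means that a normal disk at $H$ maps into $X\setminus R$, so $q_*(\lambda)=1$ there; the homomorphism then factors through $\pi_1(H)\to\pi_1(X\setminus R)$, whose image is of finite index by the same count. Finally, for part C with $\dim X=\dim U=2$, I would identify the index with the degree $d$ of the connected étale cover of $X\setminus R$ through which $U\setminus q^{-1}(R)$ factors. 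Here $\mathrm{Div}(h)=c_1(h_*\mathcal{O}_H)$ is the push-forward divisor class on the surface $X$; since $q$ is a local isomorphism it preserves local intersection multiplicities, so the projection formula compares the self-intersection $(\mathrm{Div}(h))^2$ computed on $X$ with $H^2$ computed on $U$, yielding the factor $d$ and hence the bound $d\le (\mathrm{Div}(h))^2/H^2$.
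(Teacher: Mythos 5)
The paper does not actually prove Theorem \ref{nori}; it quotes it from Nori's paper \cite{nori} (cf.\ also \cite{fulton87}, \cite{lazarsfeld}), so the only meaningful comparison is with Nori's original argument — and against that, your sketch has a genuine gap at its center. You localize the problem to the punctured neighborhood $T\setminus H$, but the theorem asserts finiteness of the index of a subgroup of the \emph{global} group $\pi_1(X\setminus R)$, and no statement about subgroups of $\pi_1(T\setminus H)$ can deliver that: finite index is not inherited by images under the homomorphism $q_*\colon \pi_1(T\setminus H)\to\pi_1(X\setminus R)$, which need not be surjective. Worse, your ``key step'' is vacuous as formulated: $T\setminus H$ deformation retracts onto the circle bundle $S$, and $\pi_1(S)\to\pi_1(H)$ is surjective with the fiber class generating the vertical direction, so the subgroup generated by the meridian $\lambda$ together with lifts of loops from $H$ is automatically \emph{all} of $\pi_1(T\setminus H)$ — ampleness plays no role, and nothing is gained. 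The real engine of Nori's proof is global: one takes the (a priori infinite) covering $p\colon \widetilde{X}\to X\setminus R$ classified by $G$, lifts $U\setminus q^{-1}(R)$ and in particular a neighborhood of $H$ to $\widetilde{X}$, extends to a normal completion (Grauert–Remmert, as in the paper's Section \ref{branchedcoverssect}), and then uses the positivity of the lifted compact divisor — pseudoconcavity and finiteness of the field of meromorphic functions in general, intersection theory on the completed cover when $\dim =2$ — to force the number of sheets to be finite. Your phrase ``Granting this, part A follows'' grants exactly the theorem.

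Two further points. First, $H$ is only assumed to be a compact analytic subspace cut out by a locally principal ideal: it may be singular or non-reduced, so $\partial T$ is not a circle bundle over $H$ and the exact sequence $\pi_1(S^1)\to\pi_1(T\setminus H)\to\pi_1(H)\to 1$ you use requires justification. Second, in part C the projection formula plus ``$q$ preserves local intersection multiplicities'' does not by itself ``yield the factor $d$'': one must work on the normal projective completion of the $d$-sheeted cover, identify $p^*(\mathrm{Div}(h))$ with the sum of the deck translates $\sigma(\widetilde{H})$ of the lift, and invoke a Hodge-index (Lorentzian Cauchy–Schwarz) estimate $\sigma(\widetilde{H})\cdot\widetilde{H}\ge \sqrt{\widetilde{H}^2\cdot\sigma(\widetilde H)^2}=H^2$ for classes of positive self-intersection, giving $(\mathrm{Div}(h))^2=\sum_{\sigma}\sigma(\widetilde{H})\cdot\widetilde{H}\ge d\,H^2$; some such mechanism, or Nori's own counting in \cite{nori}, 3.16, is indispensable. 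The one piece of your sketch that does match the standard route is the deduction of B from A: shrinking $T$ so that $q(T)\cap R=\emptyset$, the map $\pi_1(T\setminus H)\to\pi_1(X\setminus R)$ factors through $\pi_1(T)=\pi_1(H)$, and A applied with $U$ replaced by $T$ gives the claim.
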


If $q$ is embedding and $H$ is reduced, this becomes Zariski-Lefschetz hyperplane
section theorem  (cf. \cite{zarhyperplane}, \cite{hamm}). One has to
note a subtlety in the finiteness of index in A and the index bound in
C (cf. \cite{hain09}). Typically $\pi_1(q(H))$ is much bigger than
$\pi_1(H)$:  for example if $H \rightarrow q(H)$ is
normalization and $H$ is rational and $q(H)$ nodal then $\pi_1(q(H))$ is free 
group with the rank equal to the number of nodes. Nevertheless the following is
still open:


\begin{problem}\label{noriproblem} (M.Nori) Let $D$ be an effective divisor of a surface $X$ and $D^2>0$.
Let $N$ be a normal subgroup of $\pi_1(X)$ generated by the images of
the fundamental groups of the normalizations of all irreducible
components of $D$. Is the index of $N$ in $\pi_1(X)$ finite? 
In particular, can a surface with infinite fundamental  group contain
a rational curve with positive self-intersection?
\end{problem}


One of the main consequence of theorem \ref{nori} is the following:
\begin{corollary}\label{norinodal} Let $D$ and $E$ be curves on smooth projective surface
 intersecting transversally and such that $D$ has nodes as the only singularities. Assume
  that for each irreducible component $C$ of $D$ one has $C^2>2r(C)$.
Then $N=Ker (\pi_1(X\setminus (D\cup E))\rightarrow \pi_1(X\setminus E))$
is a finitely generated abelian group and the centralizer of $N$ has a
finite index in $\pi_1(X\setminus (D\cup E))$.
\end{corollary}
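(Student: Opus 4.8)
\emph{The plan} is to identify $N$ with the normal closure of the meridians of the components of $D$ and then to prove two things about it: that it is abelian, and that the conjugation action of $\pi_1(X\setminus E)$ on it has finite image. Since $D$ and $E$ meet transversally they share no irreducible component, so the homomorphism $\Gamma:=\pi_1(X\setminus(D\cup E))\to\Gamma_0:=\pi_1(X\setminus E)$ is the one that ``fills in'' $D$, and its kernel $N$ is generated, as a normal subgroup of $\Gamma$, by the meridians $\gamma_1,\dots,\gamma_n$ of the irreducible components $C_1,\dots,C_n$ of $D$. To replace the nodal $C_i$ by smooth curves I would blow up the nodes of $D$; writing $\sigma\colon\widehat X\to X$ for this blow-up, the exceptional locus lies over $D$, so $\widehat X\setminus\sigma^{-1}(D\cup E)=X\setminus(D\cup E)$, the group $\Gamma$ is unchanged, and each proper transform $\widehat C_i$ is now smooth with $\gamma_i$ its meridian.

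The local geometry enters through the normal circle bundle. For smooth $\widehat C_i$ with tubular neighbourhood $T_i$, the fibre class $\gamma_i$ is central in $\pi_1(\partial T_i)$; deleting from $\widehat C_i$ the finitely many points lying over the nodes of $D$ and over $\widehat C_i\cap\widehat E$ gives a punctured curve $\widehat C_i^{\circ}$ and a map $\pi_1(\partial T_i^{\circ})\to\Gamma$ under which $\gamma_i$ is centralized by the whole image. Since the circle bundle is trivial over any arc and $\widehat C_i^{\circ}$ is connected, all of its fibres commute with one another; thus conjugation of $\gamma_i$ by classes coming from $\widehat C_i^{\circ}$, i.e. by directions \emph{parallel} to $C_i$, produces only commuting meridians. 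What remains uncontrolled at this stage is conjugation by \emph{transverse} directions, and this is precisely what Theorem \ref{nori} will bound.

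Because $C_i^2>2r(C_i)\ge 0$, the restriction $\mathcal{O}_X(C_i)\vert_{C_i}$ is ample, and the strict inequality $C_i^2>2r(C_i)$ is exactly what keeps the degree of the relevant line bundle on the normalization of $C_i$ positive after the two preimages of each of the $r(C_i)$ nodes are removed. By Theorem \ref{nori}, this positivity makes the image of $\pi_1(\widehat C_i^{\circ})$ in $\Gamma_0$ of finite index, with the index bounded as in part C. I would use this to show that $\Gamma_0$ permutes the conjugates of $\gamma_i$ through a finite quotient: the monodromy action on the meridian class factors through the finite-index image just produced, so $\gamma_i$ has only finitely many conjugates in $\Gamma$. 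Consequently $N$ is generated by finitely many elements, and the conjugation action of $\Gamma_0$ on $N$ has finite image, which gives at once that the centralizer $Z_\Gamma(N)$ has finite index.

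The step I expect to be the main obstacle is the commutativity of $N$, i.e. the assertion that all of these conjugate meridians commute. Two meridians attached to distinct components meeting at a node commute by the local normal-crossing model $(\CC^{*})^2$, and the fibres belonging to a single $\widehat C_i$ commute by the preceding paragraph; the difficulty is to propagate this to \emph{arbitrary} conjugates, because a priori the commutativity of $N$ and the finiteness of the monodromy action are interdependent. The resolution is to run the two together: the finite monodromy above permutes a finite family of fibre classes that already commute among themselves, and the central-extension structure of $\pi_1(\partial T_i)$ shows that a finite permutation of commuting central-type elements cannot create a nontrivial commutator. Granting this, $N$ is generated by finitely many pairwise-commuting elements, hence is finitely generated abelian, and $Z_\Gamma(N)$ has finite index as already noted, completing the proof; the sharp bound $C^2>2r(C)$ is used exactly to guarantee that the monodromy stays finite so that this disentangling succeeds.
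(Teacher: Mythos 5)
There are two genuine gaps, and the first is exactly the point the hypothesis $C^2>2r(C)$ is calibrated to. Your blow-up reduction destroys the positivity you need: if $\sigma\colon\widehat X\to X$ blows up the $r(C)$ nodes of a component $C$, then $\sigma^*C=\widehat C+2E_p$ at each node (a point of multiplicity $2$), so $\widehat C^2=C^2-4r(C)$ (and less still if you also blow up the points of $C\cap E$ or $C\cap C'$, as ``blow up the nodes of $D$'' suggests). Thus $\deg\mathcal{O}(\widehat C)\vert_{\widehat C}$ can be negative under $C^2>2r(C)$, and the ampleness hypothesis of Theorem \ref{nori} fails; moreover $\sigma$ is not locally invertible along the exceptional curves, which meet $\widehat C$, so you cannot even feed a neighborhood of $\widehat C$ in $\widehat X$ into Theorem \ref{nori} as the pair $(U,q)$. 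The degree $C^2-2r(C)$ you invoke is the degree of the normal bundle of the \emph{immersion} $\nu\colon\tilde C\to X$ of the normalization, namely $\nu^*\mathcal{O}_X(C)(-\Delta)$ with $\Delta$ the two preimages of each node; to use it one must take $U$ to be a neighborhood of the immersed curve, with $q\colon U\to X$ locally invertible and separating the two branches at each node, and $H=\tilde C\subset U$. This is precisely why Theorem \ref{nori} is stated for locally invertible $q$ rather than for embeddings, and it is how the deduction in \cite{nori} proceeds. Relatedly, the finite-index input must be the image of $\pi_1(U\setminus q^{-1}(D\cup E))$ in $\Gamma=\pi_1(X\setminus(D\cup E))$ provided by part A --- a group which visibly centralizes the meridian, since the punctured tube retracts to a trivial circle bundle over the punctured normalization --- whereas your ``image of $\pi_1(\widehat C_i^{\circ})$ in $\Gamma_0$'' is the wrong arrow on both counts: there is no natural map from the curve group itself into $\Gamma$, and to bound the conjugates of $\gamma_i$ in $\Gamma$ you need a finite-index subgroup of $\Gamma$, not of $\pi_1(X\setminus E)$. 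The dichotomy you are blurring is recorded in the paper right after the corollary: the embedded positivity $C^2>4r(C)$ yields centrality, while for $4r(C)\ge C^2>2r(C)$ only the finite-index centralizer survives (see \cite{nori}, p.~324).

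The second gap is the abelianness of $N$, which you flag as the main obstacle but then dispose of with a false principle: it is not true that ``a finite permutation of commuting central-type elements cannot create a nontrivial commutator.'' Knowing that each conjugate meridian has a finite-index centralizer does not force two of them to commute --- in any finite nonabelian group generated by a conjugacy class (the transpositions in $S_3$, say) every element has finite-index centralizer, yet the normal closure of one of them is the whole nonabelian group. So the assertion that $g\gamma_ig^{-1}$ and $h\gamma_jh^{-1}$ commute for arbitrary $g,h\in\Gamma$ requires a further geometric argument, not bookkeeping with finitely many conjugates; in \cite{nori} this is a substantive step (and even the finiteness of index in part A carries a subtlety, addressed in \cite{hain09}, as the paper notes). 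What your argument does deliver, once the first paragraph's repair is made, is that $N$ is generated by the finitely many conjugates of the meridians and that $Z_\Gamma(N)$, being the intersection of finitely many finite-index centralizers, has finite index; but ``finitely generated \emph{abelian}'' remains unproven in your write-up. For the record, the paper offers no proof of this corollary --- it is quoted from \cite{nori} --- and it is exactly at these two points (the immersion-neighborhood use of weak Lefschetz, and the commutation of conjugate meridians) that your proposal diverges from an argument that closes.
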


This immediately implies that the fundamental group of a nodal curve
in $\PP^2$ is abelian (indeed, for irreducible curve of degree $d$ the
maximal number of nodes $r(C)=\frac{(d-1)(d-2)}{2}$ satisfies $2r(C)<d^2$).
Moreover, for an irreducible plane curve with $r(C)$ nodes and
$\kappa(C)$ cusps (with local equation $u^2=v^3$) one obtains that $\pi(\PP^2 \setminus C)$ is abelian
if $C^2>6\kappa(C)+2r(C)$ (apply \ref{norinodal} to resolution of
cusps only). On non-simply connected surfaces, the kernel
$\pi_1(X\setminus C) \rightarrow \pi_1(X)$ belongs to the center if
$C^2>4r(C)$ (though for $4r(C) \ge C^2 >2r(C)$ the centralizer of this
kernel still has a finite index, see \cite{nori} p. 324). 

A result pointing out toward a positive answer to the Problem
\ref{noriproblem} appears in \cite{lasell} and can be stated as follows. Let
$X$ be a smooth projective variety and $Y$ be a subvariety such that 
$\pi_1(Y) \rightarrow \pi_1(X)$ is surjective. Let $f:
Z\rightarrow Y$ be  dominant morphism where all irreducible
components of $Z$ are normal. Let $N$ be the normal subgroup of
$\pi_1(X)$ generated by the images of irreducible components of
$Z$. Then for any $n$, $\pi_1(X)/N$ has only finitely many 
$n$-dimensional complex representations, all of which are semi-simple 
 (an obvious attribute a finite group).

Applications of Nori's results include \cite{shimada97},
\cite{shimada98}.
Paper \cite{shimada03} studies further exact sequence of the fiber spaces.
Papers \cite{tokunaga00}, \cite{tokunaga03}  give conditions in opposite direction than
the one considered by Nori, guaranteeing that the fundamental group of the
complement is NON abelian. An important outcome of Nori's Weak
Lefschetz theorem is that it provides an instance for the finiteness of the index of the image of the
fundamental groups for compositions $H \rightarrow C \rightarrow X$
where as above $H,X$ are smooth and $H \rightarrow C$ is dominant.
This more general context was considered in \cite{hain09} in the
framework of the study of the representations of the fundamental groups of 
varieties dominating divisors in the moduli spaces of (pointed) curves 
(with level structure), under the heading of ``non-abelian strictness theorems''.

\section{Alexander Invariants}\label{alexinvariants}

\subsection{Alexander polynomials}\label{alexandercurves}

Alexander polynomial of knots and links was introduced by James W. Alexander in
1928 (cf. \cite{alexander}). In response to a question by D.Mumford
(cf. \cite{mumfordalex}),
who noticed its relation to a construction used by O.Zariski, 
the Alexander polynomials were put in \cite{mealex} in the context of complements to plane algebraic
curves. This extension blends the algebraic geometry and the methods introduced by Fox
(cf. \cite{fox}) and Milnor (cf. \cite{milnoralex}) for the study of
knots.  Various generalizations, in which (a zero set of) polynomial was replaced by
a subvariety of a torus and involving
germs of singularities (cf. \cite{isolatednonnormal}), extensions to higher dimensions
(cf. \cite{meannals}) and to curves in complex surfaces (cf. \cite{degt2001}), were considered as well. 
A twisted versions (cf. \cite{cogoflorens}, \cite{maxim06}, \cite{me2009}) were
studied more recently. Below we shall describe the Alexander
polynomials in the context of divisors on simply connected surfaces 
and refer to \cite{medevelopment} for the history of the subject and
further references.

Let $X$ be a smooth simply connected projective surface and let
$D$ be a divisor on $X$ with irreducible components $D_i$. Let $\{
[D_i]\}=H^2(D,\ZZ)=\oplus_i H^2(D_i,\ZZ)$ denote a free abelian group generated by the
cohomology classes corresponding to the irreducible components of $D$. For $\alpha \in H_2(X,\ZZ)$, we put $D_{\alpha}=\sum_i
(\alpha,[D_i])[D_i] \in \{[D_i]\}$, where $[D_i] \in H_2(X,\ZZ)$ is
the fundamental class of the component $D_i$ and denote by $\{D_{\alpha}\}$ the
subgroup of $\{[D_i]\}$ generated by the classes $D_{\alpha}, \alpha \in H_2(X,\ZZ)$.
$\{D_{\alpha}\}$ is the image of the homomorphism $H_2(X,\ZZ)
\rightarrow H^2(D,\ZZ)$ obtained using 
the excision and duality
isomorphisms giving $H_2(X,X-D,\ZZ)=H_2(T(D),\partial T(D),\ZZ)=H^2(D,\ZZ)$ where $T(D)$ is
a tubular neighborhood of $D$ in $X$ and $\partial T(D)$ is its
boundary.  From the exact
sequence:
\begin{equation}\label{homologysequence}
     H_2(X,\ZZ) \rightarrow H^2(D,\ZZ) \rightarrow
     H_1(X\setminus D,\ZZ) \rightarrow H_1(X,\ZZ)=0
\end{equation} 
we deduce that 
\begin{equation}\label{homologycomplement}
    \{[D_i]\}/\{D_{\alpha}\}=H_1(X\setminus D,\ZZ).
\end{equation}
For example for an irreducible projective (resp. affine) plane curve $D$ of
degree $d$ we obtain $H_1(\PP^2\setminus D,\ZZ)=\ZZ/d\ZZ$
(resp. $H_1(\CC^2\setminus D)=\ZZ$). 

Alexander polynomial is an invariant of the complement to a reduced divisor $D$
and  a surjection $\phi:\pi_1(X\setminus D) \rightarrow C$ where $C$
is a cyclic group.   We state the definition for a finite CW complex 
$Y$ endowed with a surjection $\phi: \pi_1(Y)\rightarrow C$ such that 
 $H_1(Y_{\phi},\QQ)$ is finite dimensional where $Y_{\phi}$ is the covering space
 corresponding  to the subgroup $Ker(\phi) \subset
\pi_1(Y)$ (cf.\cite{hatcher} sect.1.3)\footnote{In this case we
  call $Y_{\phi}$ 1-finite}. 

If  $C$ is finite then the finiteness of the dimension of $H_1(Y_{\phi},\QQ)$ is automatic \footnote{An example of infinite cyclic
covers which is infinite in dimension 1 is given by the complement to
a set $[3]$ containing 3
points in $\PP^1$. Let $(a,b)$ be generators of the free group $\pi_1(\PP^1\setminus
[3])$ and $\phi$ is the quotient of the normal subgroup generated by
$b$. Then $\PP^1\setminus [3]$ is homotopy equivalent to a wedge of
two circles and $(\PP^1\setminus [3])_{\phi}$ 
can be viewed as a real line with the circle attached at each integer
point of this line with the covering group $\ZZ$ acting via translations. In
particular $H_1(\PP^1\setminus [3])_{\phi},\ZZ)$ is a free abelian group
with countably many generators.}. If $H_1(Y,\ZZ)$ is
infinite cyclic then $H_1(Y_{\phi},\QQ)$ also is finite-dimensional as
follows for example from ({\ref{multt-1}}) below. 

For the covering map $Y_{\phi}\rightarrow Y$, we have the exact compactly
supported homology sequence corresponding to the sequence of chain complexes 
\begin{equation}0 \rightarrow C_*(Y_{\phi},\QQ) \buildrel t-1 \over \rightarrow C_*(Y_{\phi},\QQ) \rightarrow C_*(Y,\QQ) \rightarrow 0
\end{equation} 
Here the first two terms are viewed as the modules
over the group ring $\QQ[\ZZ]=\QQ[t,t^{-1}]$, where $t$ denotes
preferred generator of $C=\ZZ$ in multiplicative notations, and the
left map being multiplication by $t-1$. Hence
\begin{equation}\label{multt-1}
H_2(Y,\QQ) \rightarrow H_1(Y_{\phi},\QQ) \buildrel t-1\over \rightarrow H_1(Y_{\phi},\QQ) \rightarrow H_1(Y,\QQ) 
\end{equation}
$$\rightarrow 
H_0(Y_{\phi},\QQ) \buildrel (t-1) \over \rightarrow H_0(Y_{\phi},\QQ)
$$
Consider 
the cyclic decomposition 
of $H_1(Y_{\phi},\QQ)$, viewed as a module over
$\QQ[t,t^{-1}]$,
\begin{equation}\label{cyclic}
   H_1(Y_{\phi},\QQ)=\oplus \QQ[t,t^{-1}]^{a_0}\oplus_p \QQ[t,t^{-1}]/(p(t)) 
\end{equation}
where the summation is over a finite number of monic
polynomials $p$.

One of immediate consequences is that if $rk H_1(Y,\QQ)=1$,  then the multiplication by $t-1$ in the
top row in (\ref{multt-1}) is surjective (since clearly the multiplication
by $t-1$ is trivial on $H_0$) and hence in (\ref{cyclic}) 
  $a_0=0$ \footnote{the condition $a_0=0$ is equivalent to finite dimensionality of $H_1(Y_{\phi},\QQ)$ 
 over $\QQ$} 
Moreover, $(t-1)^{\alpha}, \alpha \in \NN$ is not among the polynomials $p(t)$.
\begin{definition} Let $Y$ be a CW-complex as above.

If $a_0=0$ in the decomposition
(\ref{cyclic}) one defines the Alexander polynomial\index{Alexander polynomial} $\Delta(t)$ of
$(Y,\phi)$ as the order of the $\QQ[t,t^{-1}]$-module $H_1(Y_{\phi},\QQ)$ i.e. as the product:
\begin{equation}\label{order}
   \Delta(t)=\prod p(t) 
\end{equation}
In the case when $X$ is a smooth projective surface and $D$ is a reduced
divisor, we call $\Delta(t)$, the {\it global} Alexander polynomial of
$X \setminus D$ (and the surjection $\phi$ of its fundamental group).
\end{definition}
$\Delta(t)$ has integer coefficients, is well defined up to $\pm
t^i,i \in \ZZ$ and, it follows from (\ref{multt-1}) that, $rk H_1(Y,\QQ)=1$ implies $\Delta(1)\ne 0$. 
If the target of $\phi$ is a finite cyclic group then, since
$\QQ[\ZZ_n]=\QQ[t,t^{-1}]/(t^n-1)$, instead of (\ref{cyclic}) one has  
\begin{equation}
      H_1(Y_{\phi})=\oplus [\QQ[t,t^{-1}]/(t^{{\rm ord} C}-1)]^{a_o}\oplus \QQ[t,t^{-1}]/p(t)
\end{equation}
and the Alexander polynomial defined to be the order (\ref{order}) of
this $\QQ[t,t^{-1}]$-module. 

This construction, when applied to the intersection of $D$ with a small
sphere about a singular point $P$ of $D$ and when $\phi$ is given by
evaluation of the
linking number in this sphere with $D$, yields the {\it local Alexander
polynomial}. It is not hard to show the following (cf. \cite{le}):
\begin{proposition} The local Alexander polynomial coincides with the
  characteristic 
polynomial of the local monodromy of the singularity of $D$ at $P$
\end{proposition}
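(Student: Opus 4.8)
The plan is to realize the infinite cyclic cover attached to $\phi$ as the Milnor fibre of the singularity and to identify the deck transformation with the geometric monodromy. Fix local coordinates so that $D$ has a reduced holomorphic equation $f$ near $P$, and let $L = D \cap S^3_\epsilon$ be the link of the singularity in a small sphere $S^3_\epsilon$ centered at $P$. By Milnor's fibration theorem the argument map
$$ \psi = \frac{f}{|f|} : S^3_\epsilon \setminus L \longrightarrow S^1 $$
is a locally trivial fibration whose fibre $F$, the Milnor fibre, is homotopy equivalent to a finite wedge of circles (the singularity of a reduced plane curve being isolated); in particular $H_1(F,\QQ)$ is finite dimensional. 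The geometric monodromy $h: F \to F$ induces $h_*$ on $H_1(F,\QQ)$, and the characteristic polynomial of the local monodromy is by definition $\det(t\cdot \mathrm{id} - h_*)$.

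The first point to verify is that the homomorphism $\phi: \pi_1(S^3_\epsilon \setminus L) \to \ZZ$ defined by the linking number with $D$ agrees with $\psi_*: \pi_1(S^3_\epsilon \setminus L) \to \pi_1(S^1) = \ZZ$. For a loop $\gamma$ the linking number $\mathrm{lk}(\gamma, D)$ equals the degree of the composition $\psi \circ \gamma : S^1 \to S^1$, which is exactly $\psi_*[\gamma]$; this is the standard interpretation of the linking number as the winding number of $f$ about the origin along $\gamma$. Consequently the covering $Y_\phi \to S^3_\epsilon \setminus L$ associated with $\ker\phi$ is the pullback of the universal cover $\RR \to S^1$ along $\psi$.

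Because $\psi$ is a fibration with fibre $F$ and the base $\RR$ of the pullback is contractible, the total space $Y_\phi$ is homotopy equivalent to $F$, and under this equivalence the preferred generator $t$ of the deck group $\ZZ$ acts as the geometric monodromy $h$. Passing to rational homology yields an isomorphism of $\QQ[t,t^{-1}]$-modules $H_1(Y_\phi,\QQ) \cong H_1(F,\QQ)$ on which $t$ acts by $h_*$. Finite dimensionality of $H_1(F,\QQ)$ forces $a_0 = 0$ in the decomposition (\ref{cyclic}), so the local Alexander polynomial $\Delta(t)$ is defined as the order of this module.

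It remains to compute that order. Viewing $V = H_1(F,\QQ)$ as a torsion $\QQ[t,t^{-1}]$-module determined by the single invertible operator $h_*$, the structure theorem over the principal ideal domain $\QQ[t,t^{-1}]$ gives $V \cong \bigoplus_i \QQ[t,t^{-1}]/(p_i(t))$, whose order $\prod_i p_i(t)$ equals $\det(t\cdot \mathrm{id} - h_*)$, since each cyclic summand contributes the characteristic polynomial of the corresponding companion block. This identifies $\Delta(t)$ with the characteristic polynomial of $h_*$, proving the claim. I expect the main obstacle to be the content of the second paragraph: matching the linking-number homomorphism with $\psi_*$ and recognizing the deck transformation as the geometric monodromy. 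This is precisely where Milnor's fibration theorem is indispensable, while the concluding passage to the order of the module is purely formal.
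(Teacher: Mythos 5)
Your proof is correct, and it is essentially the argument the paper has in mind: the paper gives no proof of its own, delegating the statement to L\^e's paper, and the standard proof behind that citation is exactly your chain (Milnor fibration, identification of the linking-number homomorphism with $\psi_*$, infinite cyclic cover as pullback over $\RR$ so that the deck transformation is the geometric monodromy, then the order of the torsion $\QQ[t,t^{-1}]$-module equals $\det(t\cdot\mathrm{id}-h_*)$). The only point you leave tacit is the orientation convention matching $t$ with $h_*$ rather than $h_*^{-1}$, which is harmless since the Alexander polynomial is only defined up to units $\pm t^i$.
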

 
\subsection{A Divisibility Theorem\index{Divisibility Theorem}}\label{divisibility11} This is the
central result on the Alexander polynomials allowing to obtain
information about $\Delta(t)$ in terms of geometry of $D$. In many
cases it leads to its determination or makes  possibilities for $\Delta(t)$
rather limited. The case of curves in
$\PP^2$ appears in \cite{mealex}.
\begin{theorem}\label{theoremdivisibility} Let $D=D_1\bigcup
  D_2$ be a divisor on $X$ such that $D_1$ is ample.
Let $\phi_{X\setminus D}: \pi_1(X\setminus D) \rightarrow 
H_1(X\setminus D,\ZZ)\rightarrow C$ be a
surjection onto a cyclic group $C$ (either infinite or finite) and 
  $T(D_1)$ denotes a small regular neighborhood of the divisor $D_1$. 
Assume also that $\phi$ maps the meridian \footnote{i.e. a loop consisting of a path connecting the
  base point with a point in vicinity of the irreducible component of
  $D$, the oriented boundary of a small disk in $X$ transversal to
  this component of $D$ at its
  smooth point and not intersecting the other components of $D$, with the same path used to return back
  to the base point; orientation of the small disk must be
positive i.e. such that its orientation will be compatible with the
complex orientations of smooth locus of divisor and the ambient
manifold. As an element of the fundamental group, only the conjugacy
class of a meridian is well defined.} of each irreducible component
of $D_1$
to the generator of $C$ corresponding to the variable $t$ of the Alexander polynomial. Then

1. The cyclic cover $(X\setminus D)_{\phi}$ is 1-finite 
and so is $(T(D_1)\setminus D\cap T(D_1))_{\phi_T}$ 
where $\phi_T$ is the composition $\pi_1((T(D_1)\setminus D\cap T(D_1)))
\rightarrow 
H_1((T(D_1)\setminus D\cap T(D_1)))\rightarrow
H_1(X\setminus D) \rightarrow C$ of the map induced be embedding and
the surjection $\phi_{X\setminus D}$. 

2.Let  $\Delta_{\phi_{X\setminus D}}, \Delta_{\phi_T}$
be the Alexander polynomials of $X\setminus D$ and $T(D_1)\setminus
D\cap T(D_1)$ corresponding to surjections $\phi$ and $\phi_T$ respectively.
 One has the following
divisibility:
\begin{equation} \label{divisibility}
  \Delta_{\phi_{X\setminus D}}(t) \vert \Delta_{\phi_T}
\end{equation} 

3. Let $\{p_i\}$ be the set consisting of singular points of $D_1$ and the
points $D_1\cap D_2$. For each $p_i$ let $B_{p_i}$ denotes a small ball in $X$
centered at this point. Let $\Delta_{p_i}$ denotes the Alexander polynomial
of $B_{p_i}\setminus D \cap B_{p_i}$ relative to the map $\phi_i:
H_1(B_{p_i}\setminus D\cap B_{p_i}) \rightarrow C$ induced by
embedding $B_{p_i}\setminus D \cap B_{p_i}\rightarrow X\setminus D$.
Then
\begin{equation}
\Delta_{\phi_T}=(t-1)^{\alpha}\prod \Delta_{p_i} \ \ \
\alpha \in \ZZ.
\end{equation}

In particular, the roots of the Alexander polynomials
$\Delta_{\phi_{X\setminus D}}$ and $\Delta_{\phi_T}$ are roots of unity.
\end{theorem}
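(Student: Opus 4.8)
The plan is to reduce the global statement to a local computation on a neighborhood of the ample divisor $D_1$, using ampleness to show that all of the interesting torsion in the Alexander module is already visible near $D_1$. Throughout I write $M = X\setminus D$, let $A = T(D_1)\setminus (D\cap T(D_1))$ be the deleted neighborhood of $D_1$, and let $W = X\setminus (D\cup \mathrm{int}\,T(D_1))$ be the complementary region, so that $M = A\cup W$ with $A\cap W = \partial T(D_1)\setminus D$. Since $\phi_T$ is by definition the restriction of $\phi_{X\setminus D}$, all the associated cyclic covers are restrictions of the single cover $M_\phi$; I abbreviate the induced cover of a subset $S\subseteq M$ by $S_\phi$ (so $A_\phi$ is the cover $(T(D_1)\setminus D\cap T(D_1))_{\phi_T}$ of the statement), and all inclusion-induced maps are maps of $\QQ[t,t^{-1}]$-modules.

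First I would establish part 3, which is local and simultaneously yields the $1$-finiteness of the neighborhood cover asserted in part 1. The neighborhood $T(D_1)$ retracts onto $D_1$, and I would stratify $D_1$ into its smooth locus disjoint from $D_2$, its singular points, and the points of $D_1\cap D_2$; write $\{p_i\}$ for the latter two finite sets. Over the open smooth stratum $D$ coincides with the smooth divisor $D_1$, and there $A$ restricts to the complement of the zero section in a disk-bundle model of the normal bundle, whose cyclic cover contributes only powers of $(t-1)$. Over each ball $B_{p_i}$ the contribution is, by definition, the local Alexander polynomial $\Delta_{p_i}$. A Mayer--Vietoris argument assembling these pieces over $D_1$ gives the product formula $\Delta_{\phi_T} = (t-1)^{\alpha}\prod_i \Delta_{p_i}$. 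Since each $\Delta_{p_i}$ has finite degree---being, by the Proposition identifying it with the local monodromy, the characteristic polynomial of the monodromy on the finite-dimensional homology of a Milnor fiber---and $(t-1)^{\alpha}$ is a torsion module, $H_1(A_\phi;\QQ)$ is a finitely generated torsion $\QQ[t,t^{-1}]$-module; equivalently $a_0=0$ in (\ref{cyclic}), so $A_\phi$ is $1$-finite.

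The engine of parts 1 and 2 is the claim that the inclusion $A\hookrightarrow M$ induces, up to $(t-1)$-torsion, a surjection $H_1(A_\phi;\QQ)\twoheadrightarrow H_1(M_\phi;\QQ)$ of $\QQ[t,t^{-1}]$-modules. Granting this, the target is a quotient of a finitely generated torsion module, hence itself finitely generated and torsion: this gives the $1$-finiteness of $M_\phi$ in part 1, and since the order of a quotient torsion module divides the order of the module over the PID $\QQ[t,t^{-1}]$, it gives $\Delta_{\phi_{X\setminus D}}\mid \Delta_{\phi_T}$ in part 2 (the absorbed $(t-1)$-torsion being harmless, as $\Delta_{\phi_T}$ already carries the flexible factor $(t-1)^{\alpha}$). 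To prove the surjectivity I would run Mayer--Vietoris for $M = A\cup W$ on the covers,
\[
H_1(A_\phi)\oplus H_1(W_\phi)\longrightarrow H_1(M_\phi)\longrightarrow \widetilde H_0((A\cap W)_\phi)\longrightarrow\cdots,
\]
and show that the image of $H_1(W_\phi)$ together with the connecting map contributes nothing beyond $(t-1)$-torsion. This is exactly where ampleness of $D_1$ enters: $X\setminus D_1$ is a smooth affine surface, hence homotopy equivalent to a CW complex of real dimension $\le 2$, and the Nori/Lefschetz results of Theorem \ref{nori} force the fundamental group of the complementary region $W$ to be controlled by that of the boundary $A\cap W$, so that $W$ introduces no new nontrivial eigenvalues into the Alexander module.

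I expect this surjectivity step---controlling the contribution of the region $W$ away from $D_1$ through ampleness---to be the main obstacle, since the modules involved are infinite-dimensional a priori and one must track the full $\QQ[t,t^{-1}]$-module structure rather than mere dimensions, and must verify in particular that $D_2$ away from $D_1$ produces only $(t-1)$-type torsion. Finally, the closing assertion is then immediate: by part 3 combined with the Proposition identifying each $\Delta_{p_i}$ with the characteristic polynomial of a local monodromy, the Monodromy Theorem (quasi-unipotence of the local monodromy) shows that every root of $\Delta_{\phi_T}$ is a root of unity, and the divisibility of part 2 propagates this to $\Delta_{\phi_{X\setminus D}}$.
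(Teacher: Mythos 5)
Your part 3, and the derivation of $1$-finiteness of the tube cover from it, follow the paper's own argument essentially verbatim: the paper also isotopes $(T(D_1)\setminus D_1)\setminus\bigcup_i(B_{p_i}\setminus D\cap B_{p_i})$ to a trivial punctured-disk fibration over $D_1\setminus\{p_i\}$, notes that its Alexander polynomial is a power of $t-1$ because all meridians of $D_1$ map to $t$, and assembles by Mayer--Vietoris. The genuine gap is in parts 1 and 2, at exactly the step you flag as the main obstacle. You propose to prove that $H_1(A_\phi)\to H_1(M_\phi)$ is surjective up to $(t-1)$-torsion via Mayer--Vietoris for $M=A\cup W$, killing the contribution of $W$ by appealing to Theorem \ref{nori} and to affineness of $X\setminus D_1$. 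Neither tool delivers this. Theorem \ref{nori} asserts \emph{finiteness of the index} of the images of certain fundamental groups inside $\pi_1(X\setminus R)$; it does not say that $\pi_1(W)$ is controlled by $\pi_1(A\cap W)$, and finite index would in any case be useless for producing a $\QQ[t,t^{-1}]$-module surjection. The homotopy-dimension bound for the affine surface $X\setminus D_1$ says nothing about the image of $H_1(W_\phi)$, nor about the connecting map to $\widetilde H_0((A\cap W)_\phi)$, which your sequence also leaves uncontrolled. In particular, you give no reason why meridians of components of $D_2$ lying entirely in $W$ --- whose $\phi$-values are completely unconstrained by the hypotheses --- should produce only $(t-1)$-type torsion.

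The paper uses ampleness quite differently, and more directly: for $n\gg 0$ it chooses a \emph{smooth} curve $\tilde D_1\in |nD_1|$ lying inside $T(D_1)$ and transversal to all components of $D$; the weak Lefschetz theorem of Hamm/Nori then gives that $\pi_1(\tilde D_1\setminus \tilde D_1\cap D)\to \pi_1(X\setminus D)$ is surjective, and since this map factors through $\pi_1(T(D_1)\setminus D\cap T(D_1))$, one gets an honest surjection $\pi_1(A)\twoheadrightarrow \pi_1(M)$, hence $Ker\,\phi_T\twoheadrightarrow Ker\,\phi_{X\setminus D}$ and a genuine $\QQ[C]$-module surjection $H_1(A_\phi)\twoheadrightarrow H_1(M_\phi)$, with no $(t-1)$ caveat; $1$-finiteness of $(X\setminus D)_\phi$ and the divisibility (\ref{divisibility}) follow at once from order theory over the PID. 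Note also that your $(t-1)$ caveat is not actually harmless: the exponent $\alpha$ in part 3 is a fixed integer determined by the geometry, not an adjustable slack, so a surjection only modulo $(t-1)$-primary parts would yield divisibility only up to powers of $(t-1)$, which is strictly weaker than the stated equation (\ref{divisibility}). Your closing paragraph (quasi-unipotence of the local monodromies forcing roots of unity, propagated via part 2) is fine once the surjectivity step is repaired along the paper's lines.
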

\begin{proof} Ampleness of $D_1$ implies that for $n \gg 0$ there exist a
  smooth curve $\tilde D_1$ on $X$ linearly equivalent to $nD_1$ and belonging
  to $T(D_1)$. Moreover,  we can assume that $\tilde D_1$ is transversal to all
  components of $D$.

  Weak Lefschetz theorem (cf. \cite{hamm},\cite{nori})
 implies that the composition in the middle row of the following
 diagram is a surjection: 
  \begin{equation}\label{diag}
 \begin{matrix}  & & Ker \phi_T & \rightarrow &
   Ker \phi_{X\setminus D} \cr
     & & \downarrow & & \downarrow \cr
   \pi_1(\tilde D_1\setminus \tilde D_1\cap D) &
   \rightarrow &
  \pi_1(T(D_1)\setminus D \cup T(D_1)) & \rightarrow & \pi_1(X\setminus
  D) \cr
    & & \downarrow & & \downarrow \cr
   & & H_1(T(D_1)\setminus D \cap T(D_1),\ZZ)
 & & H_1(X\setminus D,\ZZ) \cr
 & &  & \searrow & \downarrow \cr
 & &  & & C \cr
  \end{matrix}
\end{equation} 
Therefore the right map in that row and hence also $Ker \phi_T \rightarrow Ker
\phi_{X\setminus D}$ both are surjective. The
condition that meridians are taken by $\phi_T$ to non-zero element of $C$
implies that the covering space $(T(D_1)\setminus D \cap T(D_1))_{\phi_T}$ is
1-finite (cf. \cite{manuel}) 
and surjectivity of the maps of the kernels implies that so is
$(X\setminus D)_{\phi}$. Since the map in the top row in (\ref{diag})
is surjective, $\QQ[C]$-module $H_1((X\setminus D)_{\phi},\QQ)$
is a quotient of $H_1((T(D_1)\setminus D \cap T(D_1))_{\phi_T},\QQ)$
hence the divisibility relation (\ref{divisibility}) follows.

Finally, taking $T(D_1)$ sufficiently thin, $(T(D_1) \setminus D_1)\setminus\bigcup_i
(B_{p_i} \setminus D\cap B_{p_i})$ can be assumed isotopic
to the trivial $C^{\infty}$-fibration $(T(D_1) \setminus D_1) \setminus \bigcup_i (B_{p_i} \setminus
 D \cap B_{p_i} ) \rightarrow  D_1\setminus \{p_i\}$,  
with the fiber being isotopic to a punctured 2-disk. Due to
assumption that meridians of all components are mapped to generator
corresponding to $t$, the Alexander polynomial of $(T(D_1) \setminus D_1) \setminus
\bigcup_i (B_{p_i} \setminus D\cap B_{p_i})$ is a power of $t-1$. The
decomposition 
\begin{equation}\label{union}
   T(D_1)\setminus D \cap T(D_1)=\left[ (T(D_1) \setminus D_1)\setminus \bigcup_i (B_{p_i} \setminus
  D\cap  B_{p_i})\right] \cup \bigcup_i (B_{p_i}\setminus D\cap B_{p_i})
\end{equation}
induces decomposition of the cover $(T(D_1)\setminus D)_{\phi_T}$
of $T(D_1)\setminus D$
corresponding to subgroup $Ker \phi_T$ of $\pi_1(T(D_1)\setminus D
\cap T(D_1))$
into a union of preimages of each subspace on the
right in (\ref{union}). Now the Mayer -Vietoris sequence implies the
part 3 of the Theorem  
(also the 1-finiteness of $T(D_1)\setminus T(D_1)\cap D)$).
\end{proof}

\begin{corollary}[\cite{mealex}]  Let $C$ be an irreducible curve in $\PP^2$ and $L$ be the
  line at infinity. Then $H_1(\PP^2\setminus C\bigcup L,\ZZ)=\ZZ$ and
  the Alexander polynomial of $\PP^2\setminus C \bigcup L$ 
with respect to the abelianization, divides the product of the
Alexander polynomials of links of all singularities of $C \bigcup L$.
It also divides the Alexander polynomial of the link at infinity
i.e. the Alexander polynomial of the complement  $S_{\infty} \setminus C \cap S_{\infty}$
where $S_{\infty}$ is the boundary of a small (in the metric on
$\PP^2$) regular neighborhood of $L \subset \PP^2$.
\end{corollary}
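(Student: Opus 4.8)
The plan is to derive all three assertions from Theorem \ref{theoremdivisibility}, applied to $D=C\cup L$ twice with two different choices of the ample component $D_1$; the only preliminary is the homology computation. Writing $d=\deg C$, the group $\{[D_i]\}$ is free of rank two on $[C],[L]$, while $H_2(\PP^2,\ZZ)$ is generated by the line class $h$ with $h\cdot[C]=d$, $h\cdot[L]=1$, so $\{D_\alpha\}$ is generated by $d[C]+[L]$. By (\ref{homologycomplement}), $H_1(\PP^2\setminus C\cup L,\ZZ)=(\ZZ[C]\oplus\ZZ[L])/(d[C]+[L])\cong\ZZ$, with generator the meridian $\mu_C$; under abelianization the meridian $\mu_L$ of $L$ goes to $-d\,\mu_C$, i.e. to $t^{-d}$ where $t$ is the image of $\mu_C$. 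This gives the first assertion and records the two meridian images I will need.

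For the divisibility by the local link polynomials I would invoke Theorem \ref{theoremdivisibility} with $D_1=C$, $D_2=L$. Every irreducible curve of positive degree in $\PP^2$ is ample, and under abelianization the meridian of the single component $C$ is the preferred generator $t$, so the hypotheses hold verbatim. The set $\{p_i\}$ of part 3 is then the singular points of $C$ together with $C\cap L$, which (as $L$ is smooth) are exactly the singular points of $C\cup L$, each $\Delta_{p_i}$ being the link Alexander polynomial there. Parts 2 and 3 give $\Delta_{\phi_{\PP^2\setminus D}}\mid (t-1)^\alpha\prod_i\Delta_{p_i}$; since $\mathrm{rk}\,H_1=1$ forces $\Delta_{\phi_{\PP^2\setminus D}}(1)\neq 0$, the polynomial $\Delta_{\phi_{\PP^2\setminus D}}$ is coprime to $t-1$ in the PID $\QQ[t,t^{-1}]$ and hence divides $\prod_i\Delta_{p_i}$, the second assertion.

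For the divisibility by the link at infinity I would apply the theorem again with $D_1=L$ (also ample) and $D_2=C$. Taking the neighborhood $T(L)$ conical and $C$ transverse to $L$, the space $T(L)\setminus(C\cup L)$ deformation retracts onto $S_\infty\setminus C\cap S_\infty$, where $S_\infty=\partial T(L)$, so the neighborhood polynomial $\Delta_{\phi_T}$ coincides with the Alexander polynomial of the link at infinity; parts 1 and 2 then give $\Delta_{\phi_{\PP^2\setminus D}}\mid\Delta_{\phi_T}$, the third assertion.

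The hard part is this last application: with $D_1=L$ the meridian $\mu_L$ maps to $t^{-d}$, not to $t$, so for $d>1$ the hypothesis of Theorem \ref{theoremdivisibility} fails as literally stated. What saves it is a close reading of the proof of that theorem: the divisibility of parts 1 and 2 uses only that the meridians of the components of $D_1$ hit a \emph{nonzero} element of $C$ --- this is what yields $1$-finiteness of the neighborhood cover through \cite{manuel} --- together with ampleness of $D_1$, which through weak Lefschetz makes $\mathrm{Ker}\,\phi_T\to\mathrm{Ker}\,\phi_{\PP^2\setminus D}$ surjective and so realizes $H_1$ of the global cover as a quotient of $H_1$ of the neighborhood cover. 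The stronger demand that meridians map to $t$ enters only in part 3. Since $\mu_L\mapsto t^{-d}\neq 1$ is nonzero, the weaker condition holds and Claim 3 follows; the residual point to verify carefully is the transversality/conical hypothesis underlying the retraction onto $S_\infty$.
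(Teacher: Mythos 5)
Your proposal is correct and follows essentially the same route as the paper, which likewise proves the corollary by applying Theorem \ref{theoremdivisibility} twice, once with $D_1=C$ (using parts 2 and 3) and once with $D_1=L$ (using part 2). Your additional care --- observing that with $D_1=L$ the meridian maps to $t^{-d}$ rather than the generator $t$ and checking that the proof of parts 1--2 only uses nontriviality of the meridian images, and removing the $(t-1)^{\alpha}$ factor via $\Delta(1)\neq 0$ --- correctly fills in details the paper leaves implicit, and the conical-structure retraction of $T(L)\setminus (C\cup L)$ onto $S_{\infty}\setminus C\cap S_{\infty}$ holds for sufficiently small $T(L)$ even without transversality, so your flagged residual point is harmless.
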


\begin{proof} It follows from (\ref{homologycomplement}) and Theorem
  \ref{theoremdivisibility} applied to $C$ and $L$ separately. More
  precisely, the part 2 (resp. part 2 and 3) of Theorem \ref{theoremdivisibility} show
 that the global Alexander polynomial divides the Alexander polynomial at infinity (resp. of the
 product of local Alexander polynomials).
\end{proof}

\begin{corollary}[\cite{degt2001}]Let $D$ be a divisor of a simply connected surface
  $X$. Let $S$ be a subset of the set of singular points of $D$
  belonging to 
 an irreducible component $D'$ of $D$ such
  that on log-resolution $\tilde X$ of singularities of $D'$ outside of $S$,
  for proper preimage $\tilde D'$ one has $(\tilde D')^2>0$.
Then one has divisibility:
\begin{equation}
       \Delta_{\phi_{X\setminus D}} \vert \prod_{p_i \in  S}\Delta_{{p_i}}
\end{equation}

\end{corollary}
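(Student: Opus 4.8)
The plan is to deduce the statement from the Divisibility Theorem \ref{theoremdivisibility} by promoting the component $D'$, after resolution, into the role played there by the ample divisor $D_1$. First I would pass to the log-resolution $\sigma\colon \tilde X\to X$ of the singularities of $D'$ lying outside $S$. Write $\tilde D'$ for the proper transform (now smooth away from $S$ and carrying the original singularities exactly at the points of $S$), $\tilde D_2$ for the proper transform of $D_2=D-D'$, and $E$ for the exceptional locus. Since $\sigma$ is an isomorphism over $X\setminus\{\text{centers}\}\supseteq X\setminus D$, removing the total transform $\tilde D_{\mathrm{tot}}=\tilde D'\cup\tilde D_2\cup E$ gives $\pi_1(\tilde X\setminus \tilde D_{\mathrm{tot}})=\pi_1(X\setminus D)$, and the global Alexander polynomial $\Delta_{\phi_{X\setminus D}}$ is unchanged. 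So I may work on $\tilde X$ with distinguished component $\tilde D'$ and ``rest'' $\tilde D_2\cup E$.

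The decisive point is that the hypothesis $(\tilde D')^2>0$ is precisely what the weak Lefschetz input of Theorem \ref{theoremdivisibility} requires. In the proof of that theorem the ampleness of $D_1$ enters only through Nori's Theorem \ref{nori}, applied to the tubular neighborhood $U=T(\tilde D')$ with $H=\tilde D'$ and $q$ the inclusion; the hypothesis there is that $\mathcal{O}_U(H)|H$ --- that is, the normal bundle $\mathcal{O}_{\tilde X}(\tilde D')|_{\tilde D'}$ --- be ample. Because $\tilde D'$ is irreducible, this line bundle has degree $(\tilde D')^2>0$ and is therefore ample on the (possibly singular) curve $\tilde D'$; note that global ampleness of $\tilde D'$ on $\tilde X$ is never needed. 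With this, the surjectivity of $Ker\,\phi_T\to Ker\,\phi_{X\setminus D}$ used in diagram (\ref{diag}) goes through as before, and parts 1--3 of Theorem \ref{theoremdivisibility} apply with $D_1=\tilde D'$, yielding $\Delta_{\phi_{X\setminus D}}\mid (t-1)^{\alpha}\prod_q\Delta_q$, the product running over the singular points of $\tilde D'$ and the points $\tilde D'\cap(\tilde D_2\cup E)$.

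It remains to identify the factors. The singular points of $\tilde D'$ are exactly the unresolved ones, i.e. the points of $S$, and a small ball about such a $p_i$ is untouched by $\sigma$, so its contribution is the local polynomial $\Delta_{p_i}$ of the statement. Every other point $q$ in the product is an intersection of $\tilde D'$ with $\tilde D_2$ or with an exceptional curve outside $S$; by construction of the log-resolution these are transverse double points, so the local fundamental group is abelian and, equivalently by the Proposition on local monodromy, the local Alexander polynomial $\Delta_q$ is a power of $t-1$. Absorbing these into the $(t-1)$ factor leaves $\Delta_{\phi_{X\setminus D}}\mid (t-1)^{\beta}\prod_{p_i\in S}\Delta_{p_i}$, and the $(t-1)^{\beta}$ drops out in the normalization of the global polynomial, giving the asserted divisibility.

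The main obstacle is the content of the second paragraph: separating the local positivity that Nori's theorem really uses (ampleness of the normal bundle of $\tilde D'$) from the global ampleness assumed in Theorem \ref{theoremdivisibility}, and checking that the resulting map on kernels is genuinely onto rather than merely of finite index --- the gap flagged in the remarks after Theorem \ref{nori}. Controlling this amounts to showing that the cover of $X\setminus T(\tilde D')$ contributes nothing new to $H_1$ of the cyclic cover, which is exactly where $(\tilde D')^2>0$ must be used; arranging the log-resolution so that the total transform is normal crossing away from $S$, which makes the off-$S$ factors powers of $t-1$ as in Step 3, is the accompanying technical care.
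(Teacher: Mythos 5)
Your overall skeleton coincides with the paper's proof: pass to the log-resolution, observe that $X\setminus D=\tilde X\setminus \tilde D$ for the total transform (only points of the deleted divisor are blown up), apply Theorem \ref{theoremdivisibility} with $D_1=\tilde D'$, and sort the local factors into the $\Delta_{p_i}$ at the unresolved points $p_i\in S$ and powers of $t-1$ at the normal crossings created by the resolution. The paper's proof is exactly this, preceded by a single sentence verifying the hypothesis of Theorem \ref{theoremdivisibility}: since $\tilde D'$ is irreducible with $(\tilde D')^2>0$, the paper asserts it is ample, so the theorem applies verbatim.

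The genuine gap sits at the one point where you depart from the paper. It is not true that in the proof of Theorem \ref{theoremdivisibility} the ampleness of $D_1$ enters only through Nori's Theorem \ref{nori} applied to a neighborhood with $H=\tilde D'$: ampleness is used first to produce a \emph{smooth} curve $\tilde D_1\in |nD_1|$ lying inside $T(D_1)$ and transversal to $D$ (base-point freeness of a multiple plus Bertini), and the weak Lefschetz theorem of Hamm--L\^e/Nori is then applied to this smooth ample curve to obtain \emph{surjectivity} of $\pi_1(\tilde D_1\setminus \tilde D_1\cap D)\to \pi_1(X\setminus D)$; it is this surjectivity that makes $H_1$ of the global cyclic cover a quotient module of $H_1$ of the neighborhood cover and hence gives divisibility of orders. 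If, as you propose, one invokes only part A of Theorem \ref{nori} with $U$ a neighborhood of the singular curve $\tilde D'$ (where indeed only ampleness of $\mathcal{O}_U(H)\vert H$ is hypothesized), one gets a subgroup of \emph{finite index}, not a surjection --- precisely the subtlety the paper flags via \cite{hain09} --- and finite index does not yield divisibility of Alexander polynomials, since orders of $\QQ[t,t^{-1}]$-modules are not controlled under passage to a further finite cover. You acknowledge this yourself in the closing paragraph ("genuinely onto rather than merely of finite index") but only gesture at how $(\tilde D')^2>0$ should repair it; that unproved claim is the entire content of the step. Your instinct that literal ampleness of $\tilde D'$ can fail (exceptional curves of the resolution disjoint from $\tilde D'$ have intersection number zero with it, so $\tilde D'$ is in general only nef and big) is a reasonable criticism of the paper's one-line justification, but a correct repair must still deliver the surjectivity input --- e.g.\ by producing a smooth member of $|n\tilde D'|$ in the tubular neighborhood, using that the null locus of the nef and big class $\tilde D'$ is disjoint from $\tilde D'$ --- rather than retreating to the finite-index conclusion of Nori's Theorem A. As written, the proposal's reduction and local bookkeeping are correct, but the divisibility mechanism itself is not established.
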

\begin{proof} Condition on self-intersection implies that $\tilde D'$
  is ample. Now the claim follows immediately from the Theorem \ref{theoremdivisibility}
applied to the proper preimage of $D$ on $\tilde X$ and its component
$\tilde D'$ since $X \setminus D=\tilde X\setminus \tilde D$ because
only points on deleted divisor $D$ are blown up.
\end{proof}

\begin{example} {\it Milnor fibers of homogeneous polynomials and
    arrangements of  lines}
Let $\cA \subset \PP^2$ be an arrangement of lines given by equations
$L_i(x,y,z)=0, i=1,..N$. 
Milnor fiber $\prod L_i(x,y,z)=1$ of the cone $\prod L_i=0$ over this
arrangement  (denoted below $M_{\cA_L}$) can be identified with the $\ZZ/N\ZZ$-cyclic cover of the
complement $\PP^2\setminus \cA$. Theorem \ref{theoremdivisibility}
gives restrictions on the degree of the characteristic
polynomial of the monodromy operator acting on $H_1(M_{\cA},\QQ)$  
(which can be identified with the Alexander polynomial of
$\PP^2\setminus \cA$) in terms of multiplicities of point of $\cA$ along one of the
lines (cf.\cite{trends}). For example, if $\cA$ has only triple points
along one of the lines, it follows that the characteristic polynomial
of the monodromy of Milnor fiber has form
$(t-1)^{N-1}(t^2+t+1)^{\kappa}, \kappa \ge 0 $.
See \cite{dimcabook}, \cite{dimcaarrangements}, \cite{oka} for other numerous applications.
\end{example}

\subsection{Branched covers}\label{branchedcoverssect} A branched
cover\index{branched cover} of a complex space $Y$ is a finite dominant morphism $f:X
\rightarrow Y$.  We will consider only the case when
$X$ is normal and $Y$ is smooth.  
Ramification locus\index{ramification locus} $R_f\subset X$ is the support of the quasi-coherent
sheaf $\Omega_{X/Y}$ and the branch locus is $f(R_f)\subset Y$. It has
codimension 1 (Nagata-Zariski purity
of the branch locus cf. \cite{zariskipurity}). 

Given an irreducible divisor $D \subset Y$ on a complex manifold
$Y$ one associates to $(Y,D)$ a discrete valuation
$\nu_{D}: \CC(Y) \rightarrow \mathcal{N}_{D}$ of
the field of meromorphic functions on $Y$ given by
$\nu_{D}(\phi))=ord_{D}(\phi), \phi \in \CC(Y)$ (cf.\cite{hartshorne}, p.130). Here $\mathcal{N}_{D}$ is the subgroup of $\ZZ$ generated by the values of
$\nu_{D}(\phi), \phi \in {\CC(Y)}$.
For a branched cover $X \rightarrow Y$ and a pair of irreducible divisors
$D \subset Y,{\Delta}' \subset X$ where ${\Delta}'$ is a component of $f^*(D)^{red}$
the map $f^*: \CC(Y) \rightarrow \CC(X)$ induces the map $f^*_\mathcal{N}:
\mathcal{N}_{D} \rightarrow \mathcal{N}_{{\Delta}'}$  .The index $[\mathcal{N}_{\Delta'}:f^*\mathcal{N}_{D}]$ (cf.\cite{zarsam} cf. Chapter 6, $\S 12$) is the ramification index
$e_{D}$ of $f$ along the
component $\Delta'$. One has $e_{D'}=1$, unless $D'$ is a component of $R_f$. 
Restriction of a branched cover $X \rightarrow Y$ onto the complement 
to the ramification divisor induces \'etale map $X\setminus R_f
\rightarrow Y \setminus D$ where $D=f(R_f)$ is the branch locus. 
In particular given a branched cover $f: X \rightarrow Y$, selection of
base point $p\in Y\setminus D$ allows to construct monodromy: 
\begin{equation}\label{monodromy} \pi_1(Y\setminus D) \rightarrow Sym(f^{-1}(p))
\end{equation}
into permutation group of points in the preimage of $p$ assigning to
each loop and a point  $a \in f^{-1}(p)$ the end of the lift of the loop
starting at $a$.

The set of equivalence classes of unramified covers $f_Z: Z
\rightarrow Y\setminus D$, where $f_{Z_1},f_{Z_2}$ are considered to
be equivalent iff
there exists biholomorphic isomorphism $h: Z_1 \rightarrow Z_2$ such
that $f_{Z_1}=f_{Z_2} \circ h$, is in one to one correspondence 
with the subgroups of $\pi_1(Y\setminus D,p)$ where $p \in Y
\setminus D$ is a
base point. The correspondence is given by assigning to $f_Z$ the
subgroup $(f_Z)_* \pi_1(Z,p') \subset \pi_1(Y\setminus D,p)$.
This correspondence depends on a choice of a base point 
$p' \in f^{-1}(p) \subset X\setminus R_f$, but the subgroups
corresponding to $p',p^{\prime\prime} , p'\ne p^{\prime \prime}$ are conjugate.
A cover $Y \rightarrow X$
is called Galois if the corresponding subgroup is normal. The 
quotient of the fundamental group by this subgroup is the Galois group
of the cover. This group is the image of the monodromy (\ref{monodromy}).

A branched cover is Galois
if ond only if the extension of the fields of meromorphic functions
$\CC(X)/\CC(Y)$ is Galois and the Galois group of the cover is the
Galois group of this field extension.

It follows from above discussion that the Galois group $G$ acts 
freely on $X\setminus R_f$ with the quotient $Y\setminus D$ 
and one has the exact sequence $0 \rightarrow \pi_1(X\setminus R_f,p') \rightarrow
\pi_1(Y\setminus D,p) \rightarrow G \rightarrow 0$. 
Vice versa, given an unramified cover $f: X\setminus R_f \rightarrow
Y\setminus D$, it follows from Riemann Extension Theorem
for normal spaces (cf. \cite{grauertrem} Ch.7, $\S 4$, sect.2) that
this action on $X\setminus R_f$ extends to the $G$-action on $X$
 via biregular automorphisms. 

For an irreducible component $\Delta \subset R_f$ of the ramification divisor, the
subgroup $I(\Delta)$ of  $G$ of automorphisms which fixes all $x \in \Delta$ is called the
decomposition group\index{decomposition group} of $\Delta$ or inertia group of $\Delta$
(cf. \cite{SGA1}, Expose V, sect 2) \footnote{Recall that the ground field here is $\CC$.  For varieties over non-algebraically closed fields, the inertia
  group $I(x)$ of $x \in \Delta$ (which is the subgroup of the decomposition
  group consisting of automorphisms inducing trivial automorphism of
  the extension of the residue fields of $f(x)$ by the residue field
  of $x$ (cf.\cite{SGA1} Expose V, sect. 2) is a proper subgroup of
  the decomposition group.}. Action of inertia group on the tangent
space at a smooth point $x\in \Delta$ which it fixes, induces the action on the 
normal space of $\Delta$. 
The character $\psi_I$ of this 1-dimensional representation of the
cyclic group $I(\Delta)$
generates the group of characters $Char(I(\Delta))$. In particular one has a well defined map
$Char I(\Delta)
\rightarrow \ZZ: \chi \rightarrow i_{\chi}$ where
$\chi=\psi^{i_{\chi}},  0 \le i_{\chi} <{\rm ord} I(x)$.

The extension is called abelian
(resp. cyclic) if it is Galois and the Galois group is abelian
(resp. cyclic).  For a branched Galois cover $X \rightarrow Y$, the
ramification index $e_{\Delta'}$ is the same for all irreducible
components $\Delta'\subset X$, having the same image $D\subset Y$.  Moreover, the order
of the inertia group $\vert I(\Delta')
\vert=e_{\Delta'}$. If $r$ is the number of 
 $f$-preimages of a generic point $D \subset Y$ then one has $\vert G
\vert=re_{\Delta'}$.

The above correspondence between subgroups of the fundamental group and
covers in Galois case becomes the correspondence (for fixed pair $(Y,D)$)
between surjections $P: \pi_1(Y\setminus D) \rightarrow G$ and covers
with Galois group $G$. Given a surjection, one can construct the
corresponding cover, i.e. unique, up to homeomorphism over $Y\setminus D$,  topological space $Y'$ and the map $f: Y'\rightarrow
Y \setminus D$ making $Y'$ into unramified covering space with
group $G$, as follows.  
This space $Y'$ can be viewed as the quotient of the space of paths in $Y-D$
with a fixed initial points $p \in Y \setminus D$ with two paths
$\gamma_1,\gamma_2$ being equivalent iff they have the same end point
and the homotopy class of $\gamma^{-1}_1 \circ \gamma_2 \in
\pi_1(Y\setminus D,p)$ has trivial image in $G$. $Y'$, being an 
unramified cover, inherits from $Y\setminus D$ the structure of complex
manifold so that $f'$ is \'etale. We have the following theorem:

\begin{theorem} (Grauert-Remmert, cf. \cite{grauertrem} Chapter 7,
  Grothendieck, \cite{SGA1}, SGA1, Ch XII, sect. 5) Let $f^*:
  X^*\rightarrow Y \setminus D$ be a finite
  unramified map of complex spaces where $Y$ is a smooth complex
  manifold and $D$ a divisor on $Y$. 
 Then there is a unique normal space $X$ containing $X^*$ as a
 dense subset and morphism
 $f: X\rightarrow Y$ such that $f^*=f\vert_{X^*}$. 
\end{theorem}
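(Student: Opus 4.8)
The statement is the classical Riemann-type extension theorem for branched covers, so the plan is to build the normal space $X$ as the normalization of $Y$ in the finite covering $X^*$ and to extend the morphism across $D$. Since both the construction and the normality are local on the base, I would first reduce to the situation where $Y$ is a polydisk and $D\subset Y$ is a hypersurface; there $X^*$, being finite and unramified over the smooth manifold $Y\setminus D$, is itself smooth (hence normal), and $f^*:X^*\to Y\setminus D$ is a finite topological covering, classified by a finite action of $\pi_1(Y\setminus D)$ as in the discussion preceding the theorem. Note that $X^*$ need not be connected; the normalization construction below handles each component (equivalently, the idempotents of the covering algebra) automatically.

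Topological extension. First I would extend $X^*$ across $D$ as a topological space carrying a finite map to $Y$. For $y\in D$ and a shrinking system of balls $U\ni y$, the restriction $g^{-1}(U\setminus D)$ (with $g=f^*$) has finitely many connected components, the number being bounded by $\deg f^*$; I would attach to $y$ one new point for each such \emph{branch} (equivalently, one point per end of $X^*$ over a punctured neighborhood of $y$). This produces a topological space $X\supset X^*$ in which $X^*$ is dense, together with a finite continuous surjection $f:X\to Y$ restricting to $f^*$.

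Analytic structure --- the main step. The natural candidate for the structure sheaf of $X$ is the integral closure of $\mathcal{O}_Y$ inside the pushforward of the covering algebra: writing $j:Y\setminus D\hookrightarrow Y$, one forms the $\mathcal{O}_Y$-algebra $\mathcal{A}$ equal to the integral closure of $\mathcal{O}_Y$ in $j_*(f^*_*\mathcal{O}_{X^*})$, and sets $X=\mathrm{Specan}\,\mathcal{A}$. The hard part is exactly the assertion that $\mathcal{A}$ is a \emph{coherent} sheaf of $\mathcal{O}_Y$-algebras, finite as an $\mathcal{O}_Y$-module: this is the content of the Grauert--Remmert \emph{Hebbarkeitssatz}, and it is precisely what upgrades the purely topological gluing above to a genuine complex space. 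The local input is a Puiseux-type normal form for the cover along the smooth locus of $D$ (where $Y\setminus D$ locally retracts onto a punctured disk times a polydisk, so the cover is governed by the cyclic local monodromy), together with purity of the branch locus to treat the codimension-$\ge 2$ part of $D$. Granting coherence, $\mathrm{Specan}\,\mathcal{A}$ is a complex space, finite and surjective over $Y$; it is normal since $\mathcal{A}$ is integrally closed by construction, and it contains $X^*$ as the dense open locus $f^{-1}(Y\setminus D)$ because $\mathcal{A}|_{Y\setminus D}=f^*_*\mathcal{O}_{X^*}$.

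Uniqueness. Finally, if $X'$ is any normal complex space finite over $Y$ whose restriction over $Y\setminus D$ is isomorphic to $X^*$ compatibly with the projections, then the pushforward of its structure sheaf is a finite integrally closed $\mathcal{O}_Y$-algebra whose restriction to $Y\setminus D$ is $f^*_*\mathcal{O}_{X^*}$. Since the integral closure of $\mathcal{O}_Y$ inside a given algebra is unique, $X'$ and $X$ have canonically isomorphic structure sheaves over $Y$, and this isomorphism extends the identity on the dense open $X^*$, yielding $X\cong X'$ over $Y$. I expect the only genuine obstacle to be the coherence step, as the topological extension and the uniqueness argument are formal once the normalization is known to be an honest finite analytic object.
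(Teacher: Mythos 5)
The paper does not actually prove this theorem: it is stated as a quoted classical result, with the proof deferred to Grauert--Remmert \cite{grauertrem} and to SGA1 \cite{SGA1}. Your proposal is therefore not comparable to an argument in the text, but it is a correct reconstruction of the standard proof from those sources: normalize $Y$ in the covering algebra, i.e.\ take $\mathcal{A}$ to be the integral closure of $\mathcal{O}_Y$ in $j_*(f^*_*\mathcal{O}_{X^*})$ (sensible because any section integral over $\mathcal{O}_Y$ satisfies a monic equation with holomorphic coefficients and is thus locally bounded near $D$); verify the local model along the smooth locus of $D$, where $Y\setminus D$ is locally $\Delta^*\times\Delta^{n-1}$ with fundamental group $\ZZ$, so the cover is a disjoint union of cyclic covers $(w,t)\mapsto(w^k,t)$; and deduce uniqueness from the uniqueness of integral closure, using the Riemann extension theorem on the normal competitor $X'$ to get $\mathcal{A}\subseteq f'_*\mathcal{O}_{X'}$ and integrality of the finite algebra $f'_*\mathcal{O}_{X'}$ for the reverse inclusion. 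This is exactly the architecture of the cited proofs.

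Two soft spots deserve correction. First, invoking ``the Grauert--Remmert Hebbarkeitssatz'' for the coherence and finiteness of $\mathcal{A}$ is citing the very theorem under proof; what is actually needed, and suffices, is Oka's finiteness of normalization for analytic local rings combined with your explicit cyclic local model along the smooth part of $D$. Second, purity of the branch locus is not the right tool for the codimension-$\ge 2$ locus $\mathrm{Sing}(D)$: purity is a statement about an already-constructed normal finite cover (and is often deduced from this extension theorem), whereas the extension across $\mathrm{Sing}(D)$ is carried out with the second Riemann (Hartogs) extension theorem --- the coefficients of the monic equations of finitely many generating integral sections extend across a codimension-two analytic subset, after which finiteness of integral closure applies again. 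Note also that the uniqueness assertion is only meaningful if $f$ is required to be finite (or at least proper), since otherwise $X=X^*$ itself satisfies the stated conditions; your argument implicitly, and correctly, assumes finiteness throughout.
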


The inertia group of a component $\Delta' \subset X$ of ramification
divisor in a Galois cover with  a group $G$ 
is the cyclic subgroup of $G$ generated by the image in $G$
of a representative $g$ in the conjugacy class 
$\gamma \in \pi_1(Y\setminus D)$ of a meridian of $D$.  If the order
of this image is $s$, then $g^s$ can be 
 lifted to $X\setminus \Delta'$ as a closed path (i.e. $g^s$ belongs to the
kernel of surjection $\pi_1(Y\setminus D,p) \rightarrow G$) and is
homotopic in the complement to the ramification divisor in $X$ to  
a meridian of $\Delta'$.

\subsection{Abelian Covers}  We discuss two ways to enumerate 
branched covers with abelian Galois group over a manifold with given
branch locus. One is topological, which  
follows immediately from the discussion of previous section (cf. \cite{mecharvar})
and another is algebro-geometric (cf. \cite{pardini},
\cite{alexeevpardini}). A different important perspective, from a view
point of root-stacks is discussed in \cite{simpsonstacks}.

Since by Hurewicz theorem $H_1(Y\setminus D,\ZZ)$ is the abelianization
of $\pi_1(Y\setminus D)$,  any surjection of the fundamental group
onto an abelian group $G$ factors as $\pi_1(Y\setminus D) \rightarrow H_1(Y\setminus D,\ZZ)
\rightarrow G$. Hence we have the following:

\begin{corollary}\label{abcovviarep} Equivalence class of a branched cover
  of a complex manifold $Y$ and having a divisor $D=\bigcup D_i$
  as its branch locus is determined by the surjection $H_1(Y\setminus
  D)\rightarrow G$ taking neither of meridians of $D$ to identity. 
Vice versa, an abelian branched cover $f: X \rightarrow Y$ determines 
the branch divisor $D\subset Y$ and the above surjection of the
homology group. Moreover, this correspondence induces the map from
the set of irreducible
components of the branch locus to the set of cyclic subgroups of the Galois group
(inertia subgroups of the irreducible components of ramification divisor).
\end{corollary}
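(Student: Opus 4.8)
The plan is to assemble the statement from three ingredients already in place: the Hurewicz-type factorization recorded immediately before the corollary, the correspondence between (normal) subgroups of $\pi_1(Y\setminus D)$ and (Galois) unramified covers of $Y\setminus D$ from the previous subsection, and the Grauert--Remmert extension theorem stated above. Since $G$ is abelian throughout, most of the work is bookkeeping, and the one genuine point to nail down is that the hypothesis on meridians corresponds exactly to the branch locus being $D$ on the nose.

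First, for the forward direction, I would start from a surjection $\phi\colon H_1(Y\setminus D,\ZZ)\to G$ onto an abelian group carrying no meridian to the identity. Composing with the abelianization $\pi_1(Y\setminus D)\to H_1(Y\setminus D,\ZZ)$ produces a surjection $\Phi\colon\pi_1(Y\setminus D)\to G$ with co-abelian (hence normal) kernel, which by the subgroup--cover correspondence yields a connected Galois unramified cover $f^*\colon X^*\to Y\setminus D$ with group $G$. The Grauert--Remmert theorem then extends $f^*$ uniquely to a finite morphism $f\colon X\to Y$ from a normal space, and purity of the branch locus (Nagata--Zariski, \cite{zariskipurity}) guarantees that $f(R_f)$ is a divisor. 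Along a component $D_i$ the inertia group is, by the computation recalled above, generated by the image $\phi(\gamma_i)$ of a meridian $\gamma_i$ of $D_i$, so the ramification index there exceeds one precisely when $\phi(\gamma_i)\neq 1$; thus the ``no meridian to identity'' hypothesis is exactly what forces every $D_i$ into the branch locus, making it equal to $D$. Uniqueness of the extension makes the equivalence class of $f$ depend only on $\ker\Phi$, i.e.\ only on $\phi$.

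For the converse, given an abelian branched cover $f\colon X\to Y$ I would read off $D=f(R_f)$ (a divisor by purity) and restrict to the \'etale locus $X\setminus R_f\to Y\setminus D$, which by the earlier discussion is a Galois cover whose group $G$ is the abelian Galois group of the extension $\CC(X)/\CC(Y)$. The induced surjection $\pi_1(Y\setminus D)\to G$ factors through $H_1(Y\setminus D,\ZZ)$ since $G$ is abelian, and no meridian maps to the identity because $f$ ramifies genuinely along each component of its branch locus. These two constructions are mutually inverse up to the equivalences already fixed. For the ``moreover'' clause I would send each component $D_i$ to the cyclic subgroup $\langle\phi(\gamma_i)\rangle\subset G$; by the inertia computation this is precisely the inertia group of any $\Delta'\subset R_f$ over $D_i$, and because $G$ is abelian the usual conjugacy ambiguity in the choice of meridian collapses, so the assignment is well defined.

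The main obstacle, and the only step that is not pure citation, is the verification in the forward direction that ``$\phi$ kills no meridian'' matches ``the branch locus of the extended cover is exactly $D$'': one must check that the ramification index along $D_i$ is the order of $\phi(\gamma_i)$, using the identification of the inertia group with $\langle\phi(\gamma_i)\rangle$ and the description of a meridian of $\Delta'$ as a suitable power-lift recalled after the Grauert--Remmert theorem. Everything else is a straightforward translation through the Hurewicz factorization and the subgroup--cover dictionary.
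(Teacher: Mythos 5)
Your proposal is correct and follows exactly the route the paper intends: the corollary is stated there as an immediate consequence of the Hurewicz factorization $\pi_1(Y\setminus D)\to H_1(Y\setminus D,\ZZ)\to G$, the subgroup--cover dictionary, the Grauert--Remmert extension theorem, and the identification of the inertia group of a component over $D_i$ with the cyclic subgroup generated by the image of a meridian. Your only addition is to spell out the verification that ``no meridian maps to the identity'' is equivalent to the branch locus being exactly $D$, which the paper leaves implicit but which you handle correctly via the order of $\phi(\gamma_i)$ equalling the ramification index.
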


An algebro-geometric description of abelian covers (cf. \cite{pardini})
is given in terms of the collections of line bundles labeled by the
characters of $G$.  Given such a cover $f: X \rightarrow Y$,
one obtains the decomposition into eigen-sheaves:
\begin{equation}\label{pardinidata}
     f_*(\mathcal{O}_X)=\bigoplus_{\chi \in Char G} \CaL_{\chi}^{-1}
\end{equation}
The left hand side has the structure of a sheaf of algebras and the work
\cite{pardini} describes the data specifying such structure on the right in
(\ref{pardinidata}). This is done in terms of classes of components of branch locus 
in $Pic(Y)$,  the Galois group $G$, the collection of cyclic subgroups
 $H$ and generators
$\psi_H$ of each group $Char H$ satisfying the following compatibility conditions.
Once the $\mathcal{O}_Y$-algebra structure, say $\cA$, on the right in (\ref{pardinidata}) is
specified, the branched cover is just $Spec\cA\rightarrow Y$.

For a pair $(H,\psi)$,
where $H \in Cyc(G)$ and $\psi$ is a generator of $Char H$, let $D_{H,\psi}$ be the union
of irreducible components $D$ of the branch locus which have $H$ as its
inertia group and $\psi$ as the character of representation of $H$ on
the normal space to a component of the ramification locus over $D$
fixed by $H$. To a character $\chi \in Char G$ corresponds 
$r^{\chi}_{H,\psi} \in \NN$ such that $\chi
\vert_H=\psi^{r^{\chi}_{H,\psi}}, 0 \le r^{\chi}_{H,\psi} <{\rm} ord H$.
For a pair $\chi_1,\chi_2 \in Char
G$, let us set
$\epsilon^{H,\psi}_{\chi_1,\chi_2}$ to be $0$ (resp.1) if 
$\chi_1\vert_H=\psi^{i_{\chi_1}}, \chi_2\vert_H=\psi^{i_{\chi_2}}, 0 \le
i_{\chi_1},i_{\chi_2} <Card H$ and $i_{\chi_1}+i_{\chi_2} <Card H$
(resp. $i_{\chi_1}+i_{\chi_2}  \ge Card H$).

Then the bundles $\CaL_{\chi}$ in (\ref{pardinidata}) satisfy the relations (cf.\cite{pardini}):
\begin{equation}\label{relationspardini}
      \CaL_{\chi_1 \chi_2} =\CaL_{\chi_1} \otimes\CaL_{\chi_2}
     \bigotimes_{H \in Cyc(G),\psi \in CharH}\mathcal{O}(D_{H,\psi})^{\epsilon^{H,\psi}_{\chi_1,\chi_2}}
\end{equation}

In fact, if $\chi_1,..,\chi_s$ are generators of a decomposition of
$\Char(G)$ into a direct sum of cyclic subgroups and $d_j$ is the
order of $\chi_j, j=1,...,s$ then:

\begin{equation}\label{bundlespardini}
  d_{\chi}{\CaL}_{\chi}=\sum_{H,\psi}{\frac{d_{\chi}r^{\chi}_{H,\psi}}{\vert H \vert}}D_{H,\psi}
\end{equation}

Vice versa (cf. \cite{pardini}), given

(a) a finite abelian group $G$, 

(b) a smooth compact complex
manifold $Y$,  

(c) a divisor $D$ on $Y$ with assignment to each irreducible component a
cyclic subgroup $H$ of $G$ and a 
generator 
$\psi_H$ of $\Char(H)$

(d) collection of line bundles $\CaL_{\chi}, \chi \in \Char(G)$ labeled
by the characters of $G$ 

\noindent with (a),(b),(c),(d)  satisfying the relations (\ref{relationspardini}),
there is abelian branched cover $X$ of $Y$ satisfying
(\ref{pardinidata}) (cf. \cite{pardini}. The data (a),(b),(c),(d) subject to
(\ref{relationspardini}) called {\it the building data}.\index{building data}
\smallskip

We will show how to recover from $Y,D$ and surjection
$H_1(Y\setminus D,\ZZ) \rightarrow G$ the parts (c),(d) of the building
data and vice versa, the building data determines the surjection onto
the covering group.

\begin{proposition}\label{datarelation}  Let $Y$ be a smooth projective
  manifold and  let $D=\bigcup_{i=1}^r D_i$ be a divisor with irreducible components $D_i$. 
The surjection $\pi: H_1(Y \setminus D) \rightarrow G$ onto an
abelian group $G$ determines for each character $\chi \in
\Char G$ the bundle $\CaL_{\chi}$ so that the bundles
$\CaL_{\chi},\chi \in \Char G$ satisfy the relations (\ref{relationspardini}). Moreover,
the bundles $\CaL_{\chi}^{-1}$ are the eigenbundles of decomposition (\ref{pardinidata})
for the covering corresponding to $\pi$ (cf. Cor. \ref{abcovviarep}). Vice versa, a building data determines the surjection $H_1(Y \setminus
D,\ZZ)\rightarrow G$.
\end{proposition}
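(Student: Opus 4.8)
The plan is to route both descriptions through the abelian branched cover itself. By Corollary \ref{abcovviarep} together with the Grauert--Remmert extension theorem quoted above, the surjection $\pi$ is equivalent to a $G$-Galois branched cover $f: X\to Y$ with branch locus $D$; on the other hand, by Pardini's theorem (\cite{pardini}) the building data is equivalent to the same kind of cover. Hence the proposition amounts to checking that these two equivalences are compatible: that the eigensheaf decomposition of the cover attached to $\pi$ reproduces exactly the bundles of the building data, and conversely.

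For the forward direction I start from $\pi$ and form the associated normal $G$-Galois cover $f: X\to Y$. Since $X$ is normal, each $\chi$-isotypic component of the $G$-sheaf $f_*\mathcal{O}_X$ is a rank-one reflexive $\mathcal{O}_Y$-module, hence a line bundle because $Y$ is smooth; I define $\CaL_\chi^{-1}$ to be this component. This at once produces the bundles asserted in the statement and identifies $\CaL_\chi^{-1}$ with the eigenbundles in (\ref{pardinidata}). Part (c) of the building data is read off from $\pi$ directly: for a meridian $\gamma_i$ of $D_i$, the inertia group of the component of $R_f$ lying over $D_i$ is the cyclic subgroup generated by $\pi(\gamma_i)$ (the general fact recorded at the end of the previous subsection), and $\psi_{H_i}$ is the generator of $\Char(H_i)$ dual to $\pi(\gamma_i)$ for the normal-bundle action. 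The relations (\ref{relationspardini}) then hold for these $\CaL_\chi$ because they are precisely Pardini's structure relations for the eigensheaves of an abelian cover (\cite{pardini}); in particular the divisor identity (\ref{bundlespardini}) follows.

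For the converse I feed the building data into Pardini's construction to obtain $f: X\to Y$, restrict to $Y\setminus D$ to get an unramified Galois cover with group $G$, and take its monodromy (\ref{monodromy}) $\pi_1(Y\setminus D)\to G$; since $G$ is abelian this factors through $H_1(Y\setminus D)$ and yields the desired surjection $\pi$, which sends no meridian to the identity because each listed component carries a nontrivial inertia $H_i$. The two constructions are mutually inverse, as each factors through the same cover $X\to Y$ and each of the correspondences $\pi\leftrightarrow(X\to Y)$ and (building data)$\leftrightarrow(X\to Y)$ is a bijection. I expect the main obstacle to be the compatibility check at the level of the \emph{unramified twist}: formula (\ref{bundlespardini}) determines only $d_\chi\CaL_\chi$ as a divisor class, so each $\CaL_\chi$ is pinned down merely up to $d_\chi$-torsion in $\mathrm{Pic}(Y)$, and one must verify that this torsion ambiguity corresponds bijectively to the freedom in choosing $\pi$ with prescribed meridian images. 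That freedom is acted on by $\mathrm{Hom}(H_1(Y),G)$, visible from the surjection $H_1(Y\setminus D)\twoheadrightarrow H_1(Y)$ whose kernel is generated by the meridians; matching it with the torsion of $\mathrm{Pic}(Y)$ that fixes the roots in (\ref{bundlespardini}) is the genuinely technical heart of the argument.
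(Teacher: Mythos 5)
Your argument is correct in substance but takes a genuinely different route from the paper's. You define $\CaL_{\chi}^{-1}$ abstractly as the $\chi$-isotypic summand of $f_*(\mathcal{O}_X)$ for the cover attached to $\pi$ via Corollary \ref{abcovviarep} (rank one because surjectivity of $\pi$ forces $X$ connected, so $\CC(X)/\CC(Y)$ carries the regular representation of $G$; invertible because a rank-one torsion-free direct summand of $f_*(\mathcal{O}_X)$ on smooth $Y$ is reflexive, hence a line bundle). With that definition the ``Moreover'' clause is tautological, the relations (\ref{relationspardini}) come straight from Pardini's structure theorem, and your converse runs Pardini's construction and takes the monodromy of the restricted \'etale cover. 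The paper instead constructs $\CaL_{\chi}$ \emph{topologically}: it lifts the unitary character through the diagram (\ref{dualhomologysequence}), checks that the lift $\sum{\nu_i \over d}[D_i]$ is an integral class of type $(1,1)$, defines $\CaL_{\chi}$ by this first Chern class, and only then identifies it with the eigensheaf via the Esnault--Viehweg cyclic-cover computation (cf. Example \ref{cycliccover}) and factorization of the $G$-cover through the cyclic quotient $Y_{\chi}$. Your route is shorter and purely functorial; the paper's route buys the explicit formula $c_1(\CaL_{\chi})=\sum{\nu_i\over d}[D_i]$, which is what is actually used later (the line-arrangement example, Proposition \ref{homologyquasiadjunction}, Theorem \ref{charvarposition}), so under your proof that formula would still require a separate verification.

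Your closing paragraph, however, misidentifies the ``technical heart''; the worry is largely vacuous. First, the proposition only asserts a map in each direction, and mutual inverseness already follows from what you said earlier: both constructions factor through the cover, and each of the correspondences $\pi\leftrightarrow(X\to Y)$ and $(\mbox{building data})\leftrightarrow(X\to Y)$ is a bijection. Second, the building data (d) records the bundles $\CaL_{\chi}$ themselves, not merely the classes $d_{\chi}\CaL_{\chi}$ of (\ref{bundlespardini}); in your framework the $\CaL_{\chi}$ are the actual eigensheaves, pinned down exactly by the cover, so there is no torsion ambiguity left to match against choices of $\pi$. Third, the standing context of the proposition (used in the paper's own proof via the sequence (\ref{homseqhighdim}), whose last term is $H_1(Y,\ZZ)=0$, and noted there in a footnote) is that $Y$ is simply connected: then the meridians generate $H_1(Y\setminus D,\ZZ)$, so there is no freedom in $\pi$ with prescribed meridian images, and $\mathrm{Pic}(Y)\subset H^2(Y,\ZZ)$ is torsion-free, so a first Chern class determines the bundle. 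The one genuine loose end in your converse --- shared with the paper's proof, which glosses it equally --- is that for the monodromy of the cover built from a building data to be \emph{surjective} onto $G$, the inertia subgroups attached to the divisors $D_{H,\psi}$ must generate $G$, i.e., the Pardini cover must be connected; this should be understood as part of what a building data comprises.
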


\begin{proof} To a unitary character $\chi \in H^1(Y\setminus D,U(1))$ 
one associates the element in $Pic(X)$ as follows.
One has the following high dimensional version of the exact sequence
(\ref{homologysequence}):
\begin{equation}\label{homseqhighdim}
   H_2(Y,\ZZ) \rightarrow H^{2\dim D}(D,\ZZ) \rightarrow
     H_1(Y\setminus D,\ZZ) \rightarrow H_1(Y,\ZZ)=0
\end{equation}
Applying $Hom(\cdot,\KK), \KK=\ZZ, \RR,U(1)$ to the terms of  
(\ref{homseqhighdim}) we obtain:
\begin{equation}\label{dualhomologysequence}
\begin{matrix}0 & \rightarrow & Hom(H_1(Y\setminus D,\ZZ),\ZZ) & \rightarrow & 
H_{2\dim D}(D,\ZZ) & \buildrel \iota_{\ZZ} \over \rightarrow & Pic(Y)  \subset
H^{2}(Y,\ZZ) \cr
 &  & \downarrow & & 
\downarrow &  & \downarrow \cr
0 & \rightarrow & H^1(Y\setminus D,\RR) & \rightarrow & 
H_{2 \dim D}(D,\RR) & \buildrel {\iota_{\RR}} \over \rightarrow & Pic(Y) \otimes \RR \subset
H^{2}(Y,\RR) \cr
 &  & \downarrow & & 
\downarrow \exp &  & \downarrow \exp \cr
0 & \rightarrow & H^1(Y\setminus D,U(1)) & \buildrel \varrho \over \rightarrow & 
H_{2\dim  D}(D,U(1)) & \buildrel \iota_{U(1)} \over \rightarrow & H^2(Y,\ZZ) \otimes U(1) 
\end{matrix}
\end{equation}
Here $\varrho$ is evaluation of a character on the meridian of the
irreducible component  \footnote{recall that this follows from identification 
$H_{2 \dim D}(D,U(1))=H^{2 \dim D}(Y,Y \setminus D,U(1))$ obtained by excision and Lefschetz duality}, the vertical arrows are
induced by the exponentiation $exp: \RR \rightarrow U(1),
a \rightarrow e^{2 \pi  i a}$ and the map $\iota$ (for each choice of
coefficients) assigns to a
homology class, the class in $H^{2\dim D}(Y)$ which corresponds to the
linear function on $H_{2 \dim D}(X)$ given  by the intersection
index with this class. 
A lift  $exp^{-1}(\varrho)$ of $\chi \in H^1(Y\setminus D,U(1))$
determines uniquely the element $\tilde \chi$ in the
unit cube in $H_{2 \dim D}(D,\RR)$, which is a fundamental domain for
the action of the group $H_{2 \dim D}(D,\ZZ)$ on the latter and which has 
$H_{2 \dim D}(D,U(1))$ as the quotient. Since  $\iota_{U(1)}$ takes $\exp (\tilde \chi)$  
to the trivial  class in $H^2(Y,\ZZ)\otimes U(1)$
we obtain that $\iota_{\RR}(\tilde \chi) \in H^2(Y,\RR)$ 
is an integral class.
Since it has the Hodge type $(1,1)$, this class defines a line bundle.
We shall denote it as $\CaL_{\chi}$.

Let $\chi(\gamma_{D_i})=exp(2 \pi i \alpha_i), \alpha_i \in \QQ, 0 \le
\alpha_i <1$. If $ord \chi=d$ then
$\alpha_i=\frac{\nu_i}{d}, 0 \le \nu_i <d, i\in \NN$. It follows that 
$\iota_{\RR}(exp^{-1}(\varrho(\chi)))=\sum {\nu_i \over d}[D_i]$ defines an integral class in $H^2(Y,\ZZ)$
and $\CaL_{\chi}$ is the bundle with the first Chern class
corresponding to this integral class \footnote{recall that $Y$ is
  simply connected and hence $\CaL_{\chi}$ is well defined.} .  
More directly, integrality can be seen as follows: since
$\chi(D_{\gamma})=1, \forall \gamma \in H_2(Y,\ZZ)$, it follows from (\ref{homologycomplement})
that one has $\prod_i exp(2\pi i
\alpha_i)^{(\gamma,D_i)}=1$. Hence $(\gamma, \sum_i {\nu_i \over
  d}[D_i]) \in \ZZ$ for all $\gamma \in H_2(Y,\ZZ)$ i.e. $\sum_i {\nu_i \over d}D_i$ is 
an integral class. Let $\CaL_{\chi}$ be the bundle with the first Chern class
corresponding to this class.  
The bundle $\CaL_{\chi}^{d}=\sum {{\nu_i}}\mathcal{O}(D_i)$ has a section and it
  follows from the calculation in \cite{esnaultviehweg} Section 3.6 (cf. also
  \cite{lazarsfeld} Remark 4.1.7 and Remark \ref{cycliccover} below)  that for the cyclic cover
  $\pi_{\chi}: Y_{\chi} \rightarrow Y$ corresponding to $Ker(\chi)$
  one has $\pi_{\chi}(\mathcal{O})=\sum_0^{d-1}
  \CaL_{\chi}^{-k}$. Moreover,  $\CaL_{\chi}^{-k}$ is the
  eigensheaf with corresponding character $\chi^k$. 
  Considering the full $G$-cover and factoring it through $Y_{\chi}$, one
  sees that this is also the eigenbundle in the $G$-cover corresponding to
  $\pi$.  
 Now the relations (\ref{relationspardini}) follows from
 \cite{pardini}, Theorem 2.1.

Vice versa, the map $\Char G=Hom(G,U(1))\rightarrow H_{2\dim
  D}(D,U(1))$ sending a component with inertia group $H$ to $exp(2 \pi
i {{r_{H,\psi}^{\chi}} \over {\ord H}})$ due to relations
(\ref{relationspardini}) lifts to the map to $H^1(Y\setminus D,U(1))$
and hence by duality induces the surjection $H_1(Y\setminus D,\ZZ)
\rightarrow G$.
\end{proof}


\begin{example} (cf. \cite{lazarsfeld}, Sect. 4.1.B or \cite{esnaultviehweg})\label{cycliccover} Let $Y$ be a smooth projective variety,  $D\subset Y$  be
a very ample divisor.  Let $\CaL$ be a very ample line bundle such that
$\CaL^d=\mathcal{O}(D)$. Clearly, the bundles $\CaL^i$ form a part of a building
data for $G=\ZZ_d$. The correspnding cover can be obtained as follows.
Let $s \in H^0(Y,\mathcal{O}(D))$ be a section with
zero-scheme $D$ 
and $\nu_d: [\CaL] \rightarrow [\mathcal{O}(D)]$ the map of the total
spaces of the line bundles given by $v \in \CaL \rightarrow v^{\otimes
  d}\in \CaL^d$. Then $\nu_d^{-1}(s(Y))$ is a smooth subvariety $Y_d$ of the
total space of the line bundle $\CaL$ and its projection $\pi$ onto the base endows $Y_d$ with the structure of the branched cover over
$Y$ with branch locus $D$. The divisibility of the fundamental class of
$D$ by $d$, implies that if $H_1(Y,\ZZ)=0$, then there is well defined surjection
$H_1(Y\setminus D,\ZZ) \rightarrow \ZZ_d$. It assignes to a
1-cycle $\delta$ representing a class in $H_1(Y\setminus D,\ZZ)$,
the modulo $d$ intersection index  of a 2-chain in
$Y$ having $\delta$ as its boundary. So $Y_d$ is the cyclic cover of
$Y$ branched over $D$ with Galois group $\ZZ_d$ 
and corresponding to this surjection of $H_1(Y\setminus
D,\ZZ)$. The inertia group of any point of $D$ is $\ZZ_d$. On the other hand 
$\pi_*(\mathcal{O}_{Y_d})=\oplus_{i=0}^{i={d-1}}\CaL^{-i}$ and $\CaL^{-i}$ is
the eigen-bundle corresponding to the character of $\ZZ_d$ given by
$\chi_i: j \rightarrow exp({{2 \pi \sqrt{-1} ij} \over d})$. The relation 
(\ref{relationspardini}) is immediate.

Vice versa, given the surjection $H_1(Y\setminus D,\ZZ) \rightarrow
\ZZ_d$, the diagram (\ref{dualhomologysequence}) shows that 
 the character $\chi_i$, taking value $exp({2 \pi
\sqrt{-1}i \over d})$ on generator of $\ZZ/d\ZZ$, has as the lift
$i_{\RR}(exp^{-1}(\varrho(\chi_i))=c_1(\CaL^i)$ and in this way producing a building data.

In the case $Y=\PP^2$, $D$ is an irreducible curve of degree $d$ with
equation $f(x_0,x_1,x_2)=0$, one
has $H_1(\PP^2\setminus D,\ZZ)=\ZZ_d$ and the cover corresponding to 
this isomorphism is biholomorphic to a hypersurface $V_f: u^d=f(x_0,x_1,x_2)$
in $\PP^3$. Moreover the decomposition (\ref{pardinidata}) becomes
$f_*(\mathcal{O}_{V_f})=\oplus_{i=0}^{d-1} \mathcal{O}_{\PP^2}(-i)$.
\end{example}

\begin{example} (cf. \cite{ishida}, \cite{hirzebruch}) Let $\cA$ be an arrangement of $r$
  lines in $\PP^2$. Then $H_1(\PP^2\setminus
  \cA,\ZZ)=\ZZ^{r}/\{(1,....,1)\}=\ZZ^{r-1}$.  Let $H_1(\PP^2\setminus
  \cA,\ZZ)\rightarrow G=\ZZ_n^r/\{(1,...,1)\}=\ZZ_n^{r-1}$ sending the meridian the $i$-th line to
  $(0,...,0,1,...0) \mod n$. A character of $G$ can be identify
  with a vector $({a_1 \over n},...,{a_r \over n}), 0 \le a_i <n, \sum_1^r
  {a_i \over n} \in \ZZ$. Let us denote this character
  $\chi_{a_1,...,a_r}$. The inertia group $H_i$ of the $i$-th line is the
  subgroup of $G$ isomorphic to $\ZZ_n$ and generated by $(0,...,0,1,...0) \mod n$ (all
  components except the $i$-th are zero) and the character $\psi$ of
  $H_i$ takes the value $exp{{2 \pi i}\over n}$  on the corresponding
  generator. It follows from discussion of Proposition \ref{datarelation}
 that $$\CaL_{\chi_{a_1,...,a_r}}^{-1}=\mathcal{O}_{\PP^2}\left(-({{\sum_1^r a_i}\over
   n})\right)$$ See \cite{ishida} for a direct calculation of the direct image
 of the structure sheaf using that this abelian cover is the
 restriction of the Kummer cover: $\PP^{r-1}\rightarrow \PP^{r-1}$
 given by $(x_1,...,x_r) \rightarrow (x_1^n,...,x_r^n)$.
\end{example}

\begin{example} Let $D$ be the hypersurface in $\CC^n$ given by $f_1(x_1,...,x_n) \cdot...\cdot
  f_r(x_1,....,x_n)=0$ where $f_i\in \CC[x_1,..,x_n]$ are
  irreducible. Using a non-compact version of the calculation (\ref{homologycomplement}) one
  obtains $H_1(\CC^n\setminus D)=\ZZ^r$. Let 
 $p:H_1(\CC^n\setminus D) \rightarrow G$ be a surjection onto an abelian
 group.
Then to $p$ corresponds the cover $P: V_{p,\bar D} \rightarrow
\PP^n$ branched over the projective closure $\bar D$ of $D$ and
possibly over the hyperplane at infinity
with the following properties.
 The order $r_i$ of $p(\gamma_i)\in G$ coincides
 with the ramification index of the branched cover  $P$ at $P^{-1}(s)$
 where $s$ is a generic point in $D_i$. 
 At a generic point 
 $s \in \PP^{n-1}$ the ramification index at $P^{-1}(s)$ is the order
 in $G$ of the class $p(\sum (deg f_i)\gamma_i)\in H_1(\CC^n\setminus D,\ZZ)$. 
 An explicit model of such covering can be obtained as the
 normalization of the projective closure of affine complete
 intersection in $\CC^{n+r}$ given by equations:
\begin{equation}
   z_i^{r_i}=f_i(x_1,...,x_n) \ \ \ \ i=1,...,r.
\end{equation}
\end{example}

\subsection{Characteristic varieties}\label{charvarsection}
\subsubsection{Alexander invariants and jumping loci of local systems}
\label{alexandeinvariantsjump}
A multivariable generalization of Alexander polynomials was proposed
in \cite{mecharvar} as follows. Let $Y$ be a finite CW complex and $\phi:
\pi_1(Y) \rightarrow A$ be a surjection onto a finitely generated
abelian group. The unbranched abelian cover $\pi_{\phi}: Y_{\phi} \rightarrow Y$ corresponding to
$\phi$ comes with a free action of $A$ via cellular maps. Hence the
compact supported homology 
 $H_k(Y_{\phi},\CC)$ and also its exterior powers 
\begin{equation}\Lambda^iH_k(Y_{\phi},\CC) 
  \end{equation}
can be considered as the modules over the group algebra $\CC[A]$ of $A$. 
\begin{definition} \label{defdepth}(cf. \cite{homologyabcov}) The affine subvariety
  $Char_i^k(Y,\phi)$ of the torus $Spec \CC[A]$ defined as support of
  the module $\Lambda^iH_k(Y_{\phi},\CC)$ (cf.\cite{eisenbud-ca}) is called {\it the depth $i$ 
characteristic variety\index{characteristic!variety} of $Y$ in dimension $k$ (corresponding to
surjection $\phi$)}\footnote{the support is assumed to be a reduced variety}
\end{definition}

Standard results from commutative algebra (cf. 
\cite{buchsbaum} or \cite{eisenbud-ca} Ch. 20) show
that $Char_i^k(Y)$ is the zero set of the $i$-th Fitting ideal of
$\CC[A]$-module $H_k(Y_{\phi},\CC)$ i.e. the ideal generated by
 $(n-i+1)\times (n-i+1)$ minors of the matrix of a presentation of this module via
$n$ generators and $m$ relations. Moreover, for $k=1$, which unless otherwise
stated will be our focus for the rest of this section, presentation of
the module $H_1(Y_{\phi},\CC)$ can be studied using the matrix of
Fox derivatives giving presentation of $\CC[A]$-module $H_1(Y_{\phi},\tilde
p,\CC)$ where $\tilde p=\pi_{\phi}^{-1}(p)$ is the preimage of $p\in Y$ (cf. \cite{homologyabcov}). As a
consequence, this implies that for a CW complex
having as its fundamental group a group with deficiency $1$ 
and the homomorphism $\phi$ being the abelianization, the
characteristic variety $Char_1^1(Y)$ of depth 1, has codimension 1 in $Spec \CC[A]$. For example this is
the case for $Y=S^3\setminus L$ where $L$ is a link. In fact,
$Char_1^1(S^3\setminus L)$ is the zero set of 
the multivariable Alexander polynomial of $L$ (\cite{knottheory}).  
In the case of algebraic curves in $\CC^2$, the codimension of
$Char^1_1$ is typically larger than 1 except for the case $A=\ZZ$ in which case 
it is the zero set of the 1-variable Alexander polynomial discussed in section
\ref{alexandercurves} \footnote{However $Char_1(\pi_1(\PP^1\setminus
  [3]),ab)=(\CC^*)^2$ where $[3]$ is a subset containing 3 points and
  $ab$ is the abelianization of the free group}

There is a different interpretation of these subvarieties of  the complex
tori $Spec \CC[A]$ \footnote{$Spec \CC[A]$ is algebraic group with
  $Card Tor A$ connected components with $(\CC^*)^{rk A}$ being the component of identity}. 
Recall (cf. for example \cite{daviskirk}, Ch.5) that a rank $l$ local system\index{local system} on a CW
complex $Y$ is a $l$-dimensional linear 
representation of the fundamental group $\rho: \pi_1(Y)\rightarrow GL(l,\CC)$.
(Co)homology of a local system are obtained as the cohomology of the chain complex:
\begin{equation}\label{chaintwisted}
   ... \rightarrow C_i(\tilde Y,\CC)\otimes_{\pi_1(Y)} \CC^l \rightarrow ...
\end{equation}
where $C_i(Y,\CC)$ are the chains with compact support on the universal cover
considered as a module over the group ring of the fundamental group.
 
An important feature of local systems is the following:
 in the case when
$Y$ is a smooth quasi-projective variety, the cohomology of local systems
admit Hodge-deRham
description given by Deligne (cf. \cite{delignediffeq}).  
First of all representations of fundamental groups can in interpreted
as locally trivial vector bundles with constant transition functions 
(cf. \cite{delignediffeq}, Cor.1.4), which in turn, in the case when
$Y$ is a smooth manifold, can be interpreted 
as flat (integrable) connections $\nabla: \cV \rightarrow
\Omega^1_Y\otimes \cV$ on a holomorphic vector bundle $\cV$ (i.e. a
$\CC$-linear map satisfying Leibnitz rule). This
differential operator can be extended to higher degree forms and
lead to a twisted deRham complex: 
\begin{equation}\label{derhamconnection}
    ...\rightarrow \Omega^p(Y) \otimes \cV \buildrel \nabla \over \rightarrow
    \Omega^{p+1}\otimes \cV \rightarrow ... \ \ \ 
\end{equation}
The (co)homology of the latter are identified with the (co)homology
$H^*(Y,\rho)$ of the complex (\ref{chaintwisted}) since both are derived functors of 
the functor sending a representation to the subspace of invariants 
(cf. \cite{delignediffeq}, Prop. 2.27, 2.28). 

Using (co)homology with twisted coefficients of rank one local systems, one can define
{\it jumping loci}:\index{jumping loci}

\begin{equation}\label{jumpingloci}
   \cV^k_i(Y)=\{\rho \in Hom(\pi_1(Y),\CC) \vert H_k(Y,\rho) \ge i\}
\end{equation} 

There is the canonical identification of $Hom(H_1(Y,\ZZ),\CC^*)$ and 
$Spec \CC[H_1(Y,\ZZ)]$ making two collections of subvarieties of both tori
correspond to each other \footnote{the order of vanishing of the Fitting
  ideal of $H_1(Y_{\phi},\CC)$ at $(1,....,1)$ in general is different than $rk
  H_1(Y,\CC)$ which is the first Betti number of trivial local system}: 
\begin{equation}
    Char^1_i(Y) \setminus \{1\}=\cV_i^1(Y)\setminus \{1\}
\end{equation}
This was shown to be the case for any finite CW complex $Y$ in \cite{ekojump} for 
$k=1$ and arbitrary $i$ (cf. also \cite{mecharvar}, \cite{dimca07})
i.e. when one is interested in invariants of $\pi_1(Y)$ and for
$i=1$ but with arbitrary $k$ (cf. \cite{papadimasuciu}) when one considers
invariants of the homotopy type.

\subsubsection{Homology of abelian covers}\label{homologyabcov} Characteristic varieties
determine the homology of covering spaces as follows.

\begin{proposition}\label{homologyunbranched}
 (cf. \cite{homologyabcov}, \cite{hironaka}, \cite{denhamsuciu})
 Let $\phi: \pi_1(Y) \rightarrow A$ be a surjection onto
  a {\it finite} abelian group $A$ and
  let $Y_{\phi}$ be the corresponding unbranched cover of $Y$ with the
  Galois group $A$. Let $\nu(A)$ be the image of embedding $\phi^*: Spec
  \CC[A] \rightarrow Spec\CC[H_1(Y,\ZZ)]$. 
Then:
 \begin{equation}
    rk H_k(Y_{\phi},\CC)=\sum_iCard (\nu(A) \cap \cV^k_i(Y))
\end{equation}
\end{proposition}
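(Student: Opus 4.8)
The plan is to split the homology of the cover into eigenspaces for the Galois action and to match each eigenspace with a twisted homology group of the base. Because $A$ is finite and the coefficients are $\CC$, Maschke's theorem makes $\CC[A]$ semisimple and splits it as $\CC[A]=\bigoplus_{\chi\in\Char A}\CC_\chi$ into one-dimensional representations indexed by characters. Since $A$ is finite the cover $Y_\phi$ is again a finite CW complex, so $H_k(Y_\phi,\CC)$ is an ordinary finite-dimensional $\CC[A]$-module (with $A$ acting by deck transformations), and the splitting of $\CC[A]$ induces the isotypic decomposition
\begin{equation*}
  H_k(Y_\phi,\CC)=\bigoplus_{\chi\in\Char A} H_k(Y_\phi,\CC)_\chi .
\end{equation*}
Taking ranks reduces the statement to computing $\dim_\CC H_k(Y_\phi,\CC)_\chi$ for each $\chi$.

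The identification I would establish next is
\begin{equation*}
  H_k(Y_\phi,\CC)_\chi\;\cong\;H_k(Y,\rho_\chi),\qquad \rho_\chi:=\chi\circ\phi ,
\end{equation*}
where $\rho_\chi$ is the rank-one local system obtained by composing $\phi$ with $\chi$. This is a chain-level computation: the cellular chains of $Y_\phi$ are $C_*(\tilde Y)\otimes_{\ZZ[\pi_1(Y)]}\ZZ[A]$, and tensoring with $\CC$ and applying the character splitting of $\CC[A]$ turns the $\chi$-summand into exactly the twisted complex (\ref{chaintwisted}) attached to $\rho_\chi$ --- this is Shapiro's lemma for the finite quotient $\phi$. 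Under the set-theoretic identifications $Spec\,\CC[A]=\Char A$ and $Spec\,\CC[H_1(Y,\ZZ)]=Hom(H_1(Y,\ZZ),\CC^*)$, the embedding $\phi^*$ of the statement sends $\chi$ to $\rho_\chi$, so $\nu(A)=\{\rho_\chi:\chi\in\Char A\}$ and the eigenspace labelled by $\chi$ sits precisely over the point $\nu(\chi)\in\nu(A)$.

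The conclusion then follows from a counting argument. By the definition (\ref{jumpingloci}) of the jumping loci, for each fixed $\chi$ one has
\begin{equation*}
  \dim_\CC H_k(Y,\rho_\chi)=\#\{\,i\in\NN : \rho_\chi\in\cV^k_i(Y)\,\},
\end{equation*}
since $\rho_\chi\in\cV^k_i(Y)$ precisely when $\dim H_k(Y,\rho_\chi)\ge i$. Summing over $\chi\in\Char A$, using the eigenspace identification to replace the left-hand side by $\dim_\CC H_k(Y_\phi,\CC)_\chi$, and interchanging the two finite sums (a discrete Fubini over the pairs $(\chi,i)$ with $\rho_\chi\in\cV^k_i$) regroups the count by $i$ into $\sum_i Card(\nu(A)\cap\cV^k_i(Y))$, which is the asserted formula. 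I expect the one genuinely delicate point to be the eigenspace-to-twisted-homology isomorphism: one must verify that the deck-transformation action of $A$ on $H_k(Y_\phi,\CC)$ is indexed by characters compatibly with the embedding $\phi^*$ used to define $\nu(A)$, i.e. that the local system attached to $\chi$ is $\chi\circ\phi$ rather than its inverse, so that the $\chi$-eigenspace really contributes at the point $\nu(\chi)$. Granting this, semisimplicity and the summation interchange are routine.
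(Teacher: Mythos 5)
Your proof is correct and takes essentially the same route as the paper's, which likewise combines the (extended) Shapiro lemma identification $H_k(Y_{\phi},\CC)=H_k(Y,\CC[A])$ with the character decomposition $\CC[A]=\bigoplus_{\chi}\CC_{\chi}$ and the definition (\ref{jumpingloci}) of the jumping loci; your version merely spells out the chain-level details and the final counting interchange. The $\chi$ versus $\chi^{-1}$ subtlety you flag is in any case harmless for the stated formula, since $\nu(A)$ is stable under inversion of characters and so the total count is unchanged.
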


This follows from the definition of jumping loci (\ref{jumpingloci}), 
the extension of classical Shapiro lemma   (cf. \cite{denhamsuciu})
from group
cohomology to arbitrary spaces (cf. \cite{daviskirk}) i.e. in our notations
the identification $H_k(Y_{\phi},\CC)=H_k(Y,\CC[A])$, and the
decomposition $\CC[A]=\oplus_{\chi \in Spec \CC[A]} \CC_{\chi}$
where $\CC_{\chi}$ is the 1-dimensional representation of $A$ 
given by the character $\chi$.

Now consider the case of abelian branched covers which according to 
Cor. \ref{abcovviarep} are specified by the branching locus and
the abelian quotient of the fundamental group of the complement to
the latter. The proof below is a version of the argument
due to M.Sakuma (cf. \cite{sakuma}).

\begin{proposition}\label{homologysakuma}  Let $X$ be a smooth simply-connected projective surface and 
$D=\bigcup D_i$ a reduced divisor. Let $\phi: H_1(X\setminus D,\ZZ)
\rightarrow A$ be a surjection onto a finite abelian group. For a character $\chi \in A^*$ where
$A^*=Hom(A,\CC^*)$, let
$D^{\chi}$ be the union of irreducible components $D_i$ of $D$ 
such that for the meridian $\delta_i \in H_1(X\setminus D,\ZZ)$ of
$D_i$ one has $\chi(\delta_i)\ne 1$.
Denote by $d(D^{\chi},\chi)$ \footnote{The integer $d(D^{\chi},\chi)$
  is called {\it the depth} of the character\index{depth of the character} $\chi$ of the curve
  $D^{\chi}$} 
 the  maximum of the integers $i$ such that $\chi \in
Char^1_i(X\setminus D^{\chi})$, where $Char^1_i(X\setminus D^{\chi})$
is the depth $i$ 
characteristic variety of the curve $D^{\chi}$ as defined in \ref{defdepth}.  Let $\widetilde {\bar {X_{\phi}}}$ be a resolution of
singularities of the branched cover $\bar X_{\phi}$ of $X$ ramified
along $D$ corresponding to above surjection $\phi$. 
Then 
\begin{equation}\label{sakumaequation}
    rk H_1(\widetilde {\bar {X_{\phi}}},\CC)=\sum_{\chi \in A^*} d(D^{\chi},\chi)
\end{equation}
\end{proposition}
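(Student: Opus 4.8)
The plan is to pass to an $A$-equivariant resolution, decompose $H_1$ into character eigenspaces, and identify each eigenspace with a twisted cohomology group of $X$ with a suitable sub-divisor removed. First I would choose $\widetilde{\bar{X_\phi}}$ so that the Galois action of $A$ on $\bar X_\phi$ lifts to the resolution (equivariant resolution of singularities); then $A$ acts on $H_1(\widetilde{\bar{X_\phi}},\CC)$ and, since $\CC[A]$ is semisimple, one obtains the eigenspace decomposition $H_1(\widetilde{\bar{X_\phi}},\CC)=\bigoplus_{\chi\in A^*}V_\chi$, so that $rk\,H_1(\widetilde{\bar{X_\phi}},\CC)=\sum_{\chi\in A^*}\dim V_\chi$. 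The trivial character contributes $\dim V_1=b_1(X)=0$ by simple-connectivity of $X$, in agreement with $d(D^1,1)=0$; hence it suffices to treat $\chi\neq 1$ and prove $\dim V_\chi=d(D^\chi,\chi)$. This is the branched-cover upgrade of Proposition \ref{homologyunbranched}, where for the unbranched cover the $\chi$-eigenspace is already $H_1$ of the local system attached to $\chi\circ\phi$.

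For fixed $\chi\neq 1$, let $\CC_\chi$ denote the rank-one local system on $X\setminus D$ determined by $\chi\circ\phi$, and observe that its monodromy around $D_i$ is trivial precisely when $\chi(\delta_i)=1$, that is, exactly on the components not belonging to $D^\chi$; consequently $\CC_\chi$ descends to a local system, still written $\CC_\chi$, on the larger open set $X\setminus D^\chi$. Because $\chi$ factors through the cyclic quotient $A\to A/\ker\chi\cong\mathrm{im}\,\chi$, whose associated cyclic cover of $X$ is branched precisely along $D^\chi$ (a component $D_i$ lies in the branch locus iff its inertia has nontrivial image, i.e.\ iff $\chi(\delta_i)\neq 1$), the $\chi$-isotypic part $V_\chi$ factors through this intermediate cover and is therefore computed on the cyclic branched cover with branch divisor $D^\chi$. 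Using Deligne's Hodge--de Rham description of the cohomology of local systems (cf.\ \cite{delignediffeq}) together with the eigensheaf decomposition $f_*\mathcal{O}=\bigoplus_\chi\CaL_\chi^{-1}$ recalled above, I would identify $V_\chi$ with the interior cohomology $\mathrm{Im}\bigl(H^1_c(X\setminus D^\chi,\CC_\chi)\to H^1(X\setminus D^\chi,\CC_\chi)\bigr)$.

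The decisive point is that, since $\chi(\delta_i)\neq 1$ for every component $D_i$ of $D^\chi$, the local system $\CC_\chi$ has nontrivial monodromy around each branch component, so the natural map $H^1_c(X\setminus D^\chi,\CC_\chi)\to H^1(X\setminus D^\chi,\CC_\chi)$ is an isomorphism, the boundary contributions being controlled by invariants and coinvariants of a nontrivial monodromy and hence vanishing. Therefore $\dim V_\chi=\dim H^1(X\setminus D^\chi,\CC_\chi)$, and by the identification $Char^1_i(X\setminus D^\chi)\setminus\{1\}=\cV^1_i(X\setminus D^\chi)\setminus\{1\}$ recorded above, this common dimension equals the largest $i$ with $\chi\in Char^1_i(X\setminus D^\chi)$, i.e.\ the depth $d(D^\chi,\chi)$. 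Summing over $\chi\in A^*$ then yields (\ref{sakumaequation}), following the strategy of \cite{sakuma}.

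The step I expect to be the main obstacle is the identification of the eigenspace $V_\chi$ of the \emph{resolved} branched cover with the interior twisted cohomology over $X\setminus D^\chi$. One must verify simultaneously that passing from the singular branched cover $\bar X_\phi$ to its resolution does not alter the $\chi$-eigenspace of $H_1$ (birational invariance makes the total $b_1$ insensitive to the resolution, but the equivariant behavior of each individual eigenspace under the blow-ups over the singular points $p_i$ of $D$ and the points $D_1\cap D_2$ must be tracked carefully), and that the components with trivial $\chi$-monodromy genuinely drop out, so that the effective branch locus is $D^\chi$ rather than $D$. This local bookkeeping over the non-normal-crossing points, where the normalization of the cover is most delicate, is precisely where Sakuma's argument requires care.
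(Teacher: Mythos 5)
Your argument fails at its self-declared ``decisive point.'' The claim that $H^1_c(X\setminus D^\chi,\CC_\chi)\to H^1(X\setminus D^\chi,\CC_\chi)$ is an isomorphism whenever $\chi$ has nontrivial monodromy around every component of $D^\chi$ is false: if $X\setminus D^\chi$ is affine (e.g.\ $X=\PP^2$, $D^\chi$ a curve), Artin vanishing gives $H^3(X\setminus D^\chi,\CC_{\bar\chi})=0$, so Poincar\'e duality forces $H^1_c(X\setminus D^\chi,\CC_\chi)=0$; hence the map cannot be surjective precisely when the depth $d(D^\chi,\chi)=\dim H^1(X\setminus D^\chi,\CC_\chi)$ is positive, i.e.\ in every interesting case. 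Concretely, for a six-cuspidal sextic (cf.\ Example \ref{sectionsofbundles}) and $\chi$ a primitive sixth root of unity, $\dim H^1(X\setminus D^\chi,\CC_\chi)=1$ but $H^1_c=0$, while the $\chi$-eigenspace $V_\chi$ of the resolved sixfold cover is one-dimensional ($b_1=2$); so your other key identification, $V_\chi\cong\mathrm{Im}\bigl(H^1_c\to H^1\bigr)$, is also false, and the two errors merely compensate to reproduce the correct formula. The first error comes from treating the boundary of a regular neighborhood of $D^\chi$ as a circle bundle over the smooth locus: it also contains the link complements of the singular points, whose twisted $H^1$ is exactly the local Alexander/characteristic-variety data and does not vanish at the relevant characters. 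The second comes from conflating $\mathrm{Im}(H^1(\bar V)\to H^1(V))$ --- which is what the eigenspace of the smooth projective model computes, since $H^1(\bar V)\hookrightarrow H^1(V)$ --- with interior cohomology: the Gysin sequence shows $\mathrm{Im}(H^1_c(V)\to H^1(\bar V))=\ker(H^1(\bar V)\to H^1(E))$, which can vanish when the boundary divisor $E$ dominates the Albanese, so interior cohomology is in general strictly smaller.

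The correct mechanism, as in the paper, is different on both counts and renders your flagged-but-unresolved obstacle moot. First, no equivariant resolution is needed: Mumford's nondegeneracy of the intersection form of a resolution of a surface singularity yields $rk\, H_1(\widetilde{\bar X_{\phi}},\CC)=rk\, H_1(\bar X_{\phi}\setminus Sing(\bar X_{\phi}),\CC)$ (equation (\ref{mumfordcons})), a statement about the \emph{total} first Betti number, so no per-eigenspace bookkeeping through blow-ups is required; the $A$-action is then used only on $\bar X_{\phi}\setminus Sing(\bar X_{\phi})$, where it is honest. Second, after the transfer reduction to the cyclic quotient by $\ker\chi$, the comparison is not $H^1_c$ versus $H^1$ but branched versus unbranched: the restriction $H^1((X\setminus Sing\,D)_\chi)\to H^1((X\setminus D^\chi)_\chi)$ has kernel and cokernel on which $\mathrm{Im}(\chi)$ acts trivially, so for $\chi\ne 1$ the eigenspace of the partially branched cover equals the \emph{full} group $H^1(X\setminus D^\chi,\CC_\chi)$ of Proposition \ref{homologyunbranched} --- not its interior part. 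As written, your proof establishes neither of its two intermediate identities and leaves the resolution step unproved, so it does not prove the proposition.
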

\begin{proof} Denote by $Sing \bar X_{\phi}$ the set of singularities
  of $\bar X_{\phi}$. This is a finite set mapped by the covering map
  into the set $Sing(D)$ of singularities
  of $D$. 
We will start by showing that 
\begin{equation}\label{mumfordcons}
    rk H_1(\widetilde {\bar {X_{\phi}}},\CC)=rk H_1( {\bar X_{\phi}}
    \setminus Sing(\bar X_{\phi}),\CC)
\end{equation}
 Indeed, if $E$ is the exceptional set of a resolution 
 $\widetilde {\bar {X_{\phi}}} \rightarrow  {\bar X_{\phi}}$, then 
\begin{equation}
\widetilde {\bar {X_{\phi}}}\setminus E= {\bar X_{\phi}}\setminus
Sing(\bar X_{\phi})
\end{equation}
On the other hand, the exact sequence of the pair $(\widetilde{\bar
  X_{\phi}}, \widetilde{\bar {X_{\phi}}}\setminus E)$ and  the identification
$H^i(\widetilde{\bar {X_{\phi}}},\widetilde{\bar {X_{\phi}}}\setminus
E)=H_{4-i}(E)$ yield:
\begin{equation}\label{mumford1}
 0 \rightarrow  H^1(\widetilde{\bar {X_{\phi}}}) \rightarrow H^1(\widetilde{\bar
    {X_{\phi}}}\setminus E) \rightarrow H_2(E) \rightarrow H^2(\widetilde{\bar {X_{\phi}}})
\end{equation}
Together with injectivity of the right map in (\ref{mumford1}), which is a consequence of
Mumford theorem on non-degeneracy of the intersection form on a
resolution of a surface singularity (cf. \cite{mumford}), we obtain (\ref{mumfordcons}).

By universal coefficients theorem, allowing to switch to cohomology, the identity (\ref{sakumaequation})
will follow from the following calculation of dimensions of $\chi$-eigenspaces 
\begin{equation}\label{sakumaequation1}dim H^1(\bar X_{\phi}\setminus Sing(\bar
X_{\phi}))_{\chi}=d(D^{\chi},\chi)
\end{equation}
for all characters $\chi \in Char(A)$.

To show (\ref{sakumaequation1}), note that the group $A$ acts on $\bar X_{\phi}\setminus Sing(\bar
X_{\phi})$ with the quotient $X\setminus Sing(D)$. For any
character $\chi$ of group $A$ we consider the cyclic branched cover
$(X\setminus Sing(D))_{\chi}$ of 
$X\setminus Sing(D)$ corresponding to composition $\pi_1(X\setminus
D)\rightarrow A \rightarrow Im(\chi)$. One has the biregular isomorphism:
\begin{equation}\label{kernelchi}
       \bar X_{\phi}\setminus Sing(\bar X_{\phi})/Ker \chi=(X\setminus Sing(D))_{\chi}
\end{equation} 
The group $A/Ker(\chi)=Im(\chi)$ acts on the left side of
(\ref{kernelchi}) and the 
identification (\ref{kernelchi}) is $Im(\chi)$-equivariant. The transfer $H^*(\bar
X_{\phi} \setminus  Sing(\bar
X_{\phi})) \rightarrow H^*(\bar X_{\phi}\setminus Sing(\bar
X_{\phi})/Ker)$
(cf. \cite{bredon}, p.118) provides $Im(\chi)$-equivariant
isomorphism $H^*(\bar X_{\phi}\setminus Sing(\bar
X_{\phi}))^{Ker(\chi)}=
H^*(\bar X_{\phi}\setminus Sing(\bar X_{\phi})/Ker(\chi))$
which implies that 
\begin{equation}\label{sakumastep}
H^*(\bar X_{\phi}\setminus Sing(\bar
X_{\phi})_{\chi}=H^*(\bar X_{\phi}\setminus Sing(\bar
X_{\phi})^{Ker(\chi)}_{\chi}=H^*((X\setminus Sing(D))_{\chi})_{\chi}
\end{equation}
(\ref{sakumaequation1}) is obvious for trivial character and the isomorphism (\ref{sakumastep}) shows that 
(\ref{sakumaequation}) follows from the cyclic case of the Proposition for non-trivial $\chi$
\footnote{we also use that removal 0-dimensional set $Sing D$ from a 4
  -dimensional manifold does not change the first Betti number}.

Finally, the cyclic cover $(X\setminus Sing(D))_{\chi}$ is a totally ramified 
cover of $X\setminus Sing D$ branched over $D^{\chi}$ and the cyclic case with
non-trivial $\chi$ follows from the calculation of homology of unbranched covers in Proposition
\ref{homologyunbranched} since the action of $Im(\chi)$ on kernel and
cokernel of the map induced by the embedding of the cyclic unbranched
cover $(X\setminus D^{\chi})_{\chi}$
with Galois group $Im(\chi)$ 
\begin{equation}
  H^1((X-Sing D)_{\chi}) \rightarrow H^1((X\setminus
D^{\chi})_{\chi}) 
\end{equation}
is trivial.
\end{proof}
\subsubsection{Structure of characteristic varieties}

The central result on the structure of characteristic varieties of
quasi-projective manifolds is obtained from their interpretation
(cf. \cite{ekojump}) as the jumping loci of the cohomology of local
systems, which allows to apply deep Hodge theoretical methods \cite{arapura}. It asserts that the irreducible components of characteristic varieties
are a finite order cosets of subtori of $Spec \CC[H_1(X\setminus D,\ZZ)]$
and that these components are the pull backs of the characteristic varieties of
fundamental groups of curves via holomorphic maps. The origins of such 
correspondence are going back to deFranchis (\cite{zariskialgsurf}),
Beauville (\cite{beauville}), Green-Lazarsfeld
(\cite{greenlaz}), Simpson (\cite{simpson}) in projective case with 
quasi-projective case being addressed by
Arapura (\cite{arapura}).  In a very special
case when the quasi-projective manifold is a complement to an irreducible plane
singular curve the assertion of the finitness of the order of cosets
becomes the cyclotomic property of the roots of 
Alexander polynomials (\cite{mealex}, cf.Theorem
\ref{theoremdivisibility}) and does not require Hodge theory (unlike
the Theorem \ref{translated}).
We shall quote an orbifold version (cf. \cite{artalorbifolds}) of this correspondence between the
holomorphic maps and the components of characteristic varieties  

\begin{theorem}\label{translated} (cf.\cite{arapura},\cite{artalorbifolds}). Let
  $\cV^1_i(X\setminus D)^{irr}$ be an irreducible component of jumping locus
  of 1-dimensional cohomology of a smooth quasi-projective variety
  $X\setminus D$. Then $\cV^1_i(X\setminus D)^{irr}$ is a coset of
  finite order of a subtorus of the commutative algebraic group $Spec\CC[H_1(X\setminus D,\ZZ)]$.
 Moreover, there exist an orbifold curve $C^{orb}$, an
  irreducible component $\cV_i(\pi^{orb}_1(C^{orb}))^{irr}$ and
  holomorphic orbifold map $f: X\setminus D \rightarrow C^{orb}$ 
  such $\cV^1_i(X\setminus D)^{irr}=f^*(\cV_i(\pi^{orb}_1(C^{orb}))^{irr})$.
\end{theorem}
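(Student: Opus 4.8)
The plan is to follow Arapura's Hodge-theoretic strategy \cite{arapura}, together with its orbifold refinement \cite{artalorbifolds}. First I would pass from the characteristic varieties to the jumping loci of rank-one local systems, using the identification $Char^1_i(X\setminus D)\setminus\{1\}=\cV^1_i(X\setminus D)\setminus\{1\}$ recorded above (following \cite{ekojump}), so that a point of $\cV^1_i$ is a rank-one representation $\rho\in Hom(\pi_1(X\setminus D),\CC^*)=Spec\,\CC[H_1(X\setminus D,\ZZ)]$ with $\dim H^1(X\setminus D,\rho)\ge i$. I would then fix a smooth compactification of $X\setminus D$ with normal crossing boundary and invoke Deligne's mixed Hodge theory (cf. \cite{delignediffeq}): for a \emph{unitary} character $\rho$, the cohomology $H^1(X\setminus D,\rho)$ is computed by a logarithmic de Rham complex and carries a functorial Hodge decomposition. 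After reducing to unitary characters, where these Hodge structures are available, the jumping condition becomes a statement about the rank of the cup-product maps on the graded pieces.

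The geometric heart of the argument is a Castelnuovo-de Franchis type dichotomy. Along a positive-dimensional component of $\cV^1_i$, the infinitesimal form of the jumping condition --- the resonance condition governing the tangent cone at a torsion point --- forces the existence of a two-dimensional subspace of the logarithmic $(1,0)$-cohomology that is isotropic for the wedge product. I would then apply the isotropic subspace theorem, in the lineage of Castelnuovo-de Franchis and its extensions by Beauville \cite{beauville} and Green-Lazarsfeld \cite{greenlaz}, and in the quasi-projective/orbifold form of \cite{arapura} and \cite{artalorbifolds}, to produce a holomorphic map $f\colon X\setminus D\to C^{orb}$ onto an orbifold curve that pulls back the isotropic forms. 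The orbifold structure on $C^{orb}$ is forced by the multiplicities with which the components of $D$ and of the boundary at infinity are carried to the special points of $C^{orb}$; this is exactly the data needed to realise $\rho$ as a pullback $f^*(\chi)$ of a character of the orbifold fundamental group $\pi_1^{orb}(C^{orb})$.

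Granting the fibration $f$, the equality $\cV^1_i(X\setminus D)^{irr}=f^*\bigl(\cV_i(\pi_1^{orb}(C^{orb}))^{irr}\bigr)$ follows from the Leray spectral sequence of $f$: since the generic fiber is connected and the restricted local system is trivial there, $H^1(X\setminus D,f^*\chi)$ is governed by $H^1(C^{orb},\chi)$, so the component coincides with the image of the whole character subtorus $f^*\,Hom(\pi_1^{orb}(C^{orb}),\CC^*)$, a subtorus of $Spec\,\CC[H_1(X\setminus D,\ZZ)]$. To obtain that the coset is of finite order I would exploit the arithmetic symmetry of the jumping loci: they are cut out by Fitting ideals defined over $\overline{\QQ}$ and are stable under the Galois action and the power maps $\rho\mapsto\rho^n$ on the character torus, and a subtorus coset invariant under these symmetries must be translated by a torsion character --- this is the mechanism behind Simpson's theorem \cite{simpson}, in its manifestation as the Manin-Mumford phenomenon for tori.

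The hard part will be the production of the orbifold fibration from a component of the jumping locus: making the generalised Castelnuovo-de Franchis argument run uniformly at non-trivial (and a priori non-unitary) characters, rather than only in the tangent cone at the identity, is the principal obstacle, and it is where the real input of Hodge theory enters. Equally delicate is the bookkeeping at infinity --- tracking the logarithmic poles and deciding whether a component of $D$ contributes a puncture of $C^{orb}$ or instead an orbifold multiplicity --- which is precisely the point at which the quasi-projective and orbifold refinements of \cite{arapura} and \cite{artalorbifolds} must go beyond the projective prototype of \cite{beauville}, \cite{greenlaz} and \cite{simpson}.
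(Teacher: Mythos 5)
The paper itself offers no proof of Theorem \ref{translated}: it is quoted with pointers to \cite{arapura} and \cite{artalorbifolds}, so the only meaningful comparison is with those sources. Measured against them, your outline reconstructs the standard strategy faithfully: the identification of characteristic varieties with jumping loci of rank one local systems, Deligne's logarithmic Hodge theory at unitary characters, an isotropic-subspace (Castelnuovo--de Franchis) argument producing the pencil $f$ onto an orbifold curve, the Leray-type argument identifying the component with $f^*$ of the character torus of $\pi^{orb}_1(C^{orb})$, and you correctly isolate the two genuinely delicate points --- running the argument at non-unitary characters, and deciding when a boundary component yields a puncture versus an orbifold multiplicity of $C^{orb}$.

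One mechanism as you state it would, however, fail. The jumping loci are \emph{not} stable under the power maps $\rho \mapsto \rho^n$: jumping of $H^1(X\setminus D,\rho)$ says nothing about $H^1(X\setminus D,\rho^n)$, so the ``Manin--Mumford for tori'' argument you propose has no symmetry to act on. Simpson's actual argument in \cite{simpson} is different: the jump loci are closed subschemes defined over $\QQ$ (indeed over $\ZZ$) simultaneously in the Betti and the Dolbeault realizations of the rank one moduli space, and a Gelfond--Schneider transcendence argument then forces any such subvariety to be a finite union of torsion translates of subtori. Moreover, Simpson proves this for compact K\"ahler $X$; in the quasi-projective setting the finite-order statement is precisely what the orbifold refinement of \cite{artalorbifolds} supplies (with later completions by Budur and Wang in full generality), where the torsion translation is read off concretely from the multiple fibers of the orbifold pencil --- that is, from torsion characters of $\pi^{orb}_1(C^{orb})$ --- rather than from an abstract torus-symmetry argument. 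With that replacement, and with the reduction to unitary characters carried out as in \cite{arapura}, your outline agrees with the proof in the cited literature.
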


\begin{corollary} The array of Betti numbers of finite abelian covers of $X$
  branched over $D$ determines the characteristic varieties of the
  fundamental group of the complement.
\end{corollary}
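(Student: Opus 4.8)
The plan is to exhibit the passage from the Betti-number array to the characteristic varieties as a two-step procedure: first recover, from the Betti numbers, the depth $d(D^{\chi},\chi)$ of every torsion character $\chi$ of $H_1(X\setminus D,\ZZ)$ occurring in Proposition \ref{homologysakuma}, and then assemble these depths into the characteristic varieties by means of the structure theorem \ref{translated}. The conceptual input that makes the second step possible is that, by Theorem \ref{translated}, each irreducible component of $Char^1_i(X\setminus D)$ is a torsion translate of a subtorus of $Spec\,\CC[H_1(X\setminus D,\ZZ)]$; since the torsion points of a coset of a subtorus are Zariski dense in it, such a variety is completely determined by the function assigning to each torsion character its depth. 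Thus it suffices to recover this depth function from the Betti data.

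For the first step I would run a M\"obius inversion over the cyclic subcovers attached to a fixed torsion character. Let $\chi$ have order $n$ and, for each $d\mid n$, let $\bar X_{\chi^{n/d}}$ be the branched cover determined by the order-$d$ character $\chi^{n/d}$, whose first Betti number is part of the given array. Writing $\langle\chi\rangle$ for the cyclic group generated by $\chi$, Sakuma's formula (Proposition \ref{homologysakuma}) reads
\begin{equation}
  rk\,H_1(\widetilde{\bar X}_{\chi^{n/d}},\CC)=\sum_{e\mid d} g(e),\qquad
  g(e):=\sum_{\substack{\psi\in\langle\chi\rangle\\ \ord\psi=e}} d(D^{\psi},\psi),
\end{equation}
so inversion over the divisor lattice yields $g(n)=\sum_{d\mid n}\mu(n/d)\,rk\,H_1(\widetilde{\bar X}_{\chi^{n/d}},\CC)$. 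The primitive characters of $\langle\chi\rangle$ are exactly the $\chi^{k}$ with $\gcd(k,n)=1$; for these one has $D^{\chi^{k}}=D^{\chi}$ (the meridians on which $\chi$ and $\chi^{k}$ are nontrivial coincide), and $d(D^{\chi^{k}},\chi^{k})=d(D^{\chi},\chi)$, because the characteristic varieties are supports of $\QQ[A]$-modules, hence defined over $\QQ$, so that depth is invariant under the Galois action $\chi\mapsto\chi^{k}$. Therefore $g(n)=\varphi(n)\,d(D^{\chi},\chi)$ and $d(D^{\chi},\chi)$ is recovered as $g(n)/\varphi(n)$.

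Carrying this out for every torsion $\chi$ recovers the depth in $Char^1(X\setminus D)$ of every totally ramified character (those with $D^{\chi}=D$), and more generally the depth of every all-nontrivial torsion character in $Char^1(X\setminus D')$ for each subdivisor $D'\subseteq D$, since the branched covers of $(X,D')$ occur among the covers of $(X,D)$ as those surjections killing the meridians of $D\setminus D'$. Because the totally ramified torsion characters are Zariski dense in $Spec\,\CC[H_1(X\setminus D,\ZZ)]$, this already determines every irreducible component of $Char^1_i(X\setminus D)$ not contained in one of the coordinate subtori $\{\chi(\delta_j)=1\}$. The remaining components lie in such subtori, which are the character tori of the deletions $X\setminus(D\setminus D_j)$, and they are to be recovered by descending induction on the number of components from the subdivisor data just described, the base case $D'=\emptyset$ being $Char^1(X)=\{1\}$ since $X$ is simply connected.

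The step I expect to be the main obstacle is precisely this reconstruction inside the coordinate subtori. The depth of a character $\chi$ with $\chi(\delta_j)=1$ measured in $Char^1(X\setminus D)$ is \emph{not} equal to its depth $d(D^{\chi},\chi)$ measured after deleting the unramified components: passing from $X\setminus D^{\chi}$ to $X\setminus D$ introduces a Gysin/residue correction supported on the components where $\chi$ is trivial, as one already sees when $D$ is a pair of transverse lines, where the trivial character has depth $2$ in $(\CC^*)^2$ but depth $0$ on the smaller complements. Controlling this correction---showing that it is itself governed by the restrictions of $\chi$ to those components, hence by characteristic-variety data of strictly smaller complements already recovered---is the technical heart of the argument and is what makes the induction close.
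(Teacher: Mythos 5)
Your overall skeleton is exactly that of the paper's (very terse) proof: the paper argues that components of the characteristic varieties are torsion translates of subtori (Theorem \ref{translated}), hence determined by their torsion points, and that a torsion point $\chi$ lies in $Char^1_i$ if and only if the multiplicity of the $\chi$-eigenspace in the cyclic cover corresponding to $Im(\chi)$ is at least $i$ --- and then says ``the claim follows.'' Your first step is a genuine improvement in rigor on this: the given data are Betti numbers, not eigenspace multiplicities, and your M\"obius inversion over the cyclic subcovers $\bar X_{\chi^{n/d}}$, combined with Galois invariance of depth (the modules are defined over $\QQ$, so conjugate torsion characters have equal depth, and $D^{\chi^k}=D^{\chi}$ for $\gcd(k,n)=1$), is precisely the missing justification that the function $\chi\mapsto d(D^{\chi},\chi)$ can be read off from the raw Betti array via Proposition \ref{homologysakuma}. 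That computation is correct.

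The gap is the final step you yourself flag, and it does not close in the form you propose. Equation (\ref{sakumaequation1}) shows that \emph{every} abelian branched cover registers a character $\chi$ only with multiplicity $d(D^{\chi},\chi)$, the depth computed on the \emph{smaller} complement $X\setminus D^{\chi}$; so the entire Betti array determines exactly the collection $\{d(D^{\chi},\chi)\}$ and nothing more, i.e.\ the non-coordinate components of $Char^1_i(X\setminus D')$ for all subdivisors $D'\subseteq D$. The upstairs depth of a coordinate torsion character is genuinely not a function of this data together with ``characteristic-variety data of strictly smaller complements'': already for three concurrent lines in $\PP^2$ (where $\pi_1=F_2$ and the depth-one variety is the whole two-torus), a torsion character trivial on one meridian and nontrivial on the other two has depth $1$ upstairs, while the corresponding cyclic cover is branched over two lines only and is a rational surface with $b_1=0$, so the pointwise ``iff'' fails there; more seriously, by \cite{ajcarmona2} --- cited in the paper for exactly this phenomenon in the discussion after Definition \ref{essentialcomp} --- there exist \emph{essential} coordinate components, not pullbacks from any subdivisor, together with depth jumps of non-essential ones, and these are invisible to all branched covers by Sakuma's formula. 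So the ``Gysin correction'' you defer is the whole difficulty, and your descending induction cannot recover it. To be fair, the paper's own two-sentence proof asserts the pointwise equivalence without addressing this and its literal reading fails at the same coordinate characters; what your argument (and the paper's) rigorously establishes is the corollary for the components not contained in coordinate subtori of the character tori of $D$ and of its subdivisors, and the statement should be read with that caveat.
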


\begin{proof} Translated subgroups are specified by the points of
  finite order on the torus which they contain. A point $\chi$ of the
  finite order belongs to the $i$-th characteristic variety if and
  only if the multiplicity of $\chi$ in the cyclic cover corresponding
  to the group $Im(\chi)$ is at least $i$. The claim follows.
 \end{proof}

A very important application of translated subgroup property is that
it provides a necessary conditions on a group be quasi-projective
i.e. to be a fundamental group of a 
smooth quasi-projective variety. For application of these and 
ideas from different ideas, not discussed here, to the problem of 
characterisation of quias-projective and quasi-Kahler groups
(in particular the comparison with the fundamental groups of 3-manifolds)
 see: \cite{meabhyankar},
\cite{dimcasuciu3manifolds}, \cite{dimcapapadima},
\cite{artalorbifolds}, \cite{aranori}, \cite{kots}, \cite{friedl},\cite{biswas}

\subsection{Isolated non-normal crossings\index{isolated non-normal crossings}}\label{isolatednonnormalsection} A generalization of results
on Alexander invariants from  sections
\ref{alexandercurves}- \ref{charvarsection} providing invariants of the homotopy
type beyond fundamental groups was proposed in \cite{meample}.
The starting point is the following:
\begin{theorem}\label{amplenc} (cf. \cite{meample}, Th. 2.1) Let $X, dim X> 2$ be a smooth simply connected projective
  variety and let $D$ be a divisor such that all its
  irreducible components are smooth and ample. Then $\pi_1(X\setminus
  D)$ is abelian and $\pi_i(X\setminus D)=0$ for $2 \le i \le dim X-1$. 
\end{theorem}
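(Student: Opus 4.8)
The backbone of the argument is the remark that $X\setminus D$ is a smooth affine variety. Indeed each $D_j$ is ample, so $\mathcal{O}_X(\sum_j D_j)=\bigotimes_j\mathcal{O}_X(D_j)$ is ample and $D$ is the support of an ample divisor; hence $U:=X\setminus D$ is the complement of an ample divisor, i.e. affine, of dimension $n=\dim X$. I would establish the two assertions by two different Lefschetz-type reductions anchored on this fact, and I expect the commutativity of $\pi_1$ to be the easy half and the vanishing of the higher homotopy groups to be the real work.

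For commutativity I would cut down to a surface. Embedding $X$ by a large multiple of $\sum_j D_j$ and applying the Lefschetz hyperplane theorem for quasi-projective varieties (Hamm--L\^{e}) to the affine $U$, a generic two-dimensional linear section $S$ satisfies $\pi_1(X\setminus D)\cong\pi_1(S\setminus(D\cap S))$, while the classical Lefschetz theorem applied to $X$ itself shows that $S$ is again smooth and simply connected. For $S$ generic the trace $D\cap S$ is a reduced nodal curve: its components $C_j=D_j\cap S$ are smooth and ample on $S$ (so $C_j^2>0$), and because, in a normal-crossing configuration, the loci where three or more components of $D$ meet have codimension $\ge 3$, they are avoided by $S$, so distinct $C_j$ meet only in transverse double points. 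Now I would feed this into Nori's weak Lefschetz theorem. Applying Corollary \ref{norinodal} (equivalently the centrality statement following Theorem \ref{nori}) to the pair consisting of $C_j$ and $E_j:=\bigcup_{k\ne j}C_k$, the hypotheses hold: $C_j$ is smooth, hence has no nodes of its own, it meets $E_j$ transversally, and $C_j^2>0=4\cdot r(C_j)$. Thus the kernel of $\pi_1(S\setminus(D\cap S))\to\pi_1(S\setminus E_j)$, which is normally generated by the meridian of $C_j$, lies in the centre. As this holds for every $j$ and, $S$ being simply connected, the meridians of the $C_j$ generate $\pi_1(S\setminus(D\cap S))$, the group is generated by central elements and is therefore abelian. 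I would stress that transversality of the crossings is used in an essential way: without it the conclusion fails (three hyperplanes through a common line in $\PP^3$ already yield a free non-abelian $\pi_1$), which is precisely why Corollary \ref{norinodal} requires a transverse configuration.

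For the higher homotopy groups I would again invoke affine Lefschetz, now in its connectivity form: for $U$ affine of dimension $n$ and $H$ a generic hyperplane, the inclusion $U\cap H\hookrightarrow U$ is $(n-1)$-connected, so iterating over a flag of generic sections $S_{i+1}$ gives $\pi_i(X\setminus D)\cong\pi_i(U\cap S_{i+1})$ for $2\le i\le n-1$ and identifies $\pi_1$ along the whole flag. The point, however, is not this stable reduction—which by itself cannot force vanishing—but the positivity of $D$. Passing to the universal cover $\widetilde U$, the vanishing $\pi_i(X\setminus D)=0$ for $2\le i\le n-1$ is equivalent to $\widetilde U$ being $(n-1)$-connected, and since $\widetilde U$ is simply connected this is, by Hurewicz, the vanishing of the twisted homology $H_i(U;\ZZ[A])=0$ in that range, where $A=\pi_1(U)$ is the abelian group found above. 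I would prove this vanishing from ampleness: a Lefschetz pencil drawn from the ample system $|\sum_j D_j|$ endows a blow-up of $U$ with a fibration over $\CC$ whose generic fibre is itself an affine complement of one lower dimension and is highly connected, and an Artin--Sabbah type vanishing theorem for constructible sheaves on affine varieties (applied to the push-forward of the constant sheaf along $\widetilde U\to U$) yields $H_i(U;\ZZ[A])=0$ for $i<n$.

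The main obstacle is exactly this last step. The affine Lefschetz theorem controls $U$ directly and gives it the correct cohomological dimension, but its infinite abelian cover $\widetilde U$ is not itself affine, and it is the homology of this cover—the Alexander-type module over $\ZZ[A]$, in the spirit of Theorem \ref{theoremdivisibility}—that governs the homotopy groups $\pi_i$. Matching the connectivity coming from the ample pencil against this twisted homology, and verifying that the ampleness hypothesis on the individual components is inherited by the generic sections used in the induction, is the delicate part; the commutativity of $\pi_1$, by contrast, drops out cleanly from Nori's theorem once the surface reduction is in place.
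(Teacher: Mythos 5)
You were right to flag that the statement, as printed, tacitly needs the normal crossing hypothesis that is present in \cite{meample}, Th.~2.1 (the survey defers to that paper and gives no proof here); your three-planes-through-a-line example is exactly why it cannot be dropped, and your surface reduction silently restores it when you assert that the triple loci have codimension $\ge 3$ and that the trace $D\cap S$ is nodal. Granting this, your first half is essentially the intended argument: $U=X\setminus D$ is affine, Zariski--Lefschetz (Hamm--L\^{e}) identifies $\pi_1(U)$ with $\pi_1(S\setminus D\cap S)$ for a generic simply connected surface section $S$, and Nori finishes. Two small points: since $S$ is simply connected you can apply Corollary~\ref{norinodal} just once, with $D\cap S$ the whole nodal trace (each component is smooth, so $r(C_j)=0<C_j^2$) and $E=\emptyset$, so that $N=\ker\bigl(\pi_1(S\setminus D\cap S)\to\pi_1(S)\bigr)$ is the entire group and is abelian; your component-by-component variant instead needs centrality of the kernel \emph{relative} to $E_j$, which the survey records only in the absolute form ($C^2>4r(C)$ for $\pi_1(X\setminus C)\to\pi_1(X)$ after Theorem~\ref{nori}), so as written it rests on a version of Nori's theorem you would still have to quote precisely.

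The second half has a genuine gap at its crux. Artin-type vanishing on an affine $n$-fold bounds constructible cohomology \emph{above} the dimension: $H^i(U,\mathcal{F})=0$ for $i>n$, equivalently $U$ has the homotopy type of an $n$-complex; pushing $\ZZ[A]$ (or its perverse shift) through this or its compactly supported dual yields only $H_i(U;\ZZ[A])=0$ for $i>n$ --- the side you already have for free --- and nothing in the range $2\le i\le n-1$ that you need. Indeed no statement of the shape ``smooth affine complement of an ample divisor $\Rightarrow$ twisted homology vanishes below the middle'' can be true: for $U'=\CC^{n+1}\setminus\{f=0\}$ with $f$ the equation of the cone over a smooth hypersurface, the universal cover is the global Milnor fibre, so $H_n(U';\ZZ[A])\ne 0$ although $U'$ is smooth affine of dimension $n+1$ (and for unions of cones the module in degree $n$ has support $t_1^{d_1}\cdots t_r^{d_r}=1$, Example~\ref{isolatednonnormalexample}). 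The failure happens precisely because such components are singular, i.e.\ smoothness of the $D_i$ and the normal crossing condition must be consumed in the top degree $i=n-1$ --- the only degree your pencil/Lefschetz induction does not already handle, since the connectivity of the generic fibre is just the inductive statement one dimension down and reproduces only $i\le n-2$. In \cite{meample} this is where the real work sits: the homology of the abelian covers is computed through the eigensheaf decomposition (\ref{pardinidata}) and killed by Kodaira/Kawamata--Viehweg--Nadel-type vanishing using ampleness of each $D_i$, with normal crossings guaranteeing that no local Alexander-type contributions survive (locally the universal abelian cover of an NC germ complement is contractible; compare the divisibility mechanism of Theorem~\ref{theoremdivisibility} and Section~\ref{isolatednonnormalsection}). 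A purely sheaf-theoretic affine vanishing cannot substitute for this, so your final step, as sketched, does not close.
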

This theorem and Lefschetz hyperplane sections theorem have the
following as an immediate corollary:
\begin{corollary}\label{cormeample}
(\cite{meample}) Let $X$ be as in Theorem \ref{amplenc}, let $D=\bigcup_1^r D_i$ be a
  divisor with ample irreducible components and let $NNC(D)$ be the subvariety of $X$
  consisting of points $x \in X$ at which $D$ fails to be a normal crossings
  divisor \footnote{We call $D$ a
    divisor with isolated non-normal crossings, if $\dim NNC(D)=0$.}
.  Let $s=dim NNC(D)$ \footnote{the convention is that if $x
    \notin D$ then $D$ does have normal crossing at $x$ and the
    dimension of empty set is $-1$}. Then $\pi_i(X\setminus D)=0$ for $i
 \le d-s-2$. Moreover, if $H=\bigcap_1^s H_i$ is a sufficiently general
 intersection of very ample
 divisors on $X$ then $D \cap H$ is a divisor on $X\cap H$ with
 isolated non normal crossings and
\begin{equation}
   \pi_i(X\setminus D)=\pi_i((X\setminus D) \cap H) \ \ \ i \le d-s-1
\end{equation}  
In particular the first non-trivial homotopy group of the complement
to a divisor with ample components can be calculated using the divisor
$D\cap H$
with isolated non-normal crossings on $H$.
\end{corollary}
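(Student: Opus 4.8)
The plan is to deduce the Corollary from Theorem \ref{amplenc} by slicing $X$ with generic linear sections and transporting both the vanishing and the homotopy groups through the Lefschetz hyperplane theorem for quasi-projective varieties. Set $d=\dim X$ and recall the convention $\dim\emptyset=-1$, so that in the fully normal-crossing case $NNC(D)=\emptyset$, $s=-1$, the asserted range $i\le d-s-2=d-1$ is exactly the one produced by Theorem \ref{amplenc} (with the vanishing understood for $i\ge 2$, since $\pi_1$ is only abelian). Throughout, ``$m$-equivalence'' means $\pi_k$ is an isomorphism for $k<m$ and onto for $k=m$.

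First I would record the Lefschetz input. Fix a Whitney stratification of the pair $(X,D)$. For a very ample $H_0$ transverse to every stratum, the section $X\cap H_0$ is smooth (Bertini), $D\cap H_0$ is again a divisor with ample components, and the inclusion $(X\setminus D)\cap H_0\hookrightarrow X\setminus D$ is a $(d-1)$-equivalence by Hamm's stratified Lefschetz theorem (cf. \cite{hamm}, \cite{nori}). Iterating with generic $H_1,\dots,H_s$ and putting $H=H_1\cap\cdots\cap H_s$, the step-$j$ cut lowers the dimension to $d-j+1$ and supplies a $(d-j)$-equivalence, so the composite $(X\setminus D)\cap H\hookrightarrow X\setminus D$ is a $\min_{1\le j\le s}(d-j)=(d-s)$-equivalence; this is precisely the isomorphism $\pi_i(X\setminus D)=\pi_i((X\setminus D)\cap H)$ for $i\le d-s-1$. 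Simultaneously $X\cap H$ is smooth and, since $X$ is simply connected, simply connected by classical Lefschetz; the $D_i\cap H$ are ample; and the dimension count $NNC(D\cap H)\subseteq NNC(D)\cap H$ with $\dim\bigl(NNC(D)\cap H\bigr)=0$ shows $D\cap H$ is an isolated non-normal-crossings divisor on $X\cap H$. This is the second assertion.

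For the vanishing in part one I would cut once more. Take a generic complete intersection $L=H_1\cap\cdots\cap H_{s+1}$ of codimension $s+1$. Since $NNC(D)$ has dimension $s$ and a generic codimension-$(s+1)$ linear space misses an $s$-dimensional subvariety, $D\cap L$ on the smooth simply connected variety $X\cap L$ of dimension $d-s-1$ has smooth ample components crossing everywhere transversally, i.e. it is an honest normal-crossings divisor. Hence Theorem \ref{amplenc} applies to $(X\cap L,\,D\cap L)$ and gives $\pi_i\bigl((X\cap L)\setminus(D\cap L)\bigr)=0$ for $2\le i\le (d-s-1)-1=d-s-2$, with $\pi_1$ abelian. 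The iterated Lefschetz argument through $s+1$ cuts makes $(X\setminus D)\cap L\hookrightarrow X\setminus D$ a $(d-s-1)$-equivalence, so $\pi_i(X\setminus D)\cong\pi_i\bigl((X\cap L)\setminus(D\cap L)\bigr)$ for $i\le d-s-2$; combining yields $\pi_i(X\setminus D)=0$ in that range. Restricting attention to $H=H_1\cap\cdots\cap H_s$ finally identifies the first nontrivial homotopy group with that of the explicit isolated non-normal-crossings divisor $D\cap H$ on $X\cap H$.

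The only genuinely delicate point is arranging all genericity conditions at once and pinning the Lefschetz ranges exactly. One must choose the $H_j$ transverse to every stratum of a fixed Whitney stratification of $(X,D)$, so that each cut is a bona fide stratified hyperplane section and the step-$j$ connectivity bound $(d-j)$ is correct, while the top section must in addition avoid the $s$-dimensional locus $NNC(D)$ and keep each $D_i\cap L$ smooth. All of these are open dense conditions (Bertini for smoothness and transversality, a dimension count for missing $NNC(D)$), so a common generic choice exists; the remaining work is the bookkeeping that the connectivity estimates compose to exactly the stated ranges, together with the low-dimensional boundary cases $d-s-1\le 2$ where Theorem \ref{amplenc} does not directly apply and the claimed range is empty or reduces to connectedness.
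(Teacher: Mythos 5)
Your proposal is correct and follows exactly the route the paper intends: it states the Corollary as an immediate consequence of Theorem \ref{amplenc} and the Lefschetz hyperplane section theorem, and your argument simply fleshes this out by iterating generic hyperplane sections (Hamm/Goresky--MacPherson, with the $(d-j)$-equivalence at the $j$-th cut composing to a $(d-s)$-, resp.\ $(d-s-1)$-equivalence) and applying Theorem \ref{amplenc} on the codimension-$(s+1)$ slice, where the generic $L$ misses the $s$-dimensional locus $NNC(D)$ so that $D\cap L$ has smooth ample components in normal crossing position. Your bookkeeping of the connectivity ranges is exact, and you correctly note the two boundary caveats (the vanishing is meant for $i\ge 2$ since $\pi_1$ is only abelian, and the case $d-s-1\le 2$ is vacuous), so nothing is missing.
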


The high dimensional analogs of the results on the Alexander
invariants of the complement to curves described in \cite{meample} 
give a similar description of the homotopy group $\pi_{d-1}(X\setminus
D)$ where $X$ as in Theorem \ref{amplenc}, $D$ is a divisor with
ample components but now $D$ is allowed to have {\it isolated} non
normal crossings (INNC) i.e.
$dim NNC(D)=0$. The role of the first homology of the infinite abelian cover in the case of
complements to curves is played by the first non-vanishing homotopy group
$\pi_i(X\setminus D), i>1$. In fact one has the identification:
\begin{equation}
    H_{dim X-1}(\widetilde{X\setminus D},\ZZ)=\pi_{dim X-1}(X\setminus D)
\end{equation}
where $\widetilde{X\setminus D}$ is the universal (hence also universal
abelian, cf. theorem \ref{amplenc}) cover of $X\setminus
D$. The $\ZZ[\pi_1(X\setminus D)]$-module structure equivalently can be obtained 
using the Whitehead product (cf. \cite{meample}) and the characteristic
variety of $X\setminus D$ in dimension $dim X-1$ is the support of 
the module $\pi_{dim X-1}(X\setminus D) \otimes \CC$.
 
 In the case when a point in $NNC(D)$ belongs to only one component, failure to be normal
crossing means that the point is just an isolated singularity of $D$. 
In this case the local information about the Alexander invariants is contained in the Milnor 
fiber of the singularities of $D$ and includes the characteristic
polynomial of the monodromy as well as some the Hodge theoretical
invariants (cf. \cite{meample}). 
Reducible analog of isolated singularities are isolated non-normal
crossings i.e. the intersections of hypersurfaces which are smooth and
transversal everywhere except for a single point. It turns out that many features 
of isolated singularities described by  J.Milnor in \cite{milnor} have
counterparts in INNC case. They include the analogs of high connectivity of Milnor fibers, analogs of
monodromy action on the cohomology, its  cyclotomic properties and others.

To be specific, recall that if $f(x_0,...,x_n)=0$ is a germ of an isolated singularity,
$V_f$ is its zero set, $B_{\epsilon}$ is a small ball about the singular point of $f$,
$\partial B_{\epsilon}$ is the boundary sphere then we have the
following:
\footnote{only the last claim in (ii) requires singularity of $f$ to be isolated.}

\begin{theorem}(cf.\cite{milnor})
(i) The complement $B_{\epsilon} \setminus (V_f \cap B_{\epsilon})$ is homotopy equivalent
to $\partial B_{\epsilon}\setminus (V_f\cap \partial B_{\epsilon})$.

(ii) There is a locally trivial fibration $\partial B_{\epsilon} \cap
(V_f \cap \partial B_{\epsilon}) \rightarrow S^1$, with the fiber (i.e. the
Milnor fiber) being homotopy
equivalent to a wedge of spheres of dimension $n$. 
\end{theorem}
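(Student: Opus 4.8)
The plan is to follow Milnor's two-part strategy: first establish a local conical structure to obtain part (i), then construct the fibration and analyze its fiber by Morse theory for part (ii). Throughout, note that $f=f(x_0,\dots,x_n)$ has $n+1$ variables, so $\partial B_{\epsilon}=S^{2n+1}$, the link $K=V_f\cap\partial B_{\epsilon}$ has real dimension $2n-1$, and the fiber will have real dimension $2n$ (complex dimension $n$), consistent with the wedge of $n$-spheres in the statement.

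For part (i), I would first show that for $\epsilon$ sufficiently small the spheres $\partial B_{\epsilon'}$, $0<\epsilon'\le\epsilon$, meet $V_f$ transversally; equivalently the square-distance function $\rho(z)=\Vert z\Vert^2$ restricted to $V_f\setminus\{0\}$ has no critical points in the punctured ball. This is where the curve selection lemma enters: if critical points accumulated at $0$, one could select a real-analytic arc of them tending to the origin along which $\mathrm{grad}\,\rho$ lies in a conormal direction of $V_f$, and a monotonicity argument for $\Vert z\Vert^2$ along the arc yields a contradiction. Given transversality, integrating an outward radial-like vector field tangent to $V_f\setminus\{0\}$ produces a homeomorphism of pairs $(B_{\epsilon},V_f\cap B_{\epsilon})\cong(c\,\partial B_{\epsilon},\,cK)$ onto the cone over the link pair. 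The radial deformation retraction pushing the punctured cone onto its outer boundary then gives the homotopy equivalence $B_{\epsilon}\setminus(V_f\cap B_{\epsilon})\simeq\partial B_{\epsilon}\setminus K$ asserted in (i); this step uses only the conical structure and not that the singularity is isolated.

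For part (ii), set $\phi=f/|f|\colon\partial B_{\epsilon}\setminus K\to S^1$. I would first verify $\phi$ is a submersion by locating its critical points: a critical point of $\arg f$ along the sphere forces $\mathrm{grad}\,\log f(z)$ to be a purely imaginary multiple of the position vector $\bar z$, and a second application of the curve selection lemma rules this out on all sufficiently small spheres. The delicate point, and the main obstacle, is upgrading this submersion to a locally trivial fibration: the domain is non-compact because $K$ has been removed, so Ehresmann's theorem does not apply directly. I would construct a smooth vector field on $\partial B_{\epsilon}\setminus K$ that lifts $\partial/\partial\theta$ on $S^1$, is tangent to the sphere, and is controlled near $K$ so that its flow is complete (trajectories do not reach $K$ in finite time); the resulting flow furnishes the local trivializations.

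Finally, to identify the fiber $F$, I would show it is $(n-1)$-connected and has the homotopy type of an $n$-dimensional CW complex, whence a wedge of $n$-spheres. The bound on CW-dimension follows because the closed fiber is a compact Stein manifold of complex dimension $n$ with boundary, so Andreotti--Frankel/Morse theory produces no cells above dimension $n$; equivalently $H_i(F)=0$ for $i>n$. The $(n-1)$-connectivity is precisely where holomorphicity and the isolated-singularity hypothesis are essential: a Morse-theoretic argument, using that the Hessians of the real and imaginary parts of $f$ have the signature forced by the Cauchy--Riemann equations, shows that all handles occur in index exactly $n$, so $\pi_i(F)=0$ for $i<n$. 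An $(n-1)$-connected finite complex of dimension $\le n$ with free top homology is homotopy equivalent to a wedge of $b_n(F)$ copies of $S^n$, where $b_n(F)=\mu$ is the Milnor number, completing the proof.
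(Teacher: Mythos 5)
The paper offers no proof of this statement at all --- it is quoted verbatim from Milnor's book \cite{milnor} --- and your outline is a correct reconstruction of Milnor's own argument: the curve selection lemma and conic structure for (i), the controlled lifting vector field (with completeness of the flow near $K$ replacing the inapplicable Ehresmann theorem) for the fibration, and the Andreotti--Frankel dimension bound together with the holomorphic Morse-theoretic $(n-1)$-connectivity, plus the standard Hurewicz argument, for the wedge-of-spheres conclusion. You also correctly track where the isolated-singularity hypothesis enters (only in the final claim of (ii), exactly matching the paper's footnote) and silently repair the paper's typo $\partial B_{\epsilon} \cap (V_f\cap\partial B_{\epsilon})$, which should read $\partial B_{\epsilon}\setminus(V_f\cap\partial B_{\epsilon})$.
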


\begin{corollary}  If $f(x_0,...,x_n)$ is a germ of isolated singularity, then the
  universal cyclic cover of $B_{\epsilon}\setminus (B_{\epsilon}\cap
  V_f)$ is homotopy equivalent to a finite wedge of spheres of dimension $n$.
Moreover, the action of the deck transformation on the homology of the
universal cyclic cover coincides with the action
of the monodromy operator on the homology of the Milnor fiber. 
In particular, the characteristic polynomial of the monodromy coincides with the
Alexander polynomial of $\partial   B_{\epsilon}\setminus (V_f\cap \partial B_{\epsilon})$.
\end{corollary}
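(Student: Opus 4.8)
The plan is to reduce all three assertions to Milnor's fibration theorem quoted above and then read them off from the mapping-torus structure of the link complement. First I would invoke part (i) of Milnor's theorem to replace $B_{\epsilon}\setminus(V_f\cap B_{\epsilon})$ by the homotopy equivalent link complement $Y:=\partial B_{\epsilon}\setminus(V_f\cap\partial B_{\epsilon})$; since all three claims concern homotopy invariants of the complement (its cyclic cover, the deck action on homology, and the resulting polynomial), this reduction is harmless. By part (ii) the map $\phi=f/|f|\colon Y\to S^1$ is a locally trivial fibration with fiber the Milnor fiber $F\simeq\bigvee_{\mu}S^n$, so $Y$ is the mapping torus of the geometric monodromy $h\colon F\to F$.

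Next I would identify the universal cyclic cover $\widetilde Y$ with the pullback of the universal cover $\exp\colon\RR\to S^1$ along $\phi$, i.e. the cover associated to the surjection $\phi_*\colon\pi_1(Y)\to\ZZ$ sending a meridian to the generator $t$ (the same homomorphism used to define the local Alexander polynomial in section \ref{alexandercurves}). The pulled-back fibration $\widetilde Y\to\RR$ has contractible base, hence is trivial and deformation retracts onto a single fiber, yielding a homotopy equivalence $\widetilde Y\simeq F\simeq\bigvee_{\mu}S^n$, which is assertion (1). Under this equivalence the generator of the deck group $\ZZ$ acts by translation by $1$ on the $\RR$-factor, which in the mapping-torus picture is precisely the geometric monodromy $h$. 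Thus the deck transformation induces $h_*$ on $H_*(\widetilde Y,\QQ)=H_*(F,\QQ)$, which is assertion (2).

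Assertion (3) then follows formally. Since $\widetilde Y\simeq\bigvee_{\mu}S^n$, the only non-trivial reduced homology sits in degree $n$, and $H_n(\widetilde Y,\QQ)=H_n(F,\QQ)$ is a finite-dimensional $\QQ$-vector space; as a $\QQ[t,t^{-1}]$-module with $t$ acting by $h_*$ it is torsion, so in the notation of (\ref{cyclic})--(\ref{order}) its order is the product of the elementary divisors of $h_*$, namely the characteristic polynomial $\det(tI-h_*)$. By definition this order is the Alexander polynomial of $Y$ (now taken in degree $n$), so the Alexander polynomial equals the characteristic polynomial of the monodromy.

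I expect the only point requiring genuine care to be the identification in the second step: that the universal cyclic cover coincides with the pullback fibration $Y\times_{S^1}\RR$ and that under the homotopy equivalence $\widetilde Y\simeq F$ the generator of the deck group acts by $h$. This rests on the mapping-torus description of a fibration over $S^1$; one must check that $\phi_*$ carries each meridian to the chosen generator (automatic here, since an isolated critical point forces $f$ to be reduced, so every branch has multiplicity one) and that the trivialization of the pulled-back bundle over $\RR$ intertwines the $\ZZ$-translation with $h$. Granting this, the last assertion reduces to the purely algebraic fact that the order of a finite-dimensional $\QQ[t,t^{-1}]$-module on which $t$ acts as $h_*$ is $\det(tI-h_*)$.
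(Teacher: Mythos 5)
Your proof is correct and follows exactly the route the paper intends: the corollary is stated as an immediate consequence of Milnor's fibration theorem, with the infinite cyclic cover identified with the pullback $Y\times_{S^1}\RR$ of the Milnor fibration, so that it retracts to the fiber and the deck generator acts by the geometric monodromy, from which the Alexander polynomial (in degree $n$) equals $\det(tI-h_*)$. You also correctly flag the only delicate points — that $\phi_*$ sends meridians to the generator (since $f$ is reduced) and that the trivialization over $\RR$ intertwines translation with the monodromy — both of which are standard for mapping tori.
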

 
A generalization of these results to multi-component germs is as
follows (which is the local counterpart of Corollary \ref{cormeample}):

\begin{theorem}(cf. \cite{isolatednonnormal}) Let $X_r$ be a germ
  $f_1(x_0,...,x_n)\cdot ... \cdot f_r(x_0,...,x_n)=0$ of a
  hypersurface  which is product of $r$ irreducible germs.
Then for $n>1$
\begin{equation}
   \pi_1(\partial B_{\epsilon}\setminus (X_r\cap \partial
   B_{\epsilon}))=\ZZ^r \ \ \ \pi_i (\partial B_{\epsilon}\setminus (X_r\cap \partial
   B_{\epsilon}))=0 \ \ , 2 \le i \le n-1
\end{equation}
\end{theorem}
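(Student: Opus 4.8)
The plan is to reduce the link complement to a ball complement, read off $H_1$ together with generators of $\pi_1$, and then obtain both the commutativity of $\pi_1$ and the middle-degree vanishing from the global ample statements already quoted, transported to the germ by a Lefschetz-type comparison. First I would invoke the local conical structure of an analytic germ: the pair $(B_{\epsilon}, X_r\cap B_{\epsilon})$ is homeomorphic to the cone over $(\partial B_{\epsilon}, X_r\cap\partial B_{\epsilon})$, the phenomenon underlying part (i) of the Milnor theorem quoted above but valid for an arbitrary (not necessarily isolated) germ. Consequently $\partial B_{\epsilon}\setminus(X_r\cap\partial B_{\epsilon})$ is homotopy equivalent to the ball complement $W:=B_{\epsilon}\setminus X_r$, on which I would work. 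The non-compact analogue of the sequence (\ref{homologysequence}) gives $H_1(W)=\ZZ^r$, freely generated by the meridians $\gamma_1,\dots,\gamma_r$ of the $r$ irreducible components, and by transversality these meridians also generate $\pi_1(W)$. Thus $H_1$ is already as claimed, and the real content is that $\pi_1(W)$ is \emph{abelian} and that $\pi_i(W)=0$ for $2\le i\le n-1$.

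The engine for these two facts would be a global realization. Using a Bertini/general-position argument I would choose smooth very ample hypersurfaces $\tilde D_1,\dots,\tilde D_r$ on $\PP^{n+1}$ (or on any smooth simply connected projective $X$ of dimension $n+1$) whose germ at a single point $p$ reproduces $(X_r,0)$ and which form a normal crossing divisor everywhere away from $p$; then $\tilde D=\bigcup_i\tilde D_i$ has ample components and $NNC(\tilde D)=\{p\}$, so $s=\dim NNC(\tilde D)=0$. Theorem \ref{amplenc} then gives that $\pi_1(X\setminus\tilde D)$ is abelian, and Corollary \ref{cormeample}, with $d=n+1$ and $s=0$, gives $\pi_i(X\setminus\tilde D)=0$ for $2\le i\le d-s-2=n-1$. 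It remains to transport these conclusions to the germ: since $\tilde D$ is normal crossing off $p$, a generic complete intersection of very ample divisors misses $p$ and meets $\tilde D$ transversally, and the Lefschetz/Zariski hyperplane arguments (cf. \cite{nori}, \cite{hamm}) that prove Corollary \ref{cormeample} should identify $\pi_i$ of the global complement, in the range $i\le n-1$, with $\pi_i$ of the local complement $W$ at $p$, the normal-crossing behaviour away from $p$ contributing only to the abelian ``torus directions''.

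A more self-contained picture, which I would use to guide and check this comparison, is the map $\Phi=(f_1,\dots,f_r):W\to(\CC^*)^r$: it sends $\gamma_i$ to the $i$-th generator, hence is the abelianization on $H_1$, and off the origin, where $X_r$ is normal crossing, it is a locally trivial bundle with contractible fibres. Since $(\CC^*)^r$ is a $K(\ZZ^r,1)$, proving that $\Phi$ is sufficiently connected in the range $i\le n-1$—its defect being governed by the complete-intersection fibres $\{f_1=c_1,\dots,f_r=c_r\}\cap B_{\epsilon}$—would give $\pi_1(W)=\ZZ^r$ and the vanishing simultaneously. Equivalently one may pass through the Milnor fibration $F\hookrightarrow W\to S^1$ of $f=\prod_i f_i$, which reduces the higher homotopy of $W$ to $\pi_i(F)$ for $i\ge 2$.

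The main obstacle is exactly this connectivity. The product $f=\prod_i f_i$ is a highly non-isolated singularity—its singular locus $\bigcup_{i<j}(V_i\cap V_j)$ has dimension $n-1$—so the general Kato--Matsumoto bound yields only connectedness of $F$, far short of what is needed, and the generic fibres of $\Phi$ need not even be nonempty when $r>n+1$ (as already for central hyperplane arrangements). The INNC hypothesis must therefore be used essentially: all degeneration is concentrated at the single point, so a stratified-Morse/Lefschetz analysis adapted to the stratification $\{0\}\subset X_r\subset\CC^{n+1}$ is what forces $\pi_i(W)=0$ for $2\le i\le n-1$ and makes the local--global comparison an isomorphism in that range. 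Finally, the abelianness of $\pi_1$ is where the hypothesis $n>1$ is indispensable: it is precisely the dimension $\ge 3$ phenomenon of Theorem \ref{amplenc}, and it genuinely fails for plane-curve germs $n=1$, where the link group is typically non-abelian.
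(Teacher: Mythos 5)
Your reductions at the start are fine (the conical structure argument, $H_1(W)=\ZZ^r$, generation by meridians, and the sanity check that $n=1$ must fail), but the engine of your proof does not run. The fatal step is the ``transport'': there is no Lefschetz-type theorem that identifies homotopy groups of a global complement $X\setminus\tilde D$ with those of the link complement $W$ of one of its singular points. The only natural comparison is the map $\pi_i(W)\to\pi_i(X\setminus\tilde D)$ induced by inclusion, and Theorem \ref{amplenc} together with Corollary \ref{cormeample} controls the \emph{target} of this map. Neither abelianness nor vanishing pulls back along a map with no injectivity: a priori $\pi_1(W)$ could be a nonabelian group whose commutators die in the global group (exactly as happens for plane curve germs inside nodal-type global curves), and local $\pi_i$ could be nonzero classes killed globally. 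All the weak Lefschetz results you invoke (\cite{nori}, \cite{hamm}, Goresky--MacPherson) compare a quasi-projective variety with its generic hyperplane sections; they never descend from a global complement to the link of a point, and indeed in the INNC theory of \cite{meample} the information flows the other way -- the local homotopy groups at the non-normal-crossing points are the \emph{inputs} from which the global $\pi_{\dim X-1}$ is assembled. So your scheme is circular in spirit: the global statements you quote presuppose, or are built from, precisely the local theorem you are trying to prove. There is also a secondary defect in the realization step: the hypothesis allows the irreducible germs $f_i$ to be singular at the origin (an isolated singularity of a component is an NNC point), and such a germ cannot be the germ of a \emph{smooth} hypersurface, so Theorem \ref{amplenc}, which requires smooth ample components, does not apply to any globalization of the general case.

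Your fallback picture via $\Phi=(f_1,\dots,f_r):W\to(\CC^*)^r$ does not repair this: as you yourself note, $\Phi$ is not a fibration, its fibers can be empty for $r>n+1$, and where nonempty they are Milnor-fiber-like complete intersections which are certainly not contractible, so the needed connectivity of $\Phi$ is exactly the theorem restated, not a proof of it. The argument in the cited source \cite{isolatednonnormal} works directly at the germ and uses the normal crossing structure of $X_r$ off the origin through local Lefschetz/homotopical-depth theorems of Hamm--L\^e type for the stratification $\{0\}\subset X_r\subset \CC^{n+1}$: the normal crossing local models off the origin have maximal homotopical depth (their complements are locally products of tori and disks, with abelian local fundamental groups, forcing meridians to commute), and the stratified local Lefschetz comparison then yields both $\pi_1(\partial B_{\epsilon}\setminus X_r)=\ZZ^r$ and the vanishing of $\pi_i$ for $2\le i\le n-1$, with the first possible deviation from the torus model in $\pi_n$ -- the INNC analogue of Milnor's $(n-1)$-connectivity of the fiber. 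If you want to salvage your write-up, replace the globalization by this direct stratified local analysis; as it stands, the proposal proves the easy homological part and correctly locates the difficulty, but supplies no valid mechanism for the two substantive claims.
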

The role of the Milnor fiber is now played by the universal abelian
\footnote{which is also the universal cover for $n \ge 2$ since the fundamental
group is abelian in this case} cover of the complement $\widetilde{\partial B_{\epsilon}\setminus (X_r\cap \partial
   B_{\epsilon})}$ 
and the monodromy action is replaced by the 
action of fundamental group of the complement to INNC germ on the
universal abelian cover via deck transformations. 
The homology $H_n(\widetilde{\partial B_{\epsilon}\setminus (X_r\cap \partial
   B_{\epsilon})},\CC)$ is equipped with the action of the group algebra $\CC[H_1(\partial B_{\epsilon}\setminus (X_r\cap \partial
   B_{\epsilon}))]$. The $H_1(\partial B_{\epsilon}\setminus (X_r\cap \partial
   B_{\epsilon}),\ZZ)$-action 
can be, as in global case, identified the 
Whitehead product of the elements of $\pi_n(\partial B_{\epsilon}\setminus (X_r\cap \partial
   B_{\epsilon})$ with the elements of $\pi_1(\partial B_{\epsilon}\setminus (X_r\cap \partial
   B_{\epsilon}))=H_1(\partial B_{\epsilon}\setminus (X_r\cap \partial
   B_{\epsilon}))$.  The group ring of the latter is the ring of
   Laurent polynomials and the role of the characteristic polynomial of the
   monodromy is played by the subvariety of the torus $Spec \CC[H_1(\partial B_{\epsilon}\setminus (X_r\cap \partial
   B_{\epsilon}))]$ which is the support of this module (\cite{eisenbud-ca}).

\begin{example}\label{isolatednonnormalexample}
(cf. \cite{isolatednonnormal}) Let $f_{d_i}(x_0,....,x_n)=0,i=1,...,r$ be the equations of 
  smooth sufficiently general hypersurfaces in $\PP^n$. Then the union in $\CC^{n+1}$ of cones over these
  $r$ hypersurfaces is isolated non-normal crossing. The support of
  this module is the zero set of $t_1^d\cdot...\cdot t_r^{d_r}-1=0$.
\end{example}

\begin{example} Let $f_1(x,y)....f_r(x,y)$ be a germ of reducible
  curve in$\CC^2$. The support of the universal abelian cover of the 
complement to the link coincides the zero set of the multivariable Alexander
polynomial of the link. Further properties of this support and its
Hodge theoretical properties are discussed in \cite{pierretteme11}, \cite{pierretteme14}.
\end{example}

We refer to the work \cite{meample} Sections 5 and 6 for description of the structure of
 characteristic varieties of {\it global} INNC in terms of the homotopy
 groups of local INNC (of germs), as well as the relationship between
 the cohomology of local systems, the homology of branched cover and
 the translated subgroups of $Spec \CC[\pi_1(X\setminus D)]$ which
 are the irreducible components of $Supp \pi_{dim X-1}(X\setminus
 D)\otimes \CC$. The divisibility relations extending the divisibility
 theorem from section \ref{divisibility11} to the case of hypersurfaces in $\PP^n$
 with isolated singularities, the Thom Sebastiani theorems for the
 orders of the homotopy groups and other results on the topology of
 the complements to  are discussed in \cite{meannals}. For some  results in 
 non-isolated case, cf. Section \ref{perverse}.

\subsection{Twisted Alexander Invariants\index{Twisted Alexander Invariants}} A generalization
of Alexander polynomials was proposed in the context of  knot theory which uses as
an additional input a linear representation of the fundamental group (cf. \cite{kirkliv}
for a discussion of this generalization). A twisted version of Alexander
polynomials of algebraic plane curves was considered in \cite{cogoflorens}. 
In \cite{me2009} a multivariable extension of this construction called
{\it a characteristic varieties of a CW complex twisted by a unitary
representation} was defined as follows. 

Let $\pi: \pi_1(X) \rightarrow U(V)$
be a unitary representation of the fundamental
group of a CW complex such that $H_1(X,\ZZ)$ is
a free abelian group of a positive rank \footnote{torsion freeness condition of
$H_1(X,\ZZ)$ is introduced to simplify the exposition.}. Here $V$ is a complex vector space endowed with a Hermitian
bilinear form and viewed 
as a left $\CC[\pi_1(X)]$-module.                                                                                                                                                                                                                                                                                                            
Let $\tilde X$  be the universal
cover of $X$. For a (left or right) module $M$ over the algebra $\CC[\pi_1(X)]$
(which is associative but
possibly non-commutative), we denote by $M^{\flat}$
 the module obtained by restriction of the coefficients to 
 the group algebra $\CC[\pi_1'(X)] \subset \CC[\pi_1(X)]$ of the
 commutator subgroup $\pi_1'(X)$ of $\pi_1(X)$. Let $C_*(\tilde X)$ denotes
 chain complex of $\tilde X$ endowed with the natural structure of (a
 right) $\CC[\pi_1(X)]$-module. Consider the following complex of tensor products of $\CC[\pi_1'(X)]$-modules.
\begin{equation}\label{tensorproduct}
     C_*(\tilde X)^{\flat} \otimes_{\CC[\pi_1'(X)]} V^{\flat}: \ \ \
     g(c\otimes v)=cg^{-1}\otimes gv
\end{equation}
The group $\pi_1(X)$ acts on the module (\ref{tensorproduct}) 
and the restriction of this action to the commutator $\pi_1'(X)$ is
trivial. Hence (\ref{tensorproduct}) obtains the structure of
$\CC[H_1(X,\ZZ)]$-module. It  passes to the homology 
of the complex (\ref{tensorproduct}). We denote the resulting homology modules
as $H_i(X_{ab},V_{ab})$
 
\begin{definition}  The support of $\CC[H_1(X,\ZZ)]$-module $\Lambda^lH_i(\tilde
  X_{ab},V_{ab})$ we call {\it the $\rho$-twisted degree $i$, $l$-th characteristic variety of $X$} 
This is a subvariety of the torus $Spec\CC[H_1(X,\ZZ)]$ which we
denote as $Ch^l_i(X,\rho)$.
\end{definition}
In the case when $rk H_1(X,\ZZ)=1$ the support is a finite subset of
$\CC^*$ and hence the zero set of a unique monic polynomial of degree
$Card (Ch^l_i(X,\rho))$. More generally, a surjection $\epsilon:
H_1(X,\ZZ) \rightarrow \ZZ$ defines the surjection of the group
algebras and hence the embedding $\epsilon^*: \CC^*=Spec\CC[\ZZ]\rightarrow
Spec\CC[H_1(X,\ZZ)]$.
The Alexander polynomial $\Delta^l_n(X,\rho, \epsilon)$  is the unique
monic polynomial of minimal degree having in $\CC^*$ the roots $(Im
\epsilon^*)\cap Ch^l_n(X,\rho)$.

We refer to \cite{me2009} Sect. 5 for other results on the relation between the twisted Alexander
polynomials and characteristic varieties in the context of
complements to plane curves. For example the cyclotomic
property of the roots of Alexander polynomials becomes the following:
the roots of a $\rho$-twisted Alexander polynomial belong to a
cyclotomic extension of the extension of $\QQ$ generated by the eigenvalues of
$\rho$ (cf. \cite{me2009}, Th. 5.4). 

\subsection{Alexander Invariants of the complements without isolatedness properties.}\label{perverse} Investigations of 
 the Alexander invariants of the complements to hypersurfaces with isolated
singularities which were discussed in sections
\ref{alexandercurves}, \ref{divisibility11}, \ref{charvarsection} and \ref{isolatednonnormalsection}
were extended  to the case of hypersurfaces with non-isolated singularities\index{non-isolated singularities}
and further to smooth quasi-projective varieties by L.Maxim and his collaborators
(cf. \cite{maxim06}, \cite{liumaxim},\cite{liunearby}). The
$D$-modules (cf. \cite{saito},\cite{schnell}), the category of 
complexes of constructible sheaves, the perverse sheaves and
peripheral complex (going back to \cite{cappell} and
first studied in this context in \cite{maxim06})  are the key
technical tools used by
these authors. We will review two most
important outcomes of this approach: propagation of characteristic
varieties and extension of divisibility theorem for Alexander polynomials.
The propagation property of characteristic varieties was first noticed
in the context of arrangements of hyperplanes (cf. \cite{epy}, \cite{dsy}) 
 and extended further
 in \cite{liumaximwang},\cite{lmwsurvey}.  
The key observation is pure topological and concerns the spaces 
satisfying a version of cohomological duality.
\begin{definition} (cf. \cite{bieri}, \cite{dsy}, \cite{liumaximwang}) Let $X$ be a finite CW complex and $G=\pi_1(X,x_0), x_0\in
  X$. A topological space is called a duality space of dimension $n$ if
  $H^p(X,\ZZ[G])=0, p\ne n$ and $H^n(X,\ZZ[G])$ is non-zero and
  torsion free. A spaces $X$ is called {\it an abelian} duality space\index{abelian duality space} if for $A=H_1(X,\ZZ)$
  one has $H^p(X,A)=0, p \ne n$ and $H^n(X,A) \ne 0$ is torsion free. 
\end{definition}

\begin{theorem}  (cf. \cite{lmwsurvey}, Theorem 3.16) Let $X$ be an abelian duality
  space of dimension $n$.  Then the cohomology jumping loci of
 the  characters of the fundamental group $\pi_1(X)$ 
satisfy the following properties (the so call propagation package (\cite{liumaximwang},\cite{lmwsurvey}):

(i) {\it Propagation\index{propagation}}: Subvarieties $\mathcal{V}^i(X)$ form descending chain: 
 \begin{equation}
       \mathcal{V}^n(X) \supseteq \cV^{n-1}(X)\supseteq .... \supseteq \cV^0(X)
\end{equation} 

(ii) Codimension bound:
$${\rm codim}\cV^{n-i} \ge i$$ 

(iii) Irreducible components: if $V$ is an irreducible component of
codimension $d$ in $\cV^n(X)$ then $V\subset \cV^{n-d}$

(iv) Generic vanishing: for characters $\rho$ of the fundamental group
$\pi_1(X)$  in a Zariski open set in $Hom(\pi_1(X),\CC^*)$ one has
$H^i(X,L_{\rho})=0$ for $i \ne {\rm dim} X$.

(v) Signed Euler characteristic property: $(-1)^{dim X}{\chi}(X)>0$

(vi) Betti numbers inequality:  $b_i(X)>0$ for $0 \le i \le n$ and
$b_1(X) \ge n$.
\end{theorem}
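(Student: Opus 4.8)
The plan is to reduce all six assertions to a single algebraic computation. Write $\Lambda=\CC[A]$ with $A=H_1(X,\ZZ)$, so that $\operatorname{Spec}\Lambda$ is the character torus $\operatorname{Hom}(\pi_1(X),\CC^*)$ and, for a character $\rho$, the rank one local system $L_\rho$ corresponds to the residue module $\CC_\rho=\Lambda/\mathfrak{m}_\rho$, where $\mathfrak{m}_\rho$ is the maximal ideal at $\rho$; the jumping locus is $\cV^i(X)=\{\rho:H^i(X,L_\rho)\ne0\}$. For a finite CW complex the $\Lambda$-valued cochains form a bounded complex $P^\bullet$ of finitely generated free $\Lambda$-modules with $H^i(P^\bullet)=H^i(X,\Lambda)$ and $H^i(P^\bullet\otimes_\Lambda\CC_\rho)=H^i(X,L_\rho)$. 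The abelian duality hypothesis says precisely that $R\Gamma(X,\Lambda)\simeq M[-n]$ in the derived category, where $M:=H^n(X,\Lambda)$ is nonzero and torsion free. Since $P^\bullet$ is a complex of free modules it computes the derived tensor product, so
\[
 H^i(X,L_\rho)=H^i\bigl(M[-n]\otimes^{L}_{\Lambda}\CC_\rho\bigr)=\operatorname{Tor}^{\Lambda}_{\,n-i}(M,\CC_\rho).
\]
This identity is the engine for everything that follows; in particular $\cV^n(X)=\operatorname{Supp}M$, and $\Lambda$ is regular, so each local ring $\Lambda_{\mathfrak{m}_\rho}$ is regular.

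With the formula in hand, propagation (i) and the bounds (ii), (iii) become standard facts about a finitely generated module over a regular ring. For (i), over the regular local ring $\Lambda_{\mathfrak{m}_\rho}$ the minimal free resolution of $M_{\mathfrak{m}_\rho}$ has no gaps, so $\operatorname{Tor}^{\Lambda}_{j}(M,\CC_\rho)\ne0$ for every $0\le j\le \operatorname{pd}_{\Lambda_{\mathfrak{m}_\rho}}M_{\mathfrak{m}_\rho}$ and vanishes above; hence nonvanishing of $\operatorname{Tor}_{n-i}$ forces nonvanishing of $\operatorname{Tor}_{n-i'}$ for all $i'\ge i$, giving $\cV^{i}(X)\subseteq\cV^{i'}(X)$, which is the asserted chain (i). For (ii) I would invoke Auslander--Buchsbaum: at any $\mathfrak{p}\in\operatorname{Supp}M$ one has $\operatorname{pd}_{\Lambda_\mathfrak{p}}M_\mathfrak{p}=\operatorname{ht}\mathfrak{p}-\operatorname{depth}M_\mathfrak{p}\le \operatorname{ht}\mathfrak{p}$, so every prime with $\operatorname{Tor}_i\ne0$ (equivalently $\operatorname{pd}\ge i$) has height $\ge i$; thus $\cV^{n-i}(X)=\{\rho:\operatorname{Tor}_i(M,\CC_\rho)\ne0\}$ has codimension $\ge i$. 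For (iii), at the generic point $\mathfrak{q}$ of a codimension $d$ component $V$ of $\cV^n(X)=\operatorname{Supp}M$ the localization $M_\mathfrak{q}$ is nonzero and supported only at $\mathfrak{q}$, hence has depth $0$; Auslander--Buchsbaum gives $\operatorname{pd}_{\Lambda_\mathfrak{q}}M_\mathfrak{q}=d$, and upper semicontinuity of projective dimension spreads $\operatorname{pd}\ge d$ over all of $V$, whence $V\subseteq\cV^{n-d}(X)$.

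The remaining items follow quickly. For generic vanishing (iv), the non free locus of $M$ is closed, so on a dense Zariski open set $M_{\mathfrak{m}_\rho}$ is free (possibly of rank zero); there $\operatorname{Tor}_{j}(M,\CC_\rho)=0$ for $j\ge1$, i.e. $H^i(X,L_\rho)=0$ for $i\ne n$. For (v), $\chi(X)$ is independent of the rank one local system, so evaluating on such a generic $\rho$ gives $\chi(X)=(-1)^n\dim_{\CC}H^n(X,L_\rho)=(-1)^n\operatorname{rk}M$; since $M$ is nonzero and torsion free its rank is positive, so $(-1)^n\chi(X)>0$. For (vi), take the trivial character $1$: because $X$ is connected, $b_0(X)>0$ gives $1\in\cV^0(X)$, and propagation (i) yields $1\in\cV^i(X)$, i.e. $b_i(X)>0$, for all $0\le i\le n$. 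Finally $H^{-1}(X,\CC)=0$ and $H^0(X,\CC)=\CC$ translate into $\operatorname{Tor}_{n+1}(M,\CC_1)=0$ and $\operatorname{Tor}_n(M,\CC_1)\ne0$, so $\operatorname{pd}_{\Lambda_{\mathfrak{m}_1}}M_{\mathfrak{m}_1}=n$; Auslander--Buchsbaum then gives $\operatorname{depth}M_{\mathfrak{m}_1}=\dim\Lambda_{\mathfrak{m}_1}-n=b_1(X)-n\ge0$, which is exactly $b_1(X)\ge n$.

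The genuine content is concentrated in the first step: identifying $R\Gamma(X,\Lambda)$ with a perfect complex of $\Lambda$-modules and using the duality hypothesis to collapse it to $M[-n]$, after which the six conclusions are a mechanical application of Auslander--Buchsbaum and the no-gap structure of minimal resolutions over a regular local ring. The points needing care are: (a) that the $\Lambda$-cochains of a finite CW complex are finitely generated and free and that base change along $\Lambda\to\CC_\rho$ computes $H^*(X,L_\rho)$; (b) the torsion of $A$, which makes $\Lambda$ a finite product of Laurent polynomial rings rather than a domain, so one must restrict to the component carrying $\rho$ (and, in (v), note that $1\in\operatorname{Supp}M$ forces $M$ to have positive rank on the identity component); and (c) the torsion freeness clause in the definition of abelian duality space, used precisely to guarantee $\operatorname{rk}M>0$ in (v). I expect (a) to be the main obstacle, since everything downstream rests on the clean statement $R\Gamma(X,\Lambda)\simeq M[-n]$.
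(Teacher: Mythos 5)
Be aware that the paper does not actually prove this theorem: it quotes it from \cite{lmwsurvey} and only remarks that the key step in the cited proof is the study of jump loci of constructible complexes (the Mellin transformation) on semi-abelian varieties. Your derivation is therefore a genuinely different, self-contained route: it is essentially the commutative-algebra argument of Denham--Suciu--Yuzvinsky \cite{dsy}, which is also how the abelian duality property is converted into the propagation package in \cite{liumaximwang}; the sheaf-theoretic machinery in those papers is needed mainly to show that interesting spaces (e.g.\ quasi-projective manifolds with proper Albanese map) \emph{are} abelian duality spaces, not to derive the package from the definition. Your reduction $H^i(X,L_\rho)\cong\operatorname{Tor}^{\Lambda}_{\,n-i}(M,\CC_{\rho})$ is correct (modulo the harmless left/right-module inversion $\rho\mapsto\rho^{-1}$), and items (i)--(iv) and (vi) follow as you say; in (ii)--(iii) what you call ``upper semicontinuity of projective dimension'' should really be phrased as closedness of the loci $\{\mathfrak{p}:\operatorname{Tor}_j(M,\kappa(\mathfrak{p}))\ne 0\}$ (semicontinuity of the fibrewise Tor dimensions computed from a fixed finite free complex), combined with density of closed points in a Jacobson ring, but that is a wording issue rather than a gap.

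The one genuine problem is (v). Your computation yields $(-1)^n\chi(X)=\operatorname{rk} M\ge 0$, where the rank is taken on the identity component of $\operatorname{Spec}\Lambda$, and your proposed patch --- that $1\in\operatorname{Supp}M$ forces this rank to be positive, with the torsion-freeness clause guaranteeing $\operatorname{rk}M>0$ --- is false: the torsion-freeness in the definition is over $\ZZ$, not over $\Lambda$, so it does not constrain the $\Lambda$-support of $M$. Concretely, $X=(\CC^*)^n$ (equivalently the compact $n$-torus), which is an abelian duality space of dimension $n$ --- indeed by the very next theorem quoted in the paper, since its Albanese map is the identity --- has $M=H^n(X,\Lambda)\cong\CC$ supported only at the trivial character; thus $1\in\operatorname{Supp}M$ while $\operatorname{rk}M=0$ and $\chi(X)=0$. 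So the strict inequality in (v) as printed cannot be proved because it is false for this example; the source \cite{lmwsurvey} states the signed Euler characteristic property in the form $(-1)^n\chi(X)\ge 0$, and your argument proves exactly this corrected statement.
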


Use of jump loci of constructible complexes on semi-abelian varieties
(which are Albanese varieties of $X$ is appropriate sense) is the key
step in the proof of this result.

This theorem suggests the problem of identifying the abelian duality
spaces. In this direction one has the following:

\begin{theorem} (cf. \cite{liumaximwang})  (i) Let $X$ a compact Kahler manifold which is abelian 
duality space. Then $X$ is biholomorphic to an abelian variety.

(ii) Let $X$ be quasi-projective manifold, such the 
albanese map $X \rightarrow Alb(X)$ is proper.  Then $X$ is an abelian
duality space. In particular a complement to a union of hypersurfaces
in $\CC^{n}$ satisfies the propagation package.
\end{theorem}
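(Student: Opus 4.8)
The plan is to prove both parts through the Albanese map together with the theory of cohomology jump loci on (semi-)abelian varieties, reading the defining condition of an abelian duality space as a concentration statement for the complex $R\Gamma(X,\CC[A])$, $A=H_1(X,\ZZ)$, on the character torus $\mathrm{Spec}\,\CC[A]=\mathrm{Hom}(\pi_1(X),\CC^*)$. Through the Mellin (Fourier) formalism the cohomology of the universal abelian local system is governed by the rank-one jump loci $\cV^i(X)$, which by the dictionary $Char^1_i\setminus\{1\}=\cV^1_i\setminus\{1\}$ and Theorem \ref{translated} are finite unions of torsion-translated subtori pulled back from orbifold curves. The assertion that $X$ is an abelian duality space of dimension $n$ amounts to forcing $R\Gamma(X,\CC[A])$ into the single cohomological degree $n$ (with $H^n$ torsion-free over $\ZZ$); in both parts the work is therefore to pin down, via generic vanishing, the unique degree carrying the twisted cohomology.

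For part (ii) I would push forward along the proper Albanese map. Write $G=\mathrm{Alb}(X)$, a semi-abelian variety, and $a\colon X\to G$ the Albanese map, proper by hypothesis, so that $Ra_*=Ra_!$ and Verdier duality commutes with $a_*$. The central input is the generic vanishing package on semi-abelian varieties (Gabber--Loeser for algebraic tori, extended in \cite{liumaximwang}): the Mellin transform of a perverse sheaf on $G$ is a single coherent module, so that twisting by a generic rank-one local system on $G$ concentrates cohomology in one degree and forces the propagation of the associated jump loci. Because $a$ is proper, the decomposition theorem expresses $Ra_*\CC_X[\dim_\CC X]$ as a sum of shifted semisimple perverse sheaves on $G$, and the projection formula gives $H^k(X,L_\rho)=H^k(G,Ra_*\CC_X\otimes L_\rho)$ for $\rho$ pulled back from $G$; applying the semi-abelian generic vanishing to each perverse constituent yields the concentration of $R\Gamma(X,\CC[A])$ in a single degree, with $\ZZ$-torsion-freeness extracted from the cleanness of the Mellin module. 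The complement of a union of hypersurfaces in $\CC^n$ is the transparent instance: there $X$ is smooth affine of dimension $n$, $\CC_X[n]$ is already perverse, Artin vanishing bounds the cohomological degrees by $n$ while the semi-abelian generic vanishing supplies the complementary vanishing that pins the concentration degree to $n$, and the propagation package of \cite{lmwsurvey} follows.

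For part (i) the implication runs the other way: starting from the hypothesis that the compact Kahler $X$ (of complex dimension $d$, hence necessarily an abelian duality space of dimension $2d$) is an abelian duality space, I must extract rigidity of the Albanese map $a\colon X\to T=\mathrm{Alb}(X)$, where $T$ is the complex torus of dimension $q=\frac{1}{2}b_1(X)$. The concentration of $R\Gamma(X,\CC[A])$ in the top degree forces the generic rank-one local system to be acyclic, so the jump loci $\cV^i(X)$ with $i<2d$ are proper torsion-translated subtori; feeding this into the Hodge-theoretic structure of jump loci for compact Kahler manifolds (Green--Lazarsfeld \cite{greenlaz}, and the component description of \cite{beauville}, \cite{simpson}, \cite{arapura}) rules out any nonconstant map from $X$ to a lower-dimensional base carrying positive-dimensional jump loci. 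I would then deduce that $a$ has maximal Albanese dimension, so $q\ge d$, and that the absence of higher jump loci forces $q=d$ and $\deg a=1$, whence $a$ is a biholomorphism and $X\cong T$ is a complex torus. Finally the integral, torsion-free datum built into the abelian duality condition (equivalently, the projectivity available in the algebraic setting) promotes $T$ to an abelian variety.

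The main obstacle in part (ii) is the semi-abelian generic vanishing itself: controlling the Mellin transform of the proper pushforward $Ra_*\CC_X$ and proving that its cohomology concentrates in one degree is precisely the perverse-sheaf and Mellin-transformation machinery of \cite{liumaximwang}, and is the genuine technical heart rather than a formal consequence of the Albanese construction. In part (i) the delicate step is the rigidity implication \emph{single-degree concentration $\Rightarrow$ Albanese isomorphism}: one must exclude positive-dimensional Albanese fibers and excess Albanese codimension using generic vanishing, and then pass from a compact Kahler complex torus to an \emph{abelian} variety, an upgrade that is not automatic for complex tori and must be powered by the algebraicity encoded in the hypotheses.
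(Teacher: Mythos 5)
The paper itself offers no proof of this statement---it is quoted verbatim as a result of \cite{liumaximwang} (see also \cite{lmwsurvey})---so your sketch has to be measured against the actual Liu--Maxim--Wang argument, whose machinery (Mellin transformation on the character torus, generic vanishing for perverse sheaves on semi-abelian varieties) you have correctly identified. In part (ii), however, your central step fails as written. Decomposing $Ra_*\CC_X[n]\cong\bigoplus_i {}^pH^i(Ra_*\CC_X[n])[-i]$ and applying generic vanishing to each constituent does \emph{not} concentrate $R\Gamma(X,\CC[A])$ in one degree: a constituent in shift $i\neq 0$ contributes generic twisted cohomology in degree $n-i\neq n$, and since $Ra_*\CC_X[n]$ is self-dual (as $a$ is proper and $X$ smooth) such contributions occur symmetrically above and below $n$. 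Single-degree concentration therefore requires $Ra_*\CC_X[n]$ itself to be perverse, i.e.\ \emph{semismallness} of the Albanese map---a hypothesis present in the Liu--Maxim--Wang theorem but absent from your argument (and from the survey's paraphrase). Properness alone is insufficient: $X=\PP^n$ has proper Albanese map (to a point) and is not an abelian duality space. Even your ``transparent instance'' is not transparent: for $X=\CC^2\setminus\{xy=1\}$ the Albanese map to $\CC^*$ has non-compact fibers, so properness of the Albanese map for general hypersurface complements must be argued, not assumed. Finally, abelian duality is an \emph{integral} statement---concentration and torsion-freeness of $H^*(X,\ZZ[A])$---which character-by-character generic vanishing over $\CC$ cannot deliver; in \cite{liumaximwang} the logic runs the other way, with $t$-exactness of the Mellin transformation (with $\ZZ$-coefficients) as the theorem and generic vanishing/propagation as corollaries, so your appeal to ``cleanness of the Mellin module'' to extract torsion-freeness presupposes exactly what must be proved.

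In part (i) the difficulty you flag in your last sentence is not a delicate step but an unbridgeable gap: a generic (non-algebraic) complex torus is compact K\"ahler and is an abelian duality space of dimension $2d$, so the statement as printed cannot be proved; the actual Liu--Maxim--Wang conclusion for compact K\"ahler $X$ is ``complex torus,'' with ``abelian variety'' available only under a projectivity hypothesis, and no hypothesis in the theorem supplies the algebraicity you invoke. Moreover your rigidity chain---maximal Albanese dimension, hence $q\ge d$, then $q=d$ and $\deg a=1$, hence biholomorphism---is asserted rather than derived: the inequality $q\ge d$ should come from the quantitative part of the duality hypothesis (for compact $X$ one has $n=2d$, and the propagation package gives $b_1(X)=2q\ge n$), the reverse inequality $q\le d$ and the finiteness of $a$ require separate arguments (a degree-one generically finite map is a biholomorphism onto a smooth torus only after its fibers are shown to be finite), and none of these is supplied by the Green--Lazarsfeld/Arapura component structure alone.
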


Next we will describe the identity having as a special case 
the divisibility relation between product of local Alexander polynomials of
singularities and the global Alexander polynomial from section
\ref{divisibility11}  in the case of curves and in \cite{meannals} in
higher dimensions. Let $f(z_0,....,z_{n+1})=0$ be a homogeneous polynomial of degree $d$ 
having as its zero set the hypersurface $V_f \subset \PP^{n+1}$ and
let $f_d(z_1,..,z_d)=f(0,z_1,....,z_{n+1})$ be the equation of the
intersection of $V_f$ with the hyperplane $H_{\infty}$ at infinity. 
The map 
\begin{equation}\label{maptocstar} f: \PP^{n+1}\setminus (H_{\infty}\cup V_f) \rightarrow
  \CC^*
\end{equation} 
which is the restriction of $f: \CC^{n+1}=\PP^{n+1}\setminus \PP^n
\rightarrow \CC$ given by $(1,z_1,...,z_{n+1}) \rightarrow f(1,z_1,...,z_{n+1})$
allows to define the infinite cyclic cover
$\widetilde{\PP^{n+1}\setminus (H_{\infty}\cup V_f)}$ corresponding to
the kernel of the map $\pi_1(\PP^{n+1}\setminus (H_{\infty}\cup V_f)
\rightarrow \pi_1(\CC^*)=\ZZ$ induced by (\ref{maptocstar}).
For each $1 \le i \le n$ one has a well defined (up to a unit of
$\CC[t,t^{-1}]$) polynomial $\Delta_i(t)$which is the order of the
$\CC[t,t^{-1}]$-module $H_i(\widetilde{\PP^{n+1}\setminus
  (H_{\infty}\cup V_f)})$ (cf. \cite{maxim06}). Let
$\psi_f\QQ_{\CC^{n+1}}$ denotes Deligne's nearby cycles complex
associated to $f$ and let $\psi_i(t)$ be the order
of $H^{2n+1-i}(V_f \cap \CC^{n+1},\psi_f\QQ_{\CC^{n+1}})$. Note that
$\Delta_i(t)=\psi_i(t) \  for \ i <n$ and $\Delta_n(t)$ divides $\psi_n$.
Let $h(t)$ be the characteristic polynomial of the Milnor fiber $f_d(z_1,..,z_{n+1})=1$.
The final ingredient is the determinant $det \phi$ of the bilinear form:  
\begin{equation}
   H_{n+1}(\widetilde{\PP^{n+1}\setminus (H_{\infty}\cup V_f)},\QQ(t))\otimes
   H_{n+1}(\widetilde{\PP^{n+1}\setminus (H_{\infty}\cup V_f)},\QQ(t))
   \buildrel \phi \over \rightarrow\QQ(t)
\end{equation}
given by $(\alpha,\beta) \rightarrow \alpha \cdot i(\beta)$ where
$i: \partial(\PP^{n+1}\setminus (H_{\infty}\cup V_f)))\rightarrow
\PP^{n+1}\setminus (H_{\infty}\cup V_f)$ is the embedding of the
boundary $\partial(\PP^{n+1}\setminus (H_{\infty}\cup V_f))$
of a small regular neighborhood of $V_f\cup H_{\infty}$, and $"\cdot"$
is the Poincare pairing in the homology of the infinite cyclic cover (cf. \cite{milnorduality})
of the
pair $((\PP^{n+1}\setminus (H_{\infty}\cup V_f),(\partial \PP^{n+1}\setminus (H_{\infty}\cup V_f)))$.
With these definitions one can describe the relation between the
global Alexander invariants and the data of the starta of a singular
hypersurface as follows.
\begin{theorem} (cf. \cite{liumaxim}) Let $f \in
  \CC[x_1,....,x_{n+1}]$ be defining polynomial of an affine hypersurface $F
  \subset \CC^{n+1}$, $f_d$ be the top degree form of homogenization of
  $f$ with corresponding Milnor fiber $F_h$ and $h_i(t)$ be the
  Alexander polynomial associated to $H_i(F_h)$. Let $\psi_n(t)$ denotes 
  the Alexander module associated to
  $H_c^{2n-i}(V_0,\psi_f\QQ_{\CC^{n+1}})$. Finally, let $\phi$
  be the intersection form on the infinite cover associated with $f$.
Then
\begin{equation}
       h_n(t)\psi_n(t)=\delta_n(t)^2det(\phi)
\end{equation} 
\end{theorem}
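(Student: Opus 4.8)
The plan is to derive the identity from the long exact sequence of Alexander modules of the pair $(U,\partial U)$, where $U=\PP^{n+1}\setminus(H_{\infty}\cup V_f)$ and $\partial U$ is the boundary of the regular neighborhood of $V_f\cup H_{\infty}$ appearing in the definition of $\phi$, combined with Milnor-style Poincar\'e--Lefschetz duality on the infinite cyclic cover. First I would replace $U$ by a compact oriented manifold-with-boundary $\overline U$ homotopy equivalent to it, with $\partial\overline U=\partial U$ a closed oriented $(2n+1)$-manifold; the map $f\colon U\to\CC^{*}$ pulls back to infinite cyclic covers $\widetilde{\overline U}$ and $\partial\widetilde{\overline U}$, on which $t$ acts as the deck transformation. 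All homology is taken over $\QQ[t,t^{-1}]$ (passing to $\QQ(t)$ when needed), and the basic bookkeeping device is the multiplicativity of the order function on torsion $\QQ[t,t^{-1}]$-modules in short exact sequences.

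The first key step is to compute the order of the Alexander module of $\partial\overline U$ in the middle degree and to show it equals $h_n(t)\psi_n(t)$. For this I would decompose $\partial U$ into the pieces coming from a tubular neighborhood of $V_f$ and from a tubular neighborhood of $H_{\infty}$, and run a Mayer--Vietoris sequence for the induced decomposition of $\partial\widetilde{\overline U}$. The contribution of the neighborhood of $H_{\infty}$ is governed by the Milnor fibration at infinity of $f_d$, so its middle Alexander module has order the characteristic polynomial $h_n(t)$ of $F_h$; the contribution coming from $V_f$, including its possibly non-isolated singular locus, is exactly what Deligne's nearby cycle complex $\psi_f\QQ_{\CC^{n+1}}$ encodes, so its hypercohomology order is $\psi_n(t)$. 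This is where the perverse-sheaf and nearby-cycle machinery of \cite{liumaxim} is indispensable: for non-isolated singularities one cannot replace $\psi_f$ by an elementary local Milnor fibration, and the matching of degrees together with the identification of the monodromy (vanishing-cycle) action with the deck-transformation action must be carried out sheaf-theoretically.

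The second key step is duality and assembly. Milnor's Poincar\'e--Lefschetz duality for the infinite cyclic cover gives $H_k(\widetilde{\overline U},\partial\widetilde{\overline U};\QQ)\cong\overline{H^{2n+2-k}(\widetilde{\overline U};\QQ)}$, the bar denoting conjugation $t\mapsto t^{-1}$; since conjugation preserves orders up to a unit, the global Alexander polynomial $\delta_n(t)=\mathrm{ord}\,H_n(\widetilde U)$ is paired by duality with a module in the complementary degree carrying the conjugate order $\overline{\delta_n(t)}$. The form $\phi$ is the self-pairing on the finite-dimensional $\QQ(t)$-vector space $H_{n+1}(\widetilde U,\QQ(t))$ induced, via the boundary inclusion $i$, by Lefschetz duality of the pair, and $\det(\phi)\in\QQ(t)^{\times}$ is its Gram determinant. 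Feeding the boundary order $h_n(t)\psi_n(t)$ from the first step into the order-multiplicativity relation across the segment of the long exact sequence of the pair around degree $n+1$, the interior module $H_n(\widetilde U)$ and its Poincar\'e-dual partner each contribute a factor $\delta_n(t)$ (associate up to conjugation), producing the square $\delta_n(t)^2$, while the free middle-dimensional part of $H_{n+1}$ contributes exactly the torsion correction $\det(\phi)$. Collecting these factors yields $h_n(t)\psi_n(t)=\delta_n(t)^2\det(\phi)$ up to a unit of $\QQ[t,t^{-1}]$.

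I expect the main obstacle to be the first step: cleanly separating the boundary Alexander module into its factor at infinity and its nearby-cycle factor, and getting the indexing right so that exactly $h_n$ and $\psi_n$, rather than products over several adjacent degrees, appear in the middle dimension. A second delicate point is the bookkeeping between $\QQ(t)$-coefficients, where $\phi$ is non-degenerate and $\det(\phi)$ is literally a determinant, and $\QQ[t,t^{-1}]$-coefficients, where the orders $h_n,\psi_n,\delta_n$ live; it is precisely in this passage that $\det(\phi)$ emerges as the torsion correction relating $H_{n+1}(\widetilde U)$ to its dual, and making that identification rigorous --- rather than merely comparing degrees of polynomials --- is where the substance of \cite{liumaxim} resides.
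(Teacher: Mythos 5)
The paper itself contains no proof of this statement: it is quoted (with slightly garbled notation) from Liu--Maxim \cite{liumaxim}, so the benchmark is their argument, which runs through Reidemeister torsion and the Cappell--Shaneson peripheral complex. Your outline does reproduce that architecture at the top level --- boundary manifold of a regular neighborhood of $V_f\cup H_{\infty}$, sheaf-theoretic identification of its Alexander-type invariants with $h_n$ and $\psi_n$, then a Milnor-type duality on the pair producing $\delta_n(t)^2\det(\phi)$ --- but two of your steps have genuine gaps as written, and they are exactly where the theorem lives.

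First, the Mayer--Vietoris splitting of $\partial U$ into ``the piece from $V_f$'' and ``the piece from $H_{\infty}$'' is not clean: the two tubular neighborhoods interact over $V_f\cap H_{\infty}$, the overlap contributes its own Alexander modules, and you cannot conclude degreewise that the middle Alexander module of the boundary has order exactly $h_n(t)\psi_n(t)$. In \cite{liumaxim} the separation is achieved not by a two-set cover but by the open/closed decomposition underlying the compactly supported hypercohomology $H^{2n-i}_c(V_0,\psi_f\QQ_{\CC^{n+1}})$ on the affine part $V_0=V_f\cap\CC^{n+1}$ --- this is precisely why $\psi_n$ is defined with compact supports --- together with the peripheral complex, which packages the boundary contribution in all degrees simultaneously. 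Second, ``order multiplicativity across the segment of the long exact sequence of the pair around degree $n+1$'' does not suffice: orders are multiplicative along a whole exact sequence, the flanking modules in degrees $\ne n$ do not vanish, and a local segment yields only a divisibility-type relation contaminated by unknown factors. What closes the bookkeeping in \cite{liumaxim} is the torsion formalism your sketch gestures at but never invokes: Milnor's duality theorem for Reidemeister torsion \cite{milnorduality}, $\tau(\partial\overline U)\doteq\tau(\overline U)\,\overline{\tau(\overline U)}\,\det(\phi)$, in which every degree appears, combined with the degreewise identifications $\Delta_i(t)=\psi_i(t)$ for $i<n$ recorded in the paper just before the theorem, so that all non-middle factors cancel and only $h_n\psi_n=\delta_n^2\det(\phi)$ survives. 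Your alternating-product bookkeeping is a torsion computation in disguise; making it rigorous means carrying it out for the full torsion of the pair, not for one segment of one long exact sequence. You flagged both points as ``obstacles,'' but as the proposal stands they are the proof, not refinements of it.
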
 
In the case when $V_f$ is transversal to $H_{\infty}$ and has only isolated
singularities, this is translated into the following relation (cf. \cite{liumaxim}):
\begin{equation}
    (t-1)^{\vert e(\PP^{n+1}\setminus (H_{\infty}\cup V_f))\vert +(-1)^{n+1}}(t^d-1)^{{1 \over d}((d-1)^{n+1}+(-1)^n)}\prod_{p \in Sing(V_f)} \Delta_p=\Delta_n^2det(\phi)
\end{equation}i.e. one obtains topological interpretation of terms converting
divisibility into equality. In the case of plane curves one recovers
the result in \cite{cogoflorens}.

\section{Ideals of quasiadjunction and multiplier ideals}\label{section4}

Now we will turn to calculation of components of characteristic
varieties in terms of dimensions of linear systems determined by the
singular points of the curve. For earlier expositions of this material
in the case of plane curves 
cf. \cite{me09} or \cite{artal14}.

\subsection{Ideals and polytopes of quasi-adjunction}\label{qasection}

\begin{definition} Let $X$ be a complex $n$-dimensional
  manifold,  $P\in X$ be a point and let $B_P$ be a small ball centered at $P$. Let $D$ be a
reduced divisor on $X$ containing $P$, $f \in
  \mathcal{O}_P$ be a reduced germ of
  a holomorphic function having $D$ as its divisor and let $f(x_1,...,x_n)=f_1(x_1,...,x_n)\cdot ...\cdot f_r(x_1,....,x_n)$ be its
  prime factorization. Let 
\begin{equation}\label{arrays} 
 (j_1,...,j_r),(m_1,..,m_r), 0 \le j_i
  <m_i 
\end{equation} 
be two arrays of integers. Consider abelian branched cover
$V_{m_1,....,m_r}$ of $B$ 
ramified over $D$ and corresponding to the component-wise reduction 
$H_1(B_P\setminus D\cap B_P,\ZZ)=\ZZ^r\rightarrow
\oplus_1^r\ZZ/m_i\ZZ$ 
(cf. Section \ref{branchedcoverssect}). 
  After selecting local coordinates near $P$, this cover can be viewed as a germ at the origin in $\CC^{n+r}$ with
  coordinates $(z_1,....,z_r,x_1,...,x_n)$ given by  
the local equations:
\begin{equation}\label{abelcover}
   z_1^{m_1}=f_1(x_1,...,x_n),.......,z_r^{m_r}=f_r(x_1,...,x_n), 
 \end{equation}
The covering map of subvariety (\ref{abelcover}) onto $B_P$ is given
by projection $$(z_1,...,z_r,x_1,....,x_n)\rightarrow (x_1,...,x_n).$$
The ideal of quasi-adjunction $\cA(f \vert j_1,...,j_r\vert
m_1,...,m_r)$ of $f(x_1,...,x_n)$, corresponding to the array
(\ref{arrays}), is the ideal of
germs $\phi \in \mathcal{O}_P$ in the local ring of $P$, such that the
$n$-form 
\begin{equation}\label{form}
         \omega_{\phi}={{\phi(x_1,...,x_n) dx_1\wedge...\wedge dx_n} \over {z_1^{m_1-j_1-1}.....z_r^{m_r-j_r-1}}}
\end{equation}
defined on the smooth locus of (\ref{abelcover}), 
can be extended over a log resolution of (\ref{abelcover}).  
\end{definition}
One shows that the ideal $\cA(f \vert j_1,..,j_r \vert m_1,...,m_r)$ is
independent of a resolution of (\ref{abelcover}) and that it depends only on the vector:
\begin{equation}\label{vector}
   \left({{j_1+1}\over m_1},....,{{j_r+1}\over m_r}\right) \in [0,1]^r \subset \RR^r
\end{equation} 
rather than on specific values of $j_i,m_i$.
To see that dependence is only on (\ref{vector}), let us consider the following
resolution of the germ (\ref{abelcover}): select a log-resolution 
\begin{equation}\label{logres}
\mu: (\tilde X,\tilde D) \rightarrow (B_P,B_P\cap D)
\end{equation} with
the exceptional set $E=\bigcup_1^K E_k$ i.e. assume that $E\cup \mu^*(D)$ is a normal
crossing divisor. Consider a resolution $\mathcal{X}_m$ of
the {\it normalization} $\tilde X_{m_1,...,m_r}$ of the fiber product $\tilde X\times_X V_{m_1,..,m+r}$ 
\begin{equation}
 \begin{matrix}    \mathcal{X}_{m_1,...,m_r}  & & \cr
      \downarrow  &  & \cr 
{\tilde X}_{m_1,...,m_r} & & \cr
\downarrow & & \cr
\tilde X\times_X V_{m_1,..,m_r} &
   \rightarrow & V_{m_1,...,m_r} \cr
  \tilde \pi_{m_1,...,m_r} \ \  \downarrow & & \pi_{m_1,...,m_r}\downarrow \cr
  \tilde X & \buildrel \mu \over \rightarrow & \CC^n \cr  
    \end{matrix}                            
\end{equation}
The normalization $\tilde X_{m_1,..,m_r}$ has abelian quotient singularities and
hence is $\QQ$-Gorenstein Kawamata log-terminal (cf. \cite{batyrev}). 
In particular an $n$-form on the smooth locus of $\tilde X_{m_1,..,m_r}$ extends over
exceptional locus of $\mathcal{X}_m \rightarrow X_{m_1,..,m_r}$ iff it is
holomorphic on the smooth locus of $\tilde X_{m_1,..,m_r}$ (cf. \cite{GKKP}).  
 Let $a_{k,i}=ord_{E_k}\mu^*(f_i),
c_k=ord_{E_k}\mu^*(dx_1\wedge...\wedge
dx_n),e_k(\phi)=ord_{E_k}\mu^*(\phi)$. 
We obtain that $\tilde \pi_{m_1,..,m_r}(\omega_{\phi})$ extends over the smooth
locus of normalization $\tilde X_{m_1,...,m_r}$ iff for all
irreducible components $E_k \subset X$ one has:
\begin{equation}\label{multiplicities}
   \sum_i a_{k,i}{{j_i+1} \over m_i} > \sum_i a_{k,i}-e_k -c_k-1 \ \ \ e_k=e_k(\phi)
\end{equation}

\begin{definition}\label{polytopeqa} For a choice of collection $\cE=\{e_k\}, k=1,...,K$ of
  non-negative integers labeled by irreducible components of
 resolution  (\ref{logres}) and such that there exist a germ $\phi \in
 \mathcal{O}_P$ such that
$e_k=e_k(\phi)$,  the closure $\cP(\cE)$ in the $r$-cube $[0,1]^r$ of the set of solutions to 
inequalities (\ref{multiplicities}) is called a {\it polytope of quasi-adjunction\index{polytope!of quasi-adjunction}}\footnote{By a polytope we mean a set of solutions to
  a finite collection of inequalities. All polytopes considered here are
  bounded (subsets of a unit cube) and hence are the convex hulls of
  the sets of their vertices. Faces are subsets of the boundary of a
  polytope which are the convex hulls of a subset of the set of
  vertices of the polytope. The dimension of a polytope (including a
  face) is the maximal
dimension of the balls in its interior (with the dimension of a vertex
being zero).} of the germ $f=f_1 \cdot....\cdot f_r$.
\end{definition}

Using polytopes $\cP(\cE)$ or inequalities (\ref{multiplicities}), one can
describe necessary and sufficient conditions on (\ref{arrays})
assuring extendability of the form $\omega_{\phi}$
(cf. (\ref{form})) on resolution of singularities of abelian covers in
the tower (\ref{abelcover}). While the polytopes $\cP(\cE)$ form a set
which is partially
ordered by inclusion and closely related to Alexander invariants,
sometimes it is convenient also to use  
a partition of $[0,1]^r$ into a union of (locally
closed) {\it non-intersecting and possibly non-convex} polytopes $\cQ$ compatible with polytopes $\cP(\cE)$. 

To describe these polytopes consider the hyperplanes in $\RR^r$
\begin{equation}\label{equationfaces}
           \sum_i a_{k,i}(\gamma_i-1) +c_k=\epsilon_k, \epsilon_k \in
           \ZZ_{< o}
\end{equation} 
 labeled by the exceptional
components $E_k, k=1,...,K$ of resolution $\mu$ (cf. (\ref{logres})) 
where collections of integers $\epsilon_k$ are such that there exist $\phi \in \mathcal{O}_P$ satisfying
 $\epsilon_k=e_k(\phi)$ and the remaining coefficients in
 (\ref{equationfaces}) are defined just before (\ref{multiplicities}).
The hyperplanes (\ref{equationfaces}) 
partition the unit cube $[0,1]^r$ into a union of locally closed
polytopes $\cQ$ of various dimensions, such that
$\upsilon=(...,\gamma_i,..)=(...,{{j_i+1} \over m_i},...)$ and
$\upsilon'=(...,\gamma_i',..)=(...,{{j'_i+1} \over m'_i},...)$
belong to the same polytope iff $\upsilon,\upsilon'$ satisfy the same sets
of inequalities (\ref{multiplicities}). Equivalently, 
 each polytope $\cQ$ coincides with the 
polytope formed by an equivalence class of points in
$[0,1]^r$ when one considers points equivalent if they have the same
set of polytopes $\cP(\cE)$ containing each. We wil call the polytopes
$\cQ$, defining the decomposition of the unit cube into disjoint union,
{\it the strict polytopes of quasi-adjunction} to distinguish them
from the ordinary polytopes of quasi-adjunction $\cP(\cE)$.

For a fixed vector $\upsilon$ with coordinates
(\ref{vector}) the germs $\phi$ which satisfy
the inequalities (\ref{multiplicities}) form the ideal $\cA_{\cQ} \subset  \mathcal{O}_P$ depending only on
the polytope $\cQ$ and not on a choice of $\upsilon \in \cQ$.
In particular,  
the ideal of quasi-adjunction 
$\cA(f\vert j_1,...,j_r \vert m_1,...,m_r)$ is the ideal $\cA_{\cQ}$ such that
  $(....,{{j_i+1}\over {m_i}},...) \in \cQ$.

\begin{definition}\label{sheafofquasiad} 1.
 We  define an ideal of quasi-adjunction\index{ideal of quasi-adjunction} as an ideal coinciding with an ideal $\cA_{\cQ}$ for some strict polytope of quasi-adjunction $\cQ \subset
  [0,1]^r$. 

2. For a polytope in $[0,1]^r\subset \RR^r$ as above, a {\it face of
  quasi-adjunction}\index{face of
  quasi-adjunction} $\cF$ of dimension $p$ is the $p$-dimensional
intersection of the polytope of quasi-adjunction with an affine
half-space in $\RR^r$, transversal to all coordianted hyperplanes in
$\RR^r$, such
that each point of this intersection is the boundary point of the
polytope and the half-space.
\footnote{i.e.the set of solutions to a linear inequality}  
\end{definition}

\begin{corollary} The polytopes, ideals and faces of quasi-adjunction depend only on the
germ $f(x_1,...,x_n)$ and not on a resolution. The inequalities (\ref{multiplicities}) show that
there exist firstly, the collection of polytopes $\cP$, which are the
unions of the polytopes $\cQ$, and secondly to each of these
polytopes correspond the ideal $\cA_{\cQ}$
such that for any $\upsilon \in \cQ$ and a germ $\phi \in \mathcal{O}_P$, the
corresponding form 
$\omega_{\phi}$ can be extended to a holomorphic form on
$\cX_{m_1,...,m_r}$ iff $\phi \in \cA_{\cQ}$. The boundary of such a polytope
$\cQ$ is a union of faces each being also a face the boundary of a
polytope $\cP$ and
each such face being a close polytope in the
intersection of hyperplanes given by the equations (\ref{equationfaces})
with 
\begin{equation}\gamma_i=1-{{j_i+1}\over m_i} \ \ \ i=1,...,r.
\end{equation}
\end{corollary}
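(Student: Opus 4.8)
The plan is to reduce the entire statement to two independent ingredients: an intrinsic, resolution-free description of the ideals $\cA_{\cQ}$ through extendability of the form $\omega_{\phi}$ of (\ref{form}), and the elementary combinatorics of the finite hyperplane arrangement (\ref{equationfaces}) that the inequalities (\ref{multiplicities}) cut out inside the cube $[0,1]^r$. First I would settle independence of the resolution, which is the conceptual core. The abelian cover $V_{m_1,\ldots,m_r}$ of (\ref{abelcover}) and the meromorphic top-form $\omega_{\phi}$ on its smooth locus are intrinsic data, depending only on the germs $f_i$ and the integers $m_i,j_i$ and not on $\mu$. By Kawamata log-terminality of the abelian quotient singularities of the normalization $\tilde X_{m_1,\ldots,m_r}$ (cf.\ \cite{batyrev}) together with the extension theorem of \cite{GKKP}, a top-form on the smooth locus of $\tilde X_{m_1,\ldots,m_r}$ extends over the exceptional locus of $\cX_{m_1,\ldots,m_r}\to\tilde X_{m_1,\ldots,m_r}$ exactly when it is already holomorphic on that smooth locus; hence extendability over $\cX_{m_1,\ldots,m_r}$ is equivalent to extendability over an arbitrary resolution of the (intrinsic) normalization of $V_{m_1,\ldots,m_r}$. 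Since $g_*\omega_Y=\omega_{Y'}$ for every proper birational morphism $g\colon Y\to Y'$ of smooth varieties, a holomorphic top-form extends over one resolution iff it extends over any other, so $\cA_{\cQ}=\{\phi\in\mathcal{O}_P:\omega_{\phi}\text{ extends}\}$ is intrinsic to $f$. Consequently the ideals, the partition of the cube into the loci on which this ideal is constant, and the faces carved on their boundaries are all independent of the log-resolution (\ref{logres}). I expect this birational invariance to be the main obstacle: the hyperplanes (\ref{equationfaces}) and the integers $a_{k,i},c_k,e_k$ are themselves resolution-dependent, so one must pass to a common dominating resolution and verify that the extra hyperplanes it introduces refine but do not alter the cell decomposition.

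Next I would record the combinatorics. In the coordinates~(\ref{vector}) each inequality of (\ref{multiplicities}) indexed by $k$ is a strict linear condition whose bounding hyperplane is, under the identification $\epsilon_k=-e_k-1\in\ZZ_{<0}$, one of the hyperplanes (\ref{equationfaces}); because $a_{k,i}\ge 0$ and the cube is bounded, only finitely many values of each $e_k$ yield a hyperplane meeting $[0,1]^r$, so the arrangement is finite. Its relatively open cells are the strict polytopes $\cQ$, and on each cell the set of satisfied inequalities, hence the ideal $\cA_{\cQ}$, is constant, which is precisely the well-definedness asserted just before Definition \ref{sheafofquasiad}. For a fixed germ $\phi$ the locus where $\omega_{\phi}$ extends is the open polytope determined by the inequalities (\ref{multiplicities}) with $e_k=e_k(\phi)$, and its closure is a polytope of quasi-adjunction $\cP(\cE)$, a union of closed cells $\overline{\cQ}$; by construction $\cQ\subset\cP(\cE(\phi))$ holds iff $\phi\in\cA_{\cQ}$. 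This yields at once the collection of polytopes $\cP$ as unions of the $\cQ$ and, combined with the extension criterion of the first paragraph, the equivalence that $\omega_{\phi}$ extends over $\cX_{m_1,\ldots,m_r}$ iff $\phi\in\cA_{\cQ}$.

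Finally I would read off the boundary and face structure. The only non-coordinate hyperplanes bounding any $\cP(\cE)$ are those listed in (\ref{equationfaces}); hence, following Definition \ref{sheafofquasiad}(2), every face of quasi-adjunction, being the intersection of $\cP$ with a supporting half-space transversal to the coordinate hyperplanes, lies in the intersection of $[0,1]^r$ with one hyperplane (\ref{equationfaces}), its points being indexed by the arrays $(j_i,m_i)$ through the complementary coordinate $\gamma_i=1-\frac{j_i+1}{m_i}$. Because $\cP(\cE)=\bigcup\overline{\cQ}$ over the cells it contains, the outer boundary of a maximal cell $\cQ$ is part of $\partial\cP$, so each such face of $\cQ$ is simultaneously a face of the polytope $\cP$ in which $\cQ$ is maximal. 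The residual checks, namely transversality and closedness of each face as a sub-polytope of the corresponding hyperplane, are routine consequences of the linearity of (\ref{multiplicities}).
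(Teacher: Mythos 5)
Your proposal is correct and takes essentially the same route as the paper, which states this corollary without a separate proof as a summary of the construction immediately preceding it: the klt property of the normalization $\tilde X_{m_1,\ldots,m_r}$ (via \cite{batyrev}) combined with the extension theorem of \cite{GKKP}, the valuation computation yielding the inequalities (\ref{multiplicities}), and the finite hyperplane arrangement (\ref{equationfaces}) partitioning the cube. Your dominating-resolution argument via $g_*\omega_Y=\omega_{Y'}$ merely makes explicit the resolution-independence the paper leaves implicit, and your identification $\epsilon_k=-e_k-1$ with $\gamma_i=\frac{j_i+1}{m_i}$ agrees with the text before the corollary (the substitution $\gamma_i=1-\frac{j_i+1}{m_i}$ displayed in the corollary itself reflects the complementary, multiplier-ideal coordinates of Proposition \ref{quasiadmultiplier}).
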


\subsection{Ideals of quasi-adjunction and multiplier ideals}
For an exposition of the theory of multipler ideals we refer to
\cite{lazarsfeld} Part III.  Here we recall the key definitions and
relate them to the ideals of quasi-adjunction.

\begin{definition}(cf.\cite{lazarsfeld} Def.9.2.1). Let $X$ be a smooth
  complex variety and $D \in Div(X)\otimes \QQ$ an effective $\QQ$-divisor. Let
  $\mu: X'\rightarrow X$ be a log resolution,
  $K_{X'/X}=K_{X'}-\mu^*(K_X)$ is a relative canonical class. The multiplier ideal sheaf\index{multiplier ideal sheaf} 
  $\cJ(X,D)$ of $D$ is the direct image 
\begin{equation}\label{multiplierdef}
   \mu_*(K_{X'/X}-\lfloor \mu^*(D) \rfloor)
\end{equation}
where for a $\QQ$-divisor $D=\sum \gamma_iD_i, a_i\in \QQ, D_i\in Div(X)$,
$\lfloor D \rfloor=\sum \lfloor \gamma_i \rfloor D_i$ and $\lfloor \gamma \rfloor \in \ZZ$ denotes the integral part of
$\gamma\in \QQ$.

For a collection of $\QQ$-divisors $F_1,...,F_r$ (resp. the ideals
$\wp_1,...\wp_r$), the mixed multiplier ideal
$\cJ(c_1F_1,...,c_rF_r)$
(resp, $\cJ(\wp^{c_1}\cdot .... \cdot \wp^{c_r}_r)$) (cf. \cite{lazarsfeld} 9.2.8, \cite{alberichjump})
is defined as the multipler ideal
\begin{equation}
   \mu_*(K_{X'/X}-\lfloor c_1F_1+....+c+rF_r \rfloor) 
\end{equation} 
(in the case of mixed multiplier ideals attached to
$\wp_1,..,\wp_r$, the divisors $F_i$ are determined from $\wp_i\mathcal{O}_{X'}=\mathcal{O}_{X'}(-F_i)$).
\end{definition}

\begin{proposition}\label{quasiadmultiplier} The ideal of quasi-adjunction $\cA(f \vert j_1,...,j_r
  \vert ,m_1,..,m_r)$ coincides with the multiplier ideal 
  $\cJ(\sum ((1-{{j_i+1} \over {m_i}})D_i)$ 
\end{proposition}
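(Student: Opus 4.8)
The plan is to show that the two ideals are defined by the \emph{same} set of extension inequalities on a common log resolution, so that they literally coincide as subsheaves of $\mathcal{O}_P$. First I would fix the log resolution $\mu:(\tilde X,\tilde D)\to(B_P,B_P\cap D)$ from \eqref{logres}, with exceptional components $E_1,\dots,E_K$, and record the numerical data $a_{k,i}=\ord_{E_k}\mu^*(f_i)$, $c_k=\ord_{E_k}\mu^*(dx_1\wedge\cdots\wedge dx_n)=K_{\tilde X/X}$ coefficient along $E_k$, and $e_k(\phi)=\ord_{E_k}\mu^*(\phi)$. The membership $\phi\in\cA(f\vert j_1,\dots,j_r\vert m_1,\dots,m_r)$ was shown in \eqref{multiplicities} to be equivalent to
\begin{equation}\label{qaineq}
   \sum_i a_{k,i}\frac{j_i+1}{m_i} > \sum_i a_{k,i}-e_k(\phi)-c_k-1 \qquad\text{for all }k.
\end{equation}

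Next I would write out the multiplier ideal side explicitly on the same resolution. Set $\gamma_i=1-\frac{j_i+1}{m_i}$ and $D=\sum_i\gamma_i D_i$, so that $\mu^*(D)=\sum_i\gamma_i\,\mu^*(D_i)$ has coefficient $\sum_i\gamma_i a_{k,i}$ along $E_k$, while the relative canonical divisor $K_{\tilde X/X}$ has coefficient $c_k$ along $E_k$. By Definition of $\cJ(X,D)$ in \eqref{multiplierdef}, the section $\phi$ lies in $\cJ(\sum_i\gamma_i D_i)$ precisely when $\mu^*(\phi)$ vanishes along each $E_k$ to order at least the rounded-down coefficient it must kill, i.e.
\begin{equation}\label{multineq}
   e_k(\phi) \ge \Bigl\lfloor \sum_i \gamma_i a_{k,i}\Bigr\rfloor - c_k \qquad\text{for all }k.
\end{equation}
The substitution $\gamma_i=1-\frac{j_i+1}{m_i}$ turns $\sum_i\gamma_i a_{k,i}$ into $\sum_i a_{k,i}-\sum_i a_{k,i}\frac{j_i+1}{m_i}$, and the whole problem reduces to checking that \eqref{qaineq} and \eqref{multineq} cut out the same $\phi$.

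The key step is an elementary floor-function manipulation: the strict inequality \eqref{qaineq} rearranges to $e_k(\phi) > \sum_i a_{k,i} - \sum_i a_{k,i}\frac{j_i+1}{m_i} - c_k - 1 = \sum_i\gamma_i a_{k,i} - c_k - 1$. Since $e_k(\phi)$ and $c_k$ are integers, a strict inequality $e_k(\phi) > M - 1$ with $M=\sum_i\gamma_i a_{k,i}-c_k$ real is equivalent to $e_k(\phi)\ge \lceil M-1\rceil \ge \lfloor M\rfloor$ when one is careful; more precisely, for an integer $N=e_k(\phi)$ and a rational $x=\sum_i\gamma_i a_{k,i}$, the conditions $N>x-c_k-1$ and $N\ge\lfloor x\rfloor-c_k$ coincide because $N>x-c_k-1\iff N+c_k>x-1\iff N+c_k\ge\lfloor x\rfloor$ (using that $N+c_k\in\ZZ$ and that $m>x-1$ with $m$ integral is the same as $m\ge\lfloor x\rfloor$). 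This identity of integer-versus-real inequalities, applied componentwise in $k$, shows \eqref{qaineq}$\iff$\eqref{multineq}, and hence the two sheaves agree.

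The main obstacle I anticipate is book-keeping the $(-1)$ and the canonical-class term correctly, so that the floor in the multiplier ideal matches the strict inequality of quasi-adjunction. The subtlety is that the quasi-adjunction condition is a \emph{strict} inequality involving the $(n-\text{form})$ pole orders $m_i-j_i-1$ in \eqref{form}, whereas the multiplier ideal uses a \emph{rounded-down} divisor; reconciling these requires the integrality of $e_k(\phi)+c_k$ and the precise relationship $\gamma_i=1-\frac{j_i+1}{m_i}$ rather than $\gamma_i=\frac{j_i+1}{m_i}$, and one must also confirm that the extension criterion on the Kawamata log-terminal normalization $\tilde X_{m_1,\dots,m_r}$ (the discrepancy computation leading to \eqref{multiplicities}) matches the discrepancy computation defining $K_{\tilde X/X}$. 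I would close by noting that both ideals are independent of the chosen resolution—this is proved for $\cA_\cQ$ in the preceding corollary and is standard for multiplier ideals (cf.\ \cite{lazarsfeld})—so the coincidence holds intrinsically.
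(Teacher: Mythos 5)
Your proposal is correct and follows essentially the same route as the paper: both proofs compare, on the common log resolution $\mu$, the vanishing-order condition defining $\cJ\bigl(\sum_i(1-\tfrac{j_i+1}{m_i})D_i\bigr)$ with the inequalities (\ref{multiplicities}), using $\gamma_i=1-\tfrac{j_i+1}{m_i}$. The only difference is that you spell out the floor-versus-strict-inequality step via the integrality of $e_k(\phi)+c_k$, which the paper's one-line proof leaves implicit (its displayed inequality even omits the round-down, harmlessly, for exactly the reason you give).
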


\begin{proof} A germ $\phi$ is a section of the sheaf given by (\ref{multiplierdef}) where
  $D=\sum \gamma_iD_i$ if and only if 
  it satisfies the inequality
\begin{equation}
   e_k(\phi)+c_k-\sum \gamma_ia_{k,i} \ge 0 
\end{equation}
This is equivalent to (\ref{multiplicities}).
\end{proof}

Following \cite{mustatame} one can define the LCT polytope, which is 
a multi-divisor analog of the log canonical threshold (cf. \cite{lazarsfeld}):

\begin{definition}
 $$ LCT(D_1,.....,D_r)=\{ (\lambda_1,....,\lambda_r) \in \RR^r_{+} \vert
   (X,\sum_1^r \lambda_iD_i) \ is\ log \ canonical \}$$
\end{definition}

\begin{proposition} (log-canonical polytopes\index{log-canonical polytope} and polytopes of
  quasi-adjunction). Let $I: [0,1]^r\rightarrow [0,1]^r$ be the
involution of the unite cube given by $(\gamma_1,...,\gamma_r)
\rightarrow (1-\gamma_1,...,1-\gamma_r)$.
Then $LCT(D_1,..,D_r)$ is the $I$-image of the part in the interior of
the unite cube of the boundary of the polytope of
quasiadjuction containing the origin. 
\end{proposition}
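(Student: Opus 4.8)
The plan is to compute both the log-canonical region and the polytope of quasi-adjunction on one and the same log resolution $\mu$ from \eqref{logres}, reduce each to a system of linear inequalities in the integers $a_{k,i}=\ord_{E_k}\mu^*f_i$ and $c_k=\ord_{E_k}\mu^*(dx_1\wedge\cdots\wedge dx_n)$, and then observe that the involution $I$ carries one system onto the other. Proposition \ref{quasiadmultiplier}, identifying the ideal of quasi-adjunction with a multiplier ideal, is the natural bridge: it lets me read the relevant boundary of the origin polytope directly off the log-canonical threshold.

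First I would treat the log-canonical side. Writing $\Delta=\sum_i\lambda_i D_i$ and using that $\mu$ is a log resolution of $D=\bigcup_i D_i$, the discrepancy of $(X,\Delta)$ along an exceptional divisor $E_k$ is $c_k-\sum_i\lambda_i a_{k,i}$, and along a strict transform $\tilde D_i$ it is $-\lambda_i$. Since discrepancies may be computed on a fixed log resolution, the pair is log canonical exactly when all of these are $\ge -1$, so
\begin{equation}
LCT(D_1,\dots,D_r)=\{\lambda\in\RR^r_{\ge 0} : \sum_i a_{k,i}\lambda_i\le c_k+1\ \forall k,\ \lambda_i\le 1\ \forall i\}.
\end{equation}
In particular $LCT\subset[0,1]^r$, the constraints $\lambda_i\le 1$ coming precisely from the strict transforms $\tilde D_i$.

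Next I would identify the relevant polytope of quasi-adjunction. The polytope containing the origin is the one attached to the structure sheaf, i.e. to the trivial germ $\phi=1$ with $e_k=0$; by \eqref{multiplicities} its defining inequalities are $\sum_i a_{k,i}\gamma_i\ge\sum_i a_{k,i}-c_k-1$, and its faces in the interior of the cube lie on the hyperplanes \eqref{equationfaces} with $\epsilon_k=-1$. The substitution $\lambda_i=1-\gamma_i$ turns $\sum_i a_{k,i}\gamma_i=\sum_i a_{k,i}-c_k-1$ into $\sum_i a_{k,i}\lambda_i=c_k+1$, i.e. into the outer boundary of $LCT$, and simultaneously sends the defining inequalities of the polytope to those displayed above. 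Equivalently, via Proposition \ref{quasiadmultiplier} the ideal at $\gamma$ is $\cJ(\sum_i(1-\gamma_i)D_i)=\cJ(\sum_i\lambda_i D_i)$, which equals $\mathcal{O}_P$ exactly on the interior of $LCT$; the jump off $\mathcal{O}_P$ occurs along $\partial LCT$, which is the $I$-image of the $\epsilon_k=-1$ faces. This yields the asserted identification of $LCT$, through its boundary, with the $I$-image of the interior-of-the-cube part of the boundary of the origin polytope.

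The main point requiring care is the bookkeeping of open versus closed conditions and of which wall is which. Log-canonicity is a closed condition ($\ge -1$), whereas extendability of $\omega_\phi$ in \eqref{multiplicities} and triviality of the multiplier ideal are open; the two systems therefore agree precisely along their common boundary, and I must verify that this boundary is the $\epsilon_k=-1$ locus and not a deeper wall $\epsilon_k\le -2$ — this is exactly what singling out the structure-sheaf (origin) polytope, equivalently the locus where $\cJ(\sum_i\lambda_i D_i)$ first leaves $\mathcal{O}_P$, guarantees. I also have to account for the coordinate faces $\gamma_i\in\{0,1\}$: under $I$ these correspond to the constraints $\lambda_i\le 1$ (strict transforms) and $\lambda_i\ge 0$ (the cone $\RR^r_{+}$), which are not part of the log-canonical threshold wall and are precisely what the restriction to the interior of the unit cube removes. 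Once these matchings are pinned down the equality of the two inequality systems under $I$ is immediate, and the only genuinely nontrivial input is that discrepancies, equivalently multiplier ideals, are computed on the single resolution $\mu$, which is the content of Proposition \ref{quasiadmultiplier}.
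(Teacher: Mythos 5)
Your proposal is correct and takes essentially the paper's route: the paper likewise identifies the origin polytope as the region where the ideal of quasi-adjunction is trivial, invokes Proposition \ref{quasiadmultiplier} to convert it into a multiplier ideal, and then cites the multiplier-ideal characterization of log canonicity (Lazarsfeld, Section 9.3.B), which your discrepancy computation on the fixed log resolution simply carries out by hand. Your explicit matching of the $\epsilon_k=-1$ walls under $I$ and the accounting of the coordinate faces $\gamma_i\in\{0,1\}$ against the constraints $0\le\lambda_i\le 1$ fill in details the paper's three-line proof leaves implicit.
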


\begin{proof} Clearly for the vectors (\ref{vector}) sufficiently close
  to zero, the $n$-form (\ref{form}) is extendable for any $\phi$ in the
  local ring of $P$ i.e. the ``ideal'' of quasi-adjunction is not
  proper. The ideal of quasi-adjunction is constant for all vectors
(\ref{vector}) in the polytope bounded by the faces of quasi-adjunction
closest to the origin. Hence the claim follows from the
characterization of log-canonical thresholds in terms of multiplier
ideals  (cf. \cite{lazarsfeld} Section 9.3.B) and Proposition \ref{quasiadmultiplier}). 
\end{proof}

\begin{remark}\label{placeforpolytopes} 1. We often use the following correspondence between
  vectors (\ref{vector}) and the characters of local
  fundamental group of the complement to the germ of $D=\bigcup D_i$.
Consider the embedding of $[0,1)^r$ into $H^1(B_P \setminus B_P\cap
\bigcup D_i,\RR)$ using the basis dual to the meridians $\delta_i$ of
the divisors
$D_i$ i.e. assigning to a vector $(\gamma_1,....,\gamma_r)$ the
cohomology class $h$ such that $h(\delta_i)=\gamma_i$.  

This embedding induces the identification of the cube $[0,1)^r$ with
the group of characters of the local fundamental group. Indeed, any point in  $H^1(B_P \setminus B_P\cap
\bigcup D_i,\RR)$ via exponential map $t
\rightarrow exp(2 \pi i t)$ determines an element in $H^1(B_P \setminus B_P\cap
\bigcup D_i,U(1))$ i.e. an unitary character of the local fundamental
group \footnote{this is a local version of the global construction
  used in Prop. \ref{datarelation}. Here the rank of the fundamental
  group coincides with the number of irreducible components of the divisor.}
.  Vice versa, to a character in $H^1(B_P \setminus B_P\cap
\bigcup D_i,U(1))$  we can assign its unique preimage
belonging to the fundamental domain of the action of $H^1(B_P \setminus B_P\cap
\bigcup D_i,\ZZ)$ on $H^1(B_P \setminus B_P\cap
\bigcup D_i,\RR)$ which is the unit cube in the coordinates of the
above basis.

\par 2. Exact sequence of a pair $(B_P,B_P\setminus B_P\cap D)$ gives
the identification:
\begin{equation}\label{eqplaceforpolytopes}
        H^1(B_P\setminus B_P\cap D,\RR)=H^2(B_P,B_P\setminus D,\RR)=H_2(B_P\cap D,\partial
        (B_P\cap D),\RR)
\end{equation}
and similarly for the coefficients $\ZZ$ and $U(1)$. $B_P\cap D$ is
homeomoprhic to a bouquet of disks and $\partial(B_P\cap D)$ is a disjoint union of
circles, both having the cardinality coinciding with the number of
branches of $D$ at $P$. In particular (\ref{eqplaceforpolytopes}) is a
vector space with canonical direct sum decomposition with summands
corresponding to the branches of $D$ at $P$.
\end{remark}

\subsection{Local polytopes of quasi-adjunction and spectrum of singularities}\label{spectrum11}

\subsubsection{Cyclic Theory} The relation between the constants of quasi-adjunction (i.e. the
faces of quasi-adjunction in cyclic case) and the Hodge theory
was first discussed in \cite{loeservaq} in the case of
isolated singularities. This was continued in \cite{budurspectrum},
\cite{pierretteme11}.

Let $f(x_0,x_1,..x_n)$ be a germ of an isolated singularity at the
origin. Recall that the cohomology of the Milnor fiber $F_f$ support
a mixed Hodge structure defined by Steenbrink and Varchenko
(cf. \cite{steenbrink},\cite{peterssteenbrink}, \cite{varchenko}, \cite{arnoldetal}).
This mixed Hodge structure  provides the cohomology $H^n(F_f,\CC)$
with a decreasing filtration $F^p \cap H_{\lambda} \subseteq F^{p-1}
\cap H_{\lambda}$ on each summand of the direct sum decomposition of $H^n(F_f,\CC)$ into the eigenspaces of the
monodromy operator. A rational number $\alpha$ is an element of the spectrum\index{spectrum} of
$f$ of multiplicity $k$ if there is an eigenvalue of the monodromy 
$\lambda$ such that 
\begin{equation}\label{spectrum}
  n-p-1 <\alpha \le n-p \ \ exp(2 \pi i \alpha)=\lambda  \ \ \ dim
  F^p\cap H_{\lambda}/F^{p+1} \cap H_{\lambda}=k
\end{equation}(cf. \cite{loeservaq})
\begin{theorem} A rational number $\alpha$ belongs to the spectrum of
  $f(x_0,...x_n)=0$ and satisfies $0 < \alpha < 1$ if and only iff
$-\alpha$ is a  face of quasi-adjunction
of $f$ (i.e. is a constant of quasi-adjunction 
in terms of  \cite{arcata81};  the definition given there is in terms of
the adjoint ideals and is a special
case of  Def. \ref{sheafofquasiad} corresponding to cycilc
covers of $\CC^2$.)
\end{theorem}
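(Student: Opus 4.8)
The plan is to compute both the spectrum and the faces of quasi-adjunction from a single embedded resolution $\mu:(\tilde X,\tilde D)\to(\CC^{n+1},V_f)$ of the germ, with exceptional components $E_k$ and the numerical data $a_k=\ord_{E_k}\mu^*f$, $c_k=\ord_{E_k}\mu^*(dx_0\wedge\cdots\wedge dx_n)$, and then to match the two descriptions under the substitution $\alpha=1-\gamma$, where $\gamma=\frac{j+1}{m}$ is the coordinate (\ref{vector}) in the cyclic case $r=1$. Note first that the paper normalizes faces to lie in $[0,1]^r$ (Definition \ref{sheafofquasiad}), whereas the constant of quasi-adjunction in the sense of \cite{arcata81} is recorded by the shifted negative number $\gamma-1$. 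Thus ``$-\alpha$ is a face'' means $\gamma-1=-\alpha$, i.e. $\gamma=1-\alpha$; under this substitution, by Proposition \ref{quasiadmultiplier} a face at $\gamma$ is exactly a value where the multiplier ideal $\cJ((1-\gamma)V_f)=\cA(f\mid j\mid m)$ jumps, and the eigenvalue attached to that value is $\lambda=\exp(2\pi i\alpha)=\exp(-2\pi i\gamma)$, matching (\ref{spectrum}).

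First I would observe that for $\alpha\in(0,1)$ the defining condition (\ref{spectrum}) forces the Hodge level to be $p=n-1$: the band $n-p-1<\alpha\le n-p$ meets the open interval $(0,1)$ only when $n-p=1$. Hence, for $\alpha\in(0,1)$, being a spectral number is equivalent to $\lambda=\exp(2\pi i\alpha)$ being a monodromy eigenvalue with $\mathrm{Gr}_F^{\,n-1}H^n_\lambda(F_f)\neq 0$. This is the extreme (smallest) band of the spectrum, the one governing the geometric genus, and it is precisely the band where the correspondence with adjoint/multiplier ideals is clean. The whole theorem therefore reduces to identifying this single extreme graded piece with the jump of the ideals of quasi-adjunction.

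The key step I would carry out is the dictionary between holomorphic $n$-forms on the cyclic cover $z^m=f$ that extend over the resolution $\cX_{m_1,\dots,m_r}$ and the Hodge pieces $\mathrm{Gr}_F^{\,n-1}H^n_\lambda(F_f)$. Via the Steenbrink--Varchenko construction of the mixed Hodge structure on $H^n(F_f,\CC)$ (cf. \cite{steenbrink}, \cite{varchenko}), a form $\omega_\phi$ as in (\ref{form}) defines a class in the $\lambda$-eigenspace lying in $F^{n-1}$, and the extendability condition (\ref{multiplicities}) is exactly the condition that this class be nonzero in $\mathrm{Gr}_F^{\,n-1}H^n_\lambda$. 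Comparing the ideal $\cA_{\gamma'}$ for $\gamma'$ on the open strict polytope adjacent to the face with $\cA_\gamma$ on the face yields the identity
\begin{equation*}
   \dim\,\mathrm{Gr}_F^{\,n-1}H^n_\lambda(F_f)=\dim\big(\cA_{\gamma'}/\cA_{\gamma}\big),\qquad \lambda=\exp(2\pi i(1-\gamma)),
\end{equation*}
so the left side is nonzero exactly when the ideal of quasi-adjunction genuinely drops across $\gamma$, that is, exactly when $\gamma$ supports a face of quasi-adjunction. This is the cyclic specialization of the Loeser--Vaqui\'e correspondence and its refinement by Budur (cf. \cite{loeservaq}, \cite{budurspectrum}).

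Granting the dimension identity, the theorem follows formally: $\alpha\in(0,1)$ lies in the spectrum $\iff$ $\mathrm{Gr}_F^{\,n-1}H^n_{\exp(2\pi i\alpha)}(F_f)\neq0$ $\iff$ the quasi-adjunction ideal jumps at $\gamma=1-\alpha$ $\iff$ $\gamma-1=-\alpha$ is a constant (face) of quasi-adjunction in the sense of Definition \ref{sheafofquasiad} and \cite{arcata81}. The main obstacle is the Hodge-theoretic identification in the third paragraph: one must prove that extendability of $\omega_\phi$ across the exceptional locus is equivalent to non-vanishing in the correct graded piece of the Hodge filtration on the eigenspace, and, crucially, that no cancellation occurs that would let a candidate value $\frac{c_k+1+\ell}{a_k}$ fail to be an honest spectral number in exactly the cases where it fails to be an honest face. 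Establishing this without loss is most cleanly done either through Saito's mixed Hodge module formalism or through the explicit Steenbrink spectral sequence, and is where the real content of the statement lies.
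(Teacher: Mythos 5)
Your overall route is the right one, and in fact it is the only route available for comparison: the paper gives no proof of this theorem at all, merely attributing it to \cite{loeservaq} with refinements in \cite{budurspectrum}. Reducing the statement, via Proposition \ref{quasiadmultiplier}, to the identification of faces of quasi-adjunction with jumping numbers of multiplier ideals, and then invoking the jumping-numbers-equal-spectrum theorem of Budur, is exactly the standard modern argument. However, your normalization dictionary is wrong, and the error is decisive rather than cosmetic. Test your chain on the cusp $x^2=y^3$, which the paper itself computes two paragraphs below the theorem: in the paper's normalization the spectrum is $\{-\frac{1}{6},\frac{1}{6}\}$, while the face (constant) of quasi-adjunction sits at $\gamma=\frac{1}{6}$ --- see Example \ref{sectionsofbundles}, where $\frac{1}{6}$ is the contributing face, corresponding to the multiplier ideal $\cJ(\frac{5}{6}D)$ and the twist $K_X\otimes L^5$. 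Your substitution $\gamma=1-\alpha$ predicts a face at $\frac{5}{6}$ attached to the spectral number $\alpha=\frac{1}{6}$, which is false. The correct bookkeeping is: a face at $\gamma$ corresponds to the jumping number $1-\gamma$ (Proposition \ref{quasiadmultiplier}), hence by \cite{budurspectrum} to the spectral number $1-\gamma\in(0,1)$ in the \emph{standard} normalization, hence to the paper-normalized spectral number $-\gamma\in(-1,0)$. So faces match the part of the spectrum in $(-1,0)$ via $\gamma\mapsto-\gamma$, and the theorem's band $(0,1)$ is reached only through the symmetry of the spectrum (for $n=1$, the duality $\dim\mathrm{Gr}_F^{1}H^1_{\lambda}=\dim\mathrm{Gr}_F^{0}H^1_{\bar\lambda}$, i.e.\ $\alpha\leftrightarrow-\alpha$) --- a step entirely absent from your write-up.

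The same error propagates into your Hodge-theoretic key step. The extendable forms $\omega_{\phi}$ of (\ref{form}) are holomorphic $n$-forms on (a resolution of) the cover, so they detect the \emph{top} graded piece $\mathrm{Gr}_F^{n}$ of the monodromy eigenspaces --- this is precisely what Proposition \ref{homologyquasiadjunction} computes, namely $H^0(\Omega^1)_{\chi}$, the $h^{n,0}$-classes --- not $\mathrm{Gr}_F^{n-1}$ as you assert. Your displayed identity $\dim\mathrm{Gr}_F^{\,n-1}H^n_{\lambda}(F_f)=\dim(\cA_{\gamma'}/\cA_{\gamma})$ with $\lambda=\exp(2\pi i(1-\gamma))$ fails for the cusp: at $\gamma=\frac{1}{6}$ the right side is $1$, while $\mathrm{Gr}_F^{0}H^1_{\exp(2\pi i\cdot 5/6)}=0$, since by (\ref{spectrum}) the spectral number $-\frac{1}{6}$ places that eigenvector at Hodge level $p=n=1$. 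The identity you actually need is $\dim\mathrm{Gr}_F^{\,n}H^n_{\lambda}=\dim(\cA_{\gamma'}/\cA_{\gamma})$ with $\lambda=\exp(-2\pi i\gamma)$, after which conjugation and the spectrum symmetry yield the band $(0,1)$ with $p=n-1$ (your first-paragraph reduction forcing $p=n-1$ for $\alpha\in(0,1)$ is correct, as is your diagnosis of where the real content lies). As written, though, the dictionary you propose to establish would make the theorem fail on the simplest example the paper provides.
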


There are many cases when spectrum can be easily calculated explicitly. In the case of quasi-homogeneous singularities (with weights
$w_0,...,w_n$ i.e. when defining polynomial is a sum of monomials 
$ax_0^{m_0}....x_n^{m_n}$ such that $\sum_{i=0}^{i=n}w_im_i=1$) the generating
function i.e. $\sum_{\alpha} t^{\alpha}$ where
$\alpha$ runs through the spectrum of the singularity is given by 
\begin{equation}
   {1 \over t}\prod_{i=0}^{n} {{t^{w_i}-t}\over {1-t^{w_i}}}
\end{equation}
cf. \cite{saitospectrum}; we included the factor ${1 \over t}$ since we use 
the same normalization of the spectrum as in \cite{loeservaq} where the
left end of the spectrum is $-1$, cf. (\ref{spectrum}). 
In particular for an irreducible germ with one characteristic pair
$x^p=y^q$ we obtain $(t^{-{1 \over p}}+.....+t^{-{{p-1} \over
    p}})(t^{1\over q}+....+t^{{q-1}\over q})$
which for the cusp $x^2=y^3$ gives $t^{-{1 \over 6}}+t^{1 \over 6}$.
 Explicite calculations
of the spectrum and hence the constants of quasi-adjunction of
irreducible plane curve
singularities in terms of Puiseux pairs was made in \cite{saitospectrum} 
For related calculations see  \cite{budurspectrum}, \cite{galindo}. 

\subsubsection{Local abelian theory}

Calculations of 
the polytopes of quasi-adjunction of singularities of plane curves
were made in \cite{me2002},\cite{pierretteme11},\cite{pierretteme14}. Curves on
surfaces with rational singularities were considered in
\cite{alberichjump}. 

Several examples of calculation of polytopes and ideals of
quasi-adjunction for isolated non-normal crossings were considered in
 \cite{isolatednonnormal} \cite{me2009}  which lead to expressions for characteristic
 varieties mentioned in Example \ref{isolatednonnormalexample}.
Related results are presented in \cite{mustatame}.

\subsection{Ideals of quasi-adjunction and homology of
  branched covers} 

One of the first applications of ideals of quasi-adjunction was a
procedure that allowed to express the Hodge numbers of abelian covers in terms of 
dimensions of linear systems determined by the branch locus and 
 the data of singularities. 
The relation between the constants and ideals of
quasi-adjunction and Hodge numbers of the cyclic covers in
\cite{arcata81} followed by
numerous works (cf. for example \cite{loeservaq}, \cite{vaquie},
\cite{naie1}, \cite{naie2}, \cite{budur}), many using the terminology of multiplier ideals. \footnote{the term introduced by A.Nadel in 1990, cf. \cite{nadel}}

In the case of curves on
surfaces one has the following result. For a high dimensional 
extension leading to calculations of the dimensions of the space of holomorphic
forms on the resolutions of branched covers (the only Hodge
numbers which are independent of resolutions) see \cite{esnault}, \cite{meannals},
\cite{budur} etc.
\begin{proposition}\label{homologyquasiadjunction}
 Let $f: \bar X_{\phi} \rightarrow X$ be an abelian branched cover of a
 smooth projective surface $X$
  ramified over a divisor $D$ with $r$ irreducible components and let
  $f_*(\mathcal{O}_{\bar X_{\phi}})=\oplus
  L^{-1}_{\chi}$ be the decomposition (\ref{pardinidata}). For a character
  $\chi$, let $\cA_{\chi}\subset \mathcal{O}_X, \chi=exp (... .{2 \pi i
    \alpha(\chi)},...), \alpha \in [0,1)^r$ be the ideal sheaf having as stalk
  at $p \in Sing(D)$ the
  ideal of quasi-adjunction of  singularity at $p$ corresponding to the
  polytope of quasi-adjunction containing $\alpha(\chi)$

 Then the dimension of the $\chi$-eigenspace of the covering group
 acting on the space of holomorphic 1-forms on a resolution
 $\widetilde{X_{\phi}}$
of singularities of $\bar X_{\phi}$ is given by 
\begin{equation}\label{homologyquasiadjunctionformula}
   dim H^0(\widetilde{X_{\phi}},\Omega^1_{\widetilde{X_{\phi}}})_{\chi}= H^1(X,K_X\otimes
   L_{\chi} \otimes \cA_{Sing D}(\chi))
\end{equation}
\end{proposition}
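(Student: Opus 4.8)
The plan is to compute the $\chi$-eigenspace of holomorphic $1$-forms on the smooth resolution $\widetilde{X_\phi}$ by transporting it, through Hodge symmetry and Serre duality on $\widetilde{X_\phi}$, to the cohomology of the canonical sheaf, then pushing the latter down to $X$ by Grauert--Riemenschneider vanishing; at that point the $\chi$-eigensheaf of the direct image is identified with $K_X\otimes L_\chi\otimes\cA_{Sing D}(\chi)$ by the very definition of the ideal of quasi-adjunction. Since $H^0(\Omega^1)$ is a birational invariant of smooth projective surfaces, the left-hand side of (\ref{homologyquasiadjunctionformula}) is independent of the chosen resolution, so we may carry out all duality manipulations on $\widetilde{X_\phi}$ without loss.

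First I would record the two duality inputs on the smooth projective surface $\widetilde{X_\phi}$. The Galois group $G$ acts $\CC$-linearly on $H^1(\widetilde{X_\phi},\CC)$ preserving the Hodge decomposition $H^0(\Omega^1)\oplus H^1(\mathcal{O})$; complex conjugation is antilinear and inverts characters, so $\dim H^0(\Omega^1)_\chi=\dim H^1(\mathcal{O})_{\bar\chi}$. Equivariant Serre duality pairs $H^1(\mathcal{O})_{\bar\chi}$ with $H^1(K_{\widetilde{X_\phi}})_{\chi}$ into the trivial $G$-module $H^2(K)\cong\CC$ (the characters multiply to the trivial one precisely when they are $\bar\chi$ and $\chi$), whence $\dim H^1(\mathcal{O})_{\bar\chi}=\dim H^1(K_{\widetilde{X_\phi}})_{\chi}$. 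It therefore remains to compute the $\chi$-eigenspace of $H^1$ of the canonical sheaf of the resolution.

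Next I would push the canonical sheaf down to $X$. Writing $\pi=f\circ\rho$ with $\rho:\widetilde{X_\phi}\to\bar X_\phi$ the resolution and $f:\bar X_\phi\to X$ finite, Grauert--Riemenschneider gives $R^i\rho_*\omega_{\widetilde{X_\phi}}=0$ for $i>0$, while $R^if_*=0$ for $i>0$ as $f$ is finite, so Leray yields the $G$-equivariant identity $H^1(\widetilde{X_\phi},K_{\widetilde{X_\phi}})=H^1(X,\pi_*\omega_{\widetilde{X_\phi}})$. The crux is the identification of the $\chi$-eigensheaf $(\pi_*\omega_{\widetilde{X_\phi}})_\chi$ with $K_X\otimes L_\chi\otimes\cA_{Sing D}(\chi)$. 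For this I would argue locally over each $p\in Sing(D)$ using the explicit eigenforms (\ref{form}): the form $\omega_\phi$ is a $\chi$-eigenvector exactly when the array $(j_i,m_i)$ satisfies $(j_i+1)/m_i=\alpha_i(\chi)$, and by Definition \ref{sheafofquasiad} it extends holomorphically across a log resolution of the germ (\ref{abelcover}) if and only if $\phi\in\cA_\chi$. Globally the denominator $\prod z_i^{-(m_i-j_i-1)}$ contributes the twist $L_\chi$ by the fractional-divisor computation already carried out in Proposition \ref{datarelation} and Example \ref{cycliccover}, while $dx_1\wedge dx_2$ accounts for $K_X$; away from $Sing(D)$ the germ is smooth or normal crossing and the ideal is the whole structure sheaf, so the eigensheaf is $K_X\otimes L_\chi\otimes\cA_{Sing D}(\chi)$. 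The extension criterion invoked here is precisely where the hypothesis that the fibre-product normalization has Kawamata log-terminal abelian quotient singularities enters, namely that an $n$-form extends over the resolution iff it is holomorphic on the smooth locus (cf. \cite{GKKP}); note that $\bar X_\phi$ itself is branched over the singular $D$ and need \emph{not} have rational singularities, which is exactly why the correction $\cA_{Sing D}(\chi)$ is nontrivial.

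Combining the three displayed equalities gives
\begin{equation*}
\dim H^0(\widetilde{X_\phi},\Omega^1_{\widetilde{X_\phi}})_\chi=\dim H^1\bigl(X,K_X\otimes L_\chi\otimes\cA_{Sing D}(\chi)\bigr),
\end{equation*}
which is (\ref{homologyquasiadjunctionformula}). The main obstacle is the final eigensheaf identification: one must match simultaneously the character of the explicit eigenform, the line-bundle twist $L_\chi$, and the quasi-adjunction ideal, and check that the abstract $G$-equivariant Grothendieck-duality bookkeeping is consistent with the concrete normalization of $L_\chi$ fixed earlier in the paper. By contrast the Hodge-theoretic and Grauert--Riemenschneider steps are formal once $\widetilde{X_\phi}$ is known to be smooth projective.
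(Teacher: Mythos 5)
The paper itself states Proposition \ref{homologyquasiadjunction} without proof, as a known result in the tradition of \cite{arcata81}, \cite{esnault}, \cite{vaquie}, \cite{budur}; your argument is exactly the route taken in that literature, and its skeleton is sound: antilinear conjugation symmetry of the Hodge decomposition giving $\dim H^0(\Omega^1)_\chi=\dim H^1(\mathcal{O})_{\bar\chi}$, equivariant Serre duality into the $G$-trivial module $H^2(K_{\widetilde{X_\phi}})\cong\CC$, the equivariant collapse $H^1(\widetilde{X_\phi},K_{\widetilde{X_\phi}})=H^1(X,\pi_*\omega_{\widetilde{X_\phi}})$ via Grauert--Riemenschneider and finiteness of $f$, and the eigensheaf identification through the extension criterion that defines the ideals of quasi-adjunction. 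One small point: birational invariance of $H^0(\Omega^1)$ lets you change resolutions, but the $G$-action must actually exist on the resolution you use, so you should take the minimal (hence automorphism-equivariant) or a functorial resolution rather than an arbitrary one.

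The genuine soft spot is the step you yourself single out as the crux and then settle by assertion: with the paper's own normalizations, the matched-index identification $(\pi_*\omega_{\widetilde{X_\phi}})_\chi=K_X\otimes L_\chi\otimes\cA(\chi)$ comes out conjugated. In the decomposition (\ref{pardinidata}) the $\chi$-eigensheaf of $f_*\mathcal{O}$ is $L_\chi^{-1}$ with $c_1(L_\chi)=\sum_i\alpha_i[D_i]$, $\alpha(\chi)=(\dots,\alpha_i,\dots)$ (Proposition \ref{datarelation}); the eigenform (\ref{form}) of character $\alpha(\chi)=(\dots,(j_i+1)/m_i,\dots)$ has denominator $\prod z_i^{m_i-j_i-1}$, whose fractional class is $\sum_i(1-\alpha_i)[D_i]=c_1(L_{\bar\chi})$, not $c_1(L_\chi)$. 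Equivalently, the trace pairing $\pi_*\omega_{\widetilde{X_\phi}}\times f_*\mathcal{O}\rightarrow K_X$ is $G$-equivariant into invariants and so pairs the $\chi$-eigensheaf of $\pi_*\omega_{\widetilde{X_\phi}}$ with $L_{\bar\chi}^{-1}$, giving $(\pi_*\omega_{\widetilde{X_\phi}})_\chi=K_X\otimes L_{\bar\chi}\otimes\cA(\chi)$ and hence $\dim H^0(\Omega^1)_\chi=h^1(X,K_X\otimes L_{\bar\chi}\otimes\cA(\chi))$. This is corroborated by the paper's own Example \ref{sectionsofbundles}: for the six-cuspidal curve the nontrivial ideal $\mathcal{I}_{Sing}=\cA_{1/6}$ occurs together with the twist $L^5=L_{\bar\chi}$ rather than $L^1=L_\chi$, and only the pairing of a bundle of class $\sum(1-\alpha_i)D_i$ with the multiplier ideal $\mathcal{J}(\sum(1-\alpha_i)D_i)$ of Proposition \ref{quasiadmultiplier} is of the shape to which Kawamata--Viehweg--Nadel vanishing applies in Proposition \ref{kvnprop}. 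So the displayed formula holds after conjugating one of the two occurrences of $\chi$ --- precisely the $\chi\leftrightarrow\bar\chi$ ambiguity the paper concedes in Proposition \ref{kvnprop} and in the $\pm$ signs of Theorem \ref{charvarposition}. Your proof is the right one, but the local bookkeeping (the model $z^d=x$ at a smooth point of $D$ already pins down the twist) must be carried out rather than asserted; as written, the claim that the denominator contributes $L_\chi$ is the one incorrect step, though it only toggles the statement within the conjugation ambiguity already present in it.
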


\subsection{Hodge decomposition of characteristic varieties.}

\subsubsection{Calculation of Characteristic varieties: 
  Deligne extensions} 

In this section we sketch a method for calculation of the variety of
unitary characters with corresponding local systems having 
positive first Betti number and which is based on the Hodge theoretical description of
the cohomology of local systems due to Deligne and Timmersheidt
(\cite{timmer},\cite{delignediffeq},\cite{delignehodgeI},\cite{delignehodgeII},
related works include works of S.Zucker, M.Saito, El Zein and Illusie
cf. discussion in H.Esnault review of \cite{timmer} in
Math. Reviews). For
details we refer to \cite{me2009},
\cite{budur},\cite{esnaultviehweg}, \cite{artalorbifolds}.

The starting point is deRham type description of the
cohomology of local systems on smooth quasi-projective varieties
using logarithmic forms already mentioned in section
\ref{alexandeinvariantsjump}. 
With notations used in this section, we assume now
that $Y$ is smooth and quasi-projective and that $\bar Y$ is
a smooth projective compactification, such that $\bar Y\setminus Y$ is a
normal crossing divisor  $Y^{\infty}=\bigcup_{i \ge 1} Y_i$. The connection
$\nabla$ in (\ref{derhamconnection}) can be selected to be holomorphic
on $Y$ and meromorphic on $\bar Y$ i.e. in having a
matrix given in local coordinates by meromorphic functions with poles along $\bar Y
\setminus Y$. Moreover, this selection can be made so that the matrix
of connection has as its entries the 1-forms 
having logarithmic poles along $Y^{\infty}$ (i.e. linear combinations:
 $\omega=\sum \alpha_i{{dz_i}\over {z_i}}$ where $z_i$ are
local equations of irreducible components of $Y^{\infty}$ and
$\alpha_i$ are holomorphic in a chart in $\bar Y$ cf. \cite{delignediffeq}, Prop.3.2). Globally, logarithmic
connection can be viewed as a $\CC$-linear map $E \rightarrow
\Omega^1(log Y^{\infty}) \otimes E$ where $E$ is a vector bundle on
$\bar Y$satisfying Leibnitz rule.
Note that the matrix of connection, in the rank one case, is just a logarithmic
1-form. One has a well defined 
Poincare residue map $\Omega^1(log Y^{\infty})\otimes E
\rightarrow \mathcal{O}_{Y_i}\otimes E$  along each irreducible
component $Y_i$. Locally, residue depends on trivialization and globally on the
bundle $E$. 
\begin{definition} {\it Deligne's extension\index{Deligne P.!Deligne's extension} of a flat connection
    $\nabla$ on $Y$}  is
  a logarithmic connection on a bundle $E$ on $\bar Y$ such that its residues satisfy
  the inequality
\begin{equation}\label{residueinequality}
   0\le   Res_{Y_i}(\nabla)<1  
\end{equation}
for any $i$. 
\end{definition}
The description of the jumping loci of local systems in terms of
Deligne's extensions, is based on 
the degeneration of the Hodge-DeRham spectral sequence
\begin{equation}
  E_1^{p,q} = H^p(\Omega^q(log Y^{\infty}) \otimes V_{\rho}) \rightarrow  H^{p+q}(\cV_{\rho}). 
\end{equation}
in term $E_1$.  Here $V_{\rho}$ is a Deligne's extensions of a unitary
connection corresponding to the local system $\cV_{\rho}$ where
$\rho:\pi_1(Y) \rightarrow U(n)$ is a unitary representation (cf. \cite{timmer}). 
For a rank one local system $\cV_{\chi}$, corresponding to a character 
$\chi=exp(2 \pi i u)$ and the Deligne extension $L_{\chi}$ of the corresponding
connection, this degeneration implies $rk
H^1(Y,\cV_{\chi})=rk H^0(\Omega^1(logY^{\infty} \otimes L_{\chi}))+rk
H^1(L_{\chi})$ and the following:
\begin{proposition} A subset of $[0,1)^{b_1(X\setminus D)} \subset H^1(X\setminus D,\RR)$ such that the
 Deligne extension of the connection corresponding to a character
 $\chi=exp(2\pi i u)$ coincide with a fixed line bundle $L$ is a
 polytope $\Delta_L$ in $H^1(X\setminus
 D,\RR)$. The rank $rk H^1(Y,\cV_{\chi})$ is constant when $u$ varies
 within $\Delta_L$.
The subset of the torus ${\CC^*}^{b_1(X\setminus D)}=\Char
H_1(X\setminus D,\ZZ)$ consisting of the 
characters $exp(2 \pi i u), u\in \Delta_L$ is a jumping set of the
Hodge numbers of unitary local systems.  The Zariski closure in
$(\CC^*)^{b_1(X\setminus D)}$ of this subset 
is a translated by a finite order character a connected subgroup of
${\CC^*}^{b_1(X\setminus D)}$ 
which is an irreducible component of the characteristic variety of the
fundamental group. Vice versa, any irreducible component of
characteristic variety of $\pi_1(X\setminus D)$, is Zariski closure of a set $exp(\Delta_L)$.
\end{proposition}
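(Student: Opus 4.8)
The plan is to run the Hodge-theoretic computation of $rk\,H^1(Y,\cV_\chi)$ furnished by Deligne's extension together with the $E_1$-degeneration stated just above, to read off its wall-and-chamber structure from a fixed log resolution (the walls being exactly the hyperplanes that cut out the polytopes of quasi-adjunction), and finally to invoke Arapura's structure theorem (Theorem~\ref{translated}) to transport the resulting real ``unitary'' picture to the complex characteristic variety. Throughout, $Y=X\setminus D$ and $\bar Y$ is the smooth compactification obtained from a log resolution $\mu$ as in (\ref{logres}), with boundary the total transform of $D$.

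First I would fix $\mu:\tilde X\to X$ with exceptional components $E_k$ and set $a_{k,i}=\mathrm{ord}_{E_k}\mu^*f_i$. Writing $\gamma_i=u(\delta_i)$ for the value of $u\in[0,1)^{b_1(X\setminus D)}$ on the meridian $\delta_i$, the logarithmic connection attached to $\chi=\exp(2\pi i u)$ has, after normalization by (\ref{residueinequality}), residue $\gamma_i$ along the proper transform of $D_i$ and residue equal to the fractional part of $\sum_i a_{k,i}\gamma_i$ along $E_k$. The isomorphism type of the Deligne extension $L_\chi$ therefore changes precisely when some $\sum_i a_{k,i}\gamma_i$ crosses an integer, i.e. across the hyperplanes (\ref{equationfaces}) defining the strict polytopes of quasi-adjunction. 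Hence $u\mapsto L_\chi$ is constant on each such polytope, and $\Delta_L$---the set on which $L_\chi\cong L$---is a union of them, so it is a polytope; this proves the first assertion and identifies the $\Delta_L$ with the quasi-adjunction decomposition of Section~\ref{qasection}.

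For the constancy statements I would use that the $E_1$-degeneration gives $rk\,H^1(Y,\cV_\chi)=rk\,H^0(\bar Y,\Omega^1(\log Y^\infty)\otimes L_\chi)+rk\,H^1(\bar Y,L_\chi)$ and, more generally, $h^{p,q}(\cV_\chi)=\dim H^p(\Omega^q(\log Y^\infty)\otimes L_\chi)$. Every right-hand side depends only on the sheaf $L_\chi$, so all these numbers are constant as $u$ ranges over $\Delta_L$; this yields the constancy of $rk\,H^1$ and exhibits $\exp(\Delta_L)$ as a locus along which the Hodge numbers of the unitary local systems are fixed, that is, a jumping set. Passing to the characteristic variety, on any $\Delta_L$ with $rk\,H^1(Y,\cV_\chi)=i\ge 1$ we get $\exp(\Delta_L)\subset\cV^1_i(Y)$. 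By Theorem~\ref{translated} the irreducible components of $\cV^1_i(Y)$ are finite-order cosets $T$ of subtori of $\Char H_1(X\setminus D,\ZZ)$; the unitary points of such a $T$ pull back under $u\mapsto\exp(2\pi i u)$ to a rational-affine, hence convex, slice of $[0,1)^{b_1(X\setminus D)}$ on which $L_\chi$ is forced to be constant, so this slice lies in a single $\Delta_L$ and is Zariski-dense in $T$. Conversely a maximal $\Delta_L$ with $rk\,H^1\ge i$ has image whose Zariski closure is, again by Theorem~\ref{translated}, a union of such cosets, and dimension together with irreducibility forces it to be a single $T$, the finite-order translation being the torsion character through which $T$ passes. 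This gives both the fourth assertion and its converse.

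The hard part will be the matching carried out in the previous paragraph: one must check that $\Delta_L$ is full-dimensional inside the translated subtorus $T$, so that $\overline{\exp(\Delta_L)}=T$ rather than a proper subvariety, and that maximal polytopes correspond bijectively to components. This rests on the fact that for unitary characters the Deligne--Timmersheidt computation genuinely computes $H^1(Y,\cV_\chi)$, so the Hodge-theoretic jump locus agrees with the topological locus $\cV^1_i$ on unitary points; combined with the density of the unitary points in a torsion-translated subtorus, this forces the real and complex descriptions to coincide. By contrast, the linearity of the walls and the fact that the induced decomposition consists of genuine bounded convex polytopes are routine consequences of the quasi-adjunction formalism and require no new input.
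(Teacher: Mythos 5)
Your proposal is correct and follows essentially the same route as the paper's own (very terse) argument: the residue normalization (\ref{residueinequality}) translates into linear conditions on $u$, so each $\Delta_L$ is a polytope; Timmersheidt's $E_1$-degeneration makes $rk\,H^1(Y,\cV_\chi)$ and the Hodge numbers depend only on the bundle $L_\chi$, hence constant on $\Delta_L$; and Arapura's theorem (Theorem~\ref{translated}), together with density of unitary points in a torsion-translated subtorus, identifies the Zariski closures of the sets $\exp(\Delta_L)$ with the irreducible components in both directions. Your extra detail (the explicit wall-crossing of residues $\sum_i a_{k,i}\gamma_i$ over a log resolution, matching the hyperplanes (\ref{equationfaces}), and flagging the full-dimensionality of $\Delta_L$ inside a component) merely makes explicit what the paper leaves implicit, so no genuinely different method is involved.
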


Indeed, after selecting a basis in $H^1(X\setminus D,\RR)$, one readily sees
that inequality (\ref{residueinequality}) for each component
translates into a linear inequality on components of logarithm of $\chi$ in
coordinates in this basis.
It is not hard to see that there are only finitely many bundles on $X$
which are the Deligne extensions of a connection corresponding to a 
character in $[0,1)^{b_1(X\setminus D)}$ can occur (cf. \cite{me2009}). 
This provides an explicite description of the unitary part of the components of
characteristic varieties. Since by \cite{arapura} all irreducible components of
the characteristic variety are translated subtori of
${\CC^*}^{b_1(X\setminus D)}$, it follows that in this way we obtain
all the components as the Zariski closures of the exponential images
of the polytopes
$\Delta_L$. 

\subsubsection{Calculation of Characteristic Varieties: quasi-adjunction} We will focus on the
case of characteristic varieties of curves on surfaces. Similar
results are expected for characteristic varieties associated with
higher homotopy groups. We refer to  \cite{mecharvar}, \cite{meample} for some of the
results in this direction. 

Calculation in terms of ideal of quasi-adjunction is based on
comparison of topological and algebro-geometric calculation of 
the dimensions of eigenspaces of the action of the Galois group on the
homology of abelian covers given respectively by Propositions \ref{homologysakuma}
and \ref{homologyquasiadjunction}. However, the Proposition
\ref{homologysakuma} considers only the characters of
$\pi_1(X\setminus D)$ which values on the meridians of all irreducible
components of $D$ are non-trivial. This motivates the following
definition.

\begin{definition}\label{essentialcomp} Let $D=D_1\cup D_2$ be a decomposition of a reduced divisor on 
a smooth projective simply connected surface $X$. 
Let $s_{D/D_1}: \pi_1(X\setminus D) \rightarrow \pi_1(X\setminus D_1)$ be the
surjection of the fundamental groups induced by inclusion $X\setminus D
\subset X\setminus D_1$ and let 
 $s_{D/D_1}^{H_1}, s_{D/D_1}^{\pi_1'},s_{D/D_1}^{\pi_1'/\pi_1"}$
be the corresponding surjections respectively on the homology, commutator of the fundamental group and
the abelianization of the commutator. Let $s_{D,D_1}^{Char_i}:
Char_i^1(X\setminus D_1) \rightarrow Char_i^1(X\setminus D)$ be induced map
of supports of the $i$-th exterior powers of the homology modules
(over $\CC[H_1(X\setminus D_1)]$ and $\CC[H_1(X\setminus D)]$
respectively, cf. Definition \ref{defdepth}).   
An irreducible component $\mathcal{C}_D$ of the characteristic
  variety of $\pi_1(X\setminus D)$ is called {\it non-essential} if 
there is a decomposition $D=D_1\cup D_2$ and a component $\mathcal{C}_{D_1}$ of the characteristic variety of $\pi_1(X\setminus D_1)$
such that $s_{D/D_1}^{Char_i}(\mathcal{C}_{ D_1})=\mathcal{C}_{D}$. {\it An
essential component}\index{essential component} is an irreducible component of $Char_i(X\setminus
D)$ which fails to be non-essential.
\end{definition}

Below we describe a calculation of only essential components,
for simplicity assuming that $H_1(X \setminus D,\ZZ)$ has no torsion.
The results certainly can be described without this assumption. In fact, the first examples
of calculations of characteristic varieties (the roots of the Alexander
polynomials) in terms of ideals 
of quasi-adjunction, were made in \cite{arcata81} in the cases when $D$ is a
an irreducible plane curve i.e. when $H_1(\PP^2\setminus D,\ZZ)$ is a cyclic
group of order $deg D$. See also example (\ref{sectionsofbundles}) where the complement is
torsion as well. Non
essential components may exhibit a subtle behavior: the depth of
a component may increase considered as component of $\mathcal{C}_D$
instead of $\mathcal{C}_{D_1}$. We refer to discussion of this phenomenon
to \cite{ajcarmona2},\cite{artalorbifolds}, \cite{acme2}. 

\smallskip

Recall (cf. section \ref{qasection} and Remark \ref{placeforpolytopes}) that with each singular point $P$
of the reduced divisor $D=\bigcup_1^N D_i$ on a surface $X$ we
associated 
a collection of
polytopes of quasi-adjunction $\cQ_j(P), j=1,...,n(P)$ in
$U_{s(P)}=[0,1)^{s(P)} \subset H^1(B_P\setminus 
D \cap B_P,\RR)=H_2(B_P\cap D,\partial B_P\cap D,\RR)$ (recall that
$B_P$ is a small ball in $X$ centered at $P$). 
Note that the latter locally homology groups can be endowed with the maps to the corresponding global
groups for each group of coefficients $\KK=\ZZ,\RR,U(1)$ leading to  the diagram:
\begin{equation}\label{polytopesfrompoints}
\begin{matrix}   H^1(B_P\setminus B_P\cap D,\KK) & \buildrel \delta_P
  \over \rightarrow &
   H_2(B_P\cap D, \partial (B_P\cap D),\KK)
 \cr
\uparrow i_P & &\uparrow   \epsilon_P \cr
   H^1(X\setminus D,\KK) & \rightarrow   &  H_2(D,\KK) \cr
\end{matrix}
\end{equation} 
Here the top horizontal map $\delta_P$ is the isomorphisms
(\ref{eqplaceforpolytopes}),
the left vertical map $i_P$ is induced by embedding 
and the right vertical map $\epsilon_P$ is the homology boundary map:
$H_2(D,\KK) \rightarrow H_2(D,D\setminus B_P\cap D,\KK)=
 H_2(B_P\cap D, \partial (B_P\cap
D,\KK)$).

For each polytope $\cQ_j(P) \subset 
 H^1(B_P\setminus D \cap B_P ,\RR)$ we consider preimages
$$\cQ_j^X(P)=(i_P^*)^{-1}(\cQ_j(P)) \subset H^1(X\setminus D,\RR) \ \
{\rm and} \ \ \cQ_j^D(P)=\epsilon_P^{-1}(\delta_P(\cQ_j(P)))\subset H_2(D,\RR)$$

In  (\ref{polytopesfrompoints}),
each group in the top row for $\KK=\RR$ and  the group  $H_2(D,\RR)$ contains
the canonical fundamental domain for the action of respective lattice
which one obtains taking for each group $\KK=\ZZ$. 
These fundamental domains are the unit cubes in the bases
corresponding to the fundamental classes of appropriate irreducible component of $D$.
The image of each such fundamental domain in $H^1(B_P,B \setminus
B_P\cap D,\RR)$, induced by embedding $H^1(B_P,B \setminus
B_P\cap D,\RR) \rightarrow \oplus_{P \in Sing(D)} H_2(B_P\cap D, \partial (B_P\cap D),\RR)$,
is a face of the unit cube in the latter. The intersection of the
image of $H_2(D,\RR)$ in $\oplus_P H_2(B_P\cap D, \partial (B_P\cap D),\RR)$
 is either such a face, if the branches of $D$ at $P$  belong
to different irreducible components of $D$, or a diagonal in such a
face, if different branches at $P$ belong to the same irreducible
component of $D$. We denote by $U_{X,D}$ the unit cube 
in $H_2(D,\RR)$. From now on we will use the same nation
$\cQ_j^X(P),\cQ_j^D(P)$ for their intersections with the respective
unite cubes: these and only parts of respective polytopes contain 
the information we need below.

The unit cubes in $H_2(D,\RR)$ and $H_2(D \cap B_P,\partial (D\cap B_P),\RR)$,
have canonical
involution corresponding to the lift of the conjugation of characters 
via inverse of the map induced on cohomology by $\exp: \RR \rightarrow
U(1)$.  For example on each unit
cube in $H^1(B_P\setminus B_P\cap D,\RR)$ this involution is given by $(u_1,..,u_r)
\rightarrow (1-u_1,....,1-u_r)$ and similarly in other cases. For a
subset $\cU$ in such a cube we
denote the image of this involution as$-\cU$.

\begin{definition} Let $Sing(D)$ be the set of singularities of $D$ 
and let $\cQ$ be the set of collections $Q=\{ \cQ_j(P_k) \vert P_k \in Sing(D),
j=1,...J(P_k)\}$ (here $J(P_k)$ is the cardinality of the set of local 
polytopes of quasi-adjunciton of singularty $P_k$)
of (strict \footnote{strict
  polytopes of quasiadjunction were described
  just before Def. \ref{sheafofquasiad}}) local
polytopes of quasi-adjunction $\cQ_j(P_k)$,
one for each singularity $P_k \in Sing(D)$.

a. {\it The divisorial global polytope of quasi-adjunction\index{divisorial global polytope of quasi-adjunction}} is the intersection
  \begin{equation}\label{global} \cG_Q=\bigcap_{P_k\in Sing(D),Q_j(P_k) \in Q} {pr_{P_k}}^{-1} \cQ_j(P_k)
       \subset U_{X,D} \subset H_2(D,\RR)
\end{equation} 
of preimages of polytopes of quasiadjunction, one for each singular
point of $D$.
\footnote{the number of global polytopes of quasi-adjunction is at
  most $\prod_{k \in Sing(D)} n(P_k)$ where $n(P_k)$ is the number of local
  polytopes of quasi-adjunction at singular point $P_k$}

b. A {\it global divisorial face of quasi-adjunction\index{global divisorial face of quasi-adjunction}} is a face $\cF$  of a polytope
$\cG_Q$ (cf. (\ref{global})) corresponding to a collection $Q$ of local polytopes of
quasi-adjunction. We say that a face $\cF$ of a global polytope of
quasi-adjunction {\it correspond to a subset}
$\cS \subset Sing(D)$ if $\cF$ is a face of a polytope determined
already by the local
polytopes of singularities only from $\cS$: $\bigcup_{P_k\in
  \cS,Q_j(P_k) \in \cQ} {pr_{P_k}}^{-1} \cQ_j(P_k)$. 

c. {\it The sheaf of quasi-adjunction\index{sheaf!of quasi-adjunction} $\cA_{Q}$ (or $\cA_{\cG(\cQ)}$) corresponding to a choice
collection $Q$ of local polytopes of
quasi-adjunction}, is the ideal
sheaf in $\mathcal{O}_X$ 
having as the stalk at $P \notin \cS$ the local ring of $P \in X$ and the
ideal of quasi-adjunction $\cA_{\cQ_j(P)}$ 
corresponding to selected local polytope of quasi-adjunction for
singularity  $P \in \cS$ 

d. {\it The homological global polytope (resp. face) of quasi-adjunction\index{homological global polytope of quasi-adjunction}\index{homological global face of quasi-adjunction}} 
is a polytope in $H^1(X\setminus D,\RR)$, viewed as the trivial coset
of $H_2(D,\RR)/H^1(X\setminus D,\RR)$ ,
 which is the translation to
this trivial coset 
\footnote{cf. construction described in Proposition \ref{datarelation}} 
 of the intersection of divisorial
global polytope (resp. face) of quasiadjunction described in a. 
(resp. b.) of this definition with a coset in
$H_2(D,\RR)/H^1(X\setminus D,\RR)$ which image in $H^2(X,\RR)$
(i.e. the image via the map $i_{\RR}$ in (\ref{dualhomologysequence})) is an
integral cohomology class (i.e. the first Chern class of a line bundle). 
\end{definition}

The following Proposition shows that in the case when irreducible
components of $D$ are big and nef, only characters of the
fundamental group, which after lift to $H^1(X\setminus D,\RR)$ give 
classes belonging to the faces of quasi-adjunction, can have non-trivial eigenspaces for
the action of Galois groups on the abelian covers of $X$ ramified
along $D$. 

\begin{proposition}\label{kvnprop}  Assume that irreducible components of $D$
  are big and nef. Let $u \in U_{X,D}$ be such that $u$ is in interior of
  all global polytopes of quasi-adjunction $\cG_Q$ of divisor $D$ or their
  images $\bar \cP_Q$ for the involution $u \rightarrow \bar u$ sending
  $(x_1,..,x_k)\in U_{D,X}$ to $(1-x_1,...,1-x_k)$. If $\tilde X_G$ is
  a resolution of singularities of a cover $X_G$ of $X$ with abelian
  Galois group $G$ and $\chi=exp (2 \pi i u)$, then the eigenspace $H^1(\tilde X_G,\CC)_{\chi}=\{
  v \in H^1(\tilde X_G,\CC)\vert g\cdot v=\chi(g)v, \forall g \in G\}$ is trivial.
\end{proposition}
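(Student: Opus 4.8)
The plan is to combine the Hodge decomposition of the eigenspace with the two independent computations of its dimension already at our disposal — the topological one through branched covers (Proposition \ref{homologysakuma}) and the algebro-geometric one through ideals of quasi-adjunction (Proposition \ref{homologyquasiadjunction}) — and then to reduce the statement to a Kawamata--Viehweg--Nadel type vanishing theorem fed by the big and nef hypothesis.

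First I would note that, since $\tilde X_G$ is smooth projective and $G$ acts holomorphically, the Hodge decomposition is $G$-equivariant and hence restricts to the $\chi$-eigenspace,
\begin{equation}
H^1(\tilde X_G,\CC)_\chi = H^{1,0}(\tilde X_G)_\chi\oplus H^{0,1}(\tilde X_G)_\chi .
\end{equation}
Complex conjugation interchanges the two summands and carries the $\chi$-eigenspace to the $\bar\chi$-eigenspace, so that $\dim H^{0,1}(\tilde X_G)_\chi=\dim H^{1,0}(\tilde X_G)_{\bar\chi}$, where $\bar\chi=\exp(2\pi i\,\bar u)$ corresponds to the involution image $\bar u=(1-x_1,\dots,1-x_k)$ of $u$. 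It therefore suffices to show that both $H^{1,0}(\tilde X_G)_\chi$ and $H^{1,0}(\tilde X_G)_{\bar\chi}$ vanish. This is exactly why the hypothesis is stated symmetrically: it guarantees that neither $u$ nor its involution image $\bar u$ lies on a face of quasi-adjunction, and these are precisely the two conditions governing the two Hodge summands.

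Next I would apply Proposition \ref{homologyquasiadjunction} to each holomorphic piece, writing
\begin{equation}
\dim H^{1,0}(\tilde X_G)_\chi = \dim H^1\!\left(X,K_X\otimes L_\chi\otimes \cA_{Sing D}(\chi)\right),
\end{equation}
and similarly for $\bar\chi$, where $\cA_{Sing D}(\chi)$ has stalk at each $P\in Sing(D)$ the ideal of quasi-adjunction attached to the polytope containing $\alpha(\chi)=u$. By Proposition \ref{quasiadmultiplier} this ideal sheaf is a multiplier ideal $\cJ(\sum_i(1-\alpha_i)D_i)$, so the problem becomes a first-cohomology vanishing for a twisted multiplier ideal sheaf. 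The assumption that $u$ lies in the interior of every global polytope $\cG_Q$ (and dually $\bar u$) means that all the defining inequalities (\ref{multiplicities}) are \emph{strict}, i.e. $u$ avoids every face; it is this strictness that must be converted into the strict positivity needed to run a vanishing theorem.

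The hard part is the vanishing itself. I would run the exact sequence
\begin{equation}
0\to K_X\otimes L_\chi\otimes \cA_{Sing D}(\chi) \to K_X\otimes L_\chi \to K_X\otimes L_\chi\otimes\big(\mathcal{O}_X/\cA_{Sing D}(\chi)\big)\to 0 ,
\end{equation}
whose last term is supported on the finite set $Sing(D)$, so that $H^1$ of the left-hand sheaf is controlled by $H^1(X,K_X\otimes L_\chi)$ together with the cokernel of evaluation on global sections. Since each $D_i$ is big and nef, the class $c_1(L_\chi)=\sum_i\alpha_i[D_i]$ is a nonzero nonnegative combination of big and nef classes, hence big and nef, so Kawamata--Viehweg gives $H^1(X,K_X\otimes L_\chi)=0$; what remains is the surjectivity of evaluation onto the fat points cut out by $\cA_{Sing D}(\chi)$, equivalently the vanishing of the associated superabundance. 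Here the strict interior hypothesis is essential: on a face the superabundance can jump — these faces are exactly where components of the characteristic variety arise — whereas strictly inside a chamber the quasi-adjunction conditions impose no unexpected constraint on $|K_X+L_\chi|$. The main obstacle, and the step requiring the most care, is to repackage this as a genuine Nadel statement: one must produce, using that $u$ is strictly interior, an effective $\QQ$-divisor $\Delta$ with $\cJ(\Delta)=\cA_{Sing D}(\chi)$ for which $L_\chi-\Delta$ is big and nef. The naive choice $\Delta=\sum_i(1-\alpha_i)D_i$ need not have this property, and the slack provided by the strict inequalities (\ref{multiplicities}) is precisely what should allow one to perturb $\Delta$ within its chamber and gain the bigness. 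Carrying this out for $\bar\chi$ as well then forces both holomorphic eigenspaces, and hence $H^1(\tilde X_G,\CC)_\chi$, to vanish.
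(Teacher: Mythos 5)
Your argument is, in its skeleton, exactly the paper's: the $G$-equivariant Hodge decomposition together with the observation that conjugation carries $H^{0,1}(\tilde X_G)_{\chi}$ to $H^{1,0}(\tilde X_G)_{\bar\chi}$ (the paper phrases this as ``after possibly replacing $\chi$ by $\bar\chi$,'' and you correctly identify this as the reason the hypothesis is symmetric in $u$ and $\bar u$); then Proposition \ref{homologyquasiadjunction} to convert each holomorphic eigenspace into an $H^1(X,K_X\otimes\CaL_{\chi}\otimes\cA)$; then Proposition \ref{quasiadmultiplier} and Kawamata--Viehweg--Nadel vanishing. Two small remarks on the packaging: Proposition \ref{homologysakuma}, announced in your opening plan, is in fact used neither by you nor by the paper here; and your detour through the exact sequence with the skyscraper quotient, plain Kawamata--Viehweg for $H^1(X,K_X\otimes L_{\chi})$, and surjectivity of evaluation maps is harmless but unnecessary -- the paper applies Nadel vanishing directly to $K_X\otimes\CaL_{\chi}\otimes\cA$. (Also note $\chi$ is automatically of finite order, $G$ being the Galois group of a finite branched cover, so no separate reduction is needed there.)

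The one genuine gap is the step you yourself flag as ``the main obstacle'' and leave at ``should allow'': producing an effective $\QQ$-divisor $\Delta$ with $\cJ(\Delta)=\cA$ and the difference class big and nef. This is precisely the step the paper closes, and the closing move is short enough that it should be executed rather than conjectured. The point is that no new $\Delta$ has to be manufactured: by construction the ideal of quasi-adjunction is \emph{constant} on the strict polytope $\cQ$ containing $u$, so by Proposition \ref{quasiadmultiplier} one has $\cA=\cJ\bigl(\sum_i(1-\gamma_i)D_i\bigr)$ simultaneously for \emph{every} $\gamma\in\cQ$ -- a whole chamber of admissible divisors with the same multiplier ideal is already at hand. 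Since $u$ is interior to $\cQ$, one may replace $u$ by a nearby $\gamma'\in\cQ$; the paper perturbs along the coset of $H^1(X\setminus D,\RR)$ inside $H_2(D,\RR)$, which keeps the line bundle $\CaL_{\chi}$ literally unchanged, while the coefficients of the test divisor in the Nadel criterion gain a strictly positive increment, so that the required difference becomes a strictly positive $\QQ$-combination of the classes $D_i$ -- and a positive combination of big and nef classes is big and nef. Nadel vanishing then kills both terms in your display, for $\chi$ and for $\bar\chi$. This also shows that interiority is not a mere genericity convenience, which your sketch gestures at but does not pin down: at a point of a face no perturbation within the polytope is available, the positivity degenerates to the borderline nef-but-not-big class, and the vanishing genuinely fails -- Example \ref{sectionsofbundles}, where $u=1/6$ lies on a contributing face and $H^1(X,K_X\otimes L^5\otimes\mathcal{I}_{Sing})$ is one-dimensional, is exactly this failure mode, and is what makes Theorem \ref{charvarposition} possible.
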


\begin{proof} Since the characters of $H_1(X\setminus D,\ZZ)$ having
  a finite order are the characters $\chi=exp(2\pi i u)$ with $u\in \QQ$,
the density of those in $U_{X,D}$ implies that we can assume that $\chi$ 
 is a character of a finite
  abelian group $G$. Let $\CaL^{-1}_{\chi}$ be the corresponding line
  bundle (cf. Prop. \ref{datarelation}). Since the action of $G$ is holomorphic and hence preserves
  the Hodge decomposition of $H^1(\tilde X_G,\CC)$, after possibly replacing the
  character $\chi$ by the conjugate $\bar \chi$, we can assume that
  $\chi$ has non-trivial eigenspace for $G$ acting on the holomorphic forms
  of the cover.  
Then one has (cf. Prop. \ref{homologyquasiadjunction})
\begin{equation}\label{kvn}
     \dim H^0(\Omega^1_{\tilde X_G})_{\chi}=dim
     H^1(X,\Omega^2_X\otimes \CaL_{\chi}\otimes \cA_{\cG(Q)})
\end{equation}
 where $\cP(Q)$ is the global polytope of quasi-adjunction containing $u,
 \chi=exp(2 \pi i u)$.

Since $u$ is an interior point of $\cG(Q)$ one can take a small
perturbation of it along the intersection with $U_{X,D}$ with the coset 
of $H^1(X\setminus D,\RR)$ corresponding to $\CaL_{\chi}$  (i.e. an affine 
subspace) so that it remains inside $\cG(Q)$.  The corresponding 
line bundle $L_{\chi}$ is unchanged in this deformation of $u$. Using multiplier ideal
interpretation of ideals of quasi-adjunction and
Kawamata-Viehweg-Nadel vanishing (cf.\cite{lazarsfeld} sect. 9.4B)
we obtain that the terms in
(\ref{kvn})
are zeros. 
\end{proof}

\begin{definition} A global divisorial face of quasi-adjunction $\cF \subset \cG(\cF)$ is called {\it contributing} if for
  $u \in \cF$  and the resolution of singularities of the cyclic cover $\pi_*
   X_{\chi} \rightarrow X$ corresponding to the
  surjection $\chi: H_1(X\setminus D,\ZZ) \rightarrow Im(\chi) \subset \CC^*$
 one has $H^1(X,\Omega^2_X\otimes
 \CaL_{\chi}\otimes \cA_{\cG(F))})\ne 0$ (here $\CaL_{\chi}$ is the dual 
 $\chi$-eigenbundle of $\pi_*(\mathcal{O}_{\tilde X_{\chi}})$.  A
 homological face of quasi-adjunction is called contributing if its
 translation to a coset $H_2(D,\RR)/H^1(X\setminus D,\RR)$ 
is a contributing divisorial face. 
\end{definition}

\begin{theorem}\label{charvarposition} Let $X$ be a simply connected smooth projective
  surface and let $D=\sum D_i$ be a reduced divisor with irreducible
  components $D_i$ which are big and nef. Assuming as above that
  $H_1(X\setminus D,\ZZ)$ is torsion free, let $r=rk Coker
  H_2(X,\ZZ) \rightarrow H^2(D,\ZZ)$ denote its rank (cf. (\ref{homologycomplement})).
For any essential component of characteristic variety $\cV_i$ having 
  positive dimension i.e. a
coset of the r-dimensional torus $H^1(X\setminus D,U(1))$ such that
$dim \cV_i \ge 1$ there is: 

a) a collection of
  singularities $\cS$ of $D$ 

b) a contributing face $\cF$ of a global polytope of quasi-adjunction $\cG(\cS)$ which is determined by the collection
$\cS$ and a collection of the polytopes of
  quasi-adjunction $\cQ(P)$, one at each of singularities $P\in \cS$ 

c) a line bundle $\CaL_{\cG(\cS)}$

\noindent such that $\cV_i$ is the Zariski
closure of  $exp (\pm 2\pi i \cF)$ in the maximal compact subgroup
of the r-dimensional torus $Char H_1(X\setminus D,\ZZ)$ and $$dim \cV_i=dim \cF=dim
H^1(X,\Omega^2_X \otimes \CaL^{-1}_{\pm \cQ(\cS)}\otimes \cA_{\cG(\cS)})+1$$

Moreover, $\CaL_{\pm \cG(\cS)}$ is the line bundle which is part of the building
data of the cyclic cover corresponding to surjection $\chi: \pi_1(X\setminus D)
\rightarrow \ZZ_{ord(\chi)}$ for a character $\chi$ which is generic
in the component $\cV_i$.

Vice versa, given a maximal \footnote{i.e. not contained properly in a
  contributing face of the same strict global polytope of quasi-adjunction.} 
contributing face  $\cF\subset \cG(S)$ of a global polytope of quasiadjunction,
with the ideal of quasiadjunction $A_{\cG(\cS)}$,
such that the line bundle corresponding to the characters $exp(2 \pi
i u), u\in \cF$ is a $\CaL$ (satisfying $H^1(X,\Omega^2_X\otimes \CaL\otimes
\cA_{\cG(S)}) \ne 0$)  then the Zariski closure of the set of characters $exp(2 \pi
i u), u\in \cF$ is a component of characteristic variety of
$\pi_1(X\setminus D)$.

\end{theorem}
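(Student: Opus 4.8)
The plan is to prove the stated equivalence by matching, character by character, the topological computation of the depth of the characteristic variety against the algebro-geometric computation of the eigenspaces of the Galois action on branched covers, and then to extract the dimension of a component from the combinatorics of the faces of quasi-adjunction. Concretely, I would fix a positive-dimensional essential component $\cV_i$ and exploit that its torsion points are Zariski-dense (Theorem \ref{translated} guarantees that $\cV_i$ is a finite-order coset of a subtorus). For a torsion character $\chi=\exp(2\pi i u)$ lying on $\cV_i$, Proposition \ref{homologysakuma} identifies the depth $d(D^\chi,\chi)$ with $\dim H^1(\widetilde{\bar X_{\phi}},\CC)_{\chi}$, the $\chi$-eigenspace of the covering group acting on a resolution of the associated abelian branched cover. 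Splitting this eigenspace by Hodge type and applying Proposition \ref{homologyquasiadjunction} rewrites the holomorphic part as $\dim H^1(X,\Omega^2_X\otimes\CaL_{\chi}\otimes\cA_{Sing D}(\chi))$ (using $K_X=\Omega^2_X$), where $\cA$ is the ideal of quasi-adjunction attached to the local polytopes containing $u$. Because the components $D_i$ are big and nef, Proposition \ref{kvnprop} forces one of the two Hodge summands to vanish by Kawamata--Viehweg--Nadel vanishing, so the depth is computed by the single term $\dim H^1(X,\Omega^2_X\otimes\CaL^{-1}_{\pm\cQ(\cS)}\otimes\cA_{\cG(\cS)})$, and it can be nonzero only when $u$ lies on a (contributing) face $\cF$ of a global polytope of quasi-adjunction rather than in its interior.

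Next I would pin down the subtorus and its dimension. The Deligne-extension description of the jumping loci given earlier in this section shows that the extension bundle $\CaL_{\chi}$ is locally constant in $u$: the locus on which it equals a fixed bundle is a polytope, and the component of the characteristic variety is the Zariski closure of the exponential image of a maximal such polytope. Identifying this polytope with the contributing face $\cF$ (on which both $\CaL_{\chi}$ and the ideal $\cA_{\cG(\cS)}$ are constant) yields $\cV_i=\overline{\exp(\pm 2\pi i\,\cF)}$ and hence $\dim\cV_i=\dim\cF$. To compute this common dimension I would invoke Theorem \ref{translated} a second time, now for its fibration content: the component is pulled back along an orbifold map $f\colon X\setminus D\to C^{orb}$ from an irreducible component of $\cV_i(\pi_1^{orb}(C^{orb}))$, and since $f^*$ is injective on character tori, $\dim\cV_i$ equals the dimension of that component of the base jump locus. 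For the relevant positive-dimensional component this dimension is the first Betti number of the base, whereas the generic value of $h^1$ of a rank-one local system on a curve is one less than its first Betti number (the Euler-characteristic relation $-\chi(C^{orb},\CaL_{\rho})=b_1(C^{orb})-1$ for nontrivial $\rho$). Since the generic depth along $\cV_i$ is, by the first paragraph together with semicontinuity, exactly $\dim H^1(X,\Omega^2_X\otimes\CaL^{-1}_{\pm\cQ(\cS)}\otimes\cA_{\cG(\cS)})$, combining these gives the asserted $\dim\cV_i=\dim\cF=\dim H^1(\cdots)+1$.

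For the converse I would run the same chain of identifications backwards. Given a maximal contributing face $\cF\subset\cG(\cS)$ with $H^1(X,\Omega^2_X\otimes\CaL\otimes\cA_{\cG(\cS)})\ne 0$, Proposition \ref{homologyquasiadjunction} produces nonzero $\chi$-eigenspaces of holomorphic forms on the resolution of the cyclic cover determined by $\chi=\exp(2\pi i u)$, $u\in\cF$; Proposition \ref{homologysakuma} then translates this into $d(D^{\chi},\chi)\ge 1$, that is $\chi\in\cV^1_1(X\setminus D)$. As $u$ ranges over $\cF$ the Deligne bundle stays fixed, so these $\chi$ trace a positive-dimensional set whose Zariski closure is, by Theorem \ref{translated}, a translated subtorus; maximality of $\cF$ (not contained in a larger contributing face of the same strict global polytope) guarantees that this closure is a genuine irreducible component and not properly contained in a larger one. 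Throughout, the hypothesis that $\cV_i$ be \emph{essential} (Definition \ref{essentialcomp}) is what lets me take $D^{\chi}=D$ with all meridians nontrivial, so that the global quasi-adjunction data of the full divisor, rather than that of a subdivisor, controls the component.

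The main obstacle, and the step I would treat most carefully, is the precise matching in the second paragraph between the combinatorial face of quasi-adjunction $\cF$ and the Arapura fibration $f\colon X\setminus D\to C^{orb}$: one must verify that the affine span of $\cF$ coincides with the unitary polytope picture of $f^*$ of the character torus of the base, so that the abstract relation $b_1(C^{orb})=(-\chi)+1$ transports correctly into the single additive $+1$ relating $\dim\cF$ to the surface cohomology group. A secondary subtlety is the bookkeeping of the conjugation $\chi\mapsto\bar\chi$ (the $\pm$ in the statement): one has to check that exactly one of the two Hodge summands survives the big-and-nef vanishing of Proposition \ref{kvnprop} along $\cF$, so that the depth is computed by a single cohomology group with a well-defined sign of the bundle $\CaL^{-1}_{\pm\cQ(\cS)}$.
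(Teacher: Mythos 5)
Your proposal is correct and follows essentially the same route as the paper's proof: comparing the topological eigenspace count of Proposition \ref{homologysakuma} with the quasi-adjunction computation of Proposition \ref{homologyquasiadjunction}, using the Kawamata--Viehweg--Nadel vanishing of Proposition \ref{kvnprop} to confine nontrivial depth to (contributing) faces, and invoking Arapura's structure theorem together with maximality of the face to identify genuine components, exactly as the paper does. The only difference is one of detail: you make explicit the dimension count $\dim\cV_i=\dim\cF=\dim H^1(\cdot)+1$ via the orbifold pencil and the Euler-characteristic relation on the base curve (noting that simple connectivity of $X$ forces a genus-zero, punctured target, so the ``$+1$'' is correct), a step the paper's proof leaves implicit.
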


\begin{proof}  Let $Q$ be a maximal contributing face of quasi-adjunction.
The Zariski closure in $H^1(X\setminus D,\CC^*)$ of the set $exp(2 \pi
i u), u \in Q$, belongs to 
a component of characteristic variety, as follows from the
assumptions. If this Zariski closure is a proper subset of a component, 
then preimage of the unitary part (i.e. the intersection with
$H^1(X,U(1)) \subset H^1(X,\CC^*)$) of the full component must belong to the same $H^1(X\setminus
D,\RR)$ coset in
$H_2(D,\RR)$ as $Q$ and,  as follows from Prop. \ref{kvnprop}, 
its preimage in $H_2(D,\RR)$ 
 must be a face of the same polytope as $\cF$ i.e. coincide
with $\cF$ due to maximality assumption. 

Now, let $\cV_i$ be an irreducible component of characteristic
  variety.  A theorem of D.Arapura implies that $\cV_i$ is a translated
  subtorus of the torus  $H^1(X\setminus D, \CC^*)$. The subset
  $\exp^{-1}(\cV_i \cap H^1(X\setminus D,U(1))  \subset H^1(X\setminus
  D,\RR)$ consist of a set of $H^1(X\setminus D,\ZZ)$-translates of
  a linear subspace of $H^1(X\setminus D,\RR)$. The eigenbundles of
  the characters in $\cV_i$, for the push forward of the structure sheaf
   of a cyclic cover of $X$ corresponding to characters from $\cV_i$, define a collection of translates of
  $H^1(X\setminus D,\RR)\subset H_2(D,\RR)$ (cf. Prop.
  \ref{datarelation}) 
which intersect the fundamental domain (i.e. the unit cube) 
  $U_{X,D}$ for the action of $H_2(D,\ZZ)$ on
  $H_2(D,\RR)$. Due to identification in
  Prop. \ref{homologyquasiadjunction} of cohomology of the local
  systems and the cohomology of sheaves of quasi-adjunction,
  one obtains that at least one of translates belongs to a contributing
  face of quasi-adjunction. It is maximal since otherwise $\cV_i$ will
  be a proper subset of a component of larger dimension.

\end{proof}

\begin{corollary}\label{genericline} Let $X,D$ be as in theorem \ref{charvarposition}
and let $C$ be a smooth big and nef curve intersecting  
all irreducible components of $D$ at smooth points transversally.
Then $H_2(D,\RR) \subset H_2(D+C,\RR)$ has codimension one and 
divisorial contributing faces of quasi-adjunction of $D$ coincide with
those of $D+C$.  
\end{corollary}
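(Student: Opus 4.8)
The plan is to reduce the entire statement to the fact that passing from $D$ to $D+C$ creates only ordinary nodes as new singular points, and that such points carry no quasi-adjunction data. First I would dispose of the codimension claim: since $C$ is irreducible one has $H_2(D+C,\ZZ)=H_2(D,\ZZ)\oplus H_2(C,\ZZ)=H_2(D,\ZZ)\oplus\ZZ[C]$, so the inclusion $H_2(D,\RR)\subset H_2(D+C,\RR)$ has codimension one, and the unit cube $U_{X,D+C}$ is the product of $U_{X,D}$ with the unit interval in the $[C]$-direction. I record the projection $p\colon H_2(D+C,\RR)\to H_2(D,\RR)$ forgetting the $[C]$-coordinate, which will carry the identification.

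Next I would analyze the singular locus. Because $C$ meets $D$ transversally and only at smooth points of the $D_i$, the sets are disjoint: $\mathrm{Sing}(D+C)=\mathrm{Sing}(D)\sqcup\{D_i\cap C\}$. At each point of $\mathrm{Sing}(D)$ the germ of $D+C$ equals that of $D$, so the local polytopes of quasi-adjunction and the inequalities (\ref{multiplicities}) are unchanged. At each new point $D_i\cap C$ the germ is an ordinary node, already a simple normal crossing, for which the identity is a log resolution with no exceptional components; by Proposition \ref{quasiadmultiplier} and the triviality of the multiplier ideal of a normal crossing with coefficients in $[0,1)$ the only polytope of quasi-adjunction is the full square and the ideal of quasi-adjunction is the whole local ring. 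Thus the new nodes contribute no hyperplane (\ref{equationfaces}) and impose no condition on germs.

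The combinatorial comparison then follows. The quasi-adjunction hyperplanes (\ref{equationfaces}) of $D+C$ are exactly the $p$-preimages of those of $D$, so every global polytope of quasi-adjunction of $D+C$ is the cylinder $p^{-1}(\cG_Q^{D})\cap U_{X,D+C}$, and every global divisorial face of quasi-adjunction of $D+C$, being transversal to the coordinate hyperplanes and hence cut out by genuine hyperplanes (\ref{equationfaces}), has the form $p^{-1}(\cF)$ for a unique divisorial face $\cF$ of $D$; under the codimension-one inclusion $\cF$ is recovered as the slice $p^{-1}(\cF)\cap H_2(D,\RR)$. This is the asserted identification. Moreover the sheaf of quasi-adjunction attached to $p^{-1}(\cF)$ has the same stalk as $\cA_{\cF}$ at every point of $\mathrm{Sing}(D)$ and the full local ring at the new nodes, so as a subsheaf of $\mathcal{O}_X$ it equals $\cA_{\cF}$.

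It remains to match the \emph{contributing} property, and this is the step I expect to be the real obstacle. For a face $\cF$ of $D$ the test is $H^1(X,\Omega^2_X\otimes\CaL_\chi\otimes\cA_{\cF})\neq 0$, whereas for $p^{-1}(\cF)$ the eigenbundle acquires a term along $C$, so that the relevant line bundle differs from $\CaL_\chi$ by the fractional nef class $\gamma_C[C]$, where $\gamma_C$ is the $[C]$-coordinate of the chosen character (cf. Proposition \ref{datarelation}). Since the ideal sheaves already agree, I would control this twist using that $C$ is big and nef: interpreting $\cA_{\cF}$ as a multiplier ideal via Proposition \ref{quasiadmultiplier} and arguing exactly as in Proposition \ref{kvnprop} with Kawamata--Viehweg--Nadel vanishing, one shows that adding a nef fractional multiple of $C$ neither creates nor destroys the group $H^1(X,\Omega^2_X\otimes\CaL\otimes\cA_{\cF})$; equivalently, along the free $[C]$-direction of the cylinder the depth computed by this $H^1$ is constant and equals its value at $\gamma_C=0$, where the twist disappears. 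Hence $p^{-1}(\cF)$ is contributing for $D+C$ precisely when $\cF$ is contributing for $D$, with equal depths, which is the assertion of the corollary.
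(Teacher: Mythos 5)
Your reduction to the node computation is sound and reproduces the first half of the paper's proof: an ordinary node imposes no conditions of quasi-adjunction (its only polytope is the full unit square and its ideal is the whole local ring), so the quasi-adjunction hyperplanes (\ref{equationfaces}) of $D+C$ are pulled back under your projection $p$, the global polytopes of $D+C$ are the cylinders $p^{-1}(\cG_Q)$ over those of $D$, and the sheaves of quasi-adjunction are unchanged; the codimension-one claim is as you say.

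The final step, however, contains a genuine error of direction. You assert that adding the fractional class $\gamma_C[C]$ to the eigenbundle ``neither creates nor destroys'' $H^1(X,\Omega^2_X\otimes\CaL\otimes\cA_{\cF})$, ``arguing exactly as in Proposition \ref{kvnprop}''. But the mechanism of Proposition \ref{kvnprop} is Kawamata--Viehweg--Nadel vanishing, which can only \emph{kill} this $H^1$ when positivity is added; it cannot show that nonvanishing persists, and in fact it does not. A contributing face for $D$ is a boundary phenomenon: there $c_1(\CaL_\chi)$ minus the fractional divisor computing the multiplier ideal $\cA_{\cF}$ (a divisor supported on $D$ alone, since the nodes contribute nothing) fails to be big and nef, which is exactly why the $H^1$ can be nonzero. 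As soon as $\gamma_C\neq 0$ (replacing $\chi$ by $\bar\chi$ if necessary), that difference gains the summand $\gamma_C C$ with $C$ big and nef, KVN applies, and $H^1(X,\Omega^2_X\otimes\CaL_\chi\otimes\cA_{\cF})=0$. So the depth is \emph{not} constant along the $[C]$-direction: the contributing locus inside the cylinder $p^{-1}(\cF)$ is precisely the slice $\gamma_C=0$. This is the entire content of the paper's second sentence --- contributing characters must have trivial ramification along $C$, i.e.\ belong to $H_2(D,\RR)$ --- and it is what makes the contributing faces of $D+C$ literally \emph{coincide} with those of $D$ rather than being cylinders over them. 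Your conclusion would place characters ramified along $C$ in the characteristic variety of $X\setminus(D+C)$, which also contradicts the Divisibility Theorem \ref{theoremdivisibility} (cited in the paper as an alternative justification): already for a plane curve together with a generic line, the global Alexander polynomial divides the one at infinity, forcing triviality on the meridian of the added line.
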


\begin{proof} Since polytope quasiadjunction of ordinary node coincides
  with the unit square (node does not impose conditions of quasi-adjunction)
it follows that the global polytopes in $H_2(D+C,\RR)$ are the
cylinders over the global polytopes of $H_2(D,C)$ (preimages of
projection of $H_2(D+C,\RR)$ onto the later).
Kawamata-Viehweg-Nadel vanishing implies that the characters in
a contributing faces of the
eigenbundles $\CaL_{\chi}$
must have trivial ramification along $C$ i.e. belong to $H_2(D,\RR)$
(triviality of ramification also follows from Divisibility Theorem
\ref{theoremdivisibility}). 
\end{proof}

\begin{remark} The removal a line at infinity, transversal to a curve, 
was used extensively in \cite{mealex}, \cite{mecharvar}. 
The main theorem in \cite{mecharvar} follows immediately from 
Theorem \ref{charvarposition} and Corollary \ref{genericline}.
\end{remark}

Numerous examples to the Theorem \ref{charvarposition} can be found in the paper
\cite{mecharvar} in the case of line arrangements is a plane and in
(\cite{arcata81}) in the case of irreducible curves. The local
counterpart of the Theorem \ref{charvarposition} and many  
examples of calculations of multivariable Alexander polynomials of the links
(i.e. the characteristic varieties, cf. discussion after Def. \ref{defdepth}) of
singularities in terms of polytopes and ideals of quasi-adjunction are
given in \cite{pierretteme11}. For  results on zero dimensional components of
characteristic varieties we refer to \cite{artalorbifolds}, \cite{artalcogolludome}.
We will finish this section with an example of calculation on a large
class of surfaces generalizing 6-cuspidal sextic of Zariski.

\begin{example}\label{sectionsofbundles} 
Let $X$ be a smooth projective simply connected
  surface and let $L$ be a very ample line bundle on $X$. Let $s_2 \in
  H^0(X,L^2), s_3\in H^0(X,L^3)$ be generic sections of the
  corresponding tensor powers 
of $L$. Let $D$ be the zero set of $s=s_2^3+s_3^2 \in H^0(X,L^6)$. 
Then the Alexander polynomial of this curve with $6L^2$ cusps, 
corresponding to the surjection $H_1(X\setminus D,\ZZ)\rightarrow \ZZ_6$, 
is $t^2-t+1$. 

To see this, first note that the existence of the 
surjection follows from (\ref{homologycomplement})  since the class of $D$ in $H_2(X,\ZZ)$ is
divisible by $6$. Using (\ref{homologyquasiadjunctionformula}),
the eigenspace of the generator of $\ZZ_6$ acting on $H^{1,0}$
of the 6-fold cyclic can be identified with $H^1(X,K_x\otimes L^5
\otimes \mathcal{I}_{Sing})$ where $\mathcal{I}_{sing}$ is the ideal sheaf such that
$\mathcal{O}_X/\mathcal{I}_{sing}$ is the reduced 0-dimensional subscheme of $X$ with
support at the set of cusps of $D$. One has the following Koszul resolution\index{Koszul resolution} of $\mathcal{I}_{sing}$:
$$0 \rightarrow L^{-5} \rightarrow L^{-2}\oplus L^{-3} \rightarrow
\mathcal{I}_{sing} \rightarrow 0$$
After taking the tensor product of this sequence with $K\otimes L^5$
and considering the
corresponding cohomology sequence: 
\begin{equation}\label{kodairavanishing}
H^1(X,K_X \otimes L^2)\oplus H^1(X,K_X \otimes L^3) \rightarrow 
H^1(X,K_x\otimes L^5
\otimes \mathcal{I}_{Sing}) \rightarrow H^2(X,K_X) \rightarrow 0
\end{equation}
we see that Kodaira vanishing implies that
 $dim H^1(X,K_x\otimes L^5
\otimes \mathcal{I}_{Sing})=1$. 
This shows that ${1 \over 6}
\in [0,1]$ is the contributing face of quasi-adjunction 
and now the claim about the Alexander polynomial follows from the 
Theorem \ref{charvarposition}. Note that this example also can be
analyzed using methods of orbifold pencils discussed in \cite{artalorbifolds},\cite{artalcogolludome},\cite{acme2}.
\end{example}.

\subsection{Bernstein-Sato ideals\index{Bernstein-Sato ideals} and polytopes of quasi-adjunction}
Let $f_1,..,f_r$ be germs of holomorphic functions in $n$ variables. 
The Bernstein-Sato ideal $\cB(f_1,...,f_r)$ is the ideal generated by polynomials 
$b(s_1,....,s_r)$ such that there exist a differential operator $P \in
\CC[x_1,...,x_n,{\partial \over {\partial x_1}},....,{\partial \over {\partial x_1}},s_1,...,s_r]$
satisfying the identity:
\begin{equation}
  b(s_1,....,s_r)f^{s_1}....f^{s_r}=Pf_1^{s_1+1}....f_r^{s_r+1}
\end{equation}
(cf. \cite{sabbah} \cite{bahlul}, \cite{maynadier}, \cite{gyoga}, in 1-dimensional
case cf. \cite{malgrange}, \cite{kashiwara}).
In the case of plane curves singularities one has the following: 
\begin{theorem} Let $f_1,..,f_r$ be the germs of holomorphic functions in two variables. 
Let $P$ be the product of the linear forms $L_i(s_1 + 1,...,s_r + 1)$ where
$L_i$ runs through linear forms vanishing on $r-1$-dimensional faces of polytopes of
quasi-adjunction corresponding to a germ with r irreducible components $f_1,...,f_r$.
Then any $b \in \cB(f_1,...,f_r)$ is divisible by $P$.
\end{theorem}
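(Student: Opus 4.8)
The plan is to pass from the divisibility statement to a statement about the zero locus $Z(\cB)\subset\CC^{r}$ of the Bernstein--Sato ideal. Since each $L_i$ is a degree-one, hence prime, polynomial and $P$ is the product of the (distinct) forms vanishing on the codimension-one faces, one has $P\mid b$ for every $b\in\cB$ if and only if the hyperplane $\{L_i=0\}$ is contained in $Z(\cB)$ for each such face. So first I would fix a log resolution $\mu\colon\tilde X\to(\CC^{2},0)$ of $f=f_1\cdots f_r$, with components $E_k$ of the total transform, and record $a_{k,i}=\ord_{E_k}\mu^{*}f_i$ and $c_k=\ord_{E_k}\mu^{*}(dx_1\wedge dx_2)$, the data entering \eqref{multiplicities} and \eqref{equationfaces}. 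By \eqref{equationfaces} an $(r-1)$-dimensional face of a polytope of quasi-adjunction lies on a single hyperplane $\sum_i a_{k,i}(\gamma_i-1)+c_k=-m$ with $m\in\ZZ_{\ge 1}$, and evaluating the defining form at $\gamma_i=s_i+1$ (the substitution built into \eqref{vector} and Proposition \ref{quasiadmultiplier}) gives exactly $L_i(s_1+1,\dots,s_r+1)=\sum_i a_{k,i}s_i+c_k+m$. Thus the theorem reduces to showing that each hyperplane $H_{k,m}=\{\sum_i a_{k,i}s_i+c_k+m=0\}$ supporting such a face is contained in $Z(\cB)$.

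The engine I would invoke is the interplay between multiplier ideals and the Bernstein--Sato ideal. By Proposition \ref{quasiadmultiplier} the ideals $\cA_{\cQ}$ are the mixed multiplier ideals $\cJ(\sum_i(1-\gamma_i)D_i)$; hence the codimension-one faces of quasi-adjunction are precisely the jumping walls of these mixed multiplier ideals, and crossing such a face the multiplier ideal strictly drops. On the other hand, any $b\in\cB$ satisfies a functional equation $b(s)\prod_i|f_i|^{2s_i}=P\cdot\prod_i|f_i|^{2(s_i+1)}$, and the meromorphic continuation of the distribution $\prod_i|f_i|^{2s_i}$, computed on $\tilde X$, has its polar locus contained in the candidate hyperplanes $\sum_i a_{k,i}s_i+c_k+\ell=0$, $\ell\ge 1$ (this is the several-variable counterpart of the one-variable theory of \cite{kashiwara}, \cite{malgrange}, cf. \cite{sabbah}, \cite{bahlul}, \cite{maynadier}, \cite{gyoja}). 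Pairing the current with $|\phi|^2\,dx\wedge d\bar x$ for a germ $\phi$ that jumps across the wall, i.e. $\phi\in\cA_{\cQ'}\setminus\cA_{\cQ}$ for the two adjacent strict polytopes, the failure of the form $\omega_\phi$ of \eqref{form} to extend over the resolution produces a pole of $\int|\phi|^2\prod_i|f_i|^{2s_i}$ along $H_{k,m}$. The functional equation then forces $b$ to vanish on $H_{k,m}$: otherwise one could solve for $\prod|f_i|^{2s_i}$ in terms of the shifted (regular) family and remove the pole, contradicting its presence. Equivalently, in $D$-module terms, a nonzero graded piece of the Kashiwara--Malgrange/Sabbah $V$-filtration along the graph embedding (which detects the multiplier-ideal jump, cf. \cite{budur}, \cite{saito}) places $L_i$ in $\sqrt{\cB}$, and primeness of $L_i$ upgrades this to $L_i\mid b$.

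The step I expect to be the main obstacle is exactly the non-cancellation of this pole (equivalently, that the multiplier-ideal jump is genuine along the single hyperplane attached to $E_k$). The poles coming from resolution are a priori only candidates, and contributions from several components can cancel; what must be shown is that a genuine $(r-1)$-dimensional face yields a nonzero residue of $\int|\phi|^2\prod_i|f_i|^{2s_i}$ along $H_{k,m}$. Here I would use that the face has dimension exactly $r-1$, so it is cut out by the single hyperplane attached to $E_k$ and the residue is a \emph{simple} pole in the transverse direction, localized on $E_k$ and computed there by the change of variables $\mu$; the integrality constraint \eqref{vector} guarantees the relevant $\phi$ exists and that the local contribution on $E_k$ does not vanish, since $\phi\notin\cA_{\cQ}$ means $\omega_\phi$ picks up a nonremovable order along $E_k$ and only there. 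Granting this nonvanishing residue, $H_{k,m}\subset Z(\cB)$ for every codimension-one face, whence each $L_i$ divides every $b\in\cB$, and taking the product over the distinct faces yields $P\mid b$, completing the proof.
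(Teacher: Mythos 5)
The paper itself offers no written proof of this theorem: it is stated as a survey result, with the argument deferred to \cite{pierretteme11} (and its extension to \cite{isolatednonnormal}). Your proposal reconstructs essentially that argument — reduce divisibility by the prime linear forms to containment of the supporting hyperplanes of the $(r-1)$-dimensional faces in the zero locus of $\cB(f_1,\dots,f_r)$, then detect those hyperplanes as genuine poles of the archimedean zeta function $Z(\phi,s)=\int|\phi|^2\prod_i|f_i|^{2s_i}$ for a germ $\phi$ that jumps across the face, and conclude via the functional equation, using that the shifted integral is regular on $[0,1]^r$ so no iterate of $b$ other than $b$ itself can account for the pole. The translation of \eqref{equationfaces} into the forms $L_i(s_1+1,\dots,s_r+1)$ under $\gamma_i=s_i+1$, and the identification via Proposition \ref{quasiadmultiplier} of faces of quasi-adjunction with jumping walls of mixed multiplier ideals, are both correct and are exactly the mechanism in the cited source.

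Two points need small repairs. First, the functional equation is misstated: $P$ is a holomorphic differential operator, so one should either apply it to $f_1^{s_1+1}\cdots f_r^{s_r+1}\,\bar f_1^{s_1}\cdots\bar f_r^{s_r}$, or use $b(s)^2\prod_i|f_i|^{2s_i}=P\bar P\prod_i|f_i|^{2(s_i+1)}$; the pole-propagation argument goes through either way. Second, your non-cancellation step overclaims: $\phi\notin\cA_{\cQ}$ only guarantees failure of \eqref{multiplicities} along \emph{some} exceptional component, and several components $E_k$ may carry proportional data $(a_{k,i},c_k)$ defining the same hyperplane, so the pole is not "localized on $E_k$ and only there" and need not be simple. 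The standard fix is positivity rather than localization: restrict to a real line through an interior point of the face, transverse to the hyperplane; since the integrand $|\phi|^2\prod|f_i|^{2s_i}$ is nonnegative, the local contributions cannot cancel, and the divergence of the integral at the wall (which is exactly the statement $\phi\notin\cA_{\cQ}$, by the analytic characterization of multiplier ideals) makes the wall a genuine singularity of the continuation by a Landau-type lemma. With these adjustments your outline matches the proof the paper points to.
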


The same argument as used in \cite{pierretteme11}, provides extension
to isolated non-normal crossings (cf. \cite{isolatednonnormal}).  
For a general conjecture of the structure of Bernstein ideals we refer
to \cite{budur15} and for a discussion of the case of arrangements,
other related problems and references 
cf. \cite{walter}


\section{Asymptotic of invariants of fundamental groups}\label{asymptsection}

The problem of characterization of fundamental groups of smooth
quasi-projective varieties is intractable at the moment. Nevertheless
some questions about distribution of Alexander invariants can be addressed. We will see below that one can make some conclusions about
distribution of dimensions of characteristic
varieties of such fundamental groups. A different type of asymptotics,
is suggested by the relation between the degrees
of Alexander polynomials and Mordell-Weil ranks of isotrivial
families of abelian varieties
(cf. \cite{jcogomecrelle},\cite{mathannalen}) since it allows to restate the
problem of asymptotic behavior of such degrees in terms of the conjectures on 
distribution of Mordell Weil ranks of curves over the function fileds. 
In this section we shall survey the
results in \cite{asymptotics} concerning distribution of the dimensions of
characteristic varieties \footnote{Such circle of problems is inspired by conjectural
asymptotic of number fields extensions having a given group as the
Galois group or the group of its Galois closure, which are unramified outside an
arbitrary subset of primes while the size of the norm of discriminant 
grows (\cite{malle}): Malle conjectures implies a positive
  answer to the inverse problem of the Galois with little hope for
  solution in near future (as is obtaining a characterization of quasi-projective group)}.   

Let $X$ be a smooth simply connected projective variety, 
$D$ a reduced divisor and let $\Delta$ be a subset of the effective cone
$Eff(X) \subset NS(X)$ in the Neron Severi group of $X$. We shall call
the set $\Delta$ {\it saturated}\index{saturated set} if $d_1 \in \Delta$ and $d_2\in Eff(X)$ are such that
$d_1-d_2 \in Eff(X)$ implies that $d_2\in \Delta$ and $d_1-d_2\in \Delta$. We are
interested in  distribution of invariants of $\pi_1(X\setminus D)$
when the class of $D \subset Eff(X)$ is a linear combination of classes
in $\Delta$ with non-negative coefficients. We are specifically
interested in curves $D$ with large dimension of a component of characteristic variety
of $\pi_1(X\setminus D)$ and $D$ being a curve with all its irreducible
components having classes in $\Delta$. It follows from \cite{arapura}, that
existence of a component of
dimension $r$ implies existence of surjection $\pi_1(X\setminus
D)\rightarrow F_r$. Vice versa, existence of the latter implies
that the characteristic variety of $\pi_1(X\setminus D)$ contains a
component of dimension not smaller than $r$. Note right away that 
for the purpose of enumeration of reduced divisors $D$ for which one has 
a surjection $\pi_1(X\setminus D) \rightarrow F_r$ we must impose 
some conditions on such surjections. For example, given any $D$ with such
property and any reduced divisor $D'$ one has 

\begin{equation}\label{essential}\pi_1(X\setminus D\bigcup D')
\rightarrow \pi_1(X\setminus D) \rightarrow F_r
\end{equation}
and hence, given a curve admitting a surjection of its fundamental group onto
$F_r$, there are enlargements of this curve with the same
property parametrized by all the curves on the surface. This
motivates the following: 

\begin{definition}\label{esssurj} (cf. \cite{mecharvar})  Let $\CD$ be a reduced divisor on a smooth projective surface $X$. 
A surjection $\pi_1(X\setminus \CD)\rightarrow F_r$ is called {\it essential}
if $\CD$ does not admit split $\CD=D\bigcup D'$ for which one has
factorization (\ref{essential}).

A surjection $\pi_1(X\setminus \CD) \rightarrow F_r$ is called {\it reduced}
if there exist a choice of ordered system of generators
$\{x_1,...,x_{r+1} \vert x_1 \cdot....\cdot x_{r+1}=1\}$ of $F_r$ such that
this surjection takes meridian of each irreducible component of
$\CD$ to a conjugate of a generator.

We also will say that singularities of $\CD$ satisfy condition (*) if 
all singular points belonging to more than one irreducible component
are ordinary singularities i.e. are intersections of smooth
transversal branches \footnote{ The results in this section make this
  assumption. It should be possible to eliminate it with essential
  conclusions remaining intact.}.
\end{definition}

A rather detailed information about such curves was obtained
in \cite{mesergey} in the case $X=\PP^2, \Delta=\{[1]\} \in
\ZZ=Pic(\PP^2)$ i.e. the fundamental 
groups of the complements to arrangements of lines in a plane 
(see \cite{falkyuz},\cite{marco} for related results).
                    
\begin{theorem}\label{concurrent} \cite{mesergey} \cite{pereira} Let $\cA$ be an
  arrangement of lines in $\PP^2$. If there exist an essential
  surjection $\pi_1(\PP^2\setminus \cA)\rightarrow F_r, r \ge 4$
then $\cA$ is a union of concurrent lines, in which case the last
surjection is an isomorphism. 
\end{theorem}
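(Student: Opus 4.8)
The plan is to convert the algebraic datum of an essential surjection into the geometry of a pencil of plane curves, and then to defeat the hypothesis $r\ge 4$ using the known bound on the number of completely reducible fibers of a pencil.

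First I would pass from the surjection $\phi\colon \pi_1(\PP^2\setminus \cA)\to F_r$ to a positive-dimensional component of the characteristic variety. Since $F_r$ is free of rank $r$, its character torus is all of $(\CC^*)^r$, and $\phi^*$ embeds this torus as an $r$-dimensional component of $Char^1_1(\PP^2\setminus\cA)$. By the Arapura-type structure Theorem \ref{translated}, this component is pulled back from the characteristic variety of an orbifold curve by a holomorphic orbifold map $f\colon \PP^2\setminus \cA\to C^{orb}$. Because the group being surjected onto is the \emph{torsion-free} group $F_r$, any cone-point torsion of $\pi_1^{orb}(C^{orb})$ is killed, so after extending $f$ I obtain a genuine pencil of plane curves, i.e.\ a rational map $\tilde f$ from $\PP^2$ to $\PP^1$, together with $r+1$ distinguished fibers each of which is completely reducible into lines of $\cA$; the essentiality hypothesis guarantees (cf.\ \cite{mesergey}) that no proper sub-divisor of $\cA$ already realizes $\phi$.

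Next I would invoke the bound of Pereira--Yuzvinsky (\cite{pereira}): a pencil of plane curves of degree $d\ge 2$ has at most four completely reducible members. Since $r\ge 4$ yields at least $r+1\ge 5$ such members, the degree of $\tilde f$ must be $1$; that is, $\tilde f$ is a pencil of lines, and its members are exactly the lines through the single base point $p$. If some line $L$ of $\cA$ did not pass through $p$, then $L$ would be transverse to the pencil and $f$ would extend across $L$ away from finitely many points, so $\phi$ would factor through $\pi_1(\PP^2\setminus(\cA\setminus L))$, contradicting essentiality. Hence every line of $\cA$ passes through $p$, i.e.\ $\cA$ is a union of concurrent lines.

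Finally I would identify the surjection. For $n$ lines concurrent at $p$, projection from $p$ exhibits $\PP^2\setminus \cA$ as a $\CC$-bundle over $\PP^1$ minus the $n$ images of the lines, whence $\pi_1(\PP^2\setminus \cA)\cong F_{n-1}$ is free, and the $n$ lines are precisely the distinguished fibers, so $n=r+1$ and $r=n-1$. Thus $\phi\colon F_{n-1}\to F_{n-1}$ is a surjective endomorphism of a free group of the same finite rank, hence an isomorphism by the Hopfian property of free groups. The main obstacle is the second step: the only genuinely deep input is the Pereira--Yuzvinsky bound on completely reducible fibers, whose proof rests on the classification of multinets and Hirzebruch-type inequalities. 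The translation in the first step and the Hopfian argument in the last are essentially formal once Theorem \ref{translated} is available.
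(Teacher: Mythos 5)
Your route --- surjection $\Rightarrow$ positive-dimensional subtorus of the characteristic variety $\Rightarrow$ Arapura pencil $\Rightarrow$ Pereira--Yuzvinsky bound of four completely reducible fibers $\Rightarrow$ degree-one pencil --- is exactly the argument behind the paper's citation of \cite{mesergey} and \cite{pereira} (the survey gives no proof of its own), and your first two steps are sound modulo small remarks (the torus $\phi^*((\CC^*)^r)$ is only \emph{contained} in a component, and the Pereira--Yuzvinsky bound applies because the Arapura map has connected generic fibers, i.e.\ the pencil is primitive). The first genuine gap is in the concurrency step: from ``$f$ extends across $L$'' you conclude that $\phi$ factors through $\pi_1(\PP^2\setminus(\cA\setminus L))$, but Theorem \ref{translated} only gives the containment of subtori $\phi^*((\CC^*)^r)\subseteq f^*(\cdots)$. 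Dualizing that containment shows merely that the \emph{abelianization} of $\phi$ kills the meridian $\gamma_L$, i.e.\ $\phi(\gamma_L)\in[F_r,F_r]$, whereas contradicting essentiality (which, by Definition \ref{esssurj}, is precisely the condition that no meridian maps to $1$) requires $\phi(\gamma_L)=1$. To close this you need the stronger factorization theorem: every epimorphism of a quasi-projective group onto $F_r$, $r\ge 2$, itself factors through the orbifold fundamental group of a curve via an admissible map (this is in \cite{arapura} and is how \cite{mesergey}, \cite{asymptotics} argue). Your sentence ``because $F_r$ is torsion-free, cone-point torsion is killed'' presupposes such a factorization of $\phi$ rather than proving it.

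The second gap is the assertion $n=r+1$ (``the $n$ lines are precisely the distinguished fibers''). The argument actually yields only $n\ge r+1$: once all lines pass through $p$, every line of $\cA$ must be a fiber over a deleted point (otherwise $f$ would have an empty fiber), so the target is $\PP^1$ minus $n$ points and the Arapura component has dimension $n-1\ge r$; nothing forces equality, since the $\phi$-torus may sit properly inside that component. Without $r=n-1$ the Hopfian argument never starts. Indeed, under Definition \ref{esssurj} alone the isomorphism clause is genuinely delicate: for six concurrent lines, $\pi_1\cong F_5=\langle x_1,\dots,x_6 \mid x_1\cdots x_6\rangle$, and the map to $F_4=\langle a_1,\dots,a_4\rangle$ given by $x_i\mapsto a_i$ for $i\le 4$ and $x_5\mapsto a_1$ sends $x_6$ to $(a_1a_2a_3a_4a_1)^{-1}\ne 1$, hence kills no meridian; it is an essential surjection with $r=4$ that is not injective. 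So the final clause needs an additional hypothesis or argument --- e.g.\ the ``reduced'' condition of Definition \ref{esssurj}, which is exactly what Theorem \ref{maintheoremasympt} imposes (one checks the example above is not reduced, by comparing abelianized meridian images with an ordered generating system) --- before Hopficity of free groups can be invoked.
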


Moreover, there is only one known example of essential surjections of 
the complements to an arrangment line which admits surjection onto
$F_3$ \footnote{i.e. the Hesse arrangement of $12$ lines formed by 
lines containing triples of inflection points of plane smooth cubic
cf. \cite{mecharvar}} and for any $d$ there exist an arrangement of non-concurrent lines
admitting essential surjection onto $F_2$ (e.g. $3d$ lines forming the
zero set of $(x^d-y^d)(y^d-z^d)(x^d-z^d)=0$).

Work \cite{asymptotics} contains an extension of this theorem to
reduced divisors on arbitrary simply connected surfaces. Before stating the
main result, let us describe the analog of the case of concurrent lines 
in Theorem \ref{concurrent}, which is a family of the curves with irreducible
components in $\Delta$ and for which the fundamental group of the
complements may have a free quotient of arbitrary large rank. 
For this family of curves, the fundamental groups of the complement
form {\it a finite} set of groups, having cardinality depending on $\Delta$ and, 
moreover, a presentation of each group in this set can be described in terms of
geometric data we specify. However, unlike the case of Theorem \ref{concurrent}, the
problem of characterizing which specific data is realizable by
curves
in this class remains open in general.
Enumeration of
fundamental groups of such curves for a class $\delta \in
\Delta \subset Pic(X)$ can be made as follows.

\begin{proposition}\label{groupsinpencils} For any $r \ge 1$ and a movable divisor\index{movable divisor} $\delta \in Pic(X)$, \footnote{i.e. such that the
codimension of the base locus of the linear system it defines is at
least 2} there is a divisor $D$ with classes of components in the linear
system of $\delta$
and such that $\pi_1(X\setminus D)$ admits essential surjection\index{essential surjection} onto
$F_r$. Vice versa, if $D$ has all its irreducible
components being members of a pencil of curves in complete linear
system of $\delta$ (i.e. a line in $\PP(H^0(X,\mathcal{O}_X(\delta)))$),
 and $\pi_1(X\setminus D)$ admits surjection onto $F_r, r\ge
 2$  this group is an amalgamated product
$\cG*_{F_a}\CH$ with $\cG$ belonging to a finite collection of groups depending on
$\delta$, obtained by a construction below and
$\CH$ is an extension:
\begin{equation}
 0 \rightarrow F_a \rightarrow \CH \rightarrow F_{r'} \rightarrow 0 \ \
 \ r'\le r
\end{equation}
defined by a homomorphism $F_{r'} \rightarrow Aut(F_a)$ coming from a
finite set cardinality depending only on $\delta$. The number of
isomorphism classes of such groups $\pi_1(X\setminus D)$ with a fixed
class $\delta$, stabilises for large $r$.
\end{proposition}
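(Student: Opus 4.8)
The plan is to realise the given pencil as a genuine fibration and to read off $\pi_1(X\setminus D)$ from the Zariski--van Kampen presentation (Theorem \ref{vankampen}) attached to it. First I would blow up the base locus of the pencil defined by $\delta$; since $\delta$ is movable this base locus has codimension $\ge 2$, so we blow up only points and obtain a morphism $\tilde f:\tilde X\to \PP^1$ whose exceptional divisors $E_1,\dots,E_m$ are sections of $\tilde f$, with $\tilde X$ still simply connected. Every member of the pencil passes through the base locus, so the base points lie on $D$ and $\beta:\tilde X\to X$ restricts to an isomorphism $\tilde X\setminus(\tilde D\cup E)\cong X\setminus D$, where $\tilde D=\tilde f^{-1}(S)$ is the union of the fibres over the finite set $S\subset\PP^1$ carrying the components of $D$ and $E=\bigcup_j E_j$. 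Restricting $\tilde f$ gives a fibration $\tilde f:\tilde X\setminus(\tilde D\cup E)\to \PP^1\setminus S$ whose generic fibre is $F^\circ=F\setminus(m\text{ points})$, an open Riemann surface; here $F$ is the smooth member of genus $g$ with $2g-2=\delta\cdot(\delta+K_X)$, so $\pi_1(F^\circ)=F_a$ is free of rank $a=2g+m-1$, a quantity determined by $\delta$.

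For the existence part I would take $D$ to be the union of $r+1$ general (hence smooth and reduced) members of the pencil. Then $S$ consists of $r+1$ generic values, $\PP^1\setminus S$ has free fundamental group $F_r$, and the fibration induces a surjection $\pi_1(X\setminus D)\to \pi_1(\PP^1\setminus S)=F_r$, surjective because the fibre is connected and the sections $E_j$ split the homotopy sequence. This surjection is essential in the sense of Definition \ref{esssurj}: deleting any component of $D$ removes a puncture of the base and lowers the rank of the free quotient, so no splitting $D=D_1\cup D_2$ can factor it, which I would verify directly.

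For the converse I would use the decomposition $\PP^1=\Delta_1\cup_{A}\Delta_2$ into two discs glued along an annulus $A$, with $\Delta_1$ containing all the special values of $\tilde f$ (images of the singular and multiple members of the pencil) together with the singular members lying in $S$, and $\Delta_2$ containing only the generic smooth members removed to form $D$. Over $\Delta_2$ the fibration is locally trivial with fibre $F^\circ$, so the preimage group $\CH$ sits in an extension
\begin{equation}
 1\to F_a\to \CH\to F_{r'}\to 1,\qquad r'\le r,
\end{equation}
whose monodromy $F_{r'}\to Aut(F_a)$ is the product of the local monodromies around the removed generic values, drawn from the mapping class group $\cM(\bar F_g^{\circ},[d])$ and hence from data bounded by $\delta$. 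The preimage $\cG$ of $\Delta_1$ encodes the genuinely singular behaviour of the pencil. Van Kampen's theorem applied to the gluing along $A$, whose preimage deformation-retracts onto a single fibre $F^\circ$ with $\pi_1=F_a$, then yields $\pi_1(X\setminus D)=\cG*_{F_a}\CH$; that the free quotient onto $F_r$ is the one induced by this pencil follows from Arapura's structure theorem (Theorem \ref{translated}).

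The finiteness and stabilisation is where the work concentrates, and I expect it to be the main obstacle. The number and topological type of singular and multiple fibres of a pencil inside the fixed linear system $|\delta|$ are bounded by the numerical data of $\delta$: the Euler characteristic identity $e(\tilde X)=2\,e(F)+\sum_{t}\bigl(e(\tilde f^{-1}(t))-e(F)\bigr)$ bounds the total complexity of the special fibres in terms of $e(\tilde X)$ and $\delta^2$, and with it the finitely many possible monodromy germs around them. Consequently $\cG$ ranges over a finite collection $L_\delta$ of groups and the local monodromies generating $F_{r'}\to Aut(F_a)$ are drawn from a finite set $M_\delta$, both depending only on $\delta$. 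Since increasing $r$ only adds further generic members, contributing factors governed by the same finite monodromy data and never enlarging $L_\delta$ or $M_\delta$, for $r\gg 0$ no new building block appears and the isomorphism type of $\cG*_{F_a}\CH$ is fixed by the triple $(\cG,\text{monodromy type},r')$ taken from this finite data. The hard part will be precisely this uniform control: showing that the monodromies around the removed generic fibres, and hence the extensions $\CH$, fall into finitely many isomorphism types independent of $r$, which requires pinning down how Dehn twists along the vanishing cycles of the bounded set of singular fibres generate the relevant subgroup of $Aut(F_a)$ and controlling the residual ambiguity in the amalgamation.
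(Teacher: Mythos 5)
Your existence argument and the amalgamated-product structure follow the paper's proof essentially step for step: blow up the base locus of the pencil, view $X\setminus D$ as fibered over $\PP^1$ minus the points carrying the components of $D$, split $\PP^1$ into two discs and apply Theorem \ref{vankampen} to get $\cG*_{F_a}\CH$ with $F_a=\pi_1(\Sigma)$, $\Sigma$ the generic member punctured at the base points. One bookkeeping discrepancy: you place \emph{all} non-generic members, including those that are components of $D$, into $\Delta_1$. With that choice every puncture of $\Delta_2$ corresponds to a removed \emph{smooth} member, and the monodromy around such a puncture is trivial, since the fibration extends smoothly across it; so your $\CH$ is just $F_a\times F_{r'}$, and your sentence attributing to it "local monodromies around the removed generic values" drawn from the mapping class group is incorrect. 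The paper instead puts the non-generic members belonging to $D$ into $B_2$: it is exactly these punctures that produce the possibly nontrivial homomorphism $F_{r'}\rightarrow Aut(F_a)$ appearing in the statement. Your variant still yields a valid decomposition, but then $\cG$ must absorb the removed singular fibres and the finite list it ranges over has to be organized accordingly.

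The genuine gap is where you yourself locate the "hard part": the finiteness and stabilisation. An Euler characteristic identity bounds the number and the total local complexity of the singular fibres, and the equisingularity types of singular members are indeed finite in number; but the isomorphism types of $\cG$ and of the glued group depend on the \emph{isotopy class of the pencil} --- the global arrangement of the critical values and the resulting braid-monodromy data --- and finitely many local monodromy germs do not imply finitely many ways of assembling them. Trying to derive this from how Dehn twists along vanishing cycles generate a subgroup of $Aut(F_a)$ has no visible endgame. The paper's key input, missing from your sketch, is algebro-geometric: pencils are lines in $\PP(H^0(X,\mathcal{O}_X(\delta)))$, the discriminant carries an equisingular stratification with \emph{finitely many} strata because these are algebraic sets (cf. \cite{goresky}), and consequently the number $N(\delta,t)$ of isotopy classes of pencils meeting a prescribed collection of $t$ strata is finite. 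Per isotopy class there are at most $2^t$ possible groups, one for each subset $\cT$ of strata realized by non-generic components of $D$, giving the bound $\sum_{t\le T}2^{t}N(\delta,t)$ on isomorphism classes and stabilisation once $r$ exceeds the maximal number $T$ of strata a pencil can meet. The finiteness must be pulled from this finite stratification of the parameter space of pencils, not from numerical bounds on singular fibres plus monodromy group theory.
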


\begin{proof} Indeed, for any pencil in the linear system containing
  $\delta$, a union on its $r+1$ members yields a divisor $D \in
\PP(H^0(X,(r+1)\delta))$
with $\pi_1(X\setminus D)$ admitting a surjection onto $F_r$ since 
such a pencil induces a dominant map onto the complement in $\PP^1$ to
$r+1$ points. 

To enumerate all possible fundamental groups of the
complements to the curves with all irreducible components belonging to
a pencil let us consider the discriminant $Disc(\PP(H^0(X,\mathcal{O}_X(\delta))))$ of the complete linear  system
$\PP(H^0(X,\mathcal{O}_X(\delta))$ i.e. the subvariety consisting of 
the divisors having singularities worse than singularities of a generic element
in $\PP(H^0(X,\mathcal{O}_X(\delta))$. Consider also the stratification of 
the discriminant into connected components of equisingularity strata, 
adding to this stratification the complement to the discriminant as
a codimension zero stratum (cf. \cite{aluffi} on some information about
geometry of these strata).

We will use finite sets of collections of such equisingularity strata\index{equisingularity!strata}
$\cS_1,...\cS_t$ for which there  exists a
pencil $\cP$ in $\PP(H^0(X,\mathcal{O}_X(\delta)))$ with the following property: there exists a union $D$
of members of $\cP$ such
that the curve $D$ satisfies condition (*) (cf. Def. \ref{esssurj}). Let $N(\delta,t)$ be the
number of isotopy classes of pencils in $\PP(H^0(X,\mathcal{O}_X(\delta))$ such that the number of the strata of this stratification 
intersected by the pencil is $t$ and let $T$ be the least upper bound for integers $t$ for all pencils in $\delta$.
Finiteness of these numbers is a consequence of the finiteness of
the number of strata of stratifications since those are an algebraic subsets of discriminant
(cf. \cite{goresky}).

Let $D$ is a curve having $r+1$ irreducible components belonging to a pencil $\cP$ in $\PP(H^0(X,\mathcal{O}_X(\delta))$ in which the members of $\cP$ have $t$ (where $t\le
r+1$) equisingularity types. We claim that $\pi_1(X\setminus D)$,
can have at most $2^t$ isomorphism types. More precisely for each
subset $\cT$ of the set of strata $\cS_1,..,\cS_t$ there is at most one
isomorphism type of the fundamental groups $\pi_1(X\setminus D)$ where
the set of equisingularity starta of components of $D \in
\PP(H^0(X,\mathcal{O}_X((r+1)\delta)))$
 having non-generic equisingularity type 
in the pencil coincides with $\cT$. This is the case when $D$ is a
union of $\vert \cT \vert$ curves from the strata $\cS_1,...,\cS_t$ and
$r'=r+1-\vert \cT \vert$ curves
from codimension zero stratum and none of remaining $t-\vert \cT \vert$ singular
members of the pencil are not components of $D$.
In particular, for $r>t$ there are at
most $\sum_{t=0}^T 2^tN(\delta(t))$ isomorphism classes of the
fundamental groups and for $r>T$ the number of isomorphism classes of fundamental groups
of curves with components in the linear system of $\delta$ and admitting surjections onto
$F_r$ is bounded, with bound depending only on $\delta \in \Delta$.

To describe the structure of the fundamental groups of the complement
to a union $D$ of several members of a pencil $\cP$ of curves in $\delta$, with
the set equisingularity types of singular members of $\cP$ consisting
of equisingularity strata $\cS_1,..,\cS_t$, such that
non-generic types of components of $D$ are exactly those in $\cT$, and also to
enumerate such fundamental groups,   
consider the blow up $\tilde X$ of $X$ at the
base points of the pencil.  We obtain a regular map $\pi: \tilde X \setminus
\tilde D \rightarrow \PP^1\setminus S_{r+1}$ where $S_{r+1}$ a finite
subset of $\PP^1$ with cardinality $r+1$. 

Let $\PP^1=B_1 \cup B_2$ be partition into union of two disks intersecting along their common boundary
and having the following properties: $B_1$ contains all $t-\vert
\cT \vert$ fibers of $\pi$ which do not have
generic equisingularity type in $\PP(H^0(X,\mathcal{O}_X(\delta))$ and are not components of $D$,
while $B_2=\PP^1\setminus B_1$ contains $\vert \cT \vert$ non-generic fibers if $\pi$
which are components of $D$ and remaining $r'=r+1-(t-\vert T \vert)$ fibers of
$\pi$ which all are generic in the latter
linear system. Over the complement in $B_2$ to the subset over which
the fibers of $\pi$ are the components of $D$, the map $\pi$ is a
locally trivial fibration which global type is determined by $\delta$.  
Van Kampen theorem \ref{vankampen} implies the following: 
if $\Sigma$ is generic fiber of $\pi$, $\cG=\pi_1(\pi^{-1}(B_1)),
\CH=\pi_1(\pi^{-1}(B_2))$ then
\begin{equation}\label{amalgam}
  \pi_1(X\setminus D)=\cG*_{\pi_1(\Sigma)}\CH, \ \ \ \ and \ \  1\rightarrow
  \pi_1(\Sigma) \rightarrow \CH \rightarrow F_{r'} \rightarrow 1
\end{equation}
$\Sigma$ is complement in the generic fiber of the pencil to the set
of base points of the pencil i.e. $\pi_1(\Sigma)$ is a free group
$F_a$ for some $a$. The group $\cG$ belongs to a collection having at
most $2^{t}$ elements (i.e. the number of subsets in $\cS_1,....,\cS_t$). The claim follows.
\end{proof}
\begin{example}\label{quadrics} Let us enumerate the 
  fundamental groups of the complements to conic-line arrangements 
which admit  a surjection onto a free group of rank greater than 5.
The starting point is that a conic-line arrangement (satisfying
condition (*)) having such fundamental group is a union of $r+1$ (possibly
reducible) quadrics belonging to a pencil. This is content of
improvement for conic-line arrangements of the general bound in
Theorem \ref{maintheoremasympt} below (cf. Example
\ref{asymptoticsexample} 2.) 
Equisingular stratification of $\PP(H^0(\PP^2,\mathcal{O}_{\PP^2}(2))$
consists of 3 strata: smooth quadrics, reduced and reducible quadrics
i.e. a union of two transversal lines
and non-reduced quadrics i.e. the double lines. The degree of discriminant is
$3$. We denote these equisingular strata respectively as
$\cS_0,\cS_1,\cS_2$. 

Any pencil of quadrics containing as generic element a smooth
quadric in $\cS_0$, has at most 3 singular fibers which
are either 3 reducible quadrics or contains 2 singular fibers one
reduced and one non reduced. In the latter case, the condition (*)
on $D$ fails. Moreover, there are pencils with generic element inside the
stratum $\cS_1$. For such a pencil, the divisor $D \in \PP(H^0(\PP^2,\mathcal{O}(2(r+1)))$
is a union of $2r+2$ concurrent lines and hence $\pi_1(\PP^2\setminus
D)=F_{2r+1}$. 

There are $4$ equisingular classes of divisors $D \in
\PP(H^0(\PP^2,\mathcal{O}((r+1)2)))$
with components formed by curves in a pencil $\delta$, corresponding to the cases when 
the number of quadrics which are the singular elements of the pencil
and formed by  components of $D$, is either $0$ (i.e. all components of $D$ are
smooth quadrics), or is $1,2$ or $3$. Respectively, there are 4
corresponding types of fundamental groups.

For example, let us take as 
$D$ a union of $r+1$ quadrics belonging to a pencil, one of which is
reducible. Let $B_1$ be a disk containing remaining 2
reducible fibers of the pencil and let $B_2=\PP^1 \setminus B_1$. Then
$B_2$ is a disk containing the points 
corresponding to the fibers containing the components of the pencils
comprising $D$.
 Over $B_1$, the map
$\pi$ is a fibration with generic fiber being a smooth quadrics and
which has two special fibers which are the union of lines and 
therefore can be calculated using van Kampen theorem \ref{vankampen}.
Over the complement in $B_2$ to the points corresponding to the components of
$D$ one has a locally trivial fibration with the fiber being the
complement in a smooth
quadric to $4$ base points of the pencil.  Hence $\CH=\pi_1(B_2)$ is an
extension of free group $F_3$ by the free group $F_r$
with only one type of extension possible since there is only one
isotopy class of generic pencils of quadrics.
\end{example}

Now we turn to the main result of \cite{asymptotics} which can be stated as follows:

\begin{theorem}\label{maintheoremasympt} 
 Given a saturated set\index{saturated set} $\Delta$ of
  classes in $NS(V)$ consider the following trichotomy for the
  distribution of the curves $\CD$ with classes of irreducible components in
  $\Delta$ having a free essential reduced quotient of a fixed rank $r$ and satisfying
  conditions (*)
\begin{enumerate}[label=\arabic*)]
 \item\label{corolmain0:A}
 There exist infinitely many isotopy classes of curves $\CD$ admitting surjections $\pi_1(V\setminus \CD)                                
\rightarrow F_r, , r>1$.
\item\label{corolmain0:B} There are finitely many isotopy classes of curves $\CD$ admitting
surjections $\pi_1(V\setminus \CD) \rightarrow F_r, r>1$.
\item\label{corolmain0:C}
 $\CD$ admitting
a surjection $\pi_1(V\setminus \CD) \rightarrow F_r,$ is composed of
curves of a pencil. There are finitely many isotopy classes of such
$\CD$ for given $\Delta$.
\end{enumerate}
All three cases are realizable at least for some $(V,\Delta)$.  Case
\eqref{corolmain0:B}
takes place for $r \ge 10$. There
exists a constant $M(V,\Delta)$ such that for $r>M(V,\Delta)$ one has case \eqref{corolmain0:C}.
In the latter case, $\pi_1(V\setminus                                                                                             
\CD)$ splits as an amalgamated product $H*_{\pi_1(\Sigma)}G$ where
$\Sigma$ is an open  Riemann surface which is a smooth member of the pencil,
 $H$ is coming from a finite set of
groups associated with the linear system $H^0(V,\mathcal{O}(D))$,
$D$ is a divisor having class $\delta \in \Delta$ and $G$ is an extension:
\begin{equation}\label{extension}0 \rightarrow \pi_1(\Sigma) \rightarrow G \rightarrow F_r
\rightarrow 0
\end{equation}
\end{theorem}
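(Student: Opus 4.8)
The plan is to convert the group-theoretic hypothesis into geometry through Arapura's theorem (Theorem \ref{translated}), analyze the resulting pencils, and then control how the family of pencils behaves as the rank $r$ grows. First I would translate an essential reduced surjection $\pi_1(V\setminus\CD)\to F_r$ into an orbifold pencil. As recalled before Definition \ref{esssurj}, such a surjection is equivalent to an $r$-dimensional component of $Char(V\setminus\CD)$, which by Theorem \ref{translated} is the pullback $f^*(\cV_i(\pi^{orb}_1(C^{orb}))^{irr})$ of a component on an orbifold curve $C^{orb}$ under a holomorphic orbifold map $f\colon V\setminus\CD\to C^{orb}$. Since the quotient is free, any orbifold points on the base would contribute torsion, so I may take $C^{orb}=\PP^1\setminus\{q_1,\dots,q_{r+1}\}$ with orbifold fundamental group $F_r$. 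The reducedness hypothesis then forces every component of $\CD$ whose meridian has nontrivial image to be vertical for the rational pencil $\bar f\colon V\dashrightarrow\PP^1$ extending $f$, and the essentiality hypothesis forces \emph{all} components of $\CD$ to be vertical. Thus every admissible $\CD$ is assembled from fibers of a pencil $\bar f$, and the whole problem becomes: which pencils occur, and how do the chosen fibers distribute.

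Next I would establish the structural conclusion in case \eqref{corolmain0:C}. Blowing up the base locus of $\bar f$ produces a genuine fibration $\pi\colon\tilde V\to\PP^1$; writing $\PP^1=B_1\cup B_2$ as two disks glued along a circle, with $B_2$ carrying the fibers that are components of $\CD$ together with the generic fibers and $B_1$ carrying the remaining special fibers, the van Kampen presentation of Theorem \ref{vankampen}, applied exactly as in Proposition \ref{groupsinpencils}, yields the amalgamation $\pi_1(V\setminus\CD)=H*_{\pi_1(\Sigma)}G$ with $\Sigma$ a generic open fiber, $H=\pi_1(\pi^{-1}(B_1))$ drawn from the finite list attached to the linear system $|\delta|$, and $G=\pi_1(\pi^{-1}(B_2))$ the extension \eqref{extension}. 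Finiteness of isotopy classes in this case is then precisely Proposition \ref{groupsinpencils}: for a fixed fiber class $\delta$ the pencils form a finite-dimensional Grassmannian, stratified into finitely many equisingularity strata (algebraic by \cite{goresky}, \cite{aluffi}), and a union of chosen fibers is determined up to isotopy by the finite combinatorial datum of which strata are met.

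The genuine obstacle is the rigidity statement — finiteness for $r\ge 10$ and the pencil structure for $r>M(V,\Delta)$. I would argue via a dichotomy for $\bar f$. If $\bar f$ is \emph{not} composed, its generic fiber is irreducible, so the components of $\CD$ can only occur inside the \emph{reducible} members; the number of members of a non-composed pencil that are totally decomposable into $\Delta$-curves is bounded by a constant depending only on $(V,\Delta)$, whence a rank-$r$ essential reduced surjection needing $r+1$ such fibers cannot be supplied by any single non-composed pencil once $r$ exceeds this bound. Letting the class grow instead keeps the rank small and fixed while producing infinitely many curves — this is exactly case \eqref{corolmain0:A}, realized by the Ceva family below. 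Consequently, for large $r$ the pencil must be composed, i.e.\ factor through a pencil whose generic member already lies in $\Delta$, which is case \eqref{corolmain0:C}. Making the two thresholds effective is the technical heart: the universal bound on totally decomposable members of a non-composed pencil, combined with the orbifold Euler-characteristic constraints on the base $C^{orb}$, should yield the constant $10$ for mere finiteness, while the saturation of $\Delta$ caps the finitely many fiber classes $\delta$ that can occur and produces $M(V,\Delta)$. I expect the delicate point to be uniformity: controlling composed pencils whose iterated factorizations could, a priori, introduce new equisingularity strata as $r$ increases, which is where condition (*) of Definition \ref{esssurj} and the finiteness of the stratification must be invoked to close the argument.

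Finally, realizability of all three cases. Case \eqref{corolmain0:A} is witnessed on $(\PP^2,\Delta=\{[1]\})$ by the arrangements $(x^d-y^d)(y^d-z^d)(x^d-z^d)=0$, whose generic pencil fiber has degree $d\notin\Delta$ and whose lines arise from three reducible fibers, giving for every $d$ a surjection onto $F_2$ and hence infinitely many isotopy classes at fixed $r=2$. Case \eqref{corolmain0:C}, and therefore \eqref{corolmain0:B}, is witnessed by the pencil construction of Proposition \ref{groupsinpencils} for large $r$, where the generic member lies in $\Delta$ and $\CD$ is a union of $r+1$ fibers. An intermediate example on a suitable $(V,\Delta)$, combining a few sporadic non-pencil configurations with the pencil ones at a moderate value of $r$, realizes \eqref{corolmain0:B} without \eqref{corolmain0:C}.
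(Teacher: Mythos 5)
Your proposal cannot be checked against a proof in this paper, because the paper does not prove Theorem \ref{maintheoremasympt}: it is quoted from the preprint \cite{asymptotics}, and the only part argued in the text is Proposition \ref{groupsinpencils}. That said, your treatment of case 3) reproduces the paper's Proposition \ref{groupsinpencils} faithfully (blow up the base locus, split $\PP^1=B_1\cup B_2$, apply Theorem \ref{vankampen} to get $H*_{\pi_1(\Sigma)}G$ with the extension by $F_{r'}$, and count equisingularity strata of the discriminant via \cite{goresky} to get finiteness), and your witness for case 1) --- the arrangements $(x^d-y^d)(y^d-z^d)(x^d-z^d)=0$ at $r=2$ --- is exactly the family the paper points to. So the skeleton is the intended one.

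There are, however, two genuine gaps. First and most seriously, the pivotal lemma of your third paragraph --- that a non-composed pencil has at most a constant number, depending only on $(V,\Delta)$, of members decomposing entirely into curves with classes in $\Delta$ --- is asserted, never argued, and it carries the entire content of both thresholds: the constant $10$ in case 2) and the existence of $M(V,\Delta)$ in case 3). This is not a routine extension of known facts: in the model case $X=\PP^2$, $\Delta=\{[1]\}$ it is the Pereira--Yuzvinsky rigidity theorem \cite{pereira} on completely reducible fibers of pencils (used in Theorem \ref{concurrent}), and generalizing it to arbitrary simply connected surfaces and saturated $\Delta$ is precisely the technical heart of \cite{asymptotics}; an appeal to ``orbifold Euler-characteristic constraints'' does not produce the number $10$, and saturation of $\Delta$ alone does not produce $M(V,\Delta)$. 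Second, your reduction step over-reads Theorem \ref{translated}: Arapura's theorem attaches to an $r$-dimensional component of the characteristic variety an orbifold map $f$ whose pullback \emph{contains} that component, but it does not assert that the \emph{given} surjection $\pi_1(V\setminus\CD)\to F_r$ factors through $f_*$; one must prove, using the reduced and essential hypotheses of Definition \ref{esssurj}, that the surjection is induced by the pencil and that the meridian conditions force every component of $\CD$ into fibers --- you state ``reducedness forces verticality'' without reconciling the given quotient with the induced one (meridians of horizontal components map trivially under $f_*$, which contradicts reducedness only \emph{after} the factorization is established). A smaller fixable vagueness: for case 2) without case 3) the paper supplies a concrete witness, namely Example \ref{asymptoticsexample} with Ruppert's pencil (\ref{ruppert2}) giving a degree-$18$ curve with an $F_5$ quotient not composed of a pencil of conics, whereas your ``intermediate example'' is left unspecified.
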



In specific cases of $(X,\Delta)$ information about the constants $10$
and $M(X\Delta)$ can be improved. 
\begin{example}\label{asymptoticsexample} 
\begin{enumerate}
\item  Above results for the 
  arrangements of lines
shows that in this case one can replace $10$ by $2$ and
$M(\PP^2,[1])=3$. 

\item Again in the case $X=\PP^2$ but $\Delta=\{[1],[2]\}$, the
  curves $\CD$ for which there exist a surjection $\pi_1(\PP^2\setminus
  \CD)\rightarrow F_r$ must  have the type only as
  described in  Example \ref{quadrics}, provided $r>5$.  However, a generic pencil in the 
linear system: 
\begin{equation}\label{ruppert2}
 \lambda_0x_0(x_1^2-x_2^2)+\lambda_1 x_1(x_2^2-x_0^2)+\lambda_2x_2(x_0^2-x_1^2)=0
\end{equation} 
has 6 members which are unions of lines and quadrics. This gives a
curve $\CD$ of degree 18 for which $\pi_1(\PP^2\setminus \CD)$  admits a
surjection onto
$F_5$ and is not isotopic to a curve as in \ref{quadrics}.
\end{enumerate}
\end{example}

The Theorem \ref{maintheoremasympt} can be restated as follows: if $N(X,\Delta,r)$ denotes the number of
equisingular isotopy classes of curves on $X$ with irreducible
components having numerical classes in $\Delta$ and fundamental groups
admitting a surjection 
onto a free group $F_r$ then for $r >M(X,\Delta)$,  $N(X,\Delta,r)$ is
finite and all curves have special type as in the case ({\rm C}) of
the trichotomy.  For $10 < r \le M(X,\Delta)$,
$N(X,\Delta,r)$ is also finite but the type of the curves may vary. Finally, for
$r<10$ the number of isotopy classes $N(X,\Delta,r)$ may be infinite. 

Some information on dependence of the constant $M(X,\Delta)$ on $\Delta$ and $X$ is also
available. For example if $X=\PP^2,\Delta_d=\{[1],...,[d]\}$ then 
$M(\PP^2,\Delta_d)\ge 3d$. Indeed, Ruppert (cf.\cite{ruppert})
found a pencil of curves of degree $d+1$ with $3d$ fibers 
being a union of a line an a curve of degree $d$. In particular a
union of these $3d$ fibers yields a curve of degree $3d(d+1)$ with
irreducible components in $\Delta_d$ and having surjection on the free
group of rank $3d-1$. In particular the sequence $M(X,\Delta_d)$ is
unbounded. The Ruppert pencil is a generic pencil in 2-dimension
linear system of curves given by equation (which for $d=2$ it is given
in Example \ref{asymptoticsexample}):
\begin{equation}\label{ruppertd}
  \lambda_0x_0(x_1^d-x_2^d)+\lambda_1 x_1(x_2^d-x_0^d)+\lambda_2x_2(x_0^d-x_1^d)=0
\end{equation}
More precisely, the curve (\ref{ruppertd}) is singular if and only if 
$$(\lambda_0^d-\lambda_1^d)(\lambda_1^d-\lambda_2^d)(\lambda_2^d-\lambda_0^d)=0$$
and all reducible fibers are unions of a line and a curve of degree $d$.
Hence generic line in variables $\lambda_i$ is a pencil with $3d$
reducible members 
as described.

We refer to \cite{asymptotics} for examples of surjections onto free
groups of the
fundamental groups of the complements to curves on surfaces besides
$\PP^2$. 

This discussion suggests the following problems:

\begin{problem} 
\begin{enumerate}
\item Determine the rate of growth of $N(X,\Delta,r)$ 
for various $X$ and $\Delta$ when $r \rightarrow \infty$, i.e. how many
types of reducible curves admitting surjections onto $F_r$, which $r$
large (i.e. $r>M(X,\Delta)$) exist?

\item  Find a bound on $M(X,\Delta)$ in terms of invariants of $X,\Delta$
i.e. how large should be $r$ such that there exist curves admitting 
surjection onto $F_r$ and which are not the unions of the fibers of
a pencil.

\item  For $n \in \NN$ let $\Delta_n=\{ \sum n_i\delta_i \vert \delta_i\in
\Delta, n_i \le n\} \subset NS(X)$.
Determine the asymptotic of the number of curves admitting
surjection onto $F_r, r<10$ with the classes in $\Delta_n$ when
$n\rightarrow \infty$. 

\item Determine algebraic properties of the fundamental groups described
in Proposition \ref{groupsinpencils}. 
\end{enumerate}  
\end{problem} 

Some partial results, mainly in the case of plane, are discussed above
and in \cite{asymptotics}: for example  the curves $(x^n-y^n)(y^n-z^n)(x^n-z^n)=0$
formed by $3n$ lines show that the growth in Problem 3 for $\PP^2,[1]$
for $r=2$ is at least linear. The growth of $N(X,\Delta)$ appears to be related
to the asymptotic of the number of strata (cf. the proof of
Prop. \ref{groupsinpencils}) and possibly is exponential.

\section{Special curves}

This section surveys examples of calculations of the fundamental
groups and other topological information about the
complements, the properties of fundamental groups and applications. 
An important step in each such inquiry is finding a class 
of curves with interesting topology of the complements.
Most examples in this section are plane curves.

\subsection{Arrangements of lines, hyperplanes and plane curves}. 

There are many calculations of the fundamental groups of the
complements to arrangements
of lines. The braid monodromy can be calculated algorithmically. In the
case of real arrangements finding the braid monodromy and
the presentation are particularly simple: see \cite{salvetti},
\cite{hironaka}. In some instances this leads to presentations allowing
a more intrinsic characterization: for example in \cite{fan} conditions on
arrangement were found for the fundamental groups
to be products of free groups.  

The fundamental groups and more subtle questions on the topology of 
the complements to arrangements formed by hyperplanes fixed by the groups generated by
reflections were very actively studied in many case. In case of real
reflection groups, the fundamental groups of the complements to
corresponding complexified real arrangements were found \cite{brieskorn} with
presentations closely related to the Dynkin diagrams of the corresponding Coxeter
groups. The topology of the complements to hyperplanes corresponding to
the complex reflection groups also were actively studied with many
deep results.  The number of striking
results is too large to survey here and we refer for example to
\cite{broue} and \cite{bessis} for some particularly important ones and for further references.

Several calculations were made for the fundamental groups of the complements to
unions of lines and quadrics. Work \cite{quadricline} includes the
arrangements formed by unions a quadric and lines with various
tangency conditions. Few example of such type of arrangements, more specifically those real arrangements of
quadrics and lines which admit projections to  
a line with all critical points being real, were considered in \cite{namba}.
Here the standard methods of calculation of the braid monodromy are
almost as simple as in the case of real arrangements of lines and lead quickly to presentations in terms generators and relators. 

Cardinality of the set of connected components of the equisingular
families of reducible curves with fixed 
combinatorial type (cf. Definition \ref{pairs}) was 
investigated in several cases of plane curves of small degree.
In particular the classification for curves of degree 5 was carried out in \cite{degt90}.
The case of arrangements of  small cardinality and irreducibility of equisingular component was studied for
arrangements up to 9 lines as well as arrangements of 10 and 11 lines
with many different types of combinatorics with some results in the case
of arrangements of 12 lines (cf. \cite{moduli10lines},
\cite{fei}, \cite{yoshinaga12},\cite{artalconjugate}, \cite{benoit1}
 the latter are in 
connection with Rybnikov's example of combinatorially equivalent
arrangements with distinct homotopy types). Specific types of
presentations of the fundamental groups of arrangements were studied
in \cite{conjfreepresent}.



\subsection{Generic Projections\index{Generic Projections}} Study of the fundamental groups of
the complements to the branching curves of
generic projections \footnote{important results on geometry of such curves were
  obtained much earlier by italian school, notably B.Segre, Chisini and
  his school cf. \cite{segre}}
 was initiated by B.Moishezon in work \cite{moishezon} and
continued jointly with M.Teicher and later by M.Teicher and
her collaborators. Given a smooth surface $X \subset \PP^N$, a
projection from a generic $\PP^{N-3}$ gives a generic branched cover
ramified along a curve  $R\subset X$. The image of $R$ is the
branching curve $B \subset \PP^2$ 
of this projection.  
If the center of projection $\PP^{n-3}$ is sufficiently generic, then $B$ has nodes and cusps as the only singularities. 
The number of cusps and nodes can be found in terms of intersection indices of Chern
classes of $X$ and the class of hyperplane section (cf. \cite{bowdin}).
Work \cite{moishezon} 
considers the case when $X$ is a smooth surface in $\PP^3$. Then the branching curve $B$ has degree $n(n-1)$, $n(n-1)(n-2)$ cusps and 
${1 \over 2}n(n-1)(n-2)(n-3)$ nodes (for $n=3$ one obtains sextic with
six cusps). The fundamental group of the
complement is isomorphic to the quotient of the braid group on $n$
strings by its center (cf. \cite{moishezon}). 

Works \cite{moishteicher} consider generic projections of quadrics
$X=\PP^1\times \PP^1$ using a family of embeddings $i_{a.b},a,b \in \ZZ$ corresponding to
various ample divisors in $NS(X)$. Interest in this class stems form
the fact that Galois covers of $\PP^2$ with branching curve of generic
projections of these surfaces provide examples of simply connected surface of general
type for which $c_1^2>2c_2$.  The key step in the showing the simply
connectedness is the calculation of the fundamental group of the
complement to the branching curve. The relation between the
fundamental groups of the complements to the branching curves of
generic projections and the fundamental groups of smooth models of
Galois closures of these projections is discussed in
\cite{moishteicher}, \cite{liedtke}. 

Since then, the class of surfaces which generic projections produces the
curves for which one has a presentation of the fundamental groups of
the complements was greatly increased. Calculations produced over the
span of more than 30
years include complete intersections in projective spaces (\cite{robb}), very ample embeddings of Hirzebruch surfaces, embeddings
of K3 surfaces, very ample embeddings of ruled surfaces which are the products of $\PP^1$ and
smooth curves of positive genus and others. In many instances a quite
different than in the case of surfaces in $\PP^3$ pattern emerged for
the fundamental groups (cf. \cite{minageneric} for references to these
calculations).
One has to mention that
the main technical tool in such calculation is appropriate degeneration
of the surface resulting in degeneration of the branching curve. Steps
of calculation include calculation of the braid monodromy of
degenerate curve (which may be reducible) and then applying rules of
regeneration i.e. relating the braid monodromy of degenerate curve to
the braid monodromy of the curve prior to degeneration.
We refer to a survey article  \cite{teicher11}
which has useful references to these numerous calculations. 

An interesting property of branching curves of generic projections
was discovered by Chisini: (with a small number of exceptions)
the cover given by generic projection is determined by the 
curve alone, i.e. no subgroup of the fundamental group to specify 
the cover (cf. section \ref{branchedcoverssect}) is needed. A proof 
of this result was found in \cite{kulikov} (cf. also,
\cite{catanese}).

\subsection{Complements to discriminants\index{complements to discriminants} of universal unfoldings}
With a germ of isolated hypersurface singularity $f(x_1,...,x_n)=0$ one
associates the germ of the universal unfolding $\CC^N$, 
$N=dim \CC[x_1,....,x_n]/(f,{{\partial f}\over {\partial x_1}},....,
{{\partial f}\over {\partial x_i}})$ which comes with  the germ of discriminantal hypersurface
$Disc$ (corresponding to the germs having a critical point (cf. \cite{greuel})). 

The fundamental groups of
the complements to the germs $Disc$ have appearance in a variety of
questions  spreading from singularity theory and topology to representation theory
and beyond. An important feature of the fundamental groups of such
complements (as well as complements to other discriminants) is that
they come
 endowed with geometric monodromy i.e. the homomorphism
to the mapping class group of the Milnor fiber, i.e. the
group of diffeomorphisms of the Milnor fiber constant on its boundary modulo
isotopy.  This induces the homological monodromy via the action of the
mapping class group on the homology of the Milnor fiber. For $ADE$
singularities one obtains the corresponding Coxeter groups
(cf. \cite{ebeling}).  Moreover, these complements to germs often can be identified with the 
complement to the whole affine hypersurfaces in $\CC^N$, so these
local fundamental groups are quasi-projective. 
In the case of simple $ADE$ surface singularities, 
the fundamental groups of the complement were identified by Brieskorn 
(cf. \cite{brieskorn}) with the braid groups corresponding to the
respective Coxeter systems.

Calculations for several more complicated
classes of singularities were made also. An important case of Brieskorn-Pham polynomials
$f(x_1,...,x_n)=x_1^d+...+x_n^d$ was considered by M.Lonne (cf. \cite{lonne09}
and references there). Generators and relations of the fundamental group of the
complement to discriminant are described in terms of combinatorial data
given by the graph associated to singularity, analogous to Dynkin
diagram or, equivalently, in terms of the corresponding bilinear form. Vertices correspond to the integer points in the
interior of the cube $I_{d,n}=\{{\bf i}=(i_1,....,i_n) \vert 1 \le i_k \le
d-1\}$. Edges described in terms of bilinear form on the vector space
with basis $v_{\bf i}, {\bf i} \in I_{d,n}$ given by 
\begin{equation} \langle v_{\bf i},v_{\bf j} \rangle=
\begin{cases} 
 0 \ if \ \vert i_{\nu}-j_{\nu} \vert
   \ge 2 \ for \ some \ \nu \\
   0 \ if \ (i_{\nu}-j_{\nu})(i_{\mu}-j_{\mu})<1 \ for \ some
  \  \mu,\nu \\
  -2 \ if \ i=j \\
  -1 \ otherwise \\
\end{cases} 
\end{equation}
The edges of the graph connect the pairs of vertices $\bf i,j$ such
that $<v_{\bf i},v_{\bf j}>\ne 0$. In terms of this bilinear form or
the graph the
fundamental group of the complement to discriminant has generators
$t_{\bf i}$ corresponding to the vertices and the relations as follows
\begin{equation}\label{pham}
\begin{matrix}
 t_{\bf i}t_{\bf j}=t_{\bf j}t_{\bf i} & \ if  \ <v_{\bf i},v_{\bf j}>=
 0, \\
t_{\bf i}t_{\bf j}t_{\bf i}=t_{\bf j}t_{\bf i}t_{\bf j} & \ <v_{\bf
  i},v_{\bf j}> \ne 0 \\
t_{\bf i}t_{\bf j}t_{\bf k}t_{\bf i}=t_{\bf j}t_{\bf i}t_{\bf k}t_{\bf
  j} & \ <v_{\bf
  i},v_{\bf j}> <v_{\bf j},v_{\bf k}><v_{\bf k},v_{\bf i}>\ne 0 \\
  & i_{\nu} \le j_{\nu} \le k_{\nu} \ for  \ all \ \nu
\end{matrix}
\end{equation}

\subsection{Complements to discriminants of complete linear
  systems} 

This class of singular curves comprised of the curves where
the fundamental groups come endowed with the homomorphisms into 
non-abelian groups given by either geometric monodromy i.e. with
values in a mapping class group or (co)homological monodromy (with
values in the linear group of automorphisms of the
homology). Homological monodromies 
often are surjective or are close to such (i.e. the fundamental
group itself is non-abelian). The construction of these curve is as
follows. Let $X$
be a smooth projective variety and let $\CaL$ be a line bundle. The linear
system $\PP(H^0(X,\CaL))$ 
contains the discriminant consisting of the elements having
singularities worse than singularities of its generic element. 
With rare exceptions the discriminant has codimension 1 (identifying
varieties with a small dual is an interesting problem). 
Its intersection 
with a generic plane \footnote{generic choice assures that the fundamental
  group of the complement to the intersection with the plane inside
  this plane is
isomorphic to the fundamental group of the complement to the discriminant
of the complete linear system. Non-generic section were studies in
very special cases. For a recent study cf. \cite{lang}}  
 in $\PP(H^0(X,\CaL))$ produces a plane curve which
fundamental group of the complement has monodromy map into the mapping
class group of the generic fiber of the universal element of this
linear system i.e. the group of diffeomorphisms modulo isotopy of the fiber of the incidence correspondence
$I_{\CaL}\subset X \times \PP(H^0(X,\CaL)$ set theoretically consisting 
of pairs $\{(x,C) \vert x\in X, C\in \PP(H^0(X,\CaL)), x \in C\}$. In
\cite{dolgme} was considered the case $X=\PP^2$ (resp. $X=V_2$ the
quadric in $\PP^3$) and $\CaL=\mathcal{O}_{\PP^2}(3)$ (resp. $\CaL=\mathcal{O}_{V_2}(2)$) when one obtains as the fundamental group of
the complement to discriminant the extension of $SL(2,\ZZ)$ by the
Heisenberg group over the field with 3 elements (resp. the ring
$\ZZ_4$). The surjection onto $SL_2(\ZZ)$ is the monodromy (the mapping
class of 2-dimensional torus coincides with $SL_2(\ZZ)$) and the kernel
is the Heisenberg group. Recently, a progress was made in
understanding the kernel of the monodromy in the case 
$\CaL=\mathcal{O}_{\PP^2}(4)$ cf. \cite{harrisreid}.

A much more difficult case  $X=\PP^n,\CaL=\mathcal{O}_{\PP^n}(d)$,  including the case of
discriminant of the family of cubic curves just described, was addressed
by M.Lonne in \cite{lonne09}. It also
includes apparently the only other known case of this construction
i.e. $X=\PP^1,\CaL=\mathcal{O}(d)$ considered by Zariski (and mentioned in \cite{dolgme}) when the
corresponding fundamental group is the braid group of
two dimensional sphere. The fundamental group
$\pi_1(\PP(H^0(\PP^n,\mathcal{O}_{\PP^n}(d))\setminus Disc))$ is the
quotient of the group with generators and relations (\ref{pham}) by
the normal subgroup generated by additional
relations which we now shall describe. They are defined in terms of  
enumeration functions: ${\Upsilon}_k, k=0,....,n:
\{1,...,(d-1)^n\}\rightarrow I_{n,d}$ or equivalently the orderings of
the integral points of the cube $I_{n,d}$. Among them,  ${\Upsilon}_0$
considered as the ordering of the integral points in $I_{n,d}$ 
according to the reverse lexicographic order: $(i_1,....,i_n)<(i_1',....,i_n')$
iff the for the smallest subscript $k$ for which $i_k \ne i_k'$ one
has $i_k>i_{k'}$
(e.g. $(d-1,d-1,d-1)<(d-1,d-1,d-2)<(d-1,d-1,d-2)<...<(d-1,d-1,1)<
(d-1,d-2,d-1)<(d-1,d-2,d-2)<......$).
The order $<_k$ obtained from this one as follows:
\begin{equation}
  (i_1,...,i_n)<_k (j_1,...j_n) \Longleftrightarrow i_k<j_k \ or \
  i_k=j_k , (i_1,...,i_n)<_0(j_1,...j_n)
\end{equation}
With this notations a presentation of $\pi_1(\PP(H^0(\PP^n,\mathcal{O}_{\PP^n}(d))\setminus Disc))$ is given by generators and relators
(\ref{pham}) and 
\begin{equation}\label{lonnepresentation2}
 (t_i\delta_0)^{d-1}=(\delta_0t_i^{-1})^{d-1},   \delta_0 \cdot
 ....\cdot \delta_n=1 \text{where} \delta_k=\prod_{m=1}^{(d-1)^n}t_{\Upsilon_k(m)} \ k=0,....,n
\end{equation} 
It would be interesting to understand the algebraic structure of such
groups and their relation to other geometrically defined group but see 
\cite{lonne09} for discussion of the relation of this presentation
with those in cases known earlier. For results on the monodromy
representations of the groups of the complements to discriminant
using presentation (\ref{pham}), (\ref{lonnepresentation2}) we refer to 
\cite{salter2} and for the case of monodromy of complements to
discriminants on toric surfaces to \cite{salter1} and \cite{lang1}.

\subsection{Plane sextics\index{plane sextics} and trigonal curves}

In the last 10-20 years, many important results were obtained in the
study of  equisingular 
families of curves of degree 6 (and less; cf. \cite{degt99}, \cite{degt13},
\cite{degt12}, \cite{degt11}, \cite{degt11b}, \cite{degt10b} and
references below). 
The number of equisingular families of plane sextics
 measures in thousands and hence listing of possible cases
is not a reasonable approach. Several classes of sextics were
identified and we will describe some of them below. The methods
include the use of Alexander invariants, connection with 
K3 surfaces and relation with the class trigonal curves on ruled rational
surfaces. An interesting study of the moduli of sextics with six
cusps i.e. the locus in the moduli space $\cM_g$ 
given by the curves in distinct equisingular families was done in \cite{galati}. 
Several good surveys of the subject are already available
(cf. \cite{degt11c}, Preface and section 7.2 in \cite{degtbook} and \cite{degt15}).

\bigskip

A. {\it Simple and non-simple  sextics}. A sextic is called {\it simple} if its
only singularities are ADE singularities. Otherwise, a sextic is
called {\it non-simple}.
For irreducible non-simple sextics the type of equisingular deformation
type is determined by  the combinatorial type i.e. the collection
of the local types of all singularities (cf. \cite{degt11c}
Theorem 3.2.1). 
The key to a classification of simple sextics is the relation with the
theory of K3 surfaces. Consider a double cover $X_C$ of $\PP^2$ branched
over a sextic $C$. Singularities of this surface, correspond to
the singularities of the branching curve and are simple of the same ADE type
as the singularity of the curve. Moreover, the minimal resolution
$\tilde X_C$ comes with the following data associated with the
intersection form on $H_2(\tilde X_C,\ZZ)$. Recall that as a lattice
with bilinear 
form the latter is isomorphic to 
$\LL=2E_8 \oplus U^3$ where $U$ is the intersection form of quadric
surface. The data associated with the minimal resolution $\tilde X_C$
consists of the sublattice of
$H_2(\tilde X_C,\ZZ)$ spanned by the classes of exceptional curves of 
the resolution. These curves form a root system $\sigma$ in
this sublattice. Let $\tilde S_C$ be the primitive hull in $H_2(\tilde X_C,\ZZ)$
of these sublattices and the pull back to $\tilde X_C$ of the class of
a line in $\PP^2$.  An abstract oriented homological type of a K3
surface is a sublattice in $H_2(\tilde X_C,\ZZ)$, which
in the case of a double cover over a ADE sextics is the image of
$\tilde S_C$, plus the orientation of the positive definite plane in real subspace spanned by transcendental
lattice given by the holomorphic
2-form on $\tilde X_C$ (cf. \cite{degt10c} p.214).
\begin{theorem} (cf.\cite{degt08},\cite{urabe},\cite{yang})
There is one to one correspondence between oriented abstract
homological types arising from sextics and the set of equisingular deformations of sextic
curves with simple singularities. Moreover, the moduli space of
sextics in each equisingular component (i.e. its quotient by the group
of projective isomorphisms) is isomorphic to the moduli space of K3
surfaces with such abstract homological type. 
\end{theorem}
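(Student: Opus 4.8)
The plan is to transfer the classification of equisingular families of simple sextics to the classification of suitably polarized K3 surfaces, and then to invoke the global Torelli theorem together with the surjectivity of the period map. The bridge is the double cover construction already described above: a sextic $C$ with only ADE singularities produces a double cover $X_C \to \PP^2$ whose minimal resolution $\tilde X_C$ is a K3 surface, and conversely a K3 surface carrying a nef class $h$ with $h^2=2$ admits a degree two morphism to $\PP^2$ realizing it as such a double cover, the branch curve being a sextic whose singularities are forced to be of ADE type by the $(-2)$-classes orthogonal to $h$. Under this dictionary the invariant attached to $C$ is precisely the oriented abstract homological type of $\tilde X_C$.

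First I would make the assignment sending $C$ to the oriented abstract homological type of $\tilde X_C$ precise and show it is constant on equisingular deformation classes. As $C$ moves in an equisingular family, the simultaneous resolutions $\tilde X_C$ fit into a smooth family of K3 surfaces; the marked sublattice $\tilde S_C$, namely the primitive hull in $H_2(\tilde X_C,\ZZ)\cong\LL$ of the exceptional root system $\sigma$ together with $h=\pi^*\mathcal{O}_{\PP^2}(1)$, and the orientation of the positive definite plane in the transcendental lattice are both locally constant, being invariant under the monodromy of the family. This produces a well defined map from equisingular classes to oriented homological types arising from sextics.

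Next I would prove bijectivity and then identify the moduli. For surjectivity, given an oriented homological type arising from a sextic, the surjectivity of the period map produces a K3 surface $Y$ whose N\'eron--Severi lattice realizes the prescribed configuration with $h$ nef and $h^2=2$; the linear system $|h|$ then exhibits $Y$ as a double cover of $\PP^2$ branched along a sextic with exactly the prescribed ADE singularities. For injectivity, two sextics with the same oriented homological type yield K3 surfaces admitting an isometry of $H_2$ preserving $h$, the root system $\sigma$, and the transcendental orientation; the global Torelli theorem upgrades this isometry to a biregular isomorphism commuting with the two deck involutions, which therefore descends to an automorphism of $\PP^2$ carrying one sextic to the other, while connectedness of the relevant locus in the period domain places the two curves in a single equisingular family. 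Reading the same period map as an isomorphism of the arithmetic quotient parametrizing K3 surfaces of the fixed homological type with the equisingular stratum modulo $\mathrm{PGL}_3$ yields the asserted identification of moduli spaces.

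The delicate step, and the main obstacle, is the lattice theoretic bookkeeping guaranteeing that the reconstructed double cover is genuinely a sextic double cover with precisely the prescribed singular locus. One must verify that the nef class $h$ with $h^2=2$ is base point free and defines a true two-to-one morphism onto $\PP^2$, ruling out the degenerate behaviour of the linear system in which $|h|$ acquires a fixed part or maps onto a quadric; that every $(-2)$-class orthogonal to $h$ is effective and contracted by $|h|$, so that it contributes an honest singularity of the branch sextic and nothing spurious arises from a mere tangency; and that the count of $(-2)$-curves matches the ADE type of $C$ exactly. Equally subtle is the role of the orientation: one must check that it is a genuine invariant, invisible to the unoriented lattice alone, and that both the period map and the Torelli isomorphism respect it, which is exactly what forces the word \emph{oriented} in the statement and prevents identifying a surface with its complex conjugate.
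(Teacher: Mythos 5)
The paper itself contains no proof of this theorem: it is quoted from the cited work of Degtyarev, Urabe and Yang, so your proposal can only be measured against the argument in those references. In outline you do mirror that argument faithfully --- the double-cover dictionary, invariance of the oriented homological type under equisingular deformation, surjectivity of the period map to realize a prescribed type, and lattice-theoretic bookkeeping to guarantee that the nef class $h$ with $h^2=2$ is base-point free, defines a genuine two-to-one map to $\PP^2$, and that the $(-2)$-classes orthogonal to $h$ account exactly for the ADE singularities of the branch sextic. Your closing paragraph correctly identifies these, together with the fact that the orientation is extra data not visible in the unoriented lattice, as the delicate points.

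However, your injectivity step contains a genuine error. Two sextics with the same oriented abstract homological type give K3 covers whose second cohomology lattices admit an isometry respecting $h$, the root system $\sigma$ and the orientation, but this isometry has no reason to respect the Hodge decomposition: the two surfaces have different periods. The global Torelli theorem applies only to Hodge isometries, so it cannot ``upgrade'' a purely lattice-theoretic isometry to a biregular isomorphism; worse, if your argument were valid it would show that any two sextics with the same homological type are projectively equivalent, forcing each equisingular stratum modulo $\mathrm{PGL}_3(\CC)$ to be a single point --- contradicting the positive-dimensional moduli asserted in the very statement you are proving. The correct mechanism for injectivity is the one you mention only in passing: the quotient, by the stabilizing arithmetic group, of the period domain attached to the fixed oriented homological type is connected (this is precisely where the orientation enters, since it selects one of the two components interchanged by $\omega \mapsto \bar\omega$); every point of this quotient is realized by a sextic by period surjectivity plus the nef-ness analysis; and the resulting family over the connected base is equisingular, hence joins the two curves. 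Torelli belongs instead in the moduli identification: two sextics mapping to the \emph{same} period point have Hodge-isometric covers, and only there does Torelli produce an isomorphism commuting with the deck involutions and descending to an automorphism of $\PP^2$, which gives injectivity of the map from sextics modulo projective equivalence to the arithmetic quotient.
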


Particularly well understood class of such sextics is the class of
maximizing ones i.e. for which the sum of Milnor numbers is 19 (i.e. 
the maximal possible). However, there is no classification
of the fundamental groups for the curves of this type though very 
large number of cases was made explicite.

\bigskip 

B. Sextics of torus type. Those are sextics given by an equation of
the form $f_2^3+f_3^2=0$ where $f_i$ denotes a form of degree $i$.

If $f_i$ generic than for such $C$, $\pi_1(\PP^2\setminus C)$ is
the quotient of the braid group $B_3$ by its center
(\cite{zariski29}). The fundamental group varies 
when $f_i=0$ become singular or tangent to each other and there are
many explicite calculations. For curves with simple singularities, 
having such type, the commutativity of the fundamental group of the
complement is detected by the Alexander polynomial
 (Oka conjecture cf. \cite{degtoka}, \cite{jcogomecrelle})

\bigskip

C. Sextics with triple points. Blow up of the plane at a triple point of a sextic
results in Hirzebruch surface $F_1$ and a cover of degree 3 of 
projective line induced by projection from the triple point.
Such curves and their braid monodromy were studied extensively by 
Degtyarev in his book \cite{degt11c} in a more general framework 
of trigonal curves on arbitrary Hirzebruch surfaces $F_d$. Relation
between the braid monodromy and the graphs in 2-spheres leads to
enumeration of extremal irreducible trigonal curves which shows that 
their number grows exponentially (as a function of appropriate parameter).

\subsection{Zariski pairs\index{Zariski pairs}} 

One of applications of the fundamental
groups of the complements (as was envisioned and implemented in some
cases by Zariski cf. \cite{zariski29},\cite{zariskialgsurf}) 
is detecting the existence of different connected components of the 
space of equisingular deformations of curves on the surface. Indeed,   
those deformations do not alter the fundamental group. In fact there
are several natural topological equivalence relations of curves on
surfaces interrelationship between which is a natural question.

\begin{definition}\label{pairs}
Let $X$ be an algebraic surface and let $D_1,D_2$ be 
divisors on $X$.  Pairs $(X,D_1)$ and $(X,D_2)$ are equivalent if one
of the following conditions is satisfied:

\smallskip

(A) There exist an irreducible variety $T$, a holomorphic map $\Phi: \mathcal{X} \rightarrow T$ with a fiber biholomorphic to $X$, 
 a divisor $\CD \subset \cX$ such that  $\Phi$ is a locally trivial
 fibration of the pair $(\cX,\CD)$ \footnote{i.e. for any $t\in T$
   there is a neighborhood $U\subset T$ such that $\Phi^{-1}(U)$ and
   $T\times \Phi^{-1}(t)$ are equivalent as stratified spaces}
 and such that
there exist a 
pair of points $t_1,t_2 \in T$ the fibers of $\Phi\vert_{\CD}$ over $t_1,t_2$ are
$D_1,D_2$ respectively.

\smallskip
(${\rm A}^{\prime}$) There is a symplectic isotopy of pairs $(X,D_i)$ i.e. $(\cX,\CD)$ in (A)
is a pair of symplectic spaces with symplectic $\Phi$ with fiber being
sympletomorphic to $D_1,D_2$ respectively.  

\smallskip
(B) There exists a diffeomorphism (resp. PL equivalence,
reps. homeomorphism, resp. a homotopy equivalence, resp. proper
homotopy equivalence of the complements) of pairs $(X,D_1)=(X,D_2)$
i.e. one selects the corresponding type of a continuous map $X\rightarrow X$ taking subcomplex $D_1$ to
$D_2$.

\smallskip

(C) There exists an isomorphism of fundamental groups $\pi_1(X\setminus
D_1)=\pi_1(X\setminus D_2)$ (or sometimes just equality of the Alexander
polynomials).

\smallskip

(D) There exist the following:

(i) a one to one correspondence between irreducible components of
$D_i$ such that corresponding components are homeomorphic and

(ii) a one to one correspondence between singularities of $D_i$
preserving the local type in $X$ compatible with correspondence (i)
between the components.

\smallskip

(E) There exist an automorphism of fileds $\CC/\QQ$ which takes (a
deformation as in (a)) of the pair
$(X,D_1)$ to the pair $(X,D_2)$.

\end{definition}

The names used in literature are respectively, equisingular
deformation equivalence for (A), Zariski pairs \footnote{ the term was
  coined in \cite{artalpairs} in reference to first example found by
  Zariski in 1930s} for (D)-equivalent but not (B)-equivalent pairs ,
$\pi_1$-equivalent for (C), combinatorially equivalent for (D) and
conjugation equivalent for (E).

The relation between these conditions is as follows:
(A) implies (B) (Thom isotopy theorem), (B) implies (D) and also (C) by topological
invariance of the fundamental groups. Relation between equivalences in
(B) corresponding to different types of homeomorphisms of pairs are unknown in
real dimension 4 and finally (E) implies (D). 

Large and continuing to increase volume of papers
deals with finding examples confirming that these implications cannot
be reversed, though until 80s connected components of the strata
were viewed as an aberration. The conditions found in \cite{gruel} 
delineate the range of combinatorial data for which the strata 
are connected but numerous examples found up to date outside of this range,  suggest that disconnectedness
of equisingular families is a widespread occurence. At the same time no systematic theory of Zariski pairs as to 
classification or distribution did emerge. A good survey of
this vast subject is given in \cite{zariskipairssurvey}. 
Further non-trivial results on the relations between above equivalences are as follows.

(C) or (D) does not imply (A): Shirane \cite{shirane} showed that
curves in  equisingular families constructed earlier by Shimada 
(cf. \cite{shimada03}) cannot be transformed by a homeomorphism 
of $\PP^2$ though fundamental groups are isomorphic. 
Work \cite{degt14} contains examples of such type in the case of sextics.

(D) does not imply (C) for arrangements of lines defined over $\QQ$:
cf. \cite{benoit1} and references there for other numerous examples 
found by those authors giving arrangements of lines for which (D) does not
imply (C). (D) does not imply (C) for reducible curves with 
 components being a smooth curve and a union of certain 3
tangent lines
cf. \cite{shirane19}. $k$-tuples of pairwise distinct reducible
curves with one component of degree 4 and several quadrics were
considered in \cite{banaitokunaga} (also, see there the references to the works 
of these two authors presenting many other examples of failure of this 
implication). 

(E) does not imply (C): \cite{artalconjugate} gives examples of
conjugate line arrangements with non-isomorphic fundamental groups.
See also \cite{artal17} where one has conjugacy over $\QQ$ and isomorphism of the
fundamental groups and even homeomorphism of the complements but 
there is no homeomorphism of pairs. Examples are the appropriately chosen unions of
sextics and lines. Moreover, (E) and (C) do not imply (B) (cf. \cite{triangularcurves})
 
Distinct connected components often even contain curves conjugate over
$\QQ$ (arithmetic Zariski pairs cf. \cite{shimada08}).

The examples of Zariski pairs or multiplets \footnote{Zariski k-tuples
  are sets of $k$ curves in distinct classes of equivalence (B) but
  in the same class (D); sometimes, in a more loose usage, the reference is to sets of $k$
  curves in distinct classes for some equivalences (A)-(E) but not another.} 
fall in the following
groups

A.Arrangements of lines and conics. \cite{benoit}

B.Curves of degree 6 and trigonal curves (cf \cite{degt09})

C.Other sporadic examples such as reducible curves with components of
low degree (cf. \cite{ozguner}).

Methods employed in these works include study of the  Alexander invariants, Hurwitz equivalence classes of  
braid monodromy and more ad hoc invariants of the fundamental groups
(e.g. existence of dihedral cover of the complement to a curve  is an
invariant of the fundamental
group and hence can be used to distinguish classes of equisingular
deformations) and other sporadic methods (cf. \cite{shimada10},
\cite{shimada101}, \cite{shirane17}). Problems here include the
question 
of combinatorial invariance of the Alexander polynomials and more
generally the characteristic varieties or existence of Zariski pairs
defined over $\QQ$.

Many examples of fundamental groups of Zariski pairs were
computed in \cite{degt10} 

An interesting problem about Zariski multiplets is understanding the asymptotic of the 
number of connected components of equisingular families when the
number of classes of the curves 
grows. Consideration of families of trigonal curves on Hirzebruch surfaces,
shows that the number of equisingular components grows exponentially. 
One can show exponential growth of the number of connected components
of equisingular families of plane curves with nodes and cusps when
degree grows by considering generic
projections of surfaces of general type in a families which have
exponentially large growth of the number of connected components of the
moduli spaces (cf. \cite{lonnemoduli}). This follows from the
explicite formulas in terms of Chern numbers of the surfaces for the numbers of cusps, nodes and the degree of
the branching curves of generic projection (cf. \cite{bowdin}).

\printindex

\begin{thebibliography}{99}

\bibitem{degt15} A. Akyol, A.Degtyarev, Geography of irreducible plane sextics. 
Proc. Lond. Math. Soc. (3) 111 (2015), no. 6, 1307-1337. 

\bibitem{alberichjump}  M. Alberich-Carrami\~nana, J. Alvarez Montaner,
  F. Dachs-Cadefau, V. Gonz\'alez-Alonso,  Poincar\'e series of multiplier ideals in two-dimensional local rings with rational singularities. Adv. Math. 304 (2017), 769-792. 

\bibitem{alexander} J. W. Alexander,
Topological invariants of knots and links.
Trans. Amer. Math. Soc. 30 (1928), no. 2, 275-306.

\bibitem{alexeevpardini} V. Alexeev, V.; R. Pardini, On the existence of ramified abelian covers. Rend. Semin. Mat. Univ. Politec. Torino 71 (2013), no. 3-4, 307-315.

\bibitem{aluffi} P. Aluffi,
Characteristic classes of discriminants and enumerative geometry. (English summary)
Comm. Algebra 26 (1998), no. 10, 3165-3193.

\bibitem{moduli10lines}  M. Amram, M. Teicher; Fei Ye, 
Moduli spaces of arrangements of 10 projective lines with quadruple points. 
Adv. in Appl. Math. 51 (2013), no. 3, 392-418.

\bibitem{teicher11} M. Amram, R. Lehman, R. Shwartz, M. Teicher, 
Classification of fundamental groups of Galois covers of surfaces of small degree degenerating to nice plane arrangements. Topology of algebraic varieties and singularities, 65-94,
Contemp. Math., 538,

\bibitem{quadricline} M. Amram, M. Teicher, M. A. Uludag,
Fundamental groups of some quadric-line arrangements.
Topology Appl. 130 (2003), no. 2, 159-173, 


\bibitem{arapura}  D. Arapura, Geometry of cohomology support loci for local systems. I. J. Algebraic Geom. 6 (1997), no. 3, 563-597. 

\bibitem{aranori} D. Arapura, M. Nori, Solvable fundamental groups of algebraic varieties and K\"ahler manifolds.
Compositio Math. 116 (1999), no. 2, 173-188.
 

\bibitem{arnoldetal}  V. Arnold,; S. Gusein-Zade, A. Varchenko, Singularities of differentiable maps. Vol. II. Monodromy and asymptotics of integrals. Monographs in Mathematics, 83. Birkh\"auser Boston, Inc., Boston, MA, 1988.

\bibitem{triangularcurves} E. Artal Bartolo, J. I. Cogolludo-Agust\'in, J. Mart\'in-Morales, Triangular curves and cyclotomic Zariski tuples. Collect. Math. 71 (2020), no. 3, 427-441. 

\bibitem{artal17} E. Artal Bartolo,  J. I. Cogolludo-Agust\'in,
Some open questions on arithmetic Zariski pairs.Singularities in geometry, topology, foliations and dynamics, 31-54,
Trends Math., Birkh\"auser/Springer,  2017

\bibitem{artalconjugate} E. Artal Bartolo, J. I. Cogolludo-Agust\'in, B.Guerville-Ball\'e, M.Marco-Buzun\'ariz, An arithmetic Zariski pair of line arrangements with non-isomorphic fundamental group. Rev. R. Acad. Cienc. Exactas Fis. Nat. Ser. A Mat. RACSAM 111 (2017), no. 2, 377-402. 

\bibitem{zariskipairssurvey}  E. Artal Bartolo, J. I. Cogolludo,
  H.Tokunaga, A survey on Zariski pairs. Algebraic geometry in East Asia-Hanoi, 2005, 1-100, Adv. Stud. Pure Math., 50, Math. Soc. Japan, Tokyo, 2008.

\bibitem{ajcarmona}  E. Artal Bartolo, R. Carmona Ruber,  J. I. Cogolludo-Agust\'in, Braid monodromy and topology of plane curves. Duke Math. J. 118 (2003), no. 2, 261-278.

\bibitem{ajcarmona2} E. Artal Bartolo, Enrique, R. Carmona Ruber, J. I. Cogolludo Agust\'in,
Essential coordinate components of characteristic varieties. 
Math. Proc. Cambridge Philos. Soc. 136 (2004), no. 2, 287-299.

\bibitem{artal14} E. Artal Bartolo,
Topology of arrangements and position of singularities. 
Ann. Fac. Sci. Toulouse Math. (6) 23 (2014), no. 2, 223-265.

\bibitem{artalorbifolds}  E. Artal Bartolo,  J.I.Cogolludo-Agust\'in,
  D.Matei, Characteristic varieties of quasi-projective manifolds and
  orbifolds. Geom. Topol. 17 (2013), no. 1, 273-309. 

\bibitem{artalcogolludome} E. Artal Bartolo, J. I. Cogolludo-Agust\'in,
  Jose Ignacio, A. Libgober, Characters of fundamental groups of curve
  complements and orbifold pencils. Configuration spaces, 81–109,
CRM Series, 14, Ed. Norm., Pisa, 2012. 


\bibitem{acme2} E. Artal Bartolo, J. I. Cogolludo-Agust\'in, A. Libgober, Depth of cohomology support loci for quasi-projective varieties via orbifold pencils. Rev. Mat. Iberoam. 30 (2014), no. 2, 373-404.

\bibitem{artalpairs} E. Artal-Bartolo, Sur les couples de Zariski.  J. Algebraic Geom. 3 (1994), no. 2, 223-247. 



\bibitem{ludmil2} D. Auroux, L. Katzarkov, Branched coverings of ${\bf CP}^2$ and invariants of symplectic 4-manifolds. Invent. Math. 142 (2000), no. 3, 631-673. 

\bibitem{ludmil1} D. Auroux, S. Donaldson, L. Katzarkov, M. Yotov,  Fundamental groups of complements of plane curves and symplectic invariants. Topology 43 (2004), no. 6, 1285-1318.


\bibitem{bahlul} R. Bahloul, Demonstration constructive de l'existence de polynomes de Bernstein-Sato pour plusieurs fonctions analytiques, Compos. Math. 141 (2005), no. 1, p. 175-191.

\bibitem{banaitokunaga} S. Bannai, H. Tokunaga,
Geometry of bisections of elliptic surfaces and Zariski N-plets II.
Topology Appl. 231 (2017), 10-25.

\bibitem{batyrev} V. Batyrev, Dual polyhedra and mirror symmetry for Calabi-Yau hypersurfaces in toric varieties. J. Algebraic Geom. 3 (1994), no. 3, 493-535. 

\bibitem{bessis}  D. Bessis, Finite complex reflection arrangements are $K(\pi,1)$. Ann. of Math. (2) 181 (2015), no. 3, 809-904.

\bibitem{brieskorn}  E. Brieskorn, Die Fundamentalgruppe des Raumes der regularen Orbits einer endlichen komplexen Spiegelungsgruppe. Invent. Math. 12 (1971), 57-61.

\bibitem{beauville} A. Beauville, Annulation du $H^1$ pour les fibres en droites plats, Complex algebraic varieties (Bayreuth, 1990), 1-15, Lecture Notes in Math., 1507, Springer, Berlin, 1992.
 
\bibitem{bieri}   R. Bieri, B. Eckmann, Groups with homological duality
  generalizing Poincar\'e duality. Invent. Math. 20 (1973), 103-124. 

\bibitem{biswas}  I. Biswas, M. Mj, Quasiprojective three-manifold groups and complexification of three-manifolds. Int. Math. Res. Not. IMRN 2015, no. 20, 10041-10068.

\bibitem{bredon} G. Bredon, Introduction to compact transformation groups. Pure and Applied Mathematics, Vol. 46. Academic Press, New York-London, 1972. 

\bibitem{broue} M. Brou\'e, G. Malle, R. Rouquier, Complex reflection groups, braid groups, Hecke algebras. J. Reine Angew. Math. 500 (1998), 127-190.

\bibitem{buchsbaum} D. Buchsbaum, D. Eisenbud, What annihilates a module? J. Algebra 47 (1977), no. 2, 231-243.

\bibitem{budurspectrum} N. Budur, On Hodge spectrum and multiplier ideals.
Math. Ann. 327 (2003), no. 2, 257-270.

\bibitem{budur} N. Budur, Unitary local systems, multiplier ideals, and
  polynomial periodicity of Hodge numbers. Adv. Math. 221 (2009),
  no. 1, 217-250. 

\bibitem{budur15} N. Budur, Bernstein-Sato ideals and local systems. Ann. Inst. Fourier (Grenoble) 65 (2015), no. 2, 549-603. 


\bibitem{pierretteme11} P. Cassou-Nogu\'es, A. Libgober, Multivariable Hodge theoretical invariants of germs of plane curves. J. Knot Theory Ramifications 20 (2011), no. 6, 787-805.

\bibitem{pierretteme14} P. Cassou-Nogu\'es, A. Libgober, Multivariable Hodge theoretical invariants of germs of plane curves. II. Valuation theory in interaction, 82-135, EMS Ser. Congr. Rep., Eur. Math. Soc., Z\"urich, 2014. 

\bibitem{cappell} S. E. Cappell, J. L. Shaneson, Singular spaces, characteristic classes, and intersection
homology, Ann. of Math. 134 (1991)

\bibitem{catanese} F. Catanese,
On a problem of Chisini.
Duke Math. J. 53 (1986), no. 1, 33-42.

\bibitem{ciliberto} F. Catanese, C. Ciliberto,
On the irregularity of cyclic coverings of algebraic surfaces. Geometry of complex projective varieties (Cetraro, 1990), 89-115,
Sem. Conf., 9, Mediterranean, Rende, 1993.

\bibitem{cogoflorens} J. I. Cogolludo Agust\'in, V. Florens,
Twisted Alexander polynomials of plane algebraic curves. 
J. Lond. Math. Soc. (2) 76 (2007), no. 1, 105-121.

\bibitem{jaca} Topology of algebraic varieties and singularities.
Papers from the Conference on Topology of Algebraic Varieties, in honor of Anatoly Libgober's 60th birthday, held in Jaca, June 22-26, 2009. Edited by Jos\'e Ignacio Cogolludo-Agust\'in and Eriko Hironaka. Contemporary Mathematics, 538. American Mathematical Society, Providence, RI; Real Sociedad Matematica Espanola, Madrid, 2011. 

\bibitem{jcogomecrelle} J. I. Cogolludo and A. Libgober, Mordell-Weil groups of elliptic threefolds and the Alexander module of plane curves. J. Reine Angew. Math. 697 (2014), 15-55. 


\bibitem{asymptotics} J. I. Cogolludo, A. Libgober, Free quotients of
  fundamental groups of smooth quasi-projective varieties, arxiv 1904.10852

\bibitem{lang1}  R. Cretois, L. Lang, The vanishing cycles of curves in toric surfaces I. Compos. Math. 154 (2018), no. 8, 1659-1697.

\bibitem{daviskirk} J. Davis, F. P. Kirk, 
Lecture notes in algebraic topology.
Graduate Studies in Mathematics, 35. American Mathematical Society, Providence, RI, 2001.

\bibitem{degt08} A. Degtyarev, On deformations of singular plane sextics. J. Algebraic Geom. 17 (2008), no. 1, 101-135. 

\bibitem{degtoka}  A. Degtyarev,  Oka's conjecture on irreducible plane sextics. J. Lond. Math. Soc. (2) 78 (2008), no. 2, 329-351.



\bibitem{degt13} A. Degtyarev,
On plane sextics with double singular points. 
Pacific J. Math. 265 (2013), no. 2, 327-348. 
 
\bibitem{degt12} A. Degtyarev,
Dihedral coverings of trigonal curves.  
Indiana Univ. Math. J. 61 (2012), no. 3, 901-938. 

\bibitem{degtbook} A. Degtyarev, Topology of algebraic curves. An
  approach via dessins d'enfants. De Gruyter Studies in Mathematics,
  44. Walter de Gruyter and Co., Berlin, 2012.

\bibitem{degt11} A. Degtyarev, Hurwitz equivalence of braid monodromies and extremal elliptic surfaces. Proc. Lond. Math. Soc. (3) 103 (2011), no. 6, 1083-1120.

\bibitem{degt11b} A. Degtyarev, 
The fundamental group of a generalized trigonal curve. 
Osaka J. Math. 48 (2011), no. 3, 749-782.
 
\bibitem{degt11c} A. Degtyarev, Topology of plane algebraic curves: the algebraic approach. Topology of algebraic varieties and singularities, 137-161, Contemp. Math., 538, Amer. Math. Soc., Providence, RI, 2011.

\bibitem{degt10} A. Degtyarev, 
Classical Zariski pairs.
J. Singul. 2 (2010), 51-55. 

\bibitem{degt10b} A. Degtyarev Plane sextics with a type E8 singular point. Tohoku Math. J. (2) 62 (2010), no. 3, 329-355. 

\bibitem{degt10c} A. Degtyarev, Plane sextics via dessins d'enfants. Geom. Topol. 14 (2010), no. 1, 393-433.

\bibitem{degt09} A. Degtyarev, 
Zariski k-plets via dessins d'enfants. 
Comment. Math. Helv. 84 (2009), no. 3, 639-671.

\bibitem{degt2001} A. Degtyarev, A divisibility theorem for the Alexander polynomial of a plane algebraic curve. Zap. Nauchn. Sem. S.-Peterburg. Otdel. Mat. Inst. Steklov. (POMI) 280 (2001), Geom. i Topol. 7, 146-156, 300; translation in J. Math. Sci. (N.Y.) 119 (2004), no. 2, 205-210. 

\bibitem{degt99} A. Degtyarev, Quintics in ${\bf CP}^2$ with nonabelian fundamental group. (Russian) Algebra i Analiz 11 (1999), no. 5, 130-151; translation in St. Petersburg Math. J. 11 (2000), no. 5, 809-826. 

\bibitem{degt90} A. Degtyarev, I. Isotopic classification of complex
  plane projective curves of degree 5. Algebra i Analiz 1 (1989), no. 4, 78-101; translation in Leningrad Math. J. 1 (1990), no. 4, 881-904. 


\bibitem{degt14} A. Degtyarev, On the Artal-Carmona-Cogolludo construction, J. Knot Theory Ramifications 23 (2014), no. 5.

\bibitem{deligne} P. Deligne, Le groupe fondamental du compl\'ement d'une
  courbe plane n'ayant que des points doubles ordinaires est abelien
  (d'apres W. Fulton).  Bourbaki Seminar, Vol. 1979/80, pp. 1-10, Lecture Notes in Math., 842, Springer, Berlin-New York, 1981.

\bibitem{delignediffeq} P. Deligne, Equations differentielles a points singuliers reguliers. Lecture Notes in Mathematics, Vol. 163. Springer-Verlag, Berlin-New York, 1970. 

\bibitem{delignehodgeI} P. Deligne,  Theorie de Hodge. II.
  Inst. Hautes Etudes Sci. Publ. Math. No. 40 (1971), 5-57

\bibitem{delignehodgeII} P. Deligne, Theorie de Hodge. III.  Inst. Hautes Etudes Sci. Publ. Math. No. 44 (1974), 5-77


\bibitem{denhamsuciu}  G. Denham, A.Suciu, Multinets, parallel connections, and Milnor fibrations of arrangements. 
Proc. Lond. Math. Soc. (3) 108 (2014), no. 6, 1435-1470.


\bibitem{dsy} G. Denham, A. Suciu, S. Yuzvinsky, Abelian duality and propagation of resonance. Selecta Math. (N.S.) 23 (2017), no. 4, 2331-2367.

\bibitem{zaidenberg}  G. Dethloff, S. Orevkov, M. Zaidenberg, 
Plane curves with a big fundamental group of the complement,  Voronezh Winter Mathematical Schools, 63-84,
Amer. Math. Soc. Transl. Ser. 2, 184, Adv. Math. Sci., 37, Amer. Math. Soc., Providence, RI, 1998.

\bibitem{dimcabook} A. Dimca, Singularities and topology of hypersurfaces. Universitext. Springer-Verlag, New York, 1992.

\bibitem{dimcaarrangements} A. Dimca, Hyperplane arrangements. An introduction. Universitext. Springer, 2017. 

\bibitem{dimca07} A. Dimca, L. Maxim, 
Multivariable Alexander invariants of hypersurface complements.
Trans. Amer. Math. Soc. 359 (2007), no. 7, 3505-3528.

\bibitem{dimcasuciu3manifolds} Dimca, A., Suciu, A.: Which 3-manifold groups are K\"ahler groups? J. Eur. Math. Soc. 11(3), 521-528 (2009). 

\bibitem{dimcapapadima}  A. Dimca, S. Papadima, A. Suciu, Alexander I
Quasi-K\"ahler groups, 3-manifold groups, and formality. 
Math. Z. 268 (2011), no. 1-2, 169-186.



\bibitem{dolgme} I.Dolgachev, A. Libgober, On the fundamental group of the complement to a discriminant variety. Algebraic geometry (Chicago, Ill., 1980), pp. 1-25, Lecture Notes in Math., 862, Springer, Berlin-New York, 1981.

\bibitem{ebeling}  W. Ebeling, Distinguished bases and monodromy of complex
  hypersurface singularities. Handbook of Geometry and Topology of Singularities, 2020.
p.427-466. 


\bibitem{conjfreepresent} M. Eliyahu, D. Garber, M. Teicher,
A conjugation-free geometric presentation of fundamental groups of arrangements. 
Manuscripta Math. 133 (2010), no. 1-2, 247-271.

\bibitem{eisenbud-ca} D. Eisenbud, Commutative algebra. With a view toward algebraic geometry. Graduate Texts in Mathematics, 150. Springer-Verlag, New York, 1995.

\bibitem{epy} D. Eisenbud, S. Popescu, S. Yuzvinsky, Hyperplane arrangement cohomology and monomials in the
exterior algebra. Trans. Am. Math. Soc. 355(11), 4365-4383 (2003)


\bibitem{esnault} H. Esnault, Fibre de Milnor d'un cone sur une courbe plane singuli\'ere.  Invent. Math. 68 (1982), no. 3, 477-496.

\bibitem{esnaultviehweg} H. Esnault, E. Viehweg, Lectures on vanishing theorems. DMV Seminar, 20. Birkh\"auser Verlag, Basel, 1992


\bibitem{lang} A. Esterov, L. Lang, Braid monodromy of univariate   fewnomials, arxiv:2001.01634

\bibitem{falkyuz} M. Falk, S. Yuzvinsky, Multinets, resonance varieties, and pencils of plane curves. Compos. Math. 143 (4), 1069-1088 (2007) 

\bibitem{fan} K. M. Fan, Direct product of free groups as the fundamental group of the complement of a union of lines.
Michigan Math. J. 44 (1997), no. 2, 283-291.

\bibitem{farb} B. Farb, D. Margalit,
A primer on mapping class groups.
Princeton Mathematical Series, 49. Princeton University Press, Princeton, NJ, 2012.

\bibitem{fei}  Fei Ye, Classification of moduli spaces of arrangements of nine projective lines. Pacific J. Math. 265 (2013), no. 1, 243-256.

\bibitem{fox}  R. H. Fox, A quick trip through knot theory. 1962 Topology of 3-manifolds and related topics (Proc. The Univ. of Georgia Institute, 1961) pp. 120-167 Prentice-Hall, Englewood Cliffs, N.J. 

\bibitem{friedl} S. Friedl, A. Suciu,
K\"ahler groups, quasi-projective groups and 3-manifold groups.
J. Lond. Math. Soc. (2) 89 (2014), no. 1, 151-168.

\bibitem{fujita}  T. Fujita,  On the topology of noncomplete algebraic surfaces. J. Fac. Sci. Univ. Tokyo Sect. IA Math. 29 (1982), no. 3, 503-566.

\bibitem{fulton-80} W. Fulton, 
On the fundamental group of the complement of a node curve.
Ann. of Math. (2) 111 (1980), no. 2, 407-409.

\bibitem{fulton87} W. Fulton, On the topology of algebraic varieties. Algebraic geometry, Bowdoin, 1985 (Brunswick, Maine, 1985), 15-46, Proc. Sympos. Pure Math., 46, Part 1, Amer. Math. Soc., Providence, RI, 1987. 

\bibitem{galati} C. Galati, On the number of moduli of plane sextics with six cusps. Ann. Mat. Pura Appl. (4) 188 (2009), no. 2, 359-368.

\bibitem{galindo} C. Galindo, F.Hernando, F.Monserrat,  The log-canonical threshold of a plane curve. Math. Proc. Cambridge Philos. Soc. 160 (2016), no. 3, 513-535.

\bibitem{golla} M. Golla, L. Starkston, The symplectic isotopy problem for rational cuspidal curves. arXiv:1907.06787

\bibitem{gonzalez} J. Gonz\'alez-Meneses,
Basic results on braid groups, 
Annales Math\'ematiques Blaise Pascal, Volume 18 (2011) no. 1, p. 15-59

\bibitem{manuel} M. Gonz\'alez-Villa, A. Libgober, L.Maxim, Motivic infinite cyclic covers. Adv. Math. 298 (2016), 413-447.

\bibitem{goresky}  M. Goresky, R. MacPherson, Stratified Morse theory. Ergebnisse der Mathematik und ihrer Grenzgebiete (3) 14. Springer-Verlag, Berlin, 1988. 

\bibitem{grauertrem}  H. Grauert, R. Remmert, Coherent analytic sheaves. Grundlehren der Mathematischen Wissenschaften, 265. Springer-Verlag, Berlin, 1984.


\bibitem{GKKP} D. Greb, S. Kebekus, S. J. Kovacs, T.Peternell. Differential forms ´
on log canonical spaces. Inst. Hautes Etudes Sci. Publ. Math, 114(1):87-169, November 2011.

\bibitem{greenlaz} M. Green, R. Lazarsfeld, Higher obstructions to deforming cohomology groups of line bundles. J. Amer. Math. Soc. 4 (1991), no. 1, 87-103. 

\bibitem{SGA1}  Rev\^etements \^Etales et Groupe Fondamental, Lecture Notes in Mathematics, 224, Springer-Verlag, 1971, by Alexander Grothendieck et al. Updating remarks by Michel Raynaud. 

\bibitem{gruel}  G. M. Greuel, C. Lossen, E. Shustin,  Singular algebraic curves. With an appendix by Oleg Viro. Springer Monographs in Mathematics. Springer, Cham, 2018.

\bibitem{greuel} G. M. Greuel, Deformations and Smoothings of
Singularities, Handbook of Geometry and Topology of Singularities, 2020.p.369-426.

\bibitem{benoit1} B. Guerville-Ball\'e, 
An arithmetic Zariski 4-tuple of twelve lines. 
Geom. Topol. 20 (2016), no. 1, 537-553.

\bibitem{benoit} B. Guerville-Ball\'e, J. Viu-Sos,  Configurations of points and topology of real line arrangements. Math. Ann. 374 (2019), no. 1-2, 1-35.

\bibitem{gyoga} A. Gyoja, Bernstein-Sato's polynomial for several analytic functions, J. Math. Kyoto Univ. 33(2) (1993) 399-411. 

\bibitem{hain09} R. Hain, Monodromy of codimension 1 subfamilies of universal curves. Duke Math. J. 161 (2012), no. 7, 1351-1378.

\bibitem{harris-86} J. Harris, On the Severi problem, Invent. Math. 84 (1986), 445-461. 

\bibitem{harrisreid} R. Harris, The kernel of the monodromy of the universal
  family of degree d smooth plane curves. arxiv 1904.10355.

\bibitem{hartshorne} R. Hartshone, Algebraic geometry. Graduate Texts in Mathematics, No. 52. Springer-Verlag, New York-Heidelberg, 1977.

\bibitem{hamm} H. Hamm, Le Dung Trang, Un theoreme du type de Lefschetz.  C. R. Acad. Sci. Paris Ser. A-B 272 (1971), A946-A949. 


\bibitem{hatcher} A. Hatcher, Algebraic topology. Cambridge University Press, Cambridge, 2002. 

\bibitem{hironaka} E. Hironaka, Abelian coverings of the complex projective plane branched along configurations of real lines. Mem. Amer. Math. Soc. 105 (1993), no. 502,

\bibitem{ekojump}  E. Hironaka,  Alexander stratifications of character varieties. Ann. Inst. Fourier (Grenoble) 47 (1997), no. 2, 555-583. 

\bibitem{hirzebruch} F. Hirzebruch,
Arrangements of lines and algebraic surfaces. Arithmetic and geometry, Vol. II, 113-140,
Progr. Math., 36, Birkh\"auser, Boston, Mass., 1983.

\bibitem{ishida} M. Ishida,
The irregularities of Hirzebruch's examples of surfaces of general type with $c^2_1=3c_2$.
Math. Ann. 262 (1983), no. 3, 407-420.

\bibitem{millson} M. Kapovich and J. J. Millson, Inst. Hautes \'Etudes Sci. Publ. Math. No. 88 (1998), 5-95 (1999); C. R. Acad. Sci. Paris S\'er. I Math. 325 (1997), no. 8, 871-876; 

\bibitem{kashiwara} M. Kashiwara, "B-functions and holonomic systems. Rationality of roots of B-functions", Invent. Math. 38 (1976/77), no. 1, p. 33-53. 

\bibitem{knottheory} A. Kawauchi, A survey of knot theory. Burkhauser, 1990.


\bibitem{kirkliv}  P. Kirk, C. Livingston, Twisted Alexander invariants, Reidemeister torsion, and Casson-Gordon
invariants, Topology 38 (1999), no. 3, 635-661.

\bibitem{kots} D. Kotschick, K\"ahlerian three-manifold groups, Math. Res. Lett. 20 (2013), no. 3, 521-525.

\bibitem{kulikov} Vik. S. Kulikov, On Chisini's conjecture, Izv. Ross. Akad. Nauk Ser. Mat. 63:6 (1999), 83-116. 

\bibitem{lasell} B. Lasell, 
Complex local systems and morphisms of varieties. 
Compositio Math. 98 (1995), no. 2, 141-166.

\bibitem{lazarsfeld} R. Lazarsfeld,  Positivity in algebraic
  geometry. vol I and vol. II. Classical setting: line bundles and
  linear series. Ergebnisse der Mathematik und ihrer
  Grenzgebiete. 3. Folge. A Series of Modern Surveys in Mathematics
48. Springer-Verlag, Berlin, 2004.

\bibitem{le} L\^e D\~ung Tr\'ang,
Sur les noeuds alg\'ebriques. 
Compositio Math. 25 (1972), 281–321.

\bibitem{mealex} A. Libgober, Alexander polynomial of plane algebraic curves and cyclic multiple planes. Duke Math. J. 49 (1982), no. 4, 833-851.


\bibitem{arcata81} A. Libgober, Alexander invariants of plane algebraic
  curves. Singularities, Part 2 (Arcata, Calif., 1981), 135-143,
  Proc. Sympos. Pure Math., 40, Amer. Math. Soc., Providence, RI,
  1983.


\bibitem{bowdin} A. Libgober, Fundamental groups of the complements to plane singular curves. Algebraic geometry, Bowdoin, 1985 (Brunswick, Maine, 1985), 29-45, Proc. Sympos. Pure Math., 46, Part 2, Amer. Math. Soc., Providence, RI, 1987. 
 
\bibitem{homologyabcov}  A. Libgober,  On the homology of finite abelian coverings. Topology Appl. 43 (1992), no. 2, 157-166. 

\bibitem{meabhyankar}   A. Libgober, Groups which cannot be realized as
  fundamental groups of the complements to hypersurfaces in ${\bf C}^N$. Algebraic geometry and its applications (West Lafayette, IN, 1990), 203-207, Springer, New York, 1994.

\bibitem{meannals} A. Libgober, Homotopy groups of the complements to singular hypersurfaces. II. Ann. of Math. (2) 139 (1994), no. 1, 117-144. 

\bibitem{me2complex} A. Libgober, On the homotopy type of the complement to plane algebraic curves. J. Reine Angew. Math. 367 (1986), 103-114.

\bibitem{mesergey} A. Libgober,  S. Yuzvinsky, 
Cohomology of the Orlik-Solomon algebras and local systems. 
Compositio Math. 121 (2000), no. 3, 337-361.

\bibitem{mecharvar} A. Libgober, Characteristic varieties of algebraic curves. Applications of algebraic geometry to coding theory, physics and computation (Eilat, 2001), 215-254, NATO Sci. Ser. II Math. Phys. Chem., 36, Kluwer Acad. Publ., Dordrecht, 2001. 


\bibitem{me2002}  A. Libgober, Hodge decomposition of Alexander invariants. Manuscripta Math. 107 (2002), no. 2, 251-269.


\bibitem{trends} A. Libgober, Eigenvalues for the monodromy of the Milnor fibers of arrangements. Trends in singularities, 141-150, Trends Math., Birkh\"auser, Basel, 2002.


\bibitem{isolatednonnormal}  A. Libgober, Isolated non-normal
  crossings. Real and complex singularities, 145-160, Contemp. Math.,
  354, Amer. Math. Soc., Providence, RI, 2004.

\bibitem{meample}  A. Libgober,
Homotopy groups of complements to ample divisors. (English summary) Singularity theory and its applications, 179-204,
Adv. Stud. Pure Math., 43, Math. Soc. Japan, Tokyo, 2006.


\bibitem{me09} A. Libgober, Lectures on topology of complements and fundamental groups. Singularity theory, 71-137, World Sci. Publ., Hackensack, NJ, 2007. 

\bibitem{me2009} A. Libgober, Non vanishing loci of Hodge numbers of local systems. Manuscripta Math. 128 (2009), no. 1, 1-31.

\bibitem{medevelopment} A. Libgober, Development of the theory of Alexander invariants in algebraic geometry. Topology of algebraic varieties and singularities, 3-17, Contemp. Math., 538, Amer. Math. Soc., Providence, RI, 2011. 

\bibitem{mustatame} A. Libgober, M.Mustata, Sequences of LCT-polytopes. Math. Res. Lett. 18 (2011), no. 4, 733-746. 

\bibitem{mathannalen} A. Libgober,  On Mordell-Weil groups of isotrivial abelian varieties over function fields. Math. Ann. 357 (2013), no. 2, 605-629.


\bibitem{liedtke} C. Liedtke, 
Fundamental groups of Galois closures of generic projections. 
Trans. Amer. Math. Soc. 362 (2010), no. 4, 2167-2188.

\bibitem{liunearby}  Y. Liu, Nearby cycles and Alexander modules of hypersurface complements. Adv. Math. 291 (2016), 330-361.

\bibitem{liumaximwang} Y. Liu, L. Maxim, B. Wang, Mellin transformation, propagation, and abelian duality spaces. Adv. Math. 335 (2018), 231-260.

\bibitem{lmwsurvey} Y. Liu, L. Maxim, B. Wang, Perverse sheaves on
  semi-abelian varieties -- a survey of properties and applications, arXiv:1902.05430.

\bibitem{liumaxim} Y. Liu and L. Maxim, 
Reidemeister torsion, peripheral complex and Alexander polynomials of hypersurface complements. 
Algebr. Geom. Topol. 15 (2015), no. 5, 2757-2787.
 
\bibitem{loeservaq} F. Loeser, M. Vaqui\'e, 
Le polyn\^ome d'Alexander d'une courbe plane projective.
Topology 29 (1990), no. 2, 163-173.

\bibitem{lonne09} M. Lonne, Fundamental groups of projective discriminant complements. Duke Math. J. 150 (2009), no. 2, 357-405.

\bibitem{lonnemoduli} M. Lonne, M. Penegini,
On asymptotic bounds for the number of irreducible components of the moduli space of surfaces of general type II. 
Doc. Math. 21 (2016), 197-204.


\bibitem{malgrange} B. Malgrange, Polyn\^omes de Bernstein-Sato et cohomologie evanescente, in Analysis and topology on singular spaces, II, III (Luminy, 1981), Asterisque, vol. 101, Soc. Math. France, Paris, 1983, p. 243-267.


\bibitem{malle} G. Malle, On the distribution of Galois groups. J. Number Theory 92 (2002), no. 2, 315-329. 

\bibitem{marco} M. A. Marco Buzun\'ariz, A description of the resonance variety of a line combinatorics via combinatorial pencils. Graphs Combin. 25 (2009), no. 4, 469-488. 

\bibitem{maxim06} L. Maxim, 
Intersection homology and Alexander modules of hypersurface complements. 
Comment. Math. Helv. 81 (2006), no. 1, 123-155.

\bibitem{maximbook} L. Maxim, Intersection Homology and Perverse
  Sheaves with Applications to Singularity Theory. Graduate Text in
  Mathenmatics, 218, Springer, 2019.

\bibitem{maynadier} H. Maynadier,
Polynomes de Bernstein-Sato associes a une intersection complète quasi-homogene à singularite isolee.
Bull. Soc. Math. France 125 (1997), no. 4, 547-571.


\bibitem{milnoralex} J. Milnor, Infinite cyclic coverings. 1968
  Conference on the Topology of Manifolds (Michigan State Univ.,
  E. Lansing, Mich., 1967) pp. 115-133 Prindle, Weber
and Schmidt, Boston, Mass. 

\bibitem{milnorduality} J. Milnor, 
A duality theorem for Reidemeister torsion.
Ann. of Math. (2) 76 (1962), 137-147.

\bibitem{milnor} J. Milnor, Singular points of complex hypersurfaces. Annals of Mathematics Studies, No. 61 Princeton University Press, Princeton, N.J.; University of Tokyo Press, Tokyo 1968

\bibitem{moishezon} B. Moishezon, Stable branch curves and braid monodromies. Algebraic geometry (Chicago, Ill., 1980), pp. 107-192, Lecture Notes in Math., 862, Springer, Berlin-New York, 1981. 

\bibitem{moishezonchisini} B. Moishezon. The arithmetic of braids and a statement of Chisini. Geometric topology (Haifa, 1992), 151-175, Contemp. Math., 164, Amer. Math. Soc., Providence, RI, 1994.

\bibitem{moishteicher} B. Moishezon, M. Teicher,
Simply-connected algebraic surfaces of positive index.
Invent. Math. 89 (1987), no. 3, 601-643 and also B.Moishezon, M.Teicher, Galois coverings in the theory of algebraic surfaces, Proc.
Symp. Pure Math. 46, 47-65 (1987).

\bibitem{mumford} D. Mumford, The topology of normal singularities of an algebraic surface and a criterion for simplicity. Inst. Hautes Etudes Sci. Publ. Math. No. 9 (1961), 5-22.

\bibitem{mumfordalex} D. Mumford,  Appendix 2 to Chapter VIII in \cite{zariskialgsurf}.

\bibitem{nadel} A. Nadel,
Multiplier ideal sheaves and Kahler-Einstein metrics of positive scalar curvature.
Ann. of Math. (2) 132 (1990), no. 3, 549-596.

\bibitem{naie1}  D. Naie, The irregularity of cyclic multiple planes after Zariski. 
Enseign. Math. (2) 53 (2007), no. 3-4, 265-305.

\bibitem{naie2}  D. Naie, Mixed multiplier ideals and the irregularity of abelian coverings of smooth projective surfaces. Expo. Math. 31 (2013), no. 1, 40-72.

\bibitem{namba} M. Namba, H.Tsuchihashi,
On the fundamental groups of Galois covering spaces of the projective plane.
Geom. Dedicata 105 (2004), 85-105.

\bibitem{yoshinaga12} S. Nazir, M. Yoshinaga, On the connectivity of the realization spaces of line arrangements. Ann. Sc. Norm. Super. Pisa Cl. Sci. (5) 11 (2012), no. 4, 921-937. 

\bibitem{nori} M. Nori, Zariski's conjecture and related problems.
Ann. Sci. \'Ecole Norm. Sup. (4) 16 (1983), no. 2, 305-344.

\bibitem{oka} M. Oka, A survey on Alexander polynomials of plane curves. Singularit\'es Franco-Japonaises, 209-232, S\'emin. Congr., 10, Soc. Math. France, Paris, 2005.

\bibitem{ozguner} A. Ozguner, Classical Zariski Pairs with nodes,
  Master Thesis, Bilkent University, 2007.

\bibitem{papadimasuciu} S. Papadima, A. Suciu, 
Bieri-Neumann-Strebel-Renz invariants and homology jumping loci. 
Proc. Lond. Math. Soc. (3) 100 (2010), no. 3, 795-834.

\bibitem{pardini}  R. Pardini, Abelian covers of algebraic varieties. J. Reine Angew. Math. 417 (1991), 191-213. 

\bibitem{pereira} J.V. Pereira, S. Yuzvinsky, Completely reducible hypersurfaces in a pencil. Adv. Math. 219 (2008), no. 2, 672-688. 

\bibitem{peterssteenbrink} C. Peters, J. Steenbrink,  Mixed Hodge
  structures. Ergebnisse der Mathematik und ihrer Grenzgebiete.  52. Springer-Verlag, Berlin, 2008. 

\bibitem{popp} H. Popp, 
Fundamentalgruppen algebraischer Mannigfaltigkeiten. 
Lecture Notes in Mathematics, Vol. 176 Springer-Verlag, Berlin-New York 1970 

\bibitem{robb} A. Robb, M. Teicher, Applications of braid group techniques to the decomposition of moduli spaces, new examples. Special issue on braid groups and related topics (Jerusalem, 1995). Topology Appl. 78 (1997), no. 1-2, 143-151. 
 
\bibitem{urzua} X. Roulleau, G. Urz\'ua, Chern slopes of simply connected complex surfaces of general type are dense in [2,3]. Ann. of Math. (2) 182 (2015), no. 1, 287-306. 

\bibitem{ruppert} W. Ruppert, 
Reduzibilitat ebener Kurven. J. Reine Angew. Math. 369 (1986), 167-191.

\bibitem{sabbah} C. Sabbah, Proximite evanescente. I. La structure
  polaire d'un $\mathcal{D}$-module, Compositio Math. 62 (1987), no. 3, p. 283-328.

\bibitem{saito}  M. Saito, Mixed Hodge modules. Publ. Res. Inst. Math. Sci. 26 (1990), no. 2, 221-333. 
`
\bibitem{saitospectrum} M. Saito,
Exponents of an irreducible plane curve singularity, arXiv:math.0009133.

\bibitem{sakuma} M. Sakuma, Homology of abelian coverings of links and spatial graphs. Canad. J. Math. 47 (1995), no. 1, 201-224.

\bibitem{salter1} N. Salter, Monodromy and vanishing cycles in toric surfaces. Invent. Math. 216 (2019), no. 1, 153-213.

\bibitem{salter2} N. Salter. On the monodromy group of the family of smooth plane curves.  arXiv:1610.04920.

\bibitem{salvetti} M. Salvetti,
Arrangements of lines and monodromy of plane curves.
Compositio Math. 68 (1988), no. 1, 103-122.

\bibitem{schnell} C. Schnell, An overview of Morihiko Saito's theory of
  mixed Hodge modules, arxiv 1405.3096

\bibitem{segre}  B. Segre, Sulla Caratterizzazione delle curve di
  diramazione dei piani multipli generali Mem. R. Acc. d'Italia, I
4 (1930), 531.

\bibitem{severi} F. Severi, Vorlesungen uber algebraische Geometrie. Johnson Pub. reprinted,
1968; 1st. ed., Leipzig 1921.

\bibitem{simpson}  C. Simpson, Subspaces of moduli spaces of rank one local systems. Ann. Sci. Ecole Norm. Sup. (4) 26 (1993), no. 3, 361-401.

\bibitem{simpsonstacks} C. Simpson,
Local systems on proper algebraic V-manifolds. 
Pure Appl. Math. Q. 7 (2011), no. 4, Special Issue: In memory of Eckart Viehweg, 1675-1759.


\bibitem{shimada97} I. Shimada, Fundamental groups of complements to singular plane curves. Amer. J. Math. 119 (1997), no. 1, 127-157. 

\bibitem{shimada98} I. Shimada, On the commutativity of fundamental groups of complements to plane curves.
Math. Proc. Cambridge Philos. Soc. 123 (1998), no. 1, 49-52.

\bibitem{shimada03} I. Shimada, 
Fundamental groups of algebraic fiber spaces. 
Comment. Math. Helv. 78 (2003), no. 2, 335-362.

\bibitem{shimada3}  I. Shimada, Equisingular families of plane curves with many connected components, Vietnam J. Math. 31 (2003), no. 2

\bibitem{shimada08} I. Shimada, 
On arithmetic Zariski pairs in degree 6.
Adv. Geom. 8 (2008), no. 2, 205-225. 

\bibitem{shimada10} I. Shimada,
Topology of curves on a surface and lattice-theoretic invariants of
coverings of the surface. Algebraic geometry in East Asia, Seoul 2008, 361-382,
Adv. Stud. Pure Math., 60, Math. Soc. Japan, Tokyo, 2010.

\bibitem{shimada101} I. Shimada, 
Lattice Zariski k-ples of plane sextic curves and Z-splitting curves for double plane sextics.
Michigan Math. J. 59 (2010), no. 3, 621-665.

\bibitem{shirane19} T. Shirane, 
Galois covers of graphs and embedded topology of plane curves. 
Topology Appl. 257 (2019), 122-143.

\bibitem{shirane} T. Shirane, 
Connected numbers and the embedded topology of plane curves.
Canad. Math. Bull. 61 (2018), no. 3, 650-658. 

\bibitem{shirane17} T. Shirane, 
A note on splitting numbers for Galois covers and $\pi_1$-equivalent Zariski k-plets.
Proc. Amer. Math. Soc. 145 (2017), no. 3, 1009-1017.

\bibitem{steenbrink}  J. Steenbrink, Mixed Hodge structure on the vanishing cohomology. Real and complex singularities (Proc. Ninth Nordic Summer School/NAVF Sympos. Math., Oslo, 1976), pp. 525-563. Sijthoff and Noordhoff, Alphen aan den Rijn, 1977. 


\bibitem{minageneric} M. Teicher, 
The fundamental group of a CP2 complement of a branch curve as an extension of a solvable group by a symmetric group.
Math. Ann. 314 (1999), no. 1, 19-38.

\bibitem{teicher} M. Teicher, 
Braid monodromy type invariants of surfaces and 4-manifolds. Trends in singularities, 215-222,
Trends Math., Birkh\"auser, Basel, 2002.

\bibitem{timmer} K. Timmerscheidt,
Mixed Hodge theory for unitary local systems.
J. Reine Angew. Math. 379 (1987), 152-171.

\bibitem{tokunaga00} H. Tokunaga,
Dihedral coverings of algebraic surfaces and their application. 
Trans. Amer. Math. Soc. 352 (2000), no. 9, 4007-4017.

\bibitem{tokunaga03} H. Tokunaga, 
Galois covers for S4 and U4 and their applications.
Osaka J. Math. 39 (2002), no. 3, 621-645.

\bibitem{urabe} T. Urabe,  Combinations of rational singularities on plane sextic curves with the sum of Milnor numbers less than sixteen. Singularities (Warsaw, 1985), 429-456, Banach Center Publ., 20, PWN, Warsaw, 1988. 

\bibitem{vankampen} E. Van Kampen,  On the Fundamental Group of an Algebraic Curve. Amer. J. Math. 55 (1933), no. 1-4, 255-260.

\bibitem{vaquie}   M. Vaqui\'e, Irregularit\'e des rev\^etements cycliques des surfaces projectives non singuli\'eres. Amer. J. Math. 114 (1992), 1187-1199

\bibitem{varchenko}  A. N. Varchenko, Asymptotic Hodge structure on vanishing cohomology. Izv. Akad. Nauk SSSR Ser. Mat. 45 (1981), no. 3, 540-591.

\bibitem{walter} U. Walther, Bernstein-Sato polynomial versus cohomology of the Milnor fiber for generic hyperplane arrangements. Compos. Math. 141 (2005), no. 1, 121-145.

\bibitem{yang} Jin-Gen Yang, Sextic curves with simple singularities. Tohoku Math. J. (2) 48 (1996), no. 2, 203-227. 

\bibitem{zariski29}  O. Zariski,  On the Problem of Existence of Algebraic Functions of Two Variables Possessing a Given Branch Curve. Amer. J. Math. 51 (1929), no. 2, 305-328.

\bibitem{zarhyperplane}  O. Zariski, A theorem on the Poincare group of an algebraic hypersurface. Ann. of Math. (2) 38 (1937), no. 1, 131-141.

\bibitem{zariskialgsurf} O. Zariski, Algebraic surfaces. With appendices by S. S. Abhyankar, J. Lipman and D. Mumford. Preface to the appendices by Mumford. Classics in Mathematics. Springer-Verlag, Berlin, 1995.


\bibitem{zariskipurity} O. Zariski, On the purity of the branch locus
  of algebraic functions.
 Proc. Nat. Acad. Sci. U.S.A. 44 (1958), 791-796. 

\bibitem{zarsam} O. Zariski, P.Samuel, Commutative
  algebra. Graduate Texts in Mathematics, Vol. 29. Springer-Verlag, New York-Heidelberg, 1975.

\end{thebibliography}
\end{document}